\newtheorem{lem}{Lemma}[section]
\newtheorem{rem}[lem]{Remark}
\newtheorem{eg}[lem]{Example}
\newtheorem{thm}[lem]{Theorem}
\newtheorem{defn}[lem]{Definition}
\newtheorem{cor}[lem]{Corollary}
\newtheorem{prop}[lem]{Proposition}
\newtheorem{deflem}[lem]{Definition--Lemma}
\def\BbK{\mathbb{K}}
\def\G{\mathbb{G}}
\def\cG{\mathcal{G}}
\def\hookar{\ar@{^{(}->}}
\def\fuk{\EuF}
\def\bc{\mathsf{bc}}
\def\cY{\mathcal{Y}}
\def\mir0{F_0}
\newcommand{\fm}{\mathfrak{m}}
\def\cM{\mathcal{M}}
\def\G{\mathbb{G}}
\def\and{\, \& \,}
\def\cM{\mathcal{M}}
\def\nov{r}
\def\cO{\mathcal{O}}
\def\cF{\mathcal{F}}
\def\cA{\mathcal{A}}
\def\cC{\mathcal{C}}
\def\cD{\mathcal{D}}
\def\cR{\mathcal{R}}
\def\NE{\mathrm{NE}}
\def\NE{\mathrm{NE}}
\def\R{\mathbb{R}}
\def\Z{\mathbb{Z}}
\def\fuk{\mathcal{F}}
\def\id{\mathrm{id}}
\def\bc{{\operatorname{bc}}}
\def\prebc{{pre\text{-}bc}}
\newcommand{\bimod}[2]{{{#1}\text{-}mod\text{-}{#2}}}
\def\cN{\mathcal{N}}
\def\Rbar{\overline{\mathcal{R}}}
\def\Cbar{\overline{\mathcal{C}}}
\def\C{\mathbb{C}}
\def\Cbar{\overline{\cC}}
\def\stab{\mathrm{stab}}
\def\Q{\mathbb{Q}}
\def\big{{\operatorname{big}}}
\def\sm{{\operatorname{sm}}}
\def\top{{\mathsf{top}}}
\def\im{\mathrm{im}}
\def\bL{\mathbf{L}}
\def\ex{\mathrm{ex}}
\def\signn{{n(n+1)/2}}
\def\sbc{{\operatorname{sbc}}}
\def\bulk{{\operatorname{bulk}}}
\def\novb{r}
\def\novblam{t}
\title{The cyclic open--closed map and  variations of Hodge structures}
\author{Sheel Ganatra and Nick Sheridan}
\begin{document}

\maketitle

\begin{abstract}
    We construct the cyclic open--closed map for the big (i.e., bulk-deformed) relative Fukaya category, in the semipositive case, and show that it is a morphism of `polarized variations of semi-infinite Hodge structures'. 
    We also give a natural criterion for the map to be an isomorphism, which is verified for example in the context of Batyrev mirror pairs. We conclude in such Calabi-Yau cases that the rational Gromov--Witten invariants can be extracted from the relative Fukaya category, and hence that enumerative mirror symmetry is a consequence of homological mirror symmetry for Calabi-Yau mirror pairs.
\end{abstract}

\section{Introduction}

This paper establishes a structural relationship  between two variations of semi-infinite Hodge structure (VSHS) associated to the symplectic geometry of a smooth projective variety $X$. The first, the ``categorical VSHS of the Fukaya category'' is associated to its (big, or bulk-deformed) Fukaya category via a categorical construction as reviewed in \cite{Sheridan_formulae}. The second, the ``$A$-model VSHS,'' is naturally associated to its (big) quantum cohomology via rational curve counting and topology, see \cite{Barannikov} and \cite[Section 2.3.1]{CIT}. 

The underlying cohomology groups associated to these VSHS are, respectively, the negative cyclic homology of the big Fukaya category and the (quantum) cohomology of $X$. Our paper first constructs, under suitable semi-positivity hypotheses and in a technical framework for Fukaya categories described below, a version of the cyclic open-closed map \cite{Ganatra:cyclic} between these groups. Our main result shows that the cyclic open--closed map is a morphism of VSHS, i.e., it intertwines the structural data associated to VSHS on either side. Specifically, it intertwines higher residue pairings, and it intertwines the Getzler--Gauss--Manin connection with the Dubrovin--Givental connection, as was announced in \cite{Ganatra2015}.

We also give a criterion for the cyclic open--closed map to be an isomorphism (analogous criteria were stated in \cite{Ganatra2015}), which in particular holds in cases where homological mirror symmetry is established. In such cases we learn immediately that the big Fukaya category `knows' the $A$-model VSHS. Under Calabi-Yau hypotheses this implies we can extract the rational Gromov--Witten invariants of $X$ from its Fukaya category and deduce genus 0 enumerative mirror predictions from homological mirror symmetry, facts we explain following the argument of \cite{Ganatra2015}. 
Whereas that reference focused on the `small' quantum VSHS (which suffices to reconstruct the rational Gromov--Witten invariants of Calabi--Yau threefolds, but not higher-dimensional spaces), our results hold for the `big' version, and in particular allow us to reconstruct the Frobenius manifold structure on quantum cohomology from the Fukaya category. See Section \ref{sec:frob} for more details.

The geometric and technical context in which our work establishes these results is that of the (big) {\em relative Fukaya category} of a suitably semi-positive symplectic manifold. Our constructions build on the technical foundations for such categories and their open-closed maps which were developed in the series of papers \cite{perutz2022constructing,relfukii}. 
In the first paper of that series \cite{perutz2022constructing}, the `small' relative Fukaya category of a symplectic manifold relative to a divisor was constructed.  In the second \cite{relfukii}, this was generalized to the `big' relative Fukaya category, the closed--open and open--closed maps were constructed, their basic properties established, Abouzaid's split-generation criterion was established, and an efficient formalism for constructing Floer-theoretic operations on such categories was set up which we will use.
Our paper, combined with these results \cite{perutz2022constructing,relfukii}, establishes the technical foundations of the relative Fukaya category outlined in \cite[Section 4]{Ganatra2015}; \cite[Section 2.5]{sheridan2021homological}; \cite[Theorems B and C]{Ganatra2023integrality}; and \cite[Theorem B]{GHHPS}. 
In particular, the results stated there as being contingent on foundational results about the relative Fukaya category, are now proved unconditionally. 

In the following subsections we give a more detailed overview of the key definitions, our main results, and applications to Calabi-Yau mirror symmetry.

\subsection{Big relative Fukaya category}

We summarize the definition of the big relative Fukaya category given in \cite{relfukii} (see also \cite{perutz2022constructing}). 
It depends on the following geometric data:
\begin{itemize}
    \item a compact $2n$-dimensional symplectic manifold $(X,\omega)$;
    \item a Liouville subdomain $(W,\theta) \subset X$ (so $d\theta = \omega|_W$);
    \item a grading datum $\G$, which comes equipped with a morphism $H_1(\cG^{or}(W)) \to \G$, where $\cG^{or}(W)$ denotes the Grassmannian of oriented Lagrangian subspaces of $W$;
    \item a `system of divisors' $V = \cup_{q \in Q} V_q \subset X \setminus W$;
    \item an $\omega$-compatible almost complex structure $J_0$ on $X$ such that each component $V_q$ of $V$ is a $J_0$-holomorphic submanifold, and there exists a convex collar for $W$;
    \item a Morse--Smale pair $(f,g)$ on $X$;
    \item certain choices of `perturbation data' for the relevant moduli spaces of pseudoholomorphic curves and Morse flowlines. 
\end{itemize}

We define $\kappa \in H^2(X,W;\R)$ to be the relative cohomology class determined by $\omega$ and $\theta$, and we define $Nef \subset H^2(X,W;\R)$ to be the convex cone spanned by the classes $PD([V_q])$. 
The data are required to satisfy the following conditions:
\begin{itemize}
    \item \textbf{(Ample)} $\kappa$ lies in the interior of $Nef$.
    \item \textbf{(Semipositive)} There exists a class $\tilde{c}_1 \in Nef$ which is a lift of $c_1(TX) \in H^2(X)$ along $H^2(X,W) \to H^2(X)$. 
\end{itemize}

\begin{rem}\label{rem:ag data} 
We repeat here \cite[Remark 1.3]{relfukii}, for the reader's convenience. Following \cite[Section 1.2]{perutz2022constructing}, we may construct the geometric data data $(W \subset X, \omega, \theta, V, J_0)$ from algebro-geometric data as follows. Let $X$ be a smooth complex projective variety, $D \subset X$ a simple normal crossings divisor with components indexed by $P$, and $Nef \subset Div(X, D)_\R \cong \R^P$ a rational polyhedral cone
in the space of divisors supported on $D$ such that:
\begin{itemize}
    \item $N ef$ contains an ample class in its interior;
    \item $N ef$ is contained in the cone of effective semiample divisors supported on $D$;
    \item $Nef$ contains a divisor homologous to the anticanonical divisor modulo torsion.
\end{itemize}
Then we can construct a non-empty path-connected set of data $(W \subset X, \omega, \theta, J_0)$ as above, where $W \subset X$ is a deformation retract of $X \setminus D$, $\omega$ is a K\"ahler form on $X$, $\kappa = [\omega; \theta] \in H^2(X, W ; \R) \cong Div(X, D)_\R$ is an ample class in the interior
of $Nef$, and $J_0$ is the integrable complex structure (see \cite[Section 9.1]{perutz2022constructing}). We
may also construct a system of divisors $V$ such that the classes $P D([V_q ])$ span $N ef$, see \cite[Section 9.2]{perutz2022constructing}. 
\end{rem}

In light of Remark \ref{rem:ag data}, we denote the big relative Fukaya category associated to geometric data as above by $\fuk^\big(X,D)$, where $D$ is to be thought of as some divisor giving rise to the geometric data via Remark \ref{rem:ag data} (even though in general we do not require that any such divisor exists).  
We will similarly denote $\fuk(X \setminus D) := \fuk(W)$.

Objects of $\fuk^\big(X,D)$ are compact exact Lagrangians $L \subset W$ equipped with a $\G$-grading (which means a lift of the canonical map $L \to \cG(W)$ to the abelian cover $\tilde \cG \to \cG^{or}(W) \to \cG(W)$ corresponding to $\pi_1(\cG^{or}(W)) \to H_1(\cG^{or}(W)) \to \G$) and spin structure.

The coefficient ring of the small relative Fukaya category is the $\G$-graded $\fm^\sm$-adic completion of the group ring $\Z[\NE]$, where $\NE \subset H_2(X,W)$ is the monoid of integral classes in the dual cone to $Nef$, and $\fm^\sm$ is the ideal generated by $\nov^u$ for non-zero $u \in \NE$. 
To define the coordinate ring of the big relative Fukaya category, we choose a basis $\alpha^i$ for the Morse cochain complex $CM^*(f,g)$ associated to the Morse--Smale pair $(f,g)$. 
The coefficient ring for the big relative Fukaya category is then the ring of `divided power series' $R^\big \subset \Q \hat\otimes R^\sm[[\novb_i]]$, where the bulk variable $\novb_i$ has degree $|\novb_i| = 2-|\alpha^i|$. 
More precisely, it is the completion of the $R^\sm$-subalgebra generated by the classes $\nov_i^k/k!$, for the filtration induced by the intersection of of the $\fm^\big$-adic filtration with this subalgebra, where $\fm^\big$ is generated by $\fm^\sm$ and the bulk variables. 
The coefficient ring $R^\big$ is equipped with a natural $\G$-grading, a filtration, and a differential (coming from the Morse differential). 

Morphism spaces in $\fuk^\big(X,D)$ are free $R^\big$-modules of finite rank, generated by intersection points between Hamiltonian perturbations of the respective Lagrangians. 
The structure maps count pseudoholomorphic discs $u$, weighted by a monomial in the bulk variables $\nov_i$ which records the constraints on Morse flowlines which are imposed on $u$, as well as a monomial $\nov^u$ recording the homology class $[u] \in \NE$. 

\subsection{Bounding cochains on the small category}

For expository reasons, in the statements of our main results we will restrict our choice of coefficient rings to the most geometrically interesting one, by base-changing to a universal Novikov ring over $\C$. 
Analogues of our main results over more general coefficient rings can easily be deduced from the chain-level statements proven in the body of the paper, which are stated in maximal generality.

In this section we recall the construction of the category of bounding cochains on the small relative Fukaya category over the Novikov field, following \cite[Section 1.4]{perutz2022constructing} and \cite[Section 1.3]{relfukii} (where slightly different notation was used, for consistency with \cite{perutz2022constructing,GHHPS}). 

We assume that $R^\sm$ is concentrated in degree $0 \in \G$. 
This is the case, for example, if $\G = \Z$ and the morphism $H_1(\cG^{or}) \to \Z$ is induced by a holomorphic volume form on $X$; it is also the case unconditionally if $\G = \Z/2$. 
Let $K \subset \R$ be a subgroup containing the image of $\kappa:H_2(X,W) \to \R$. 
For example, if the class $\kappa$ is integral, we may take $K = \Z$.

\begin{defn}
We define the \emph{Novikov field}:
$$\Lambda^\sm := \left\{ \sum_{i=0}^\infty c_i T^{k_i}: c_i \in \C, k_i \in K, \lim_{i \to \infty} k_i = +\infty\right\},$$
and equip it with the trivial filtration. 
We define the \emph{Novikov ring} $\Lambda^\sm_{\ge 0} \subset \Lambda^\sm$, consisting of sums with all $k_i \ge 0$, and equip it with the $\fm^\sm_\Lambda$-adic filtration, where $\fm^\sm_\Lambda$ is the ideal generated by $T^A$, where $A = \min\{\kappa(\nov^u): u \in \NE \setminus \{0\}\}$.
\end{defn}

There is a filtered ring homomorphism
\begin{align*}
    R^\sm & \to \Lambda^\sm_{\ge 0} \qquad \text{sending}\\
    \nov^u & \mapsto T^{\kappa(u)}.
\end{align*}
We define
\begin{itemize}
    \item $\fuk^\sm(X,D;\Lambda^\sm_{\ge 0}):= \fuk^\sm(X,D) \otimes_{R^\sm} \Lambda^\sm_{\ge 0}$ by base-changing along the above filtered ring homomorphism;
    \item $\fuk^\sm(X,D;\Lambda^\sm_{\ge 0})^\bc$ by forming the category of bounding cochains, in accordance with \cite[Definition 2.15]{relfukii}; this implicitly involves changing the filtration on the coefficient ring $\Lambda^\sm_{\ge 0}$ to the trivial filtration;
    \item $\fuk^\sm(X,D;\Lambda^\sm)^\bc$ by base-changing to $\Lambda^\sm$ (this is well-defined as the inclusion $\Lambda^\sm_{\ge 0} \to \Lambda^\sm$ is filtered, once the filtration on $\Lambda^\sm_{\ge 0}$ has been changed to the trivial one). 
\end{itemize}
The category $\fuk^\sm(X,D;\Lambda^\sm)^\bc$ is a $\Lambda^\sm$-linear $A_\infty$ category. 
In particular, as the filtration on $\Lambda^\sm$ is trivial, the category is uncurved (cf. \cite[Section 2.4]{relfukii}). 

\subsection{`Small' bounding cochains on the big category}

We now define the category of bounding cochains over the big relative Fukaya category to which our main results will apply; the definition is not quite the standard one.

\begin{defn}
Choose a basis $\gamma^i$ for $H^*(X;\Q)$. 
We define $\Lambda^\big_{\ge 0}$ to be the $\G$-graded $\fm^\big_\Lambda$-adic completion of $\Lambda^\sm_{\ge 0}[\novblam_i]$, where the bulk variable $\novblam_i$ has degree $2-|\gamma^i|$; $\fm^\big_\Lambda$ is the ideal generated by $\fm^\sm_\Lambda$ and the bulk variables $\novblam_i$. 

We define $\Lambda^\big$ to be the $\G$-graded $\fm^{\bulk}_\Lambda$-adic completion of $\Lambda^\sm[\novblam_i]$, where $\fm^{\bulk}_\Lambda$ is the ideal generated by the bulk variables $\novblam_i$. 
\end{defn}

For each basis element $\gamma^i$ of $H^*(X;\Q)$, we choose a chain-level representative in $CM^*(f,g)$:
$$\gamma^i = \left[ \sum_j C^i_j \alpha^j \right].$$
This allows us to define a filtered ring homomorphism
\begin{equation}\label{eq:RbigLambig}
R^\big \to \Lambda^\big_{\ge 0}
\end{equation}
which extends the morphism $R^\sm \to \Lambda^\sm_{\ge 0}$ and sends
$$\novb_j \mapsto \sum_i C^i_j \novblam_i.$$
(More abstractly, we choose a chain map $(H^*(X;\Q),0) \to (CM^*(f,g),d_{f,g})$ which induces the identity on cohomology, then its dual induces the map $R^\big \to \Lambda^\big_{\ge 0}$ on the bulk variables.)

\begin{deflem}[See Section \ref{sec:sbc}]\label{deflem:bigsmall}
    There is a $\Lambda^\big$-linear $A_\infty$ category $\fuk^\big(X,D;\Lambda^\big)^\sbc$, which is a deformation of $\fuk^\sm(X,D;\Lambda^\sm)^\bc$ over $\Lambda^\big$. 
    Explicitly, this means that the categories have the same set of objects, the morphism spaces are obtained by base-change along the morphism $\Lambda^\sm \to \Lambda^\big$, and the $A_\infty$ structure maps of $\fuk^\big(X,D;\Lambda^\big)^\sbc$ are equal to those of $\fuk^\sm(X,D;\Lambda^\sm)^\bc$ modulo $\fm^{\bulk}_\Lambda$.
\end{deflem}

The notation `$\sbc$' stands for `small bounding cochains'.

The content of Definition--Lemma \ref{deflem:bigsmall} is purely algebraic. 
Briefly, the objects of the category are pairs $(L,b)$ where $L$ is a Lagrangian brane and $b$ is a bounding cochain over $\Lambda^\sm_{\ge 0}$. 
We change coefficients of the big Fukaya category via the homomorphism \eqref{eq:RbigLambig}, then deform the resulting $A_\infty$ structure maps by the bounding cochains $b$, just as in the usual definition; however, as the Maurer--Cartan equation is only satisfied modulo $\fm^{\bulk}_\Lambda$ (the ideal generated by the bulk variables), the curvature of each object is not necessarily zero, but will only lie in $\fm^{\bulk}_\Lambda$. 
This is precisely the condition required of a $\Lambda^\big$-linear $A_\infty$ category, in accordance with \cite[Definition 2.10]{relfukii}. 

\begin{rem}
    One could also define a different coefficient ring $\bar\Lambda^\big$, which is equal to $\Lambda^\big$ as a ring but has instead the trivial filtration; then one could define a $\bar\Lambda^\big$-linear $A_\infty$ category by first base-changing to $\Lambda^\big_{\ge 0}$, then passing to bounding cochains in the sense of \cite[Definition 2.15]{relfukii}, then base-changing to $\bar\Lambda^\big$. 
    As the filtration on $\bar\Lambda^\big$ is trivial, this would produce an uncurved $A_\infty$ category which we would denote $\fuk^\big(X,D;\bar\Lambda^\big)^\bc$, and is an instance of the general construction outlined in \cite[Section 1.3]{relfukii}. 
    However, in general, this category need have no non-trivial objects, even in situations where $\fuk^\sm(X,D;\Lambda^\sm)^\bc$ is well-behaved: obstructions may appear in the Maurer--Cartan equation at higher order in the bulk variables. 
    In contrast, any object of $\fuk^\sm(X,D;\Lambda^\sm)^\bc$ defines an object of $\fuk^\big(X,D;\Lambda^\big)^\sbc$, and we know situations where $\fuk^\sm(X,D;\Lambda^\sm)^\bc$ is well-behaved (e.g., in situations where homological mirror symmetry statements hold for it \cite{sheridan2015homological,sheridan2021homological,Ganatra2023integrality,GHHPS}, we may show that it is homologically smooth). 
    This will be crucial to prove one of our main results, Theorem \ref{thm:vshs}, which gives a criterion under which we can extract the big quantum VSHS from the Fukaya category. 
    To prove this result, we first give a natural criterion for the small open--closed map to be an isomorphism (Theorem \ref{thm:aut_gen}; the criterion is known to be satisfied in the above-mentioned cases where homological mirror symmetry has been proved); then bootstrap from this to prove that the `big' cyclic open--closed map is an isomorphism (Lemma \ref{lem:OCiso}). 
\end{rem}

\subsection{Main results}

Before stating our main results, we recall some notation: $QH^*$ denotes quantum cohomology, defined as in \cite[Section 4.1]{relfukii}; $HH^*$ denotes Hochschild cohomology, defined as in \cite[Section 2.7]{relfukii}; $HH_*$ (respectively, $fHH_*$) denotes Hochschild homology (respectively, filtered Hochschild homology), defined as in \cite[Section 2.8]{relfukii}; $HC_*^-$ (respectively, $fHC_*^-$) denotes negative cyclic homology (respectively, filtered negative cyclic homology), defined as in Section \ref{sec:cyc}.

We now state our main results for the small Fukaya category, some of which are special cases of results in \cite{relfukii}. 
We will need to repeat the results for the big Fukaya category, but there are important distinctions which would get lost if we only stated the `big' versions.

\begin{thm}[Theorem 1.10 of \cite{relfukii}]\label{thm:co_int}
There is a unital graded $\Lambda^\big$-algebra homomorphism
$$\cC\cO_\Lambda^\big: QH^*(X;\Lambda^\big) \to HH^*(\fuk^\big(X;D;\Lambda^\big)^\sbc)$$
called the \emph{big closed--open map}. 
Quotienting by $\fm^\bulk_\Lambda$, one gets a unital graded $\Lambda^\sm$-algebra homomorphism
$$\cC\cO_\Lambda^\sm: QH^*(X;\Lambda^\sm) \to HH^*(\fuk^\sm(X;D;\Lambda^\sm)^\bc)$$
called the \emph{small closed--open map}. 
The small closed--open map coincides with the first-order deformation class of $\fuk^\big(X,D;\Lambda^\big)^\sbc$, in the direction of the bulk variables.
\end{thm}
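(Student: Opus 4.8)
The plan is to unwind both sides of the claimed equality at the chain level and observe that they count the same configurations of pseudoholomorphic discs carrying one interior marked point. Recall from \cite{relfukii} that, before passing to cohomology, the big closed--open map is built from Floer-theoretic operations which assign to a Morse cochain $\alpha^j \in CM^*(f,g)$ a Hochschild cochain of $\fuk^\big(X,D)$ whose $(d{+}1)$-st component counts rigid pseudoholomorphic discs with $d$ boundary inputs, one boundary output, and one interior marked point carrying an incidence constraint with the unstable manifold of the Morse critical point $\alpha^j$, each such disc weighted by $\novb^{[u]}$ recording its class $[u] \in \NE$. On the other side, the $A_\infty$ structure maps of $\fuk^\big(X,D)$ are, by construction, divided-power series in the bulk variables $\novb_j$ whose coefficient of $\prod_j \novb_j^{m_j}/m_j!$ counts discs carrying, for each $j$, exactly $m_j$ interior marked points each constrained by the unstable manifold of $\alpha^j$. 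Consequently the formal derivative $\partial_{\novb_j}$ of the big structure maps, evaluated at $\novb = 0$ (well-defined on the divided-power algebra $R^\big$), is precisely the chain-level closed--open cochain associated to $\alpha^j$; differentiating the $A_\infty$ relations for $\fuk^\big(X,D)$ shows directly that this cochain is a Hochschild cocycle, and its class is by definition the first-order deformation class of $\fuk^\big(X,D)$ in the direction $\novb_j$.

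Next I would track the changes of coefficients involved in passing from $\fuk^\big(X,D)$ over $R^\big$ to $\fuk^\big(X,D;\Lambda^\big)^\sbc$. The base change along \eqref{eq:RbigLambig} sends $\novb_j \mapsto \sum_i C^i_j \novblam_i$, so by the chain rule $\partial_{\novblam_i}$ of the base-changed structure maps at $\novblam = 0$ equals $\sum_j C^i_j$ times the derivative in $\novb_j$, i.e.\ the chain-level closed--open cochain associated to the Morse cochain $\sum_j C^i_j \alpha^j$, which is exactly the chosen chain-level representative of $\gamma^i \in H^*(X;\Q)$. Hence the first-order deformation class in the direction $\novblam_i$ is represented by this cochain; its class in $HH^*$ is, after quotienting by $\fm^\bulk_\Lambda$ and using Definition--Lemma \ref{deflem:bigsmall} to identify the reduction with the small category, precisely $\cC\cO^\sm_\Lambda(\gamma^i)$. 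Finally, passing from $\fuk^\big(X,D;\Lambda^\big)$ to its $\sbc$-version only deforms the structure maps by inserting the bounding cochains $b$, which are defined over $\Lambda^\sm_{\ge 0}$ and are therefore independent of the bulk variables; thus $\partial_{\novblam_i}$ commutes with this insertion, and the resulting first-order term is the $b$-deformed closed--open cochain, which is exactly the chain-level definition of $\cC\cO^\sm_\Lambda$ for the $\sbc$-category. Since the first-order deformation classes in all bulk directions assemble into a $\Lambda^\sm$-linear map $QH^*(X;\Lambda^\sm) \to HH^*(\fuk^\sm(X,D;\Lambda^\sm)^\bc)$, and $QH^*(X;\Lambda^\sm) = H^*(X;\Q) \otimes_\Q \Lambda^\sm$ is generated over $\Lambda^\sm$ by the $\gamma^i$, agreement on the $\gamma^i$ gives agreement of the two maps. (Degrees are consistent: $|\novblam_i| = 2 - |\gamma^i|$ forces $\partial_{\novblam_i}\mu$ to have Hochschild degree $|\gamma^i| = \deg \cC\cO^\sm_\Lambda(\gamma^i)$.)

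The main obstacle will be the bookkeeping needed to guarantee that the relevant moduli spaces \emph{literally coincide}: the bulk deformation of $\fuk^\big(X,D)$ and the chain-level closed--open map of \cite{relfukii} must be set up with the same perturbation data, the same interior-point incidence constraints, and the same Morse data, so that the identification above is an equality of Hochschild cochains rather than merely a quasi-isomorphism. If the two constructions are a priori made with different auxiliary choices, one further needs a continuation argument interpolating the perturbation data and checking that the induced maps on $HH^*$ agree; this is routine but must be done carefully so as to respect the $\fm^\big$-adic filtration, the $\G$-grading, and the divided-power structure throughout. A secondary point to pin down is the precise meaning of `first-order deformation class' (via the formal derivative $\partial_{\novb_j}$ on $R^\big$ and its compatibility with the completions), so that the statement being proved is unambiguous.
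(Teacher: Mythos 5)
Your outline reconstructs, in essence, the argument of \cite[Theorem 1.10]{relfukii}, which the paper's proof simply invokes while noting that the only modification needed is passing to small bounding cochains rather than bounding cochains — a modification your last paragraph also correctly handles. The key observation, that the coefficient of $\novb_j$ in the divided-power series defining $\mu^\big$ is by definition the chain-level closed--open cochain attached to $\alpha^j$, and then chasing through the base change $\novb_j \mapsto \sum_i C^i_j \novblam_i$ to obtain $\cC\cO^\sm_\Lambda(\gamma^i)$, is the right one.

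Two caveats. First, the obstacle you raise at the end is not actually present: in the formalism of \cite{relfukii} summarized in Section 3, the big $A_\infty$ structure maps and the chain-level closed--open map are both obtained from the same system of families of domains via the map $B^\sm(F) \otimes_{R^\sm} R^\big \to B^\big(F,P)$, differing only in which subset $P \subset P^{\bulk,in}$ of bulk marked points is retained as a $QC^*$-input rather than integrated into divided powers of the bulk variables. Because the perturbation data and moduli spaces coincide by construction, your identification is an equality of Hochschild cochains and no continuation argument is required. Second, your outline establishes only the final sentence of the theorem (the identification with the first-order deformation class). The existence of $\cC\cO^\big_\Lambda$ as a unital graded $\Lambda^\big$-algebra homomorphism, and the fact that quotienting by $\fm^\bulk_\Lambda$ gives $\cC\cO^\sm_\Lambda$, are also claims of the statement; the paper takes these directly from \cite[Theorem 1.10]{relfukii}, so a complete proof should either cite likewise or reproduce the algebra homomorphism and unitality arguments.
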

\begin{proof}
    The proof is the same as that of \cite[Theorem 1.10]{relfukii}, in the case $S = \Lambda^\big$: the only difference is that we pass to small bounding cochains, rather than bounding cochains, but the same proof applies verbatim.
\end{proof}

\begin{cor}\label{cor:vers}
    If $\cC\cO_\Lambda^\sm$ is an isomorphism, then $\fuk^\big(X,D;\Lambda^\big)^\sbc$ is a versal deformation of $\fuk^\sm(X,D;\Lambda^\sm)^\bc$. 
\end{cor}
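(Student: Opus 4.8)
The plan is to deduce versality of $\fuk^\big(X,D;\Lambda^\big)^\sbc$ over $\Lambda^\big$ from the fact that its first-order deformation class, in the direction of the bulk variables, is the small closed--open map $\cC\cO^\sm_\Lambda$, which by Theorem \ref{thm:co_int} is a classifying map for first-order deformations valued in $HH^*(\fuk^\sm(X,D;\Lambda^\sm)^\bc)$. Recall that a deformation of an $A_\infty$ category $\cA$ over a (pro-)Artinian base is \emph{versal} if, given any other deformation of $\cA$ over a base $B$ together with an identification of their restrictions to $B/\fm_B$ with $\cA$, the second deformation is pulled back from the first along some ring homomorphism; equivalently (in the presence of a suitable obstruction theory), the deformation is versal precisely when its Kodaira--Spencer / first-order deformation map is an isomorphism onto the relevant Hochschild cohomology controlling deformations. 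So the content of the corollary is exactly the standard `first-order isomorphism $\Rightarrow$ versality' principle, once one checks that the deformation problem here has the expected deformation-theoretic structure (it is governed by $HH^*$, has no issues with convergence/completeness, and the relevant gradings match up).

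Concretely, the steps I would carry out are as follows. First, set up the deformation functor: given any complete filtered $\Lambda^\sm$-algebra $B$ with $B/\fm_B = \Lambda^\sm$, consider deformations of $\fuk^\sm(X,D;\Lambda^\sm)^\bc$ over $B$ in the sense of \cite[Definition 2.10]{relfukii} (same objects, morphism spaces base-changed, structure maps reducing to those of $\fuk^\sm$ mod $\fm_B$), up to the natural notion of gauge equivalence. Second, recall the obstruction theory: the primary obstruction to extending a deformation over $B/\fm_B^{k}$ to $B/\fm_B^{k+1}$ lies in $HH^2$ (with appropriate grading shift; here degree $2$ in the cohomological normalization), and the first-order deformations over $\Lambda^\sm[\epsilon]/\epsilon^2$ with $|\epsilon| = 2 - |\gamma^i|$ are classified by $HH^*(\fuk^\sm(X,D;\Lambda^\sm)^\bc)$, with the classifying map of $\fuk^\big(X,D;\Lambda^\big)^\sbc$ being precisely $\cC\cO^\sm_\Lambda$ by the last sentence of Theorem \ref{thm:co_int}. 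Third, run the standard inductive lifting argument: since $\cC\cO^\sm_\Lambda$ is assumed to be an isomorphism, $\Lambda^\big$ (whose maximal ideal modulo $\fm^\sm_\Lambda$ is freely generated by the $\novblam_i$ with $|\novblam_i| = 2 - |\gamma^i|$, matched to a basis of $QH^*$) is a `miniversal' base in the sense that $\fm^\bulk_\Lambda / (\fm^\bulk_\Lambda)^2 \cong QH^*(X;\Lambda^\sm) \cong HH^*$; then for any competing deformation over $B$ one constructs the comparison map $\Lambda^\big \to B$ order by order, at each stage using surjectivity of $\cC\cO^\sm_\Lambda$ onto $HH^2$-classes to solve for the next Taylor coefficient of the ring map and using the (graded) $A_\infty$ gauge freedom to absorb the ambiguity — exactly as in Fukaya's or Seidel's treatment of versality, or as in \cite[Section 3]{Sheridan_formulae}.

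I should be careful about one subtlety rather than a genuine obstacle: one must ensure that the deformation $\fuk^\big(X,D;\Lambda^\big)^\sbc$ is honestly a deformation in the completed/pro-Artinian sense that the versality argument requires, including the fact (from Definition--Lemma \ref{deflem:bigsmall}) that objects may acquire curvature in $\fm^\bulk_\Lambda$ — so `deformation' here must mean deformation in the curved sense of \cite[Definition 2.10]{relfukii}, and the obstruction theory is the one for curved $A_\infty$ deformations, which is still governed by the same $HH^*$ (the curvature term sits in the length-zero part of the Hochschild complex and is handled uniformly). The main point to verify, and where the real work lies, is that the abstract obstruction/classification theory for \emph{this particular} notion of curved, filtered $A_\infty$ deformation genuinely matches the hypotheses of the `first-order-iso $\Rightarrow$ versal' lemma — i.e., that $\fuk^\sm(X,D;\Lambda^\sm)^\bc$ being uncurved (filtration on $\Lambda^\sm$ trivial) and the base $\Lambda^\big$ being complete for the $\fm^\bulk_\Lambda$-adic filtration are exactly the finiteness/completeness conditions needed. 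Granting that, the corollary is immediate; I expect the cleanest exposition is to cite the general versality statement (e.g.\ from \cite{Sheridan_formulae}) and note that its hypotheses are met, with $\cC\cO^\sm_\Lambda$ playing the role of the Kodaira--Spencer map.
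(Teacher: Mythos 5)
Your proposal is correct and takes essentially the same approach as the paper: both deduce versality from the identification of $\cC\cO^\sm_\Lambda$ with the first-order deformation class (Theorem \ref{thm:co_int}) combined with the standard ``first-order isomorphism implies versal'' deformation-theory argument, which the paper outsources to \cite[Theorem 3.3]{Sheridan_CDM} rather than spelling it out. Your elaboration of the inductive lifting step and the caveat about the curved/filtered setting is a faithful unpacking of what that citation contains, not a different route.
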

\begin{proof}
    This follows immediately from the fact that $\cC\cO_\Lambda^\sm$ coincides with the first-order deformation class of $\fuk^\big(X,D;\Lambda^\big)^\sbc$, together with a standard deformation theory argument (see, e.g., \cite[Theorem 3.3]{Sheridan_CDM}). 
\end{proof}

\begin{thm}[Theorem 1.11 of \cite{relfukii}] \label{thm:oc_int}
There is a $QH^*(X;\Lambda^\big)$-module homomorphism
$$\cO\cC_\Lambda^\big: fHH_*(\fuk^\big(X,D;\Lambda^\big)^\sbc)[-n] \to QH^*(X;\Lambda^\big)
$$
called the \emph{big open--closed map}.\footnote{We recall that `$fCC_*$' stands for the (adically) completed Hochschild chain complex, and `$fHH_*$' for its cohomology, cf. \cite[Section 2.8]{relfukii}.}
Quotienting by $\fm^\bulk_\Lambda$, one gets a $QH^*(X;\Lambda^\sm)$-module homomorphism
$$\cO\cC_\Lambda^\sm: HH_*(\fuk^\sm(X,D;\Lambda^\sm)^\bc)[-n] \to QH^*(X;\Lambda^\sm)
$$
called the \emph{small open--closed map}.
\end{thm}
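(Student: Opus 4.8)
The plan is to define $\cO\cC_\Lambda^\big$ exactly as the big open--closed map of \cite[Theorem 1.11]{relfukii} is defined, specialized to the coefficient ring $S=\Lambda^\big$, with the single modification that we work over the category $\fuk^\big(X,D;\Lambda^\big)^\sbc$ of \emph{small} bounding cochains from Definition--Lemma \ref{deflem:bigsmall} in place of a category of honest bounding cochains. Concretely, I would construct a chain-level map
$$\cO\cC\colon fCC_*\bigl(\fuk^\big(X,D;\Lambda^\big)^\sbc\bigr)[-n]\longrightarrow CM^*(f,g)\,\widehat{\otimes}\,\Lambda^\big,$$
whose target is the Morse cochain complex of the pair $(f,g)$ and computes $QH^*(X;\Lambda^\big)$ as in \cite[Section 4.1]{relfukii}. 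The map counts pseudoholomorphic discs carrying the cyclically ordered Hochschild inputs on boundary marked points, arbitrarily many insertions of the bounding cochains at further boundary points (exactly as in the definition of the deformed $A_\infty$ structure maps of Definition--Lemma \ref{deflem:bigsmall}), and one interior marked point constrained to an ascending manifold of $(f,g)$; each disc is weighted by the same Novikov and bulk-variable monomials that weight the structure maps of $\fuk^\big(X,D)$. Passing to cohomology and using the identification $fHH_*(\cdots)=H_*\bigl(fCC_*(\cdots)\bigr)$ then yields $\cO\cC_\Lambda^\big$, and independence of the perturbation data and of the chain-level choices (the representatives $\gamma^i$, their lifts to $CM^*(f,g)$, etc.) follows from the usual one-parameter cobordism argument, as in \cite{relfukii}.

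First I would check that the sum defining $\cO\cC$ converges in the appropriate adic topology: every contributing disc of positive energy carries a positive power of the Novikov parameter (and a bulk variable if it imposes nontrivial Morse constraints), and for fixed energy Gromov compactness bounds the combinatorial type, so that only finitely many discs contribute modulo any power of $\fm^\big_\Lambda$ --- the same estimate as in \cite{relfukii}, with the role of the \emph{completed} complex $fCC_*$ being precisely to make the construction live in the completed setting. Next I would verify that $\cO\cC$ is a chain map for the (curved) Hochschild differential on the source: the codimension-one strata of the relevant $1$-dimensional moduli spaces are of two kinds --- boundary disc-bubbling and edge-collapse contributions, which assemble into the Hochschild differential of the deformed category (here the curvatures of the objects $(L,b)$, which lie in $\fm^\bulk_\Lambda$ rather than vanishing, enter the boundary terms, and are accommodated because $fCC_*$ is the Hochschild complex of the \emph{curved} $A_\infty$ category $\fuk^\big(X,D;\Lambda^\big)^\sbc$ in the sense of \cite[Definition 2.10]{relfukii}), and breaking of the interior gradient flowline, which produces the Morse differential on the target. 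Then I would establish $QH^*(X;\Lambda^\big)$-linearity by the standard argument with a second interior marked point constrained to a cycle representing a class $a$: the two boundary degenerations of the resulting $1$-dimensional moduli space give a chain homotopy between $\cO\cC\bigl(\cC\cO_\Lambda^\big(a)\cap-\bigr)$ and $a\star\cO\cC(-)$, exactly as in \cite[Theorem 1.11]{relfukii}; together with Theorem \ref{thm:co_int} this gives the asserted module-homomorphism property.

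Finally, the small open--closed map is obtained by reduction modulo $\fm^\bulk_\Lambda$: by Definition--Lemma \ref{deflem:bigsmall} the category $\fuk^\big(X,D;\Lambda^\big)^\sbc$ reduces mod $\fm^\bulk_\Lambda$ to $\fuk^\sm(X,D;\Lambda^\sm)^\bc$ (base-changed to $\Lambda^\sm$), $fCC_*$ reduces to the ordinary Hochschild complex, and $QH^*(X;\Lambda^\big)$ reduces to $QH^*(X;\Lambda^\sm)$; since the chain-level $\cO\cC$ is manifestly continuous for the $\fm^\bulk_\Lambda$-adic filtrations it descends, and the discs that survive the quotient --- those of bulk-degree $0$ --- are precisely the ones defining the open--closed map of the small category, so the descended map is $\cO\cC_\Lambda^\sm$ as in \cite{relfukii}. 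I expect the genuinely new point to require care rather than new ideas: it is the bookkeeping forced by the $\sbc$ construction --- one must confirm that the bounding cochains, although only defined over $\Lambda^\sm_{\ge 0}$ and only solving the Maurer--Cartan equation modulo $\fm^\bulk_\Lambda$, nonetheless produce well-defined deformed disc counts over $\Lambda^\big$, and that the chain-map identity still closes up once the $\fm^\bulk_\Lambda$-valued curvature terms are included. The real analytic and homotopical difficulties --- transversality and virtual perturbations for the moduli of discs with an interior marked point (coupled, in the Morse-theoretic model of $QH^*$, to gradient flowlines), Gromov compactness, and well-definedness of the resulting operations over the completed coefficient rings --- are exactly those already resolved in \cite{relfukii}, and are inherited verbatim; so, as in the proof of Theorem \ref{thm:co_int}, once the $\sbc$-versus-$\bc$ distinction is kept in view the proof of \cite[Theorem 1.11]{relfukii} applies with only cosmetic changes.
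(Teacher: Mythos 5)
Your proposal is correct and follows the same approach as the paper, which simply cites the proof of \cite[Theorem 1.11]{relfukii} with the single modification of using small bounding cochains in place of bounding cochains. One presentational difference worth noting: the paper's construction of the cyclic version in Section~\ref{sec:cyc oc} (and presumably \cite[Theorem 1.11]{relfukii} as well) proceeds \emph{algebraically} --- define the open--closed map geometrically only on the undeformed big Fukaya category $\fuk^\big(X,D)$ over $R^\big$, then base-change to $\Lambda^\big_{\ge 0}$, pre-compose with the map $F_*$ on Hochschild chains induced by the functor $F\colon\cC^\prebc\to\cC$ of \cite[Lemma 2.14]{relfukii} (which is exactly the operation of inserting bounding cochains), restrict to the $\sbc$ subcategory, and base-change to $\Lambda^\big$ --- whereas you define the deformed map \emph{directly geometrically} by counting discs with bounding-cochain insertions; these descriptions agree because $F$ is defined by inserting $b$'s, so pre-composing the chain-level $\cO\cC$ with $F_*$ reproduces your disc counts. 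The rest of what you check (convergence, chain-map identity with the curvature terms in $\fm^\bulk_\Lambda$, module property via a second interior marked point, reduction mod $\fm^\bulk_\Lambda$) is a faithful expansion of what the cited proof requires.
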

\begin{proof}
     As with Theorem \ref{thm:co_int}, the proof is the same as that of \cite[Theorem 1.11]{relfukii}, except that we use small bounding cochains rather than bounding cochains.
\end{proof}

\begin{thm}\label{thm:mukai_int}
    The (big or small) open--closed map respects pairings, in the sense that
    $$\langle \cO\cC_\Lambda(\alpha),\cO\cC_\Lambda(\beta)\rangle = (-1)^\signn \langle \alpha, \beta \rangle_{Muk}$$
    where $\langle -,-\rangle$ is the pairing on quantum cohomology (with conventions as in \cite[Section 4.1]{relfukii}), and $\langle -,-\rangle_{Muk}$ is the Mukai pairing on Hochschild homology (with conventions as in \cite[Definition 5.19]{Sheridan_formulae}). 
\end{thm}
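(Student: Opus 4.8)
The plan is to deduce the statement from the corresponding chain-level compatibility, using the fact that the open-closed map and both pairings admit explicit descriptions in terms of counts of pseudoholomorphic discs and spheres. The essential geometric input is a single moduli space: configurations of genus-zero curves with two interior marked points constrained to go to Poincaré-dual cycles in $X$, together with boundary components carrying Hochschild chains, such that its codimension-one boundary strata recognize on one side the composition $\langle \cO\cC_\Lambda(-),\cO\cC_\Lambda(-)\rangle$ (two discs, each mapped out by $\cO\cC$, glued along a sphere realizing the quantum pairing) and on the other side $\langle -,-\rangle_{Muk}$ (a single annulus/disc-with-two-outputs configuration computing the Mukai pairing via the `inverse dualizing bimodule' or, more concretely, via the Connes-type cyclic symmetrization of the Hochschild pairing). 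Since the total count over a compact one-dimensional moduli space vanishes, the two contributions agree up to the stated Koszul sign $(-1)^{n(n+1)/2}$, which comes from reordering the $n$-dimensional factors when one identifies the diagonal class / the orientation on the moduli of the gluing sphere.

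First I would reduce to the small case: both sides of the claimed identity are $R^\big$-linear (indeed $\Lambda^\big$-linear) in each variable, and the big open-closed map and the big quantum pairing reduce modulo $\fm^\bulk_\Lambda$ to their small counterparts while the Mukai pairing is defined purely algebraically from the $A_\infty$ structure; so it suffices to prove the identity with bulk variables present but treat them as formal parameters, i.e. run the geometric argument over $R^\big$ (equivalently $\Lambda^\big$) once and for all. Concretely the bulk insertions just decorate the disc and sphere moduli with additional constrained marked points, and play no role in the boundary analysis beyond being carried along. Second, I would recall the precise chain-level model of $\cO\cC$ from \cite[Section 2.8]{relfukii} (the Hochschild chain with one interior output) and of the Mukai pairing from \cite[Definition 5.19]{Sheridan_formulae} (which, via the identification in that reference, equals the composition of the canonical Hochschild-homology pairing $HH_*(\cA)\otimes HH_*(\cA) \to HH_*(\cA\otimes\cA^{op}) \to \Lambda$ with the Künneth and trace maps). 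Third, I would define the interpolating moduli space $\cM$ of spheres with two interior marked points and two `input' boundary punctures carrying cyclic words, with a varying parameter gluing the two interior points to a nodal sphere; a standard gluing-and-degeneration analysis (as in the proof that $\cO\cC$ is a module map, \cite[Theorem 1.11]{relfukii}, and as in Ganatra's original cyclic open-closed work \cite{Ganatra:cyclic}) identifies the boundary.

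The main obstacle I expect is \emph{not} the existence of the interpolating moduli space or its transversality — that is supplied by the perturbation-data framework already set up in \cite{relfukii} — but rather the precise bookkeeping of signs and orientations, culminating in the factor $(-1)^{n(n+1)/2}$. This sign is notoriously convention-dependent: it arises from comparing the orientation on quantum cohomology's pairing (which involves integration over $X$, hence the fundamental class of a $2n$-manifold and a diagonal in $X\times X$) against the purely combinatorial Koszul signs in the bar complex defining the Mukai pairing, and it must be made to match the conventions of \cite[Section 4.1]{relfukii} and \cite[Definition 5.19]{Sheridan_formulae} simultaneously. I would handle this by checking the sign in the simplest nontrivial case — $X$ a point, or more safely $\cA$ the trivial one-object $A_\infty$ algebra $\Lambda$ with $\cO\cC$ sending the generator of $HH_0$ to the unit — where both sides collapse to explicit scalars and the sign $(-1)^{n(n+1)/2}$ can be read off directly, and then argue that the general sign is forced by the same local orientation conventions at the interior nodes. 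A secondary technical point is ensuring that the completed/filtered Hochschild complex $fHH_*$ (rather than ordinary $HH_*$) causes no trouble: since the pairing takes values in $\Lambda^\big$ and is continuous for the $\fm^\bulk_\Lambda$-adic topology, the identity, once proved modulo each power of $\fm^\bulk_\Lambda$, passes to the completion.
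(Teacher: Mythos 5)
Your proposal correctly identifies the paper's geometric mechanism — a Cardy-type cobordism relating the quantum pairing of two open--closed maps to the Mukai pairing via annulus degeneration — and this is indeed what Section~\ref{sec:oc pair} does. But there is a concrete gap in the boundary analysis you describe. You claim that one codimension-one boundary stratum of a single interpolating family recognizes ``two discs, each mapped out by $\cO\cC$, glued along a sphere realizing the quantum pairing.'' The Deligne--Mumford degeneration of the annulus $\Rbar(A,s,\ell)$ at $R=0$ produces two discs joined at their interior marked points by a \emph{length-zero} interval, i.e.\ a nodal configuration. That is \emph{not} the operation $\langle \cO\cC(\alpha),\cO\cC(\beta)\rangle$: the quantum pairing is realized by an interior attachment of length $\infty$ (a broken Morse flowline, per the conventions of \cite[Section 4.1]{relfukii}). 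Going from $\infty$-length to $0$-length attachment is a separate homotopy, and the paper therefore runs a two-stage argument: first the family $\Rbar(H^1_{\langle k\rangle})$ with length parameter varying over $[0,\infty]$ (Lemma~\ref{lem:cardy triv}), and only then the annulus family producing Lemma~\ref{lem:cardy nontriv}. Your single moduli space, as written, has a mismatched boundary stratum.

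A second omission is the passage to bounding cochains. The Cardy homotopy you construct lives at chain level over $R^\big$ for $\fuk^\big(X,D)$, whereas the theorem concerns the categories $\fuk^\sm(X,D;\Lambda^\sm)^\bc$ and $\fuk^\big(X,D;\Lambda^\big)^\sbc$. The theorem does not follow by mere base change: one must verify that the functor $F:\cC^{\prebc}\to\cC$ from \cite[Lemma~2.14]{relfukii} respects the Mukai pairing at chain level (Lemma~\ref{lem:C_Cbc_pairings} in the paper), since the pairing on the bounding-cochain side is built from the deformed $\mu$'s. Also note the annulus family is two-dimensional ($R$ and $\theta$), so the paper works with a codimension-one subfamily $\Rbar(H^2_{\langle\rangle})$ cut out by a constraint on angles, and auxiliary $H^3_{\langle\rangle}$ terms appear in the boundary and must be shown to cancel; your outline does not anticipate this. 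Finally, the overall sign $(-1)^{n(n+1)/2}$ depends on $n$, so evaluating at $X$ a point ($n=0$) gives no sign information; the verification must be an orientation computation as in Appendix~\ref{sec:signs}, not an extrapolation from one example.
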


Theorem \ref{thm:mukai_int} will be proved in Section \ref{sec:oc pair}. A similar result is proved, in the positively monotone case, in \cite[Appendix B]{Ganatra2016}.

\begin{thm}\label{thm:wpcy}
    The map
    \begin{align*}
        \varphi: HH_{-n}(\fuk^\sm(X,D;\Lambda^\sm)^\bc) & \to \Lambda^\sm\\
        \varphi(\alpha) & := \langle \cO\cC_\Lambda^\sm(\alpha),e\rangle
    \end{align*}
    defines a \emph{weak proper Calabi--Yau structure} (see, e.g., \cite[Definition A.2]{Sheridan2013} for the definition). 
\end{thm}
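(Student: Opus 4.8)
The plan is to verify the three defining properties of a weak proper Calabi--Yau structure directly: (i) $\varphi$ has the correct degree, i.e.\ it is a map $HH_{-n} \to \Lambda^\sm$ of degree $0$ (equivalently, $HH_* \to \Lambda^\sm[-n]$); (ii) $\varphi$ is closed for the cyclic differential, i.e.\ it extends to (or arises from) a negative cyclic class, so that $\varphi \circ b = 0$ on the Hochschild complex and the induced pairing is a chain map; and (iii) the induced pairing $HH_*(\mathcal{A}) \otimes HH_*(\mathcal{A}) \to \Lambda^\sm$, $(\alpha,\beta) \mapsto \varphi(\alpha \cdot \beta)$ (using the $HH_*$-module, or rather the cap-product with $HH^*$ via closed--open) is non-degenerate. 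The key inputs are Theorem \ref{thm:mukai_int} (the open--closed map intertwines the quantum pairing with the Mukai pairing, up to the sign $(-1)^\signn$) and Theorem \ref{thm:oc_int} (that $\mathcal{O}\mathcal{C}_\Lambda^\sm$ is a $QH^*$-module map).

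First I would establish the degree and closedness. The degree statement is immediate from the grading conventions: $\mathcal{O}\mathcal{C}_\Lambda^\sm$ shifts degree by $-n$, the unit $e \in QH^0$, and the pairing $\langle -,- \rangle$ on $QH^*$ is of degree $-2n$ (Poincar\'e duality), so $\varphi(\alpha) = \langle \mathcal{O}\mathcal{C}_\Lambda^\sm(\alpha), e\rangle$ vanishes unless $|\alpha| = -n$, as required. For closedness, the cleanest route is to observe that $\varphi$ is the reduction of the cyclic open--closed map composed with the pairing-against-$e$ functional, which is manifestly cyclic-closed; alternatively, one invokes the known fact (e.g.\ from \cite{Ganatra:cyclic} or the construction of $\mathcal{O}\mathcal{C}$) that the composite $\langle \mathcal{O}\mathcal{C}(-), e\rangle$ descends from negative cyclic homology, hence kills the image of the $b$-differential at the chain level. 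This is essentially bookkeeping once the chain-level open--closed map of \cite{relfukii} is in hand.

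The substantive step is non-degeneracy. The strategy is: the pairing induced by $\varphi$ on $HH_*(\fuk^\sm(X,D;\Lambda^\sm)^\bc)$ is, by definition and by the $QH^*$-module property of $\mathcal{O}\mathcal{C}_\Lambda^\sm$, computed as $(\alpha,\beta) \mapsto \langle \mathcal{O}\mathcal{C}_\Lambda^\sm(\alpha \cap \mathcal{C}\mathcal{O}_\Lambda^\sm{}^\vee(\beta)), e\rangle$ or more symmetrically, it agrees up to sign with the Mukai pairing transported through $\mathcal{O}\mathcal{C}$ — precisely the content of Theorem \ref{thm:mukai_int}. Since the quantum pairing $\langle-,-\rangle$ on $QH^*(X;\Lambda^\sm)$ is non-degenerate (it is Poincar\'e duality, which survives base change to the Novikov field), the Mukai pairing is non-degenerate on the image of $\mathcal{O}\mathcal{C}_\Lambda^\sm$. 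To upgrade this to non-degeneracy on all of $HH_*$, I would use that for the proper (compact, exact-Lagrangian) Fukaya category the Mukai pairing is itself non-degenerate — this is a general fact for smooth and proper, or at least homologically-finite, $A_\infty$ categories over a field, and $\Lambda^\sm$ is a field. Combining: $\varphi$-induced pairing $=$ (up to sign) Mukai pairing $=$ non-degenerate.

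The main obstacle I anticipate is precisely the last implication — controlling non-degeneracy of the Mukai pairing on the full Hochschild homology rather than just on $\mathrm{im}(\mathcal{O}\mathcal{C})$. If one does not already know $\mathcal{O}\mathcal{C}_\Lambda^\sm$ is injective (let alone an isomorphism — that is the content of the later Theorem \ref{thm:aut_gen}, which one does \emph{not} want to assume here since Theorem \ref{thm:wpcy} should be unconditional), one must argue non-degeneracy intrinsically. The way around this is the standard one: a weak proper CY structure only requires the pairing $HH_*(\mathcal{A}) \otimes HH_*(\mathcal{A}^{op}) \to \Lambda^\sm$ built from $\varphi$ and the diagonal bimodule to be non-degenerate, which for a proper $A_\infty$ category over a field follows from Serre-type duality / the fact that $\varphi$ restricted to each $\mathrm{Hom}(L,L)$ is a non-degenerate trace — and this last fact is exactly the Poincar\'e-duality-type statement for Lagrangian Floer cohomology, which in the relative Fukaya setting follows from Theorem \ref{thm:mukai_int} applied to the image classes of individual objects together with the known non-degeneracy of Floer pairings. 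So the proof reduces to: (a) chase the definitions to identify the $\varphi$-pairing with the Mukai pairing up to sign via Theorems \ref{thm:oc_int} and \ref{thm:mukai_int}; (b) invoke non-degeneracy of the quantum pairing and of Floer pairings on individual Lagrangians; (c) conclude. I expect (a) to be a short but sign-sensitive computation, and (b)--(c) to be citations to \cite{Sheridan2013,Sheridan_formulae} plus the $A_\infty$ Poincar\'e duality already in \cite{perutz2022constructing,relfukii}.
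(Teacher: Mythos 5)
The gap you flag yourself at the end is in fact the real one, and your proposed workaround does not close it. You want to reduce non-degeneracy of the $\varphi$-induced pairing to (a) Theorem \ref{thm:mukai_int} and (b) ``known non-degeneracy of Floer pairings on individual Lagrangians.'' But (a) is a statement about the Mukai pairing on $HH_*$: restricted to length-zero Hochschild chains it gives an Euler-characteristic-type pairing, not the Floer-theoretic Poincar\'e pairing on $\Hom(L_0,L_1)\otimes\Hom(L_1,L_0)$, so ``Theorem \ref{thm:mukai_int} applied to the image classes of individual objects'' does not produce what you need. And (b) is precisely the content that has to be established in the framework of \cite{perutz2022constructing,relfukii}: that the chain-level trace $x\otimes y\mapsto\varphi(\mu^2(x,y))$ is cohomologically non-degenerate on every morphism space is \emph{equivalent} to $G_\cC(\varphi)$ being an isomorphism of bimodules, i.e.\ to the wpCY statement itself. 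You cannot cite it as prior input without circularity, and it is not a formal consequence of properness over a field (a proper $A_\infty$ category over a field need not carry a wpCY structure).

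The paper avoids this by not attempting to deduce invertibility from a pairing at all. Instead it constructs, via Floer-theoretic moduli spaces, two explicit bimodule morphisms $wpCY\colon\sigma(n)\fuk^\big_\Delta\to(\fuk^\big_\Delta)^\vee$ and $wpCY^{-1}$ in the opposite direction (families of discs with one interior and two boundary marked points), and then constructs explicit bimodule homotopies $H^1_{wpCY},H^2_{wpCY}$ from one-parameter families interpolating between the glued configuration and a nodal one, proving $wpCY^{-1}\circ wpCY\simeq\id$ and $wpCY\circ wpCY^{-1}\simeq\id$ (Theorem \ref{thm:wpcy_big}). A further family $H^3_{wpCY}$ exhibits $G_{\fuk^\big}(\varphi)$ as cohomologous to $wpCY$ (Lemma \ref{lem:wpcy_oc}); this replaces your ``closedness'' step and also identifies the geometrically-constructed isomorphism with the one coming from $\varphi$. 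Base-changing to $\Lambda^\sm_{\ge0}$ and passing to bounding cochains via Lemmas \ref{lem:wpcy_bc} and \ref{lem:wpcy_gc} then gives the statement. In short: the substantive input is a direct geometric construction of the inverse, not a non-degeneracy argument; Theorem \ref{thm:mukai_int} is used elsewhere (in Theorem \ref{thm:aut_gen}) and is not the engine here. Your degree bookkeeping in step (i) is fine, but steps (ii) and (iii) need to be replaced by the moduli-space argument.
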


Theorem \ref{thm:wpcy} will be proved in Section \ref{sec:wpcy}. A similar result is proved, in the positively monotone case, in \cite[Lemma 2.4]{Sheridan2013}.

\begin{cor}[Proposition 2.6 of \cite{Sheridan2013}] \label{cor:co_oc_dual}
    The following diagram commutes:
    $$\begin{tikzcd}
         QH^*(X;\Lambda^\sm) \ar[rr,"\alpha \mapsto \langle \alpha{,}\, -\rangle"] \ar[d,"\cC\cO_\Lambda^\sm"]   & &QH^*(X)^\vee[-2n] \ar[d,"(\cO\cC_\Lambda^\sm)^\vee"]  \\
        HH^*(\fuk^\sm(X,D;\Lambda^\sm)^\bc) \ar[rr,"\alpha \mapsto \varphi(\alpha \cap -)"] && HH_*(\fuk^\sm(X,D;\Lambda^\sm)^\bc)^\vee[-n],
     \end{tikzcd}
    $$
    where the horizontal arrows are isomorphisms; in particular, $\cC\cO_\Lambda$ and $\cO\cC_\Lambda$ are dual, up to natural identifications of their respective sources and targets.
\end{cor}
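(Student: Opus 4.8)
The plan is to deduce the commutativity of the diagram from the three preceding results: Theorem~\ref{thm:mukai_int} (the open--closed map respects pairings), Theorem~\ref{thm:wpcy} (the weak proper Calabi--Yau structure $\varphi$ on $HH_*$), and the fact that $\cC\cO^\sm_\Lambda$ is a unital algebra homomorphism which is a module map for the $QH^*$-action (Theorems~\ref{thm:co_int} and~\ref{thm:oc_int}). First I would verify that the two horizontal arrows are isomorphisms: the top arrow is the map induced by the Poincar\'e (quantum) pairing, which is nondegenerate by construction of $QH^*$, hence an isomorphism onto the shifted dual; the bottom arrow is the map $\alpha \mapsto \varphi(\alpha \cap -)$, which is an isomorphism precisely because $\varphi$ defines a \emph{weak} proper Calabi--Yau structure (this is exactly the nondegeneracy condition built into \cite[Definition A.2]{Sheridan2013}), using here that the Hochschild homology is finite-dimensional over $\Lambda^\sm$, which follows from homological smoothness/properness of the small category.

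Next I would chase an element $\alpha \in QH^*(X;\Lambda^\sm)$ around both ways. Going right-then-down produces the functional on $HH_*$ given by $\beta \mapsto \langle \alpha, \cO\cC^\sm_\Lambda(\beta)\rangle$ (up to the sign and shift bookkeeping). Going down-then-right produces the functional $\beta \mapsto \varphi\big(\cC\cO^\sm_\Lambda(\alpha) \cap \beta\big)$. So the content of the claim is the identity
\begin{equation*}
\varphi\big(\cC\cO^\sm_\Lambda(\alpha)\cap \beta\big) = (-1)^{?}\,\langle \alpha,\cO\cC^\sm_\Lambda(\beta)\rangle
\end{equation*}
for all $\alpha,\beta$, with an explicit sign. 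The key input is the compatibility of $\cO\cC$ with the module structures: $\cO\cC^\sm_\Lambda(\cC\cO^\sm_\Lambda(\alpha)\cap\beta) = \alpha \star \cO\cC^\sm_\Lambda(\beta)$, where $\star$ is the quantum product (this is part of Theorem~\ref{thm:oc_int}, that $\cO\cC$ is a $QH^*$-module map via $\cC\cO$). Applying $\langle -, e\rangle$ and using the Frobenius property $\langle \alpha \star x, e\rangle = \langle \alpha, x\rangle$ of the quantum pairing, the left side becomes $\varphi(\cC\cO^\sm_\Lambda(\alpha)\cap\beta) = \langle \cO\cC^\sm_\Lambda(\cC\cO^\sm_\Lambda(\alpha)\cap\beta), e\rangle = \langle \alpha \star \cO\cC^\sm_\Lambda(\beta), e\rangle = \langle \alpha, \cO\cC^\sm_\Lambda(\beta)\rangle$, which is exactly the right side. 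The signs and degree shifts are then extracted by carefully tracking the $[-2n]$, $[-n]$ conventions and the $(-1)^{\signn}$ appearing in Theorem~\ref{thm:mukai_int}; Theorem~\ref{thm:mukai_int} itself is only needed to pin down the sign (and to double-check consistency), not the underlying identity.

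The main obstacle I expect is purely bookkeeping: getting the signs, the grading shifts $[-n]$ versus $[-2n]$, and the direction of the duals all to line up, especially reconciling the Mukai-pairing sign of Theorem~\ref{thm:mukai_int} with the Frobenius identity used above. There is also a small subtlety in that the unit $e \in QH^*$ is sent by $\cC\cO^\sm_\Lambda$ to the identity in $HH^*$ (unitality in Theorem~\ref{thm:co_int}), which is what makes $\varphi(\cC\cO^\sm_\Lambda(e)\cap\beta) = \varphi(\beta)$ and hence recovers the base case; one should check this is consistent with the normalization of $\varphi$. Since this corollary is cited as \cite[Proposition 2.6]{Sheridan2013}, I would follow that argument essentially verbatim, substituting the ``big'' constructions' quotients by $\fm^\bulk_\Lambda$ for the monotone inputs used there, and remark that the only genuinely new ingredients — Theorems~\ref{thm:mukai_int} and~\ref{thm:wpcy} — have been established above in the present (semipositive, relative) setting.
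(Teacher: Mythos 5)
Your argument is essentially the paper's: both reduce the corollary to \cite[Proposition 2.6]{Sheridan2013} applied verbatim, with the same foundational inputs — Theorem~\ref{thm:wpcy} for the bottom horizontal isomorphism, Theorem~\ref{thm:oc_int} for the $QH^*$-module compatibility of $\cO\cC$, and the Frobenius property of the quantum pairing for the commutativity chase.

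Two minor over-inclusions in your write-up worth correcting. First, Theorem~\ref{thm:mukai_int} is not among the paper's foundational inputs for this corollary and is not needed even for sign bookkeeping; the signs come out of \cite{Sheridan2013}'s argument and the conventions already fixed in Theorems~\ref{thm:co_int} and~\ref{thm:oc_int}. Second, and more importantly, you invoke finite-dimensionality of $HH_*$ via ``homological smoothness/properness of the small category'' to justify the bottom isomorphism. The corollary is stated unconditionally — no smoothness hypothesis appears, and indeed it is later used as an input to Theorem~\ref{thm:aut_gen} where smoothness of a subcategory is the thing being \emph{exploited}, so building smoothness into the corollary would weaken it. In fact no finiteness is needed: $\Lambda^\sm$ is a field, so cohomology commutes with linear duality for any complex of $\Lambda^\sm$-vector spaces, and the wpCY structure alone (as an isomorphism $\sigma(n)\fuk_\Delta\cong\fuk_\Delta^\vee$ in $H(\bimod{}{})$) gives the bottom arrow as an isomorphism.
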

\begin{proof}
    The proof of \cite[Proposition 2.6]{Sheridan2013} applies verbatim; the only foundational inputs it uses are Theorem \ref{thm:wpcy} (to show that the bottom horizontal map is an isomorphism), Theorem \ref{thm:oc_int}, and the standard fact that the pairing $\langle -,-\rangle$ makes $QH^*(X;\Lambda^\sm)$ into a Frobenius algebra (to show that the diagram commutes).
\end{proof}

Theorems \ref{thm:mukai_int} and \ref{thm:wpcy} allow us to implement the following result of Sanda and Ganatra \cite{Sanda,Ganatra2016} in our setting (we state the criterion in the Calabi--Yau case, where it is cleanest, but see the references for natural generalizations):

\begin{thm}[Theorem 1.1 of \cite{Sanda}, Theorems 1 and 8 of \cite{Ganatra2016}]\label{thm:aut_gen}
    Suppose that $X$ is connected, and the grading group is $\G = \Z$. 
    If $\cA \subset \fuk^\sm(X,D;\Lambda^\sm)^\bc$ is homologically smooth, then:
    \begin{itemize}
        \item $\cA$ split-generates $\fuk^\sm(X,D;\Lambda^\sm)^\bc$;
        \item the following four maps are isomorphisms:
        $$QH^*(X;\Lambda^\sm) \xrightarrow{\cC\cO_\Lambda^\sm} HH^*(\fuk^\sm(X,D;\Lambda^\sm)^\bc) \to HH^*(\cA)$$
        and
        $$HH_*(\cA)[-n] \to HH_*(\fuk^\sm(X,D;\Lambda^\sm)^\bc)[-n] \xrightarrow{\cO\cC_\Lambda^\sm} QH^*(X;\Lambda^\sm).$$
    \end{itemize}
\end{thm}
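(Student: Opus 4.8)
The plan is to deduce Theorem \ref{thm:aut_gen} from the abstract split-generation and duality machinery of Sanda \cite{Sanda} and Ganatra \cite{Ganatra2016}, using Theorems \ref{thm:mukai_int} and \ref{thm:wpcy} of the present paper as the inputs that were left as black boxes in those references. The key observation is that $\cA \subset \fuk^\sm(X,D;\Lambda^\sm)^\bc$ being homologically smooth, together with the existence of a weak proper Calabi--Yau structure on the ambient category (Theorem \ref{thm:wpcy}), puts us exactly in the situation of the relevant abstract theorems: a smooth full subcategory of a proper Calabi--Yau category, with the pairing data controlled.

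First I would recall the statement of Abouzaid's split-generation criterion in the form proved in \cite{relfukii}: a subcategory $\cA$ split-generates $\fuk^\sm$ provided the restriction map on Hochschild homology, composed with $\cO\cC_\Lambda^\sm$, hits the unit $e \in QH^*$ (or more precisely, provided the image of $\cO\cC$ restricted to $\cA$ contains an invertible element). So the first step is to bootstrap from smoothness of $\cA$: using Theorem \ref{thm:wpcy}, the weak proper Calabi--Yau structure $\varphi$ restricts to a nondegenerate pairing on $HH_*(\cA)$ (this uses smoothness of $\cA$ to identify $HH_*(\cA)$ with its dual via the Mukai pairing, as in the Sanda/Ganatra argument), and nondegeneracy forces the restriction map $HH_*(\cA) \to HH_*(\fuk^\sm)$ to be injective with image a direct summand on which $\varphi$ is still nondegenerate. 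Combined with Corollary \ref{cor:co_oc_dual} (the $\cC\cO$/$\cO\cC$ duality) and Theorem \ref{thm:mukai_int} (compatibility with pairings), one then runs the Sanda--Ganatra argument: nondegeneracy of the pairing on $HH_*(\cA)$ propagates, via the commuting square, to show $\cC\cO|_{\cA}: QH^* \to HH^*(\cA)$ is surjective; since $QH^*$ is finite-dimensional (over $\Lambda^\sm$) and $HH^*(\cA)$ receives a unital ring map, a dimension/duality count shows it is an isomorphism. Surjectivity of $\cC\cO|_\cA$ onto $HH^*(\cA)$ implies in particular that the unit of $HH^*(\cA)$ lifts, which via the closed--open/open--closed duality gives that $\cO\cC|_\cA$ hits $e$, hence split-generation by Abouzaid's criterion. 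Once $\cA$ split-generates, the maps $HH_*(\cA) \to HH_*(\fuk^\sm)$ and $HH^*(\fuk^\sm) \to HH^*(\cA)$ are isomorphisms (Hochschild invariants only see the split-closure), and then the fact that $\cC\cO_\Lambda^\sm$ and $\cO\cC_\Lambda^\sm$ themselves are isomorphisms follows by composing these with the already-established isomorphisms at the level of $\cA$, or directly from the nondegeneracy arguments above applied to $\fuk^\sm$ itself.

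The main obstacle I anticipate is bookkeeping rather than conceptual: one must check that the technical framework of \cite{relfukii} — in particular the curved/uncurved subtleties, the passage through bounding cochains, and the grading conventions ($\G = \Z$, $R^\sm$ concentrated in degree zero, $X$ connected so that $QH^0$ is one-dimensional) — matches the hypotheses under which Sanda's and Ganatra's abstract theorems are stated, so that their proofs genuinely apply verbatim. The connectedness and $\G = \Z$ hypotheses are precisely what guarantee that the unit component of $QH^*$ is a field, which is used both in the dimension count for $\cC\cO$ being an isomorphism and in verifying the Calabi--Yau dimension axiom; I would make explicit that these are the only places these hypotheses enter. The other point requiring care is that Theorem \ref{thm:wpcy} produces a \emph{weak} (i.e., possibly non-smooth, cyclically-non-refined) proper Calabi--Yau structure, whereas some formulations of the split-generation/duality package ask for a smooth one — but since $\cA$ is assumed smooth and the ambient category is proper, the weak structure suffices, exactly as in the cited references, and I would point to \cite[Definition A.2]{Sheridan2013} and the corresponding discussion in \cite{Ganatra2016} to confirm the match.
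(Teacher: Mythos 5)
Your proposal is correct and takes essentially the same route as the paper: the paper's proof is a citation proof, observing that the argument of Ganatra \cite{Ganatra2016} (and Sanda \cite{Sanda}) applies verbatim once one supplies Abouzaid's split-generation criterion from \cite[Theorem 1.13]{relfukii}, Theorem~\ref{thm:mukai_int}, Theorem~\ref{thm:oc_int}, and Corollary~\ref{cor:co_oc_dual} as foundational inputs, and your sketch simply unpacks that cited argument using the same ingredients (with Theorem~\ref{thm:wpcy} entering indirectly through Corollary~\ref{cor:co_oc_dual}).
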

\begin{proof}
    A brief examination of the proof of the first point given in \cite[Section 5]{Ganatra2016} shows that the only foundational inputs needed to prove the first point are Abouzaid's split-generation criterion \cite{Abouzaid2010a} (which holds in this case by \cite[Theorem 1.13]{relfukii}), Theorem \ref{thm:mukai_int}, and Corollary \ref{cor:co_oc_dual}. 
    Otherwise, the proof applies verbatim.

    A similarly brief examination shows that the proof that the second point is a consequence of the first relies only on the foundational results of Theorem \ref{thm:oc_int} (to show $\cO\cC_\Lambda$ is surjective), Theorem \ref{thm:mukai_int} (to show it is injective), and Corollary \ref{cor:co_oc_dual} (to conclude that $\cC\cO_\Lambda$ is also an isomorphism), and otherwise applies verbatim.
\end{proof}

We now turn to results about cyclic homology. 
We state all our results for negative cyclic homology, $HC_*^-$, but the natural analogues of all these results hold for the positive and periodic versions.

\begin{thm}\label{thm:OC_minus_bc}
    There is a homomorphism of $\Lambda^\big[[u]]$-modules extending the big open--closed map, called the \emph{big negative cyclic open--closed map}
    $$\cO\cC_\Lambda^{-,\big}: fHC_*^-(\fuk^\big(X,D;\Lambda^\big)^{sbc})[-n] \to QH^{*}(X;\Lambda^\big)[[u]],$$
    where $fHC_*^-$ denotes the filtered negative cyclic homology. 
    Quotienting by $\fm^\bulk_\Lambda$, we obtain a homomorphism of $\Lambda^\sm[[u]]$-modules extending the small open--closed map, called the \emph{small negative cyclic open--closed map}
    $$\cO\cC_\Lambda^{-,\sm}: HC_*^-(\fuk^\sm(X,D;\Lambda^\sm)^\bc) [-n] \to QH^*(X;\Lambda^\sm)[[u]].$$
\end{thm}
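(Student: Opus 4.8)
The plan is to construct the cyclic open--closed map at the chain level as a $u$-linear deformation of the already-constructed open--closed map, following the strategy of \cite{Ganatra:cyclic} adapted to the present framework. Recall that the (big) open--closed map of Theorem \ref{thm:oc_int} is realized by a chain map $fCC_*(\fuk^\big(X,D;\Lambda^\big)^{sbc}) \to CM^*(f,g) \hat\otimes \Lambda^\big$ (or onto the relevant model for quantum cohomology), built by counting pseudoholomorphic discs with one boundary output carrying a cyclic word of inputs, one interior marked point mapped to a chosen divisor, together with constraints recording Morse flowlines and homology classes, exactly as in the description of the structure maps in Section 1.1. To upgrade this to negative cyclic homology one enlarges the target to $CM^*(f,g) \hat\otimes \Lambda^\big[[u]]$ and seeks a collection of operations $\mathcal{OC}^{(k)}$, $k \ge 0$, with $\mathcal{OC}^{(0)} = \mathcal{OC}$, assembled into $\sum_k u^k \mathcal{OC}^{(k)}$, intertwining the Connes differential $b + uB$ on the cyclic bar complex with the de Rham--type differential $d_{f,g} + u\cdot(\text{correction})$ on the target. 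The operations $\mathcal{OC}^{(k)}$ count discs with $k$ additional interior marked points which are ``forgotten'' — more precisely, one uses the $S^1$-family of moduli spaces of discs with one boundary output and cyclically ordered inputs, with the marked points constrained to angular positions, and takes fiber products against the circle; this is the standard source of the powers of $u$ (cf. \cite{Ganatra:cyclic}, and the treatment of the circle action in the closed sector).

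The key steps, in order, are: (i) set up the relevant $S^1$-equivariant moduli spaces of discs in the framework of \cite{relfukii}, using the ``efficient formalism for constructing Floer-theoretic operations'' referenced there, so that the necessary transversality, compactness, and gluing results follow from the already-established machinery; (ii) define the operations $\mathcal{OC}^{(k)}$ by counting the $0$-dimensional components of these moduli spaces, weighted by the same bulk monomials $\novb_i$ and Novikov monomials $\nov^u$ as in the uncyclified case, and check that the weights are compatible with the $\fm^\big_\Lambda$-adic filtration so that the resulting map lands in the completed/filtered cyclic complex $fCC_*^-$; (iii) analyze the codimension-one boundary strata of the one-dimensional moduli spaces — disc bubbling at the boundary (producing the $b$ terms and the $A_\infty$ relations via the bounding cochains), sphere bubbling and marked-point collisions at the interior (producing the differential on quantum cohomology and, crucially, the curvature terms lying in $\fm^\bulk_\Lambda$, in accordance with Definition--Lemma \ref{deflem:bigsmall}), and the $S^1$-boundary/forgetting-a-marked-point strata (producing the $B$ and $u$-correction terms) — to deduce the chain-map equation for $b + uB$; and (iv) pass to cohomology, verify $\Lambda^\big[[u]]$-linearity (automatic from the construction) and that quotienting by $\fm^\bulk_\Lambda$ recovers the analogous chain-level construction over $\Lambda^\sm$, giving the small negative cyclic open--closed map; the compatibility with $\mathcal{OC}_\Lambda$ upon setting $u = 0$ is then immediate since $\mathcal{OC}^{(0)} = \mathcal{OC}$.

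I expect the main obstacle to be step (iii), specifically bookkeeping the curvature contributions in the big, bulk-deformed setting. In the uncurved monotone situation of \cite{Ganatra:cyclic} one does not see the curvature terms; here, because the objects of $\fuk^\big(X,D;\Lambda^\big)^{sbc}$ have curvature in $\fm^\bulk_\Lambda$ rather than zero, the boundary analysis of the cyclic moduli spaces will produce extra terms, and one must check these are precisely absorbed by working with the correct ``filtered'' or completed cyclic complex $fCC_*^-$ and that they do not obstruct the chain-map equation — i.e., that the construction is consistent with the deformation-theoretic package of Definition--Lemma \ref{deflem:bigsmall}. A secondary technical point is ensuring that all the $S^1$-equivariant moduli problems fit into the abstract formalism of \cite{relfukii} so that no new analytic foundations are needed; I would phrase the construction to maximize reuse of that formalism, treating the marked-point constraints as yet another instance of the Morse-flowline constraints already handled there. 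Granting these, the remaining verifications — $u$-linearity, the $\fm^\bulk_\Lambda$-reduction, and the $u=0$ compatibility — are formal.
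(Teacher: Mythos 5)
Your overall plan is the right one, and most of it tracks the paper's actual proof; but you have misplaced where the bounding-cochain deformation enters the construction, and this matters because it makes your identified ``main obstacle'' disappear.

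The paper does \emph{not} run the chain-level moduli-space construction directly on $fCC^-_*(\fuk^\big(X,D;\Lambda^\big)^{\sbc})$, as you propose. Instead, the operations $\cO\cC^\vee_k$ and $\cO\cC^\wedge_k$ are built (Lemma \ref{lem:oc cyc chain}, via \cite[Lemma 4.18]{relfukii}) for the big relative Fukaya category $\fuk^\big(X,D)$ over $R^\big$, \emph{before} any bounding-cochain deformation is applied, yielding a chain map $\cO\cC^{-,\big}\colon \sigma(\cO\cC)\,fCC^-_*(\fuk^\big(X,D)) \to QH^*(X;R^\big)[[u]]$. The passage to $\fuk^\big(X,D;\Lambda^\big)^{\sbc}$ is then purely algebraic: base-change along $R^\big \to \Lambda^\big_{\ge 0}$, pre-compose with the functoriality map $F_*$ for filtered negative cyclic homology induced by the forgetful functor $\cC^{\prebc}\to\cC$ (Lemma \ref{lem:HC_C_Cbc}), restrict to the subcategory of small bounding cochains, and base-change to $\Lambda^\big$. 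Because no geometric moduli spaces are ever used for the $\sbc$ category itself, the ``curvature contributions'' you expected to dominate step (iii) never appear in the boundary analysis: the curvature in $\fm^\bulk_\Lambda$ is handled once and for all by the algebraic functoriality lemma, not by analyzing extra boundary strata. So the piece you should look for (and prove first) is the algebraic fact that an $A_\infty$ functor of filtered categories induces a chain map on filtered negative cyclic complexes; with that in hand, the geometric work is exactly the uncurved-looking construction over $R^\big$.

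A secondary difference is in the families of domains. Rather than the fibered-circle model you sketch from \cite{Ganatra:cyclic}, the paper uses $\Rbar(\cO\cC,k^\vee,s,\ell) := \Rbar(\cO\cC,s,\ell)\times D_k$, where $D_k$ is the $2k$-ball, together with an inductive stratification that distinguishes $\vee$- and $\wedge$-strata, mirroring the decomposition $fCC^{nu}_* = fCC^\vee_* \oplus fCC^\wedge_*$ of non-unital Hochschild chains. The paper's remark in Section \ref{sec:cyc oc} shows this is a coarsening of the \cite{Ganatra:cyclic} families obtained by the change of coordinates $z_i = (1-|p_{i-1}|^2/|p_i|^2)p_i$, which collapses the ``degenerate'' strata; the resulting families sit more cleanly inside the stratified-chain formalism of \cite{relfukii}. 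Your high-level description of the angular constraints and the $b+uB$ boundary analysis is otherwise on target, and the steps (ii) and (iv) you describe are exactly what the paper does. One detail you should not omit from a write-up: the chain-level identities you need are the pair \eqref{eq:OC check rel}--\eqref{eq:OC hat rel}, one for each summand of $fCC^{nu}_*$, and the $u$-power series is assembled as $\sum_k u^k(\cO\cC^\vee_k(\alpha^\vee) + \cO\cC^\wedge_k(\alpha^\wedge))$.
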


Theorem \ref{thm:OC_minus_bc} will be proved in Section \ref{sec:cyc oc}. It builds on the construction of the cyclic open--closed map in \cite{Ganatra:cyclic}, in the positively monotone or exact case. 
We have:

\begin{lem}\label{lem:OCiso}
    If $\cO\cC_\Lambda^\sm$ is an isomorphism, then so are $\cO\cC_\Lambda^\big$, $\cO\cC_\Lambda^{-,\sm}$, and $\cO\cC_\Lambda^{-, \big}$.
\end{lem}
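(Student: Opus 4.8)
The plan is to deduce each isomorphism statement from the $\fm^\bulk_\Lambda$-adic completeness of the big coefficient rings, together with the fact (Definition--Lemma \ref{deflem:bigsmall} and Theorems \ref{thm:co_int}, \ref{thm:oc_int}, \ref{thm:OC_minus_bc}) that the big structures reduce to the small structures modulo $\fm^\bulk_\Lambda$. First I would establish that $\cO\cC_\Lambda^\big$ is an isomorphism. The map $\cO\cC_\Lambda^\big$ is a map of $\Lambda^\big$-modules, and $\Lambda^\big$ is complete for its $\fm^\bulk_\Lambda$-adic filtration; both source $fHH_*(\fuk^\big(X,D;\Lambda^\big)^\sbc)$ and target $QH^*(X;\Lambda^\big)$ carry compatible filtrations whose associated graded pieces are, in each filtration degree $k$, identified with $HH_*(\fuk^\sm)\otimes_{\Lambda^\sm}(\fm^\bulk_\Lambda)^k/(\fm^\bulk_\Lambda)^{k+1}$ and $QH^*(X;\Lambda^\sm)\otimes_{\Lambda^\sm}(\fm^\bulk_\Lambda)^k/(\fm^\bulk_\Lambda)^{k+1}$ respectively (this uses that the big Fukaya category is a deformation of the small one in the sense of Definition--Lemma \ref{deflem:bigsmall}, so morphism spaces and hence Hochschild chains are obtained by base change, and similarly for $QH^*$). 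On associated graded, $\cO\cC_\Lambda^\big$ induces $\cO\cC_\Lambda^\sm$ tensored up, which is an isomorphism by hypothesis. A standard completeness argument (the five lemma applied degree by degree, then passing to the inverse limit, using that everything is complete and separated) then shows $\cO\cC_\Lambda^\big$ is itself an isomorphism.

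Next I would run essentially the same argument for the cyclic versions. The map $\cO\cC_\Lambda^{-,\big}$ is a map of $\Lambda^\big[[u]]$-modules which reduces to $\cO\cC_\Lambda^{-,\sm}$ modulo $\fm^\bulk_\Lambda$. So it suffices to know $\cO\cC_\Lambda^{-,\sm}$ is an isomorphism, and then the same associated-graded-plus-completeness argument applies (with $u$ carried along formally; the $\fm^\bulk_\Lambda$-adic filtration is what drives the induction, and $[[u]]$ is just an extra formal variable on which everything is already complete). To see that $\cO\cC_\Lambda^{-,\sm}$ is an isomorphism given that $\cO\cC_\Lambda^\sm$ is: $HC_*^-$ of an $A_\infty$ category is computed by a complex which is $u$-adically complete, with $HC_*^-/u \cong HH_*$ on associated graded (the $u$-adic spectral sequence / Connes' exact sequence); since $\cO\cC_\Lambda^{-,\sm}$ reduces to $\cO\cC_\Lambda^\sm$ modulo $u$, and the target $QH^*(X;\Lambda^\sm)[[u]]$ has the analogous $u$-adic filtration with the analogous associated graded, another completeness argument — now with respect to the $u$-adic filtration — upgrades the isomorphism $\cO\cC_\Lambda^\sm$ on associated graded to an isomorphism $\cO\cC_\Lambda^{-,\sm}$.

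I would organize the writeup around a single lemma: if $f\colon M \to N$ is a filtered map of complete, separated filtered modules over a complete filtered ring, and $\mathrm{gr}(f)$ is an isomorphism, then $f$ is an isomorphism; this is routine. The three assertions of Lemma \ref{lem:OCiso} are then three applications of this lemma, two with respect to the $\fm^\bulk_\Lambda$-adic filtration and one with respect to the $u$-adic filtration, chained appropriately: first $\cO\cC_\Lambda^{-,\sm}$ from $\cO\cC_\Lambda^\sm$ via the $u$-adic filtration, then $\cO\cC_\Lambda^\big$ and $\cO\cC_\Lambda^{-,\big}$ from their small counterparts via the bulk filtration. The main obstacle — more bookkeeping than genuine difficulty — is verifying that the filtrations on source and target really are exhaustive, complete, and separated, and that the identifications of associated graded pieces with base-changed small invariants are compatible with the open--closed maps; this requires unwinding the definitions of $fHH_*$, $fHC_*^-$ (the adically completed complexes of \cite{relfukii}) and of the filtration on $\Lambda^\big$, and checking that the chain-level construction of $\cO\cC_\Lambda^{-,\big}$ in Section \ref{sec:cyc oc} is filtered with the expected reduction mod $\fm^\bulk_\Lambda$, which is immediate from the construction since the big operations are weighted by monomials in the bulk variables.
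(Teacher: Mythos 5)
Your proposal is correct and matches the paper's proof in essence: both are filtration/spectral-sequence arguments, using the $\fm^\bulk_\Lambda$-adic filtration to pass from small to big and the $u$-adic filtration to pass from Hochschild to negative cyclic homology. The only (immaterial) difference is the final edge of the diamond: you deduce $\cO\cC_\Lambda^{-,\big}$ from $\cO\cC_\Lambda^{-,\sm}$ via the $\fm^\bulk_\Lambda$-adic filtration, whereas the paper deduces it from $\cO\cC_\Lambda^\big$ via the $u$-adic filtration.
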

\begin{proof}
    If $\cO\cC_\Lambda^{\sm}$ is an isomorphism, then so is $\cO\cC_\Lambda^\big$, by a spectral sequence argument for the $\fm^\bulk_\Lambda$-adic filtration; and so is $\cO\cC_\Lambda^{-,\sm}$, by a spectral sequence argument for the $u$-adic filtration. 
    Finally, if $\cO\cC_\Lambda^{\big}$ is an isomorphism, then so is $\cO\cC_\Lambda^{-,\big}$ by a spectral sequence argument for the $u$-adic filtration.
\end{proof}

\begin{thm}\label{thm:oc_hres_bc}
    The big negative cyclic open--closed map respects pairings:
    $$\langle \cO\cC_\Lambda^{-,\big}(\alpha),\cO\cC_\Lambda^{-,\big}(\beta)\rangle = (-1)^\signn \langle \alpha,\beta \rangle_{res},$$
    where $\langle -,-\rangle$ denotes the $u$-sesquilinear extension of the integration pairing $\langle-,-\rangle$ on $QH^*(X;\Lambda^\big)[[u]]$, and $\langle -,-\rangle_{res}$ denotes the higher residue pairing on $fHC_*^-(\fuk^\big(X,D;\Lambda^\big)^{sbc})$, with conventions as in \cite[Definition 5.33]{Sheridan_formulae}. 
    Quotienting by $\fm^\bulk_\Lambda$, we obtain that the small negative cyclic open--closed map respects pairings.
\end{thm}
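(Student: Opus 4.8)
The plan is to reduce the statement about the higher residue pairing on negative cyclic homology to the statement about the Mukai pairing on Hochschild homology (Theorem \ref{thm:mukai_int}), using the fact that both the higher residue pairing and the $u$-sesquilinear integration pairing are determined by their reductions modulo $u$ together with $u$-sesquilinearity, $u$-(anti)linearity, and compatibility with the $S^1$-action. Concretely, I would first recall the chain-level construction of $\cO\cC_\Lambda^{-,\big}$ from Section \ref{sec:cyc oc}: it is built from a family of operations $\mathcal{OC}^{-}=\sum_{k\ge 0}\mathcal{OC}^{(k)}u^k$ lifting $\cO\cC_\Lambda^\big$, where the $\mathcal{OC}^{(k)}$ count pseudoholomorphic discs with extra marked points carrying the $S^1$-action. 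The higher residue pairing $\langle-,-\rangle_{res}$ on $fHC_*^-$ is, by \cite[Definition 5.33]{Sheridan_formulae}, the unique $u$-sesquilinear extension of the Mukai pairing compatible with the cyclic structure (connecting homomorphism), and similarly the pairing on $QH^*(X;\Lambda^\big)[[u]]$ in the statement is by definition the $u$-sesquilinear extension of the Poincaré pairing.

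The key steps, in order: (1) Show that both sides of the claimed identity are $u$-sesquilinear in $(\alpha,\beta)$ — for the right-hand side this is definitional, and for the left-hand side it follows because $\cO\cC_\Lambda^{-,\big}$ is a $\Lambda^\big[[u]]$-module map (Theorem \ref{thm:OC_minus_bc}) and the integration pairing on $QH^*[[u]]$ was defined to be $u$-sesquilinear. (2) Show both sides are compatible with the $S^1$-actions/cyclic differentials on source and target, i.e. that the pairing intertwines Connes' $B$ operator on $fHC_*^-$ with the relevant operator (multiplication structure / quantum connection residue) on $QH^*[[u]]$; this is a chain-level computation with the cyclic bar complex and the moduli spaces defining $\mathcal{OC}^{-}$, analogous to the proof that $\cO\cC_\Lambda^{-,\big}$ is a chain map. (3) Combine (1), (2) with Theorem \ref{thm:mukai_int} (the $u=0$ reduction) and a uniqueness statement: a $u$-sesquilinear pairing on a free $\Lambda^\big[[u]]$-module that is compatible with the cyclic structure is determined by its value mod $u$. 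This uniqueness is exactly the characterization of the higher residue pairing, so the identity propagates from order $u^0$ to all orders. (4) Deduce the small statement by quotienting by $\fm^\bulk_\Lambda$, which is immediate since all the constructions are compatible with this reduction (as in Theorems \ref{thm:co_int}, \ref{thm:oc_int}, \ref{thm:OC_minus_bc}).

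The main obstacle I expect is step (2): establishing the precise chain-level compatibility between the cyclic open--closed operations $\mathcal{OC}^{(k)}$ and the cyclic/$S^1$-structures on both sides, including getting all the signs and the powers of $u$ right, and identifying the operator on the quantum cohomology side that corresponds to Connes' $B$ under $\cO\cC^{-}$. This requires a careful analysis of the boundary strata of the moduli spaces of discs with an $S^1$-point (the strata where the $S^1$-point collides with inputs, or where discs bubble), parallel to — but one degree more intricate than — the analysis already carried out to prove $\cO\cC_\Lambda^{-,\big}$ is well-defined and to prove Theorem \ref{thm:mukai_int} in the non-cyclic case. A secondary subtlety is handling the filtrations: one must check that the relevant arguments go through for \emph{filtered} negative cyclic homology $fHC_*^-$ (so that the pairing is well-defined and the uniqueness argument applies $\fm^\big_\Lambda$-adically), but this should follow from the same filtered-completeness bookkeeping used elsewhere in the paper. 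Once step (2) is in place, steps (1), (3), (4) are formal.
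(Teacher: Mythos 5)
Your proposal takes a genuinely different route from the paper's, and the route as described has a gap that I don't think you can close without substantially new input.

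The paper does not argue by reducing the higher residue statement to the Mukai statement; it proves both \emph{simultaneously} at the chain level, by directly constructing a homotopy. Concretely, it builds families of domains $\Rbar(H^{12}_{\langle k\rangle})$, $\Rbar(H^1_{\langle k\rangle})$ (nodal/cylindrical degenerations of two $\cO\cC$-discs glued at their interior outputs) and the annular families $\Rbar(A)$, $\Rbar(H^2_{\langle k\rangle})$, $\Rbar(H^3_{\langle k\rangle})$, decorated with the $S^1$-parameters $D_k$ on both inputs. Applying the consistency lemma \cite[Lemma 4.18]{relfukii} to these families yields Lemmas \ref{lem:cardy triv} and \ref{lem:cardy nontriv}, hence Corollaries \ref{cor:cardy 1} and \ref{cor:cardy 2}, which together produce an explicit chain homotopy $H_{\langle\rangle}=H^1_{\langle\rangle}+H^2_{\langle\rangle}$ between $\langle\cO\cC^{-,\big}(\alpha),\cO\cC^{-,\big}(\beta)\rangle$ and $(-1)^\signn\langle\alpha,\beta\rangle_{res}$. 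Theorem \ref{thm:mukai_int} is then obtained by setting bulk variables to zero, i.e.\ it is a corollary of the chain-level identity, not an input to Theorem \ref{thm:oc_hres_bc}.

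The gap in your approach is step (3). You assert that ``a $u$-sesquilinear pairing on a free $\Lambda^\big[[u]]$-module that is compatible with the cyclic structure is determined by its value mod $u$,'' and that this is how \cite[Definition 5.33]{Sheridan_formulae} characterizes $\langle-,-\rangle_{res}$. Neither part is right. In this paper (and in the cited reference), the higher residue pairing is \emph{defined} by the explicit chain-level formula \eqref{eqnmukform} — the term ``$u$-sesquilinear extension'' refers to the formula, not to a universal property. And $u$-sesquilinearity alone emphatically does not pin down the higher-order $u$-coefficients of a pairing from its $u^0$ part: if $HC^-\cong V[[u]]$ after a choice of splitting, a $u$-sesquilinear pairing amounts to an arbitrary series $\sum_k u^k\langle-,-\rangle_k$ of $\Lambda$-bilinear forms on $V$, and the $u=0$ reduction fixes only $\langle-,-\rangle_0$. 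To get uniqueness one needs additional rigidity, which in Hodge-theoretic language is covariant constancy with respect to the $u\partial_u$-direction of the connection on both sides, together with the fact that $\cO\cC^{-,\big}$ intertwines those $u$-components. That intertwining is \emph{not} established anywhere in the paper (Theorem \ref{thm:oc_conn_bc} addresses only the base directions $d\log T$, $d\novblam_i$), so the uniqueness you invoke is not available. Your step (2), if made precise, is essentially this missing intertwining statement — and proving it would require an analysis of annular/degenerating moduli with $S^1$-marked points at least as involved as the paper's direct construction, so the reduction would not even save work. The correct strategy here is the direct one: define the $u$-decorated annulus families, verify the consistency of their boundary strata, and read off the homotopy from \cite[Lemma 4.18]{relfukii}.

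Step (4) (deducing the small statement by reducing mod $\fm^\bulk_\Lambda$) is correct and matches the paper.
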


Theorem \ref{thm:oc_hres_bc} will be proved in Section \ref{sec:oc pair}.

Our next result will say that $\cO\cC_\Lambda^{-,\big}$ respects connections. 
In order to formulate the result, it will be convenient to choose a homogeneous basis $(\beta^i)$ of $H^*(X;\C)$, and let $(\beta_i)$ denote the dual basis with respect to the intersection pairing; and let $\novblam_i$ denote the corresponding bulk generators of $\Lambda^\big$. We define a $\Lambda^\big$-module 
$$\Omega^\big_\Lambda := \Lambda^\big \langle d\log T, d\novblam_i \rangle,$$
together with the obvious derivation
$$D^\big_\Lambda:\Lambda^\big \to \Omega^\big_\Lambda.$$

The ($u$-)connection on quantum cohomology is the Dubrovin/Givental connection:
\begin{align}
    u\nabla^{DG}_\Lambda: QH^*(X;\Lambda^\big)[[u]] & \to \Omega^\big_\Lambda \otimes QH^*(X;\Lambda^\big)[[u]],\\
    u\nabla^{DG}_\Lambda(\alpha) &:= uD^\big_\Lambda(\alpha) - d\log T \otimes [\omega] \star \alpha - \sum_i d\novblam_i \otimes \gamma^i \star \alpha.
\end{align}
Strictly speaking, there is no well-defined map $\nabla^{DG}_\Lambda$, as we can't multiply this expression by $u^{-1}$; rather, $u\nabla^{DG}$ is a well-defined map which has all the properties expected of a connection multiplied by $u$. 
We call such an object a \emph{$u$-connection}.

The $u$-connection on cyclic homology is given by the Getzler--Gauss--Manin connection \cite{Getzler_GM}, with conventions as in \cite{Sheridan_formulae}:
$$u\nabla^{GGM} : fHC_*^-(\fuk^\big(X,D;\Lambda^\big)^\sbc) \to \Omega^\big_\Lambda \otimes fHC_*^-(\fuk^\big(X,D;\Lambda^\big)^\sbc).$$

\begin{thm}\label{thm:oc_conn_bc}
    The big negative cyclic open--closed map respects $u$-connections:
    $$\left(\id \otimes \cO\cC_\Lambda^{-,\big} \right) \circ u\nabla^{GGM} = u\nabla^{DG}_\Lambda \circ \cO\cC_\Lambda^{-,\big}.$$
\end{thm}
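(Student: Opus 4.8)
The plan is to reduce Theorem \ref{thm:oc_conn_bc} to a chain-level statement about the cyclic open--closed cochain map $\cO\cC^{-,\big}$ constructed in Section \ref{sec:cyc oc}, and then to verify the two halves of the connection separately, since the Getzler--Gauss--Manin connection and the Dubrovin--Givental connection each split as a "$u\partial$" term plus a "multiplication" term. More precisely, writing $u\nabla^{GGM} = u D^\big_\Lambda - B^{GGM}$ and $u\nabla^{DG}_\Lambda = u D^\big_\Lambda - B^{DG}$, where $B^{GGM}$ is the operator built from the cyclic analogue of the Kodaira--Spencer/$b$-differential contraction by the deformation classes and $B^{DG}$ is (quantum) cup product with $[\omega]\,d\log T + \sum_i \gamma^i\, d\nu_i$, it suffices to check two things: (i) that $\cO\cC^{-,\big}$ is a map of $\Lambda^\big$-modules compatible with the plain derivation $D^\big_\Lambda$ acting coefficient-wise on the two sides (which is essentially built into the construction, once one is careful that $\cO\cC^{-,\big}$ is defined over $\Lambda^\big$ and varies appropriately with the bulk parameters); and (ii) that $\cO\cC^{-,\big}$ intertwines $B^{GGM}$ with $B^{DG}$, i.e. $(\id\otimes\cO\cC^{-,\big})\circ B^{GGM} = B^{DG}\circ\cO\cC^{-,\big}$.

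For step (ii), I would use the two pieces of structure already established. For the $d\log T$-component, the relevant operator on the closed side is quantum multiplication by $[\omega]$, and on the open side it is the cyclic lift of the operation that differentiates the $A_\infty$-structure in the Novikov direction; the compatibility here is the cyclic/$u$-linear enhancement of the statement that the (non-cyclic) open--closed map intertwines the connecting operations coming from the $\kappa$-grading, which in turn is a consequence of the divisor axiom / the fact that the curvature and structure maps of the relative Fukaya category carry the expected $\nov^u$-weights. For the $d\nu_i$-components, the operator on the closed side is quantum multiplication by $\gamma^i$, and on the open side it is the operation of inserting the bulk class $\gamma^i$ as an extra closed input — precisely the first-order bulk-deformation class, which by Theorem \ref{thm:co_int} is exactly the closed--open image $\cC\cO(\gamma^i)$. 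So (ii) becomes the assertion that $\cO\cC^{-,\big}$ is a map of modules over $QH^*(X;\Lambda^\big)$ acting through $\cC\cO$ on one side and through quantum product on the other, upgraded to the cyclic ($u$-linear) level: the module statement for the non-cyclic $\cO\cC$ is Theorem \ref{thm:oc_int}, and the cyclic upgrade should follow from a homotopy argument identifying the two $u$-linear operators as chain maps inducing the same map in homology, using that $fHC^-_*$ of the big category and $QH^*(X;\Lambda^\big)[[u]]$ have the filtration/completeness needed to run an obstruction-theoretic comparison over $\C[[u]]$.

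Concretely I would organize the proof as follows. First, recall the cochain-level model for $\cO\cC^{-,\big}$ from Section \ref{sec:cyc oc} as a sum $\cO\cC^{-,\big} = \sum_{k\ge 0} u^k\, \cO\cC^{(k)}$ of operations from the (completed, negative) cyclic chain complex to $QH^*(X;\Lambda^\big)$, counting genus-zero discs with one output and extra "$u$-marked" interior or boundary insertions, weighted as usual by $\nov^u$ and by bulk monomials. Second, write down the cochain-level representatives of $u\nabla^{GGM}$ (the Getzler formula, with conventions from \cite{Sheridan_formulae}, Definition~5.33 and surrounding) and of $u\nabla^{DG}_\Lambda$. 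Third, establish the $d\log T$-compatibility: this is where one uses that the relative Fukaya category is set up so that $T$-powers record $\kappa$-areas and that $[\omega]\star(-)$ on $QH^*$ is computed by the same disc/sphere counts with a point constraint representing $[\omega]$; the key identity is a "dilation" or "energy" relation expressing $T\,\partial_T$ of a disc count in terms of inserting $[\omega]$, which at the cyclic level must be checked compatibly with the $u$-marked points. Fourth, establish the $d\nu_i$-compatibility via the bulk first-order deformation class and Theorem \ref{thm:co_int}, bootstrapped to the cyclic setting through the already-known non-cyclic module statement of Theorem \ref{thm:oc_int} together with a $u$-adic spectral sequence / homotopy argument. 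Finally, assemble these to conclude the displayed identity, noting that both sides are continuous $\Lambda^\big$-linear and that $fHC^-_*$ is the appropriate completed object so the identity of cochain maps descends to homology.

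The main obstacle I expect is the cyclic ($u$-linear) enhancement in both steps, and especially the $d\log T$-term: the non-cyclic statements (that $\cO\cC$ is a module map and respects the Novikov-direction connecting operations) are essentially in hand from \cite{relfukii} and Theorems \ref{thm:co_int}--\ref{thm:oc_int}, but promoting them to statements about $\cO\cC^{-,\big}$ requires constructing the relevant chain homotopies equivariantly for the $S^1$-action (or, in the cyclic bar model, for the Connes operator $B$), keeping track of signs and of the $u$-marked-point combinatorics in the moduli spaces of discs, and ensuring all the weighted counts still converge in the $\fm^\big_\Lambda$- and $u$-adic topologies on $\Lambda^\big[[u]]$. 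This is exactly the type of homotopy-of-$u$-linear-operations argument that Getzler's construction and the cyclic open--closed formalism of \cite{Ganatra:cyclic} are built to handle, so I anticipate the structure of the argument is clear but the bookkeeping — matching the GGM correction terms disc-by-disc with the DG correction terms, via moduli spaces with one interior/boundary marked point carrying the connection variable — is the technical heart of the section.
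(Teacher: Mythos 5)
Your high-level strategy — split both $u$-connections into a ``bare derivation'' term and a ``multiplication/correction'' term and compare them piecewise — is morally close to the organization of the paper's argument, and your intuitions are right on both fronts: the $d\log T$-direction really does come down to an ``energy/divisor'' relation matching $\kappa$-weighted disc counts against insertions of a point constraint, and the $d\nu_i$-direction really does come down to matching bulk insertions against quantum product. But there are two genuine gaps.

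The central gap is your step (ii): you cannot deduce the cyclic ($u$-linear) compatibility from the non-cyclic module statement of Theorem~\ref{thm:oc_int} by a ``$u$-adic spectral sequence/obstruction-theoretic comparison''. If two $u$-linear chain maps on $fCC_*^-$ agree modulo $u$, their difference is $u$ times a chain map, and there is nothing forcing that to vanish or even to be null-homotopic; the $u$-adic filtration argument only upgrades \emph{isomorphism mod $u$} to \emph{isomorphism}, not \emph{agreement mod $u$} to \emph{agreement}. (This is exactly why Lemma~\ref{lem:OCiso} goes through but an analogous bootstrap for the connection statement would not.) Moreover, the GGM correction is not a module action in the sense you suggest: at the chain level it is $-b^{1|1}(\tilde\nabla\mu|\cdot) - uB^{1|1}(\tilde\nabla\mu|\cdot)$, and the $uB^{1|1}$ piece genuinely mixes $u$-degrees, so there is no clean $u$-degree-$0$ operator $B^{GGM}$ to compare against quantum cup product. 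What actually happens in the paper is a direct chain-level construction of the homotopy on the full negative cyclic complex, over $R^\big$: a cascade of four families of domains $\Rbar(H^1_\nabla),\dots,\Rbar(H^4_\nabla)$, interpolated along an angular variable $\theta_*$ that records the position of the extra stabilizing/bulk point, with shared boundary faces $H^{12},H^{23},H^{34}$ that cancel when the four resulting Lemmas~\ref{lem:conn 1}--\ref{lem:conn 4} are summed. The $b^{1|1}$ and $B^{1|1}$ terms appear precisely as the $\{t=1\}$ and sub-arc boundary faces of these families; they cannot be absorbed into Theorem~\ref{thm:oc_int}.

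A secondary gap, which your proposal does not engage with: because the chain-level statement is proved over $R^\big$ and then base-changed, one needs a \emph{chain-level} model $PD(V_q)\star(-)$ for quantum multiplication by $PD(V_q)$ (Section~\ref{sec:pdvq}). There is no canonical Morse cochain representative of $PD(V_q)$, so this is a nontrivial construction, using a moduli space of spheres with a non-symmetric stabilizing marked point constrained to $V_q$. It also forces the delicate bookkeeping, explained in the Remark after Lemma~\ref{lem:conn 4}, of toggling the stabilizing point $p_*^{int}$ between symmetric and non-symmetric depending on whether it lies on a sphere bubble — this is needed both to make the families $f^{sym}$-stable and to allow the comparison with $D^q_R(\cO\cC^{-,\big})$ at the $\{\rho=1\}$ face. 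None of this is visible from the ``divisor axiom'' heuristic alone. You do note at the end that the equivariant chain homotopies need to be built, which is the right instinct; but the earlier claim that a filtration/obstruction argument does the job is where the proof breaks.
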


Theorem \ref{thm:oc_conn_bc} will be proved in Section \ref{sec:oc conn}. A similar result has been proved, under strong regularity hypotheses on moduli spaces of holomorphic curves, in \cite[Theorem 1.7]{Hugtenburg:OC}.

\subsection{Implications for mirror symmetry of compact Calabi--Yau varieties}\label{sec:frob}

Let us expand on the implications of our results for mirror symmetry, in light of \cite{Ganatra2015}. 
For concreteness, we consider a `Greene--Plesser mirror pair', as defined in \cite{sheridan2021homological}. 
On the $A$-side of mirror symmetry, the construction gives rise to a Calabi--Yau variety $X$ with a simple normal-crossings divisor $D$, whose irreducible components $(D_p)_{p \in P}$ are indexed by some set $P$. 
The coefficient ring of the small relative Fukaya category $\fuk^\sm(X,D)$ is $R^\sm$; and a K\"ahler class $\kappa$ determines the homomorphism $d(\kappa)^*:R^\sm \to \Lambda^\sm_{\ge 0}$. 
We will assume that $\kappa$ is integral, so that the homomorphism factors through $\C[[T]] \subset \Lambda^\sm_{\ge 0}$. 

On the $B$-side, the construction gives rise to a family of Calabi--Yau varieties $\check{\mathcal{X}}$ over $\mathbb{A}^P$. 
By \cite[Theorem C]{Ganatra2023integrality}, there exist $\psi_p \in R^\sm$ such that there is a quasi-equivalence of $\C((T))$-linear $A_\infty$ categories
$$D^\pi\fuk^\sm(X,D;\Lambda^\sm)^\bc \simeq D^bCoh(\check{X}_b),$$
where $b \in \mathbb{A}^P_{\C((T))}$ is defined by $b_p = d(\kappa)^*(\nov_p \cdot \psi_p)$. 

By \cite[Theorem A]{Ganatra2015} (which was originally contingent on the results now proved in this paper, as well as analogous results on the $B$-side, since proved in \cite{Tu:HKR}, where the author describes it as a `folklore' result), it follows that Hodge-theoretic mirror symmetry holds: there is an isomorphism of $\Z$-graded VSHS over $\C((T))$,
\begin{equation}\label{eq:HodgeMS}
    \mathcal{H}_A^\sm(X) \cong \mathcal{H}_B^\sm(\check{X}).
\end{equation}
We recall the relevant definitions from \cite{Ganatra2015}. 
Recall that a $\Z$-graded VSHS over $\C((T))$ is equivalent to a $\Z/2$-graded vector space over $\C((T))$ equipped with a filtration, a connection in the $T$-direction which satisfies Griffiths transversality with respect to the filtration, and a covariantly constant nondegenerate pairing (see \cite[Lemma 2.7]{Ganatra2015}).  
The small $A$-VSHS $\mathcal{H}_A^\sm(X)$ is defined to be $H^*(X;\C((T)))[n]$, equipped with the degree filtration, Dubrovin--Givental connection, and integration pairing (see \cite[Definition 3.1]{Ganatra2015}). 
The small $B$-VSHS $\mathcal{H}_B^\sm(\check{X})$ is defined to be $H^*_{dR}(\check{X})$, equipped with the Hodge filtration, Gauss--Manin connection, and twisted integration pairing (see \cite[Definition 3.6]{Ganatra2015}). 

It is explained in \cite[Appendix C]{Sheridan2017} how to use this result, for different integral K\"ahler forms $\kappa$, to characterize the mirror map $(\psi_p)$. 
In particular, we may write down formulae for the mirror map in terms of the solutions to hypergeometric differential equations; this is used in the proof of \cite[Theorem B]{Ganatra2023integrality}. 

It is reviewed in \cite{Ganatra2015} how Hodge-theoretic mirror symmetry \eqref{eq:HodgeMS} implies the classical enumerative predictions of mirror symmetry for, e.g., the quintic threefold. 
In particular, with the foundations provided by this paper to justify the application of \cite[Theorem A]{Ganatra2015} and the proof of homological mirror symmetry given in \cite{sheridan2015homological}, we obtain a new proof of the enumerative predictions of Candelas--de la Ossa--Green--Parkes \cite{CdlOGP}, independent of those given by Givental \cite{Givental:EGW} or Lian--Liu--Yau \cite{LLY,LLY2,LLY3,LLY4} (where the definition of the Gromov--Witten invariants involved is the symplectic one, by \cite[Lemma 5.4]{relfukii}, which is equivalent to the algebraic definition by \cite{LiTian}).

In fact, whereas \cite{Ganatra2015} restricted its attention to the case of small quantum cohomology (which of course suffices to completely determine the genus-zero Gromov--Witten invariants of a Calabi--Yau threefold), the results in this paper are sufficiently general to prove the analogous statements for big quantum cohomology, as alluded to in \cite[Remark 1.20]{Ganatra2015}. 
Namely, we have:

\begin{thm}\label{thm:vshs}
    Suppose that $\fuk^\sm(X,D;\Lambda^\sm)^\bc$ has a homologically smooth subcategory $\cA$, the grading group is $\G = \Z$, and $X$ is connected. 
    Then the big cyclic open--closed map
    $$\cO\cC^{-,\big}:fHC_*^-(\fuk^\big(X,D;\Lambda^\big)^\sbc)[-n] \to QH^*(X;\Lambda^\big)[[u]]$$
    is an isomorphism of polarized VSHS over $\Lambda^\big$. 
\end{thm}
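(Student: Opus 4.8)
The plan is to deduce Theorem~\ref{thm:vshs} by combining the structural results on the cyclic open--closed map (Theorems~\ref{thm:oc_hres_bc} and~\ref{thm:oc_conn_bc}) with the isomorphism criterion already assembled in the excerpt. Concretely, the hypotheses --- existence of a homologically smooth subcategory $\cA \subset \fuk^\sm(X,D;\Lambda^\sm)^\bc$, grading group $\G = \Z$, and $X$ connected --- are precisely those of Theorem~\ref{thm:aut_gen}. So first I would invoke Theorem~\ref{thm:aut_gen} to conclude that $\cA$ split-generates $\fuk^\sm(X,D;\Lambda^\sm)^\bc$ and that the small open--closed map $\cO\cC_\Lambda^\sm$ is an isomorphism. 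Then Lemma~\ref{lem:OCiso} immediately upgrades this: $\cO\cC_\Lambda^\big$, $\cO\cC_\Lambda^{-,\sm}$, and the big negative cyclic open--closed map $\cO\cC_\Lambda^{-,\big}$ are all isomorphisms (of the appropriate module types). This settles that $\cO\cC^{-,\big}$ is an isomorphism of $\Lambda^\big[[u]]$-modules; it remains to check that it is a morphism of \emph{polarized VSHS}, i.e.\ that it intertwines the filtrations, the higher residue/integration pairings, and the $u$-connections.

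Next I would unwind the definition of a polarized VSHS over $\Lambda^\big$ (in the sense used in \cite{Sheridan_formulae}): the data are a module over $\Lambda^\big[[u]]$, a $u$-connection satisfying Griffiths transversality, and a covariantly-constant sesquilinear pairing inducing a perfect pairing on the semi-infinite quotient (the polarization). On the $A$-side this is the big quantum VSHS: $QH^*(X;\Lambda^\big)[[u]]$ with the Dubrovin--Givental $u$-connection $u\nabla^{DG}_\Lambda$ and the $u$-sesquilinear extension of the integration pairing, which is standard (see \cite{Barannikov,CIT}). On the $B$-side (the Fukaya side) this is the categorical VSHS of \cite{Sheridan_formulae}: $fHC_*^-(\fuk^\big(X,D;\Lambda^\big)^\sbc)$ with the Getzler--Gauss--Manin $u$-connection $u\nabla^{GGM}$ and the higher residue pairing $\langle-,-\rangle_{res}$. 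The compatibility with pairings is exactly Theorem~\ref{thm:oc_hres_bc} (up to the sign $(-1)^{\signn}$, which is harmless for the notion of polarization), and the compatibility with $u$-connections is exactly Theorem~\ref{thm:oc_conn_bc}. The only remaining point is the filtration: I would observe that the Hodge (semi-infinite) filtration on either side is the $u$-adic one, and $\cO\cC^{-,\big}$ is by construction a map of $\Lambda^\big[[u]]$-modules extending $\cO\cC_\Lambda^\big$, hence strictly $u$-adically filtered; since it is moreover an isomorphism, it is automatically a strict (bi-)filtered isomorphism, so Griffiths transversality is transported correctly. Finally, an isomorphism of modules that intertwines all the structure and is a strict filtered map is a fortiori a polarized isomorphism, because the polarization nondegeneracy on one side forces it on the other.

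The main obstacle --- and the place where care is genuinely needed rather than a formal citation --- is verifying that the structural statements I am invoking (Theorems~\ref{thm:oc_hres_bc}, \ref{thm:oc_conn_bc} and the definitions of the two VSHS) are being compared with matching sign and grading conventions, so that ``respects pairings'' and ``respects $u$-connections'' really do add up to ``morphism of polarized VSHS'' with the degree shift $[-n]$ in the statement. In particular one must check that the shift $[-n]$ on $fHC_*^-$ is the one that makes the higher residue pairing land in the correct degree to match the integration pairing on $QH^*$, and that the $\Z$-grading (available because $\G = \Z$) is respected throughout; this is why the hypothesis $\G = \Z$ appears. A secondary, more technical point is that Lemma~\ref{lem:OCiso}'s spectral sequence arguments require the relevant filtrations ($\fm^\bulk_\Lambda$-adic and $u$-adic) to be complete and exhaustive on both source and target, which holds by the definitions of $\Lambda^\big$, $\fuk^\big(X,D;\Lambda^\big)^\sbc$, and $fHC_*^-$; I would note this but not belabor it. Assembling these pieces, the theorem follows.

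\begin{proof}
The hypotheses are exactly those of Theorem~\ref{thm:aut_gen}, so $\cA$ split-generates $\fuk^\sm(X,D;\Lambda^\sm)^\bc$ and $\cO\cC_\Lambda^\sm$ is an isomorphism. By Lemma~\ref{lem:OCiso}, $\cO\cC_\Lambda^{-,\big}$ is then an isomorphism of $\Lambda^\big[[u]]$-modules. It is a morphism of $\Lambda^\big[[u]]$-modules extending $\cO\cC_\Lambda^\big$, hence strict for the $u$-adic (Hodge) filtrations on both sides; being moreover an isomorphism, it transports Griffiths transversality, so the filtration data match. By Theorem~\ref{thm:oc_hres_bc} it intertwines the higher residue pairing with the $u$-sesquilinear integration pairing (up to the overall sign $(-1)^{\signn}$, which does not affect the notion of polarization and in particular preserves nondegeneracy), so the polarizations match. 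By Theorem~\ref{thm:oc_conn_bc} it intertwines the Getzler--Gauss--Manin $u$-connection with the Dubrovin--Givental $u$-connection. Thus $\cO\cC^{-,\big}$ is an isomorphism of polarized VSHS over $\Lambda^\big$.
\end{proof}
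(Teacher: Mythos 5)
Your proof is correct and follows exactly the paper's route: Theorem~\ref{thm:aut_gen} to get that $\cO\cC_\Lambda^\sm$ is an isomorphism, Lemma~\ref{lem:OCiso} to upgrade to the big cyclic version, then Theorems~\ref{thm:oc_hres_bc} and~\ref{thm:oc_conn_bc} for compatibility with pairings and connections. The extra remarks about $u$-adic strictness and the sign $(-1)^{\signn}$ are reasonable elaborations but not substantive departures.
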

\begin{proof}
    By Theorem \ref{thm:aut_gen}, $\cO\cC^\sm_\Lambda$ is an isomorphism; hence so is $\cO\cC^{-,\big}_\Lambda$, by Lemma \ref{lem:OCiso}. 
    It respects pairings by Theorem \ref{thm:oc_hres_bc}, and connections by Theorem \ref{thm:oc_conn_bc}, hence is an isomorphism of polarized VSHS.
\end{proof}

We now explain the sense in which Theorem \ref{thm:vshs} means that the genus-zero Gromov--Witten invariants of $X$ ``can be extracted from $\fuk^\sm(X,D;\Lambda^\sm)^\bc$'', under the given hypothesis, together with the assumption that the cohomology of $X$ satisfies the Hard Lefschetz property. 

\begin{rem}
    Note that the hypotheses that $\fuk^\sm(X,D;\Lambda^\sm)^\bc$ admits a homologically smooth subcategory, the Fukaya category is $\Z$-graded, $X$ is connected, and its cohomology satisfies the Hard Lefschetz property, are verified for generalized Greene--Plesser mirror pairs in \cite[Theorem D]{sheridan2021homological},\footnote{More precisely, our constructions in this paper only apply when the `MPCS' condition (guaranteeing that $X$ is smooth) and `embeddedness' condition (guaranteeing that the Lagrangians constructed are embedded -- this holds for all Greene--Plesser mirror pairs) hold in that paper.} and for a class of Batyrev mirror pairs in \cite[Theorem B]{GHHPS}. 
    In particular, the following arguments apply in these cases.
\end{rem}

First, we observe that the hypothesis implies that $\cC\cO^\sm$ is an isomorphism, by Theorem \ref{thm:aut_gen}, and hence that $\fuk^\big(X,D;\Lambda^\big)^\sbc$ is a versal deformation of $\fuk^\sm(X,D;\Lambda^\sm)^\bc$, by Corollary \ref{cor:vers}. 
The versal deformation of $\fuk^\sm(X,D;\Lambda^\sm)^\bc$ is unique up to a formal change of variables and a quasi-equivalence, by a standard deformation theory argument such as \cite[Theorem 3.3]{Sheridan_CDM}. 

We now recall that, under the assumption that the cohomology of $X$ has the Hard Lefschetz property, the natural splitting
$$\sigma^\sm: QH^*(X;\Lambda^\sm) \to QH^*(X;\Lambda^\sm)[[u]]$$
(given by the inclusion of power series which are constant in $u$) of the projection map $QH^*(X;\Lambda^\sm)[[u]] \to QH^*(X;\Lambda^\sm)$ may be characterized in terms of the monodromy weight filtration of the small Dubrovin--Givental connection, as reviewed in \cite[Sections 2 and 3]{Ganatra2015}. 
A splitting $\sigma^\sm$ is equivalent to a choice of `opposite subspace' for our small VSHS $QH^*(X;\Lambda^\sm)[[u]]$, in the sense of Barannikov \cite{Barannikov} (cf. \cite[Definition 2.10]{CIT}). 
Explicitly, the opposite subspace is the $\Lambda^\sm[u^{-1}]$-submodule of $QH^*(X;\Lambda^\sm)((u))$ spanned by $u^{-1}\im(\sigma)$.

The splitting/opposite subspace can be uniquely extended to one for the big VSHS, in such a way that the opposite subspace is invariant under the Dubrovin--Givental connection. 
The resulting splitting 
$$\sigma^\big: QH^*(X;\Lambda^\big) \to QH^*(X;\Lambda^\big)[[u]]$$
is easily shown to be the natural one, corresponding to power series which are constant in $u$, by arguing order-by-order in the $\fm^\bulk_\Lambda$-adic filtration. 
It is straightforward to observe that this opposite subspace is isotropic for the pairing $\langle-,-\rangle$, and furthermore is graded.

We now observe that the big VSHS is miniversal, in the sense of \cite{Barannikov} (cf. \cite[Definition 2.8]{CIT}); explicitly, if we take $s_0$ to be the section corresponding to the identity $1 \in QH^*(X;\Lambda^\big)[[u]]$, then the map
\begin{align*}
    (\fm^\bulk_\Lambda/(\fm^\bulk_\Lambda)^2)^\vee & \to QH^*(X;\Lambda^\big)[[u]]/uQH^*(X;\Lambda^\big)[[u]] = QH^*(X;\Lambda^\big) \\
    v & \mapsto (\nabla^{DG}_\Lambda)_v (s_0)
\end{align*}
is identified with the map $QH^*(X;\Lambda^\big) \to QH^*(X;\Lambda^\big)$ given by quantum cup product with the identity, which is of course an isomorphism. 
Furthermore, the section $s_0$ has degree $0$, so it defines a \emph{dilaton shift} in the sense of \cite[Section 2.2.2]{CIT}.

As the VSHS is miniversal and comes equipped with an opposite subspace which is isotropic and graded, and a dilaton shift, it determines a formal Frobenius manifold structure on the formal neighbourhood of the origin in $QH^*(X;\Lambda^\sm)$, by \cite[Proposition 2.11]{CIT}; as the opposite subspace is the natural one, this is the natural Frobenius manifold structure associated to the genus-zero Gromov--Witten invariants of $X$, defined for example in \cite{Manin}. 
Note that, while the base of the versal deformation space does not \emph{a priori} come equipped with a natural parametrization, the `flat coordinates' defined in \cite[Section 2.2.2]{CIT} remove this ambiguity, and define a natural parametrization of the base (cf. \cite[Section 2.5]{Ganatra2015}). 

We have explained how our results allow to extract the Frobenius manifold structure on quantum cohomology from $\fuk^\sm(X,D;\Lambda^\sm)^\bc$, under hypotheses which have been verified for a large class of generalized Greene--Plesser and Batyrev mirror pairs in \cite{sheridan2021homological,GHHPS}. 
As the homological mirror symmetry results proved in those references identify (the triangulated envelope of) $\fuk^\sm(X,D;\Lambda^\sm)^\bc$ with a mirror category of coherent sheaves $D^bCoh(\check{X})$, it follows that the Frobenius manifold structure on quantum cohomology can be extracted from $D^bCoh(\check{X})$ by applying the same procedure: we pass to the versal deformation of $D^bCoh(\check{X})$ (we note that it may be curved), take the VSHS defined by its filtered cyclic homology, take the opposite subspace defined by the monodromy weight filtration and extend it to the versal deformation, observe that it is isotropic, graded, and admits a dilaton shift (because the $A$-model VSHS does -- note that these are properties rather than additional structures, with the exception of the dilaton shift which is defined up to a complex scalar, which can however be fixed up to a sign by considering the pairing $\langle s_0,\nabla_{T \partial_T}^n s_0 \rangle$, cf. \cite[Section 6.2]{Ganatra2015}), then construct the associated Frobenius manifold. 

In order to compare the resulting Frobenius manifold with the $B$-model Frobenius manifold associated to $\check{X}$ in \cite{BK}, and hence prove the version of closed-string mirror symmetry proposed there and in \cite{Barannikov,Barannikov:MS}, it remains to show that the VSHS defined on the cyclic homology of the versal deformation of $D^bCoh(\check{X})$ is isomorphic to the one constructed in \cite[Section 4]{Barannikov}, extending the result proved in \cite[Theorem 0.1]{Tu:HKR}.

\paragraph{Acknowledgments:} We are grateful to Tim Perutz, our collaborator on \cite{Ganatra2015}. S.G. was supported by NSF grant CAREER DMS-2048055 and a Simons Fellowship (award number 1031288).
N.S. was supported by an ERC Starting Grant (award number 850713 – HMS), a Royal Society University Research Fellowship, the Leverhulme Trust through the Leverhulme Prize, and a Simons Investigator award (award number 929034).

\section{Algebraic preliminaries}

We adopt the notation and terminology wholesale from \cite[Section 2]{relfukii}. 
We particularly recall the device used to keep track of Koszul signs for homomorphisms of non-zero degrees: given graded modules $M$ and $N$, we regard a homomorphism $\alpha \in Hom^{|\alpha|}(M,N)$ of degree $|\alpha|$ as being equivalent to a grading-preserving homomorphism $Hom(\sigma(\alpha) M, N)$, where $\sigma(\alpha)$ is a graded $\Z_2$-torsor in degree $|\alpha|$ and we use the shorthand $\sigma(\alpha) M:= \sigma(\alpha) \otimes M$. 

For example, the open--closed maps of all flavours have degree $n$: so we define $\sigma(\cO\cC) = \sigma(n)$ to be the graded $\Z_2$-torsor in degree $n$, and the cyclic open--closed map as a homomorphism
$$\cO\cC^{-,\big}:\sigma(\cO\cC)fHC_*^-(\fuk^\big(X,D;\Lambda^\big)^\sbc) \to QH^*(X;\Lambda^\big)[[u]].$$

\subsection{Small bounding cochains}\label{sec:sbc}

In this section, we establish Definition--Lemma \ref{deflem:bigsmall}. 
We start by defining
$$\fuk^\big(X,D;\Lambda^\big) := \fuk^\big(X,D) \otimes_{R^\big} \Lambda^\big_{\ge 0},$$
where the morphism $R^\big \to \Lambda^\big_{\ge 0}$ is defined in \eqref{eq:RbigLambig}.

The objects of $\fuk^\big(X,D;\Lambda^\big)^\sbc$ are objects $(L,b)$ of $\fuk^\sm(X,D;\Lambda^\sm)^\bc$: a Lagrangian brane $L$ equipped with a (`small') bounding cochain $b$. 
Via the natural homomorphism $\Lambda^\sm_{\ge 0} \to \Lambda^\big_{\ge 0}$, such $b$ can be regarded as an endomorphism of the same object $L$, now regarded as an object of $\fuk^\big(X,D;\Lambda^\big_{\ge 0})$. 
In particular, we may regard $(L,b)$ as a `pre-bounding cochain' in the sense of \cite[Definition 2.13]{relfukii}, i.e.  an object of $(\fuk^\big(X,D; \Lambda^\big_{\ge 0})^\prebc$. 
This object may have nonvanishing curvature, however, its curvature will lie in $\fm^\bulk_\Lambda$, because $b$ has vanishing curvature in $\fuk^\sm(X,D) \otimes \Lambda^\sm_{\ge 0}$. 
It follows that the full subcategory of $(\fuk^\big(X,D; \Lambda^\big_{\ge 0})^\prebc$ consisting of such objects is linear over the coefficient ring $\Lambda^\big_{\ge 0}$, equipped with the $\fm^\bulk_\Lambda$-adic filtration. 
We now base-change it to $\Lambda^\big$, and define $\fuk^\big(X,D;\Lambda^\big)^\sbc$ to be this category: it is evident that it has the desired properties.

\subsection{Weak proper Calabi--Yau structures}

Let $\cC$ be an $R$-linear $A_\infty$ category, in the sense of \cite[Definition 2.10]{relfukii}, $\bimod{\cC}{\cC}$ the $R$-linear unital differential graded category of $(\cC,\cC)$-bimodules, $\cC_\Delta$ the diagonal bimodule. 
Recall the definition of the \emph{linear dual} $\mathcal{M}^\vee$ of a bimodule $\mathcal{M}$ given in \cite[Equation (2.11)]{Seidel_II}.

\begin{defn}[Cf. Section A.5 of \cite{Sheridan2013}]
    An \emph{$n$-dimensional weak proper Calabi--Yau structure} ($n$-wpCY structure) on $\cC$ is an isomorphism $\sigma(n)\cC_\Delta \cong \cC_\Delta^\vee$ in $H(\bimod{\cC}{\cC})$.
\end{defn}

\begin{lem}\label{lem:wpcy_bc}
    An $n$-wpCY structure on $\cC$ induces one on $\cC^\bc$.
\end{lem}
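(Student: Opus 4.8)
The plan is to produce the wpCY structure on $\cC^\bc$ by transporting the given quasi-isomorphism $\sigma(n)\cC_\Delta \cong \cC_\Delta^\vee$ along the functor $\cC \to \cC^\bc$, using the functoriality of the diagonal bimodule and its dual under $A_\infty$-functors, plus the fact that $\cC^\bc$ is, up to Morita equivalence, `the same' as $\cC$. More precisely: first I would recall that adding bounding cochains produces a fully faithful $A_\infty$-functor $\iota \colon \cC \hookrightarrow \cC^\bc$ (each object $L$ maps to $(L,0)$, or one includes all of $\cC$ diagonally), and that $\cC^\bc$ is split-generated by the image, or at least that pullback along $\iota$ induces a homotopy equivalence on the relevant bimodule categories when restricted to the relevant subcategories. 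The key structural input is that for any $A_\infty$-functor $F\colon \cC \to \cD$ there are natural bimodule maps relating $F^*\cD_\Delta$ and $\cC_\Delta$ (a canonical map in one direction, an equivalence when $F$ is full and faithful on a generating subcategory) and that linear duality $\cM \mapsto \cM^\vee$ is contravariantly functorial for such pullbacks, i.e. $F^*(\cD_\Delta^\vee) \simeq (F^*\cD_\Delta)^\vee$ as $(\cC,\cC)$-bimodules (this uses that morphism spaces have finite rank / are dualizable, which holds here).

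Given these, the argument is: pull back the isomorphism $\sigma(n)\cC_\Delta \cong \cC_\Delta^\vee$ — wait, we need to go the other way. Instead one should push forward. So the cleaner route: observe that the wpCY structure on $\cC$, being an isomorphism in $H(\bimod{\cC}{\cC})$, is equivalent to a closed, non-degenerate Hochschild \emph{cochain} (a class in $HH^*(\cC, \cC^\vee_\Delta)$, i.e. a cyclically-invariant pairing up to homotopy), and the statement to prove is that this class restricts/extends to a non-degenerate one for $\cC^\bc$. The construction of $\cC^\bc$ only changes the $A_\infty$-structure by conjugating/deforming via the bounding cochains $b$, and does not change the underlying graded morphism modules; so a cyclically-invariant $\infty$-inner product on $\cC$ (the unpacking of a wpCY structure, cf. the discussion in \cite[Section A.5]{Sheridan2013}) can be \emph{transferred} to $\cC^\bc$ by the standard formula twisting each input by the appropriate number of insertions of $b$ — exactly as one transfers Hochschild (co)chains under the $b$-deformation. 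Concretely, if $\phi = \{\phi^{k_0,k_1}\}$ are the components of the wpCY pairing on $\cC$, define $\phi^\bc$ by summing $\phi$ evaluated on all ways of inserting arbitrarily many copies of $b$ between the genuine inputs; the closedness ($A_\infty$-bimodule map / cocycle) equation for $\phi^\bc$ follows from that for $\phi$ together with the Maurer--Cartan equation for $b$, by the same bookkeeping that shows the $b$-deformed structure maps satisfy the $A_\infty$ relations. Non-degeneracy is then automatic because, modulo the filtration (here: after the deformation, working over the coefficient ring with its filtration), $\phi^\bc$ reduces to $\phi$ on the associated graded, and non-degeneracy is a condition that can be checked on the associated graded / leading order term — a standard filtered-nilpotence argument, precisely the kind used elsewhere in the paper (e.g.\ the spectral sequence arguments in Lemma \ref{lem:OCiso}).

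So the key steps, in order, are: (1) recall that $\cC^\bc$ has the same morphism modules as $\cC$, with $A_\infty$-structure obtained by the bounding-cochain deformation, and fix the relevant filtration (the one making $\cC^\bc$ an $R$-linear $A_\infty$ category in the sense of \cite[Definition 2.10]{relfukii}); (2) unpack an $n$-wpCY structure as a homotopy class of cyclically-symmetric $\infty$-inner products, i.e.\ a cocycle in the appropriate complex computing $\Hom_{H(\bimod{\cC}{\cC})}(\sigma(n)\cC_\Delta, \cC_\Delta^\vee)$; (3) write down the $b$-deformed pairing $\phi \mapsto \phi^\bc$ by the insert-copies-of-$b$ formula; (4) verify that $\phi^\bc$ is again a cocycle (bimodule map up to homotopy) using the cocycle equation for $\phi$ and the curved-MC equation for $b$ — this is the main computation, though it is routine once set up correctly; (5) verify non-degeneracy of $\phi^\bc$ by reduction to the associated graded, where it agrees with $\phi$. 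I expect step (4), the compatibility of the deformed pairing with the deformed $A_\infty$-structure, to be the main obstacle — not because it is deep, but because one must be careful with the Koszul signs (tracked via the $\sigma(\alpha)$ formalism recalled at the start of Section 2) and with the bookkeeping of where the $b$-insertions may land in a bimodule map as opposed to an $A_\infty$-structure map; the curvature of objects lying in $\fm^\bulk_\Lambda$ rather than vanishing (as in Definition--Lemma \ref{deflem:bigsmall}) must be handled, but since the statement of the present lemma is for a general $R$-linear $\cC$ I would instead phrase everything in the possibly-curved filtered setting from the outset, so that convergence of the $b$-insertion sums is guaranteed by the filtration.
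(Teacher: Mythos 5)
Your proposal is correct in substance and, once unpacked, amounts to the same mechanism as the paper's proof, but you do more work than necessary. The paper's proof is a one-liner: it invokes the $A_\infty$-functor $F \colon \cC^\bc \to \cC$ (from \cite[Lemma 2.14]{relfukii}) and the pullback functor $(F\otimes F)^* \colon \bimod{\cC}{\cC} \to \bimod{\cC^\bc}{\cC^\bc}$, noting that this pullback sends $\cC_\Delta \mapsto \cC^\bc_\Delta$, commutes with linear duality, and sends identity bimodule morphisms to identity bimodule morphisms; consequently it carries the isomorphism $\sigma(n)\cC_\Delta \cong \cC_\Delta^\vee$ in $H(\bimod{\cC}{\cC})$ to an isomorphism in $H(\bimod{\cC^\bc}{\cC^\bc})$. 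Your ``insert arbitrarily many copies of $b$'' formula for $\phi^\bc$ is precisely the component-level description of $(F\otimes F)^*\phi$ (since $F$ has $F^0 = b$, $F^1 = \id$, higher components trivial), and your step (4), checking the cocycle condition via the Maurer--Cartan equation, is exactly the verification that pullback along an $A_\infty$-functor sends closed morphisms to closed morphisms — which is packaged for free in the statement that $(F\otimes F)^*$ is a dg functor. Two further remarks. First, you get off on the wrong foot with $\iota\colon\cC\to\cC^\bc$, $L\mapsto(L,0)$: the relevant functor is the one in the other direction, $F\colon\cC^\bc\to\cC$, $(L,b)\mapsto L$, and you eventually arrive at its effect without naming it. Second, your step (5) — non-degeneracy via the associated graded — is valid but superfluous given the cleaner observation that $(F\otimes F)^*$ preserves identities: one pulls back both halves of the isomorphism $\phi,\psi$ together with the homotopies witnessing $\phi\psi\simeq\id$, $\psi\phi\simeq\id$, and the pulled-back data witness that $(F\otimes F)^*\phi$ is again an isomorphism in the homotopy category, with no spectral sequence needed.
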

\begin{proof}
    Follows from the fact that the pullback functor 
    $$(F \otimes F)^*: \bimod{\cC}{\cC} \to \bimod{\cC^\bc}{\cC^\bc},$$
    which is induced by the functor $F:\cC^\bc \to \cC$ from \cite[Lemma 2.14]{relfukii}, sends $\cC_\Delta \mapsto \cC^\bc_\Delta$, respects linear duality, and also sends identity morphisms to identity morphisms.
\end{proof}

We also recall the tensor product $\cM \otimes_\cC \cN$ of $(\cC,\cC)$-bimodules $\cM$ and $\cN$ from \cite[Equations (2.12-13)]{Seidel_II}. 
There is a natural isomorphism of chain complexes
\begin{equation}\label{eq:lin_duality}
hom_{\bimod{\cC}{\cC}}(\cC_\Delta,\cM^\vee) \cong fCC_*(\cC_\Delta \otimes_\cC \cM)^\vee
\end{equation}
arising from hom-tensor adjunction (see \cite[Equation (2.26)]{Seidel_II}).

Recall that there is a closed bimodule homomorphism
$$\cC_\Delta \otimes_\cC \cM \to \cM$$
defined in \cite[Equation (2.7)]{Seidel_nat_transf} (it will not be relevant for us whether this homomorphism is a quasi-isomorphism in our setting).
This induces a map
$$fCC_*(\cC)^\vee \to fCC_*(\cC_\Delta \otimes_\cC \cC_\Delta)^\vee.$$
Composing this with \eqref{eq:lin_duality}, for $\cM = \cC_\Delta$, we obtain a chain map
$$G_\cC: fCC_*(\cC)^\vee \to hom_{\bimod{\cC}{\cC}}(\cC_\Delta,\cC_\Delta^\vee).$$

\begin{lem}\label{lem:wpcy_cd}
    The diagram
    $$\begin{tikzcd}
        CC_*(\cC^\bc)^\vee \ar[r,"G_{\cC^\bc}"]  & hom_{\bimod{\cC^\bc}{\cC^\bc}}(\cC^\bc_\Delta,(\cC^\bc)_\Delta^\vee) \\
        fCC_*(\cC)^\vee \ar[r,"G_\cC"]\ar[u,"(F_*)^\vee"] & hom_{\bimod{\cC}{\cC}}(\cC_\Delta,\cC_\Delta^\vee) \ar[u,"(F \otimes F)^*"] 
    \end{tikzcd}$$
    commutes.
\end{lem}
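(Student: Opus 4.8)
The plan is to unwind all four objects appearing in the square to the level of Hochschild chain complexes and their duals, and to check commutativity by naturality of each ingredient with respect to the $A_\infty$ functor $F:\cC^\bc \to \cC$ of \cite[Lemma 2.14]{relfukii}. Recall that $G_\cC$ is built as a composite of three maps: the dual $fCC_*(\cC_\Delta \otimes_\cC \cC_\Delta)^\vee \to fCC_*(\cC)^\vee$ induced by the action map $\cC_\Delta \otimes_\cC \cC_\Delta \to \cC_\Delta$ of \cite[Equation (2.7)]{Seidel_nat_transf}; the hom-tensor adjunction isomorphism \eqref{eq:lin_duality} for $\cM = \cC_\Delta$; and nothing else. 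So the square decomposes into three sub-squares stacked vertically, one for each of these constituents, and it suffices to check that each sub-square commutes. The vertical maps throughout are induced by $F$: on the left by $(F_*)^\vee$ (dual of the map on Hochschild chains), and on the right by $(F \otimes F)^*$ (pullback on bimodules, which as noted in the proof of Lemma \ref{lem:wpcy_bc} sends $\cC_\Delta \mapsto \cC^\bc_\Delta$).

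First I would record that $(F \otimes F)^*$ is compatible with the bimodule tensor product and with linear duality, in the sense that there are natural comparison maps $(F\otimes F)^*(\cM \otimes_\cC \cN) \to (F\otimes F)^*\cM \otimes_{\cC^\bc} (F \otimes F)^*\cN$ and $(F \otimes F)^*(\cM^\vee) \cong ((F\otimes F)^*\cM)^\vee$; these are formal consequences of the definitions in \cite[Equations (2.11-13)]{Seidel_II} and the hom-tensor adjunction \cite[Equation (2.26)]{Seidel_II}, and in particular the adjunction isomorphism \eqref{eq:lin_duality} is natural in $\cC$. Next I would check that the action map $\cC_\Delta \otimes_\cC \cM \to \cM$ of \cite[Equation (2.7)]{Seidel_nat_transf} is natural with respect to $A_\infty$ functors: pulling it back along $F$ and composing with the comparison map above yields the corresponding action map for $\cC^\bc$. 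Finally, the identification $fCC_*(\cC_\Delta) = fCC_*(\cC)$ intertwines $F_*$ on Hochschild chains with $(F \otimes F)^*$ applied to the diagonal bimodule (again because $F$ sends $\cC_\Delta$ to $\cC^\bc_\Delta$ compatibly with the cyclic/bar structure); dualizing gives the bottom sub-square. Assembling the three naturality statements gives the commutativity of the full square.

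The main obstacle I anticipate is bookkeeping rather than conceptual: one must be careful that $F$ is a (possibly curved, certainly non-strict) $A_\infty$ functor, so that "pullback of bimodules along $F$" and "the induced map $F_*$ on $fCC_*$" are the correct $A_\infty$-functorial operations (with all their higher components), and that the naturality squares for the tensor product and for the action map \cite[Equation (2.7)]{Seidel_nat_transf} commute \emph{on the nose} at chain level and not merely up to homotopy — otherwise the statement of Lemma \ref{lem:wpcy_cd}, which asserts strict commutativity of chain maps, would need to be weakened. I expect this holds strictly because each of the relevant constructions (bar tensor product, linear dual, the evaluation-type map of \cite[Equation (2.7)]{Seidel_nat_transf}) is defined by an explicit formula that is manifestly functorial; but verifying this requires tracking the explicit sums over ways of inserting $F$'s higher components, and getting the Koszul signs right using the $\sigma(\cdot)$-torsor formalism recalled from \cite[Section 2]{relfukii}. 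A secondary check is that $(F_*)^\vee$ really is the map appearing on the left edge — i.e. that the Hochschild-chains functoriality of $F$ used here agrees with the one implicitly used elsewhere in the paper — which should be immediate from the conventions of \cite[Section 2.8]{relfukii}.
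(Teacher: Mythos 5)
Your proposal is correct, and it is essentially a fleshed-out version of the paper's own one-line proof, which just says ``Follows from the definitions''; you decompose $G_\cC$ into its two constituents (dual of the action map, hom-tensor adjunction) and verify the naturality of each under $F$, which is exactly the right way to unwind that assertion. Your worry about strict versus up-to-homotopy commutativity is legitimate and correctly resolved: each ingredient is given by an explicit chain-level formula functorial in the category, so the squares do commute on the nose.
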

\begin{proof}
    Follows from the definitions.
\end{proof}

\begin{cor}\label{lem:wpcy_gc}
    Suppose that we have $\phi \in fCC_*(\cC)^\vee$, such that $G_\cC(\phi)$ is an $n$-wpCY structure on $\cC$. 
    Then $(F_*)^\vee \phi \in CC_*(\cC^\bc)^\vee$ induces the corresponding $n$-wpCY structure on $\cC^\bc$.
\end{cor}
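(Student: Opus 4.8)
The plan is to chase the commutative square of Lemma \ref{lem:wpcy_cd} and combine it with the compatibility established in Lemma \ref{lem:wpcy_bc}. By hypothesis, $G_\cC(\phi) \in hom_{\bimod{\cC}{\cC}}(\cC_\Delta,\cC_\Delta^\vee)$ is a cycle representing an isomorphism $\sigma(n)\cC_\Delta \cong \cC_\Delta^\vee$ in $H(\bimod{\cC}{\cC})$, i.e.\ an $n$-wpCY structure on $\cC$. First I would apply the square of Lemma \ref{lem:wpcy_cd} to $\phi$: commutativity gives
$$G_{\cC^\bc}\left( (F_*)^\vee \phi \right) = (F \otimes F)^* \left( G_\cC(\phi) \right)$$
as cycles in $hom_{\bimod{\cC^\bc}{\cC^\bc}}(\cC^\bc_\Delta,(\cC^\bc)_\Delta^\vee)$ (using that $(F\otimes F)^*$ respects linear duality, as recorded in the proof of Lemma \ref{lem:wpcy_bc}, so the target on the right is identified with the expected one). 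Hence the class $[G_{\cC^\bc}((F_*)^\vee\phi)]$ is exactly the image of $[G_\cC(\phi)]$ under the map on cohomology induced by $(F\otimes F)^*$.

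Next I would observe that this image is precisely the wpCY structure on $\cC^\bc$ constructed in the proof of Lemma \ref{lem:wpcy_bc}: that proof produces the induced structure by pulling back $[G_\cC(\phi)]$ along $(F\otimes F)^*$, using that this functor sends $\cC_\Delta \mapsto \cC^\bc_\Delta$, sends identity morphisms to identity morphisms, and commutes with linear duality — so it carries an isomorphism $\sigma(n)\cC_\Delta \cong \cC_\Delta^\vee$ to an isomorphism $\sigma(n)\cC^\bc_\Delta \cong (\cC^\bc)_\Delta^\vee$. The only remaining point is that $(F\otimes F)^*[G_\cC(\phi)]$ is still an \emph{isomorphism} in $H(\bimod{\cC^\bc}{\cC^\bc})$, not merely a closed morphism; but this is immediate, since a functor of dg categories carries isomorphisms in the homotopy category to isomorphisms, and $(F\otimes F)^*$ sends the structure maps witnessing invertibility of $G_\cC(\phi)$ to structure maps witnessing invertibility of its image (again using that identities go to identities).

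The main obstacle — such as it is — is bookkeeping rather than mathematics: one must be careful that the identification of the target of $G_{\cC^\bc}$ with $hom_{\bimod{\cC^\bc}{\cC^\bc}}(\cC^\bc_\Delta,(\cC^\bc)_\Delta^\vee)$ used implicitly on both sides is the \emph{same} identification, i.e.\ that the natural isomorphism \eqref{eq:lin_duality} and the bimodule map $\cC_\Delta \otimes_\cC \cM \to \cM$ are compatible with pullback along $(F\otimes F)^*$. This is exactly the content of Lemma \ref{lem:wpcy_cd}, so once that square is in hand the corollary is a one-line diagram chase. I would therefore present the proof as: apply Lemma \ref{lem:wpcy_cd}, then invoke the construction in the proof of Lemma \ref{lem:wpcy_bc} to identify the resulting class, noting that invertibility is preserved because $(F\otimes F)^*$ is a dg functor sending identities to identities.
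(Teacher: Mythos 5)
Your argument is correct and is exactly the intended one: the paper's proof is just ``Follows from the definitions,'' and your diagram chase through Lemma \ref{lem:wpcy_cd} combined with the identification from (the proof of) Lemma \ref{lem:wpcy_bc} is precisely what that terse phrase is standing in for. The observation that $(F\otimes F)^*$ preserves isomorphisms in the homotopy category because it is a dg functor sending identities to identities is the right justification for invertibility being inherited.
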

\begin{proof}
    Follows from the definitions.
\end{proof}

\subsection{Cyclic homology}\label{sec:cyc}

Let $\cC$ be an $R$-linear $A_\infty$ category. 
We define a new such category $\cC^+$, by 
$$ hom_{\cC^+}(C_0,C_1) := \begin{cases}
                            hom_\cC(C_0,C_1) & \text{if $C_0 \neq C_1$}\\
                            hom_\cC(C_0,C_1) \oplus R \cdot e^+ & \text{if $C_0=C_1$,}
\end{cases}  
$$
with $e^+$ in degree $0$. 
We define $\mu^s(\ldots, e^+,\ldots) = 0$ for all $s \neq 2$, and 
$$\mu^2( e^+,a) = a = -\mu^2(a, e^+),$$
where $\sigma(\mu) \sigma(e^+) = \sigma(0)$ (here, $\sigma(e^+)$ is the $\sigma(1)$ out the front of $e^+$ in $\cC^+(C,C) = \sigma(1)^\vee hom_{\cC^+}(C,C)$), and leaving all other structure maps $\mu^*$ unchanged.
Then $\cC^+$ is a strictly unital filtered $A_\infty$ category, with strict units $e^+$. 

We define the subcomplex $D_* \subset CC_*(\cC^+)$ of \emph{degenerate elements}, generated by $c_0[\ldots|c_s]$ such that $c_i = \sigma e^+$ for some $i>0$, together with the length-zero chains $\sigma e^+$. 
We define the \emph{non-unital unfiltered Hochschild chains}, $CC_*^{nu}(\cC) := CC_*(\cC^+)/D_*$, and the non-unital filtered Hochschild chains $fCC^{nu}_*(\cC) = \overline{CC}_*^{nu}(\cC)$. 
These decompose as a direct sum:
\begin{align*}
    fCC^{nu}_*(\cC) &\cong fCC^\vee_*(\cC) \oplus fCC^\wedge_*(\cC),\qquad\text{where}\\
    fCC^{\vee}_*(\cC) &= fCC_*(\cC),\\
    fCC^{\wedge}_*(\cC) &= \sigma(e^+)^\vee fCC_*(\cC).
\end{align*}
Here $fCC^{\vee}_*$ is interpreted as those Hochschild chains which do not have an entry equal to an $e^+$, while $fCC^{\wedge}_*$ is interpreted as those which have the first entry equal to an $e^+$. 
The Hochschild differential on this complex is equal to the usual Hochschild differential $b$ on the factor $fCC^{\vee}_*$, while it equals
$$b(e^+[\alpha]) = e^+[b_{\wedge\wedge}(\alpha)] + b_{\wedge \vee}(\alpha)$$
on $fCC^{\wedge}_*$, where
\begin{align}
\label{eq:b hat}
    b_{\wedge \wedge}(a_0[a_1|\ldots|a_s]) &=  \sum_{i,j} e^+[a_1|\ldots|\mu^{j-i}(a_{i+1},\ldots,a_j)|\ldots|a_s],\\
    b_{\wedge \vee}(a_0[a_1|\ldots|a_s]) &= a_0[a_1|\ldots|a_s] - a_s[a_0|\ldots|a_{s-1}],
\end{align}
where as usual $\sigma(b) = \sigma(\mu)$.

When $\cC$ is an ordinary $A_\infty$ category and c-unital, the composition of the natural maps
\[ CC_*(\cC) \hookrightarrow CC_*(\cC^+) \to CC_*^{nu}(\cC)\]
is a quasi-isomorphism (compare \cite[Section 1.4]{Loday_CH}). 
It follows that if $\cC$ is filtered and admits an HH-unit (in the sense of \cite[Definition 2.17]{relfukii}), the analogue holds for $fCC_*(\cC) \to fCC_*^{nu}(\cC)$, by a spectral sequence argument for the natural filtration. 

Now define the Connes differential $B: fCC_*^{nu}(\cC) \to fCC_*^{nu}(\cC)$ by
\begin{equation}
\label{eqn:ConnesB}
 B(c_0[\ldots|c_s]) := \sum_j e^+[c_{j+1}|\ldots|c_s|c_0|\ldots|c_j]
\end{equation}
where $\sigma(B)\sigma(CC_*) = \sigma(CC_*)\sigma(e^+)$.    
It has degree $-1$, and satisfies $B^2 = 0$ and $bB+Bb = 0$. 
We note that it vanishes on $fCC^{\wedge}_*(\cC)$, and sends $fCC^{\vee}_*(\cC) \mapsto fCC^{\wedge}_*(\cC)$. 

We define $fCC_*^-(\cC) := fCC_*^{nu}(\cC)[[u]]$. 
Here the notation `$[[u]]$' means we tensor with $R[u]$ then take the completion with respect to the $u$-adic filtration. 

We equip $fCC_*^-(\cC)$ with the differential $b+uB$, and denote its cohomology by $fHC_*^-(\cC)$.  
If the coefficient ring $R = (\G,R,\cF_{\ge \bullet},d)$ has $\cF_{\ge 1} = 0$, then $\cC$ is uncurved, and $CC_*^- = fCC_*^-$, and we write $HC_*^-$ instead of $fHC_*^-$.

Filtered cyclic homology is functorial in $\cC$, in the sense that an $A_\infty$ functor $F:\cC \to \cD$ induces a map $F_*:fHC_*^-(\cC) \to fHC_*^-(\cD)$; this follows from \cite[Lemma 3.26]{Sheridan_formulae}, adapted to the filtered setting. 

\begin{lem}\label{lem:HC_C_Cbc}
    Let $F:\cC^{\prebc} \to \cC$ be the $A_\infty$ functor from \cite[Lemma 2.14]{relfukii}. 
    It induces a map
    $$F_*: fHC_*^-(\cC^{\prebc}) \to fHC_*^-(\cC),$$
    and its restriction to the subcategory $\cC^\bc \subset \cC$ induces a map
    $$F_*: HC_*^-(\cC^\bc) \to fHC_*^-(\cC).$$
\end{lem}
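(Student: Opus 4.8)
The plan is to verify Lemma \ref{lem:HC_C_Cbc} by directly applying the functoriality of filtered negative cyclic homology, discussed just before the statement, to the specific functor $F: \cC^{\prebc} \to \cC$ of \cite[Lemma 2.14]{relfukii}. First I would observe that $\cC^{\prebc}$ is itself an $R$-linear $A_\infty$ category (in the filtered sense of \cite[Definition 2.10]{relfukii}), so that the construction of $fCC_*^-$ and the functoriality statement both apply to it. The functor $F$ then induces $F_*: fHC_*^-(\cC^{\prebc}) \to fHC_*^-(\cC)$ on the nose, by the filtered adaptation of \cite[Lemma 3.26]{Sheridan_formulae}; there is essentially nothing to prove here beyond invoking the paragraph preceding the lemma.

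For the second assertion, I would first recall that $\cC^\bc \subset \cC^{\prebc}$ is the full subcategory on those pre-bounding cochains whose curvature vanishes (not merely lies in some ideal), so that $\cC^\bc$ is an \emph{uncurved} filtered $A_\infty$ category; hence, as noted at the end of Section \ref{sec:cyc}, $CC_*^- = fCC_*^-$ for $\cC^\bc$, which justifies writing $HC_*^-(\cC^\bc)$ rather than $fHC_*^-(\cC^\bc)$. The restriction of $F$ to this full subcategory is again an $A_\infty$ functor $\cC^\bc \to \cC$ (composing the inclusion $\cC^\bc \hookrightarrow \cC^{\prebc}$ with $F$), so the same functoriality yields the induced map $F_*: HC_*^-(\cC^\bc) = fHC_*^-(\cC^\bc) \to fHC_*^-(\cC)$, and by construction this factors through $F_*: fHC_*^-(\cC^{\prebc}) \to fHC_*^-(\cC)$ via the map induced by the inclusion $\cC^\bc \hookrightarrow \cC^{\prebc}$, so the two statements are compatible as the phrasing ``its restriction'' demands.

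The only point requiring slight care — and the step I would flag as the main (minor) obstacle — is checking that the functoriality of $fHC_*^-$ genuinely applies to $F$ and its restriction in the \emph{curved/filtered} setting, since \cite[Lemma 3.26]{Sheridan_formulae} is stated for ordinary (uncurved) $A_\infty$ categories. This is exactly the adaptation alluded to in the sentence ``this follows from \cite[Lemma 3.26]{Sheridan_formulae}, adapted to the filtered setting'': one must confirm that the chain-level map $fCC_*^{nu}(\cC^{\prebc}) \to fCC_*^{nu}(\cC)$ built from the Taylor components of $F$ is filtered, commutes with $b$ and with the Connes operator $B$ (hence with $b + uB$ after $u$-adic completion), and descends to the degenerate quotient — all of which follow from the standard formulas for the action of an $A_\infty$ functor on Hochschild chains together with the fact that $F$ respects units (as in the proof of Lemma \ref{lem:wpcy_bc}). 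Once this is in place the lemma is immediate, so I would simply write ``Follows from the functoriality of $fHC_*^-$ discussed above, applied to $F$ and to its restriction to the full subcategory $\cC^\bc$; note that $\cC^\bc$ is uncurved, so $HC_*^- = fHC_*^-$ for it.''
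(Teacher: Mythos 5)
Your proposal is correct and takes essentially the same approach as the paper: the paper's proof is the one-line ``Follows from \cite[Lemma 3.26]{Sheridan_formulae}, adapted to the filtered setting,'' and you simply spell out what that adaptation involves (filteredness, compatibility with $b$ and $B$, descent to the non-unital quotient, unitality of $F$) and add the correct justification for writing $HC_*^-$ rather than $fHC_*^-$ for $\cC^\bc$.
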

\begin{proof}
    Follows from \cite[Lemma 3.26]{Sheridan_formulae}, adapted to the filtered setting. 
\end{proof}

\subsection{Pairings}

For any finite-rank free $R$-module $M$, we can define the supertrace
$$\mathsf{str}: Hom(M,M) \to R$$
to be the composition
$$Hom(M,M) \cong Hom(M,R) \otimes M \xrightarrow{\mathrm{ev}} R.$$

Let $\cC$ be an $R$-linear $A_\infty$ category whose morphism spaces are finite-rank free $R$-modules (i.e., finite-rank and free on the cochain level, not just on the cohomology level). 
We define the \emph{Mukai pairing}  
$$\langle -,-\rangle_{Muk}: fHH_*(\cC) \otimes fHH_*(\cC) \to \BbK,$$
and its $u$-sesquilinear extension, the \emph{higher residue pairing}
$$\langle - ,-\rangle_{res}: fHC_*^-(\cC) \otimes fHC_*^-(\cC) \to \BbK[[u]],$$
to both be induced by the following chain map: if $\alpha = c_0[c_1|\ldots|c_s]$ and $\beta = d_0[d_1|\ldots|d_t]$ are generators of $CC_*(\cC)$ (respectively $CC_*^{nu}(\cC)$), then
\begin{multline}
\label{eqn:mukform}
 \langle \alpha,\beta \rangle_{Muk} := \sum_{j,k,\ell,m}\mathsf{str} (c \mapsto \\
 \left. \mu_1^*(c_{k+1},\ldots,c_s,c_0,\ldots,c_j,\mu_2^*(c_{j+1},\ldots,c_k,c,d_{m+1},\ldots,d_s,d_0,\ldots,d_\ell),\ldots,d_m) \right),
 \end{multline}
 where $\sigma(\alpha) \sigma(\beta) = \sigma(\mu_1)\sigma(\mu_2)$. Here, $\sigma(\alpha)$ is the $\sigma(-1)$ out the front of the copy of $CC_*(\cC)$ containing $\alpha$, and similarly for $\sigma(\beta)$; and $\mu_1 = \mu_2 = \mu_\cC$, but we use different notation to distinguish the copies of $\sigma(\mu_i)$ associated with them. 
If the expression in \eqref{eqn:mukform} is not composable in $\cC$, we set the summand to be $0$. 
Here `$c$' represents an element in the corresponding space $\cC(C_0,C_1)$. 

Note that if $\alpha = (\alpha^\vee,\alpha^\wedge) \in fCC^{\vee}_* \oplus fCC^{\wedge}_*$, and similarly for $\beta$, then 
\begin{equation}
    \langle \alpha,\beta \rangle_{Muk} = \langle \alpha^\vee,\beta^\vee \rangle_{Muk,\vee\vee} + \langle \alpha^\wedge,\beta^\vee \rangle_{Muk,\wedge\vee} + \langle \alpha^\vee,\beta^\vee \rangle_{Muk,\vee\wedge}(\alpha^\vee,\beta^\wedge)
\end{equation}
where $\langle,\rangle_{Muk,\vee\vee} = \langle,\rangle_{Muk}$ is the usual Mukai pairing on $fCC^{\vee}_*$; and if $\alpha = c_0[c_1|\ldots|c_s]$ and $\beta = d_0[d_1|\ldots|d_t]$ a generators of $fCC_*$, then
\begin{align*} 
\langle \alpha,\beta \rangle_{Muk,\wedge\vee} &= \sum_{m} \mathsf{str}(c \mapsto \mu^*(c_0,\ldots,c_s,c,d_{m+1},\ldots,d_0,\ldots,d_m)),\\
\langle \alpha,\beta \rangle_{Muk,\vee\wedge} &= - \sum_k \mathsf{str}(c \mapsto \mu^*(c_{k+1},\ldots,c_0,\ldots,c_k,c,d_0,\ldots,d_t)).\\
\end{align*}

\begin{lem}\label{lem:C_Cbc_pairings}
The natural maps $fHH_*(\cC^{\prebc}) \to fHH_*(\cC)$ and $fHC_*^-(\cC^{prebc}) \to fHC_*^-(\cC)$ from Lemma \ref{lem:HC_C_Cbc}, respect pairings.
\end{lem}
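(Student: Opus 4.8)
The plan is to reduce to a chain-level identity and then deduce it from the compatibility of $F$ with the diagonal bimodule that was already isolated in the proofs of Lemmas \ref{lem:wpcy_bc} and \ref{lem:wpcy_cd}. Both pairings are, by construction, induced by the single chain-level formula \eqref{eqn:mukform}: the Mukai pairing directly, and the higher residue pairing as its $u$-sesquilinear extension (with conventions as in \cite[Definition 5.19]{Sheridan_formulae} and \cite[Definition 5.33]{Sheridan_formulae}). Likewise $F_*$ on $fHH_*$ and on $fHC_*^-$ is induced by the standard pushforward of non-unital (cyclic) Hochschild chains along the $A_\infty$ functor $F$ of \cite[Lemma 2.14]{relfukii} --- a sum over partitions of the cyclic word of entries into consecutive blocks with $F^{(\mathrm{block\ length})}$ applied to each, the strict unit $e^+$ being handled exactly as in \cite[Lemma 3.26]{Sheridan_formulae}. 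So it suffices to show that, as bilinear maps $fCC_*^{nu}(\cC^{\prebc})^{\otimes 2}\to\BbK$, one has $\langle F_*(-),F_*(-)\rangle_{Muk} = \langle-,-\rangle_{Muk}$ up to chain homotopy; the $fHC_*^-$ statement then follows since $F_*$ is $u$-linear on $fCC_*^-$ and the higher residue pairing is the $u$-sesquilinear extension, so the $u^0$-identity propagates, provided $F_*$ is compatible with the Connes differential $B$ (which is part of \cite[Lemma 3.26]{Sheridan_formulae}, adapted to the filtered setting).

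The core of the argument is as follows. Since $\cC^{\prebc}$ and $\cC$ have the same underlying morphism modules, the ``free slot'' $c$ appearing in \eqref{eqn:mukform} may be read in either category, and substituting the pushforward formula for $F_*\alpha$ and $F_*\beta$ expresses $\langle F_*\alpha,F_*\beta\rangle_{Muk}$ as a sum --- over all block partitions and all cyclic distributions of the two decorated words around the ``figure-eight'' of two $\mu_\cC$-operations --- of supertraces with $F$-blocks inserted at every entry. I would then absorb each $F$-insertion into an adjacent $\mu_\cC$ by the $A_\infty$-functor equations for $F$, exactly as in the computation showing that $F_*$ is a chain map on $CC_*^{nu}$, but carried out now on the two-input figure-eight rather than on a single cyclic word: summed over all configurations, every $\mu_\cC$ with $F$-decorated inputs collapses to the corresponding $\mu_{\cC^{\prebc}}$, leaving precisely \eqref{eqn:mukform} evaluated in $\cC^{\prebc}$, i.e.\ $\langle\alpha,\beta\rangle_{Muk}$. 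Structurally, this is the statement that the whole manipulation takes place inside $\bimod{\cC^{\prebc}}{\cC^{\prebc}}$ via $(F\otimes F)^*$: that pullback carries $\cC_\Delta\mapsto\cC^{\prebc}_\Delta$ and $\cC_\Delta^\vee\mapsto(\cC^{\prebc}_\Delta)^\vee$, respects the bimodule tensor product $-\otimes_\cC-$ and the closing-up morphism $\cC_\Delta\otimes_\cC\cC_\Delta\to\cC_\Delta$, and sends identities to identities --- precisely the facts invoked in Lemmas \ref{lem:wpcy_bc}--\ref{lem:wpcy_cd} --- and \eqref{eqn:mukform} is assembled from exactly these pieces.

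The hard part is the combinatorial and sign bookkeeping in the second step: one must check that the partition-and-distribution sum on the $\cC$-side, after absorbing the $F$'s, reproduces with no leftover terms the partition sums hidden inside the $\mu_{\cC^{\prebc}}$-operations and inside $F_*$ itself, and that all Koszul signs --- tracked via the $\sigma$-torsor conventions of \cite[Section 2]{relfukii}, with $\sigma(\alpha)\sigma(\beta)=\sigma(\mu_1)\sigma(\mu_2)$ as in \eqref{eqn:mukform} --- come out equal. The one genuinely new wrinkle relative to the non-cyclic case is the behaviour of the $e^+$-decorated chains spanning $fCC^{\wedge}_*(\cC^{\prebc})$: here one needs that $F$ is strictly unital (equivalently, that $F_*$ preserves the subcomplex $D_*$ of degenerate chains) so that the components $\langle-,-\rangle_{Muk,\wedge\vee}$ and $\langle-,-\rangle_{Muk,\vee\wedge}$ transform in the same way as the $\vee\vee$-component, and again that $F_*$ commutes with $B$ up to the usual homotopy so that the $u$-sesquilinear extension is respected. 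None of this requires ideas beyond those already present in \cite[Section 3]{Sheridan_formulae}, but it is where the care lies.
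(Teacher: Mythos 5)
Your route is essentially the paper's---expand the chain-level formula \eqref{eqn:mukform} and compare---but you overcomplicate it by framing the computation as a general $A_\infty$-functor pushforward and invoking ``up to chain homotopy,'' which obscures the one observation the paper's one-line proof actually rests on. The functor $F\colon \cC^{\prebc}\to\cC$ from \cite[Lemma 2.14]{relfukii} is not a general $A_\infty$-functor: the underlying morphism modules of $\cC^{\prebc}$ and $\cC$ are \emph{literally identical}, $F^1=\mathrm{id}$, $F^{\ge 2}=0$, and the only nontrivial component is $F^0=b$ (inserting the bounding/pre-bounding cochain). Consequently $F_*$ on $fCC_*^{nu}$ is not a sum over arbitrary block partitions but simply the operation of inserting copies of $b$ into the cyclic word, and $\mu_{\cC^{\prebc}}$ is by definition $\mu_\cC$ with $b$'s inserted. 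Substituting into \eqref{eqn:mukform}, the sum defining $\langle F_*\alpha,F_*\beta\rangle_{Muk}$ in $\cC$ (insert $b$'s into both cyclic words, then sum over split points $j,k,\ell,m$) is the \emph{same set of terms}, bijectively and with identical Koszul signs, as the sum defining $\langle\alpha,\beta\rangle_{Muk}$ in $\cC^{\prebc}$ (sum over split points, then expand each $\mu_{\cC^{\prebc}}$ into $b$-insertions); the supertrace is over the same free module in both cases. The same regrouping works verbatim for the $\wedge\vee$, $\vee\wedge$, and $\wedge\wedge$ components and commutes strictly with the Connes operator $B$, giving the $u$-sesquilinear statement with no homotopy correction. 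Your appeal to the $A_\infty$-functor equations to ``absorb'' the $F$'s is where the gap lies: for a genuinely higher $F$ this manipulation does not close up exactly in the figure-eight of two $\mu$'s and the outcome would at best be a homotopy (a nontrivial theorem, not a computation), and the reduction to Lemmas~\ref{lem:wpcy_bc}--\ref{lem:wpcy_cd} you gesture at would need to be made precise. Better to state the shape of $F$ up front and observe that the identity is an exact chain-level regrouping, which is what ``by inspection of the formulae'' means.
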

\begin{proof}
    The results hold on the chain level, by inspection of the formulae.
\end{proof}

\subsection{Connections}

Let $R = (\G,R,\cF_{\ge \bullet},d)$ be a coefficient ring, $\Omega_R$ a free $R$-module of finite rank, and $D:R \to \Omega_R$ a derivation. 
Explicitly, in our context this means that $d_\Omega \circ D = D \circ d_R$, and $D(fg) = D(f) g + fD(g).$ 

\begin{eg}\label{eg:Omega R big}
    Let $R = R^\big$ be the coefficient ring of the big relative Fukaya category. 
    For each $q \in Q$, define $\Omega^q_R := R\cdot d\log \nov_q$, and $D^q_R: R \to \Omega^q_R$ by
    \begin{align*}
        D^q_R(\nov^u) &= \nov^u \cdot \novb^k \otimes (u \cdot V_q) d\log \nov_q \qquad \text{for $u \in \NE$};\\
        D^q_R\left(\frac{\novb_i^{k}}{k!}\right) &= 0,
    \end{align*}
    and extending to all of $R^\big$ by the Leibniz rule and continuity.
    For each basis element $\beta_i$ of $CM^{2-*}(f,g)$, define $\Omega^i_R := R \cdot d\novb_i$, and $D^i_R: R \to \Omega^i_R$ by
    \begin{align*}
        D^i_R(\nov^u) &= 0 \qquad \text{for $u \in \NE$};\\
        D^i_R\left(\frac{\novb_j^{k}}{k!}\right) &= \begin{cases}
                                                        \frac{\novb_i^{k-1}}{(k-1)!}\cdot d\novb_i & \text{ if $i=j$;}\\
                                                        0 & \text{otherwise.}
        \end{cases}                 
    \end{align*}
    Now define a derivation $D_R^\big:R \to \Omega^\big_R$ by
    \begin{align*}
        \Omega^\big_R &:= \bigoplus_{q \in Q} \Omega^q_R \oplus \bigoplus_i \Omega^i_R,\\
        D^\big_R(\alpha) &:= \left(D^q_R(\alpha)\right)_{q \in Q} \oplus \left(D^i_R(\alpha)\right)_i.
    \end{align*}
\end{eg}

\begin{eg}
    Define $\Omega^\big_\Lambda := \Lambda^\big \otimes \left(\Z \oplus H^{2-*}(f,g) \right)$. 
    We denote the element $1 \otimes (1 \oplus 0)$ by $d\log T$, and an element $1 \otimes (0 \oplus \beta)$ by $d\nov_i$.
\end{eg}

\begin{defn}\label{def:conn}
    Given a derivation $D:R \to \Omega$, where $\Omega$ is a free $R$-module of finite rank, we define a \emph{connection} on an $R$-module $M$ to be a map
    $$\nabla:M \to \Omega \otimes M $$
    which commutes with the differential: $\nabla \circ d = d \circ \nabla$, and satisfies the Leibniz rule:
    $$\nabla(f \cdot m) = Df \otimes m + f \otimes \nabla(m).$$
    When we want to emphasize its dependence on the derivation $D$, we may refer to such $\nabla$ as a `$D$-connection', but we will not do this routinely.
\end{defn}

Connections on $M$ and $N$ induce connections on $M \otimes_\BbK N$ and $Hom_\BbK(M,N)$ (the latter uses the fact that $\Omega$ is free of finite rank to identify $Hom_R(M,\Omega \otimes N)$ with $\Omega \otimes Hom_R(M,N)$).
Connections on $M_i$ induce a connection on $\oplus_i M_i$ and $\prod_i M_i$ (again using the fact that $\Omega$ is free of finite rank to identify $\Omega \otimes \prod_i M_i$ with $\prod_i \Omega \otimes M_i$). 
A connection on $M$ induces one on the completion $\overline{M}$ (again using the fact that $\Omega$ is free of finite rank to identify $\Omega \otimes \overline{M}$ with $\overline{\Omega \otimes M}$). 

We define the Dubrovin--Givental $u$-connection 
\begin{align*}
    u\nabla^{DG}_R: QC^*(X;R^\big)[[u]] & \to \Omega^\big \otimes QC^*(X;R^\big)[[u]]\\
    u\nabla^{DG}_R(\alpha) & := uD^\big_R(\alpha) + \sum_q d\log \nov_q \otimes (PD(V_q) \star \alpha)  + \sum_i d\nov_i \otimes (\beta^i \star \alpha),
\end{align*}
where $PD(V_q) \star -$ (respectively $\beta^i \star -$) are certain chain maps which will be defined in Section \ref{sec:pdvq}, inducing the operation of quantum cup product with $PD(V_q)$ (respectively $\beta^i$) on the level of cohomology. 

Now let $f^*:R_1 \to R_2$ a morphism of coefficient rings, and $M$ an $R_1$-module. 
Then we may define the $R_2$-module $f^*M := R_2 \otimes_{R_1} M$. 
However, we need some extra structure in order to transfer a connection from $M$ to $f^*M$. 

\begin{defn}
Let $R_i$ be coefficient rings, and $D_i:R_i \to \Omega_i$ be derivations, for $i=1,2$. 
We define a morphism of coefficient rings with derivations, $f = (f^*,Df): (R_1,D_1) \to (R_2,D_2)$, to consist of:
\begin{itemize}
    \item A map of coefficient rings, $f^*:R_1 \to R_2$;
    \item A map of $R_2$-modules, $Df:R_2 \otimes_{R_1} \Omega_1  \to \Omega_2$, satisfying
    $$Df (r_2 \otimes D_1(r_1)) = r_2 D_2(f^*(r_1))$$ 
    for $r_i \in R_i$.
\end{itemize} 
\end{defn}

\begin{lem}\label{lem:pullback conn}
    Let $(f^*,Df):(R_1,D_1) \to (R_2,D_2)$ be a morphism of coefficient rings with derivations, and let $\nabla:M \to \Omega_1 \otimes M$ be a connection on the $R_1$-module $M$. 
    We may define a connection $f^*\nabla$ on the $R_2$-module $f^*M$, by
    \begin{align*}
    f^* \nabla: f^*M  & \to \Omega_2 \otimes f^*M \\
    f^*\nabla & := D_2 \otimes \id + Df \circ (\id \otimes \nabla). 
\end{align*}
\end{lem}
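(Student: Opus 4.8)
The plan is to verify directly from Definition \ref{def:conn} that the proposed formula
$$f^*\nabla := D_2 \otimes \id + Df \circ (\id \otimes \nabla)$$
is well-defined, commutes with the differential, and satisfies the Leibniz rule over $R_2$. The formula should be read as a composite of maps $f^*M = R_2 \otimes_{R_1} M \to \Omega_2 \otimes_{R_2} f^*M$: the first term sends $r_2 \otimes m \mapsto D_2(r_2) \otimes (1 \otimes m)$, and the second applies $\id_{R_2} \otimes \nabla : R_2 \otimes_{R_1} M \to R_2 \otimes_{R_1} (\Omega_1 \otimes_{R_1} M) = (R_2 \otimes_{R_1}\Omega_1) \otimes_{R_2} f^*M$ and then applies $Df$ to the first factor.

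First I would check that the second term is well-defined over the tensor product $R_2 \otimes_{R_1} M$, i.e. that $\id \otimes \nabla$ descends to the balanced tensor product; this is immediate since $\nabla$ is $R_1$-linear up to the Leibniz correction and the correction terms land in the image of $\Omega_1 \otimes M$, but one must be careful that the total expression is balanced. The key point is that the defining compatibility of $Df$, namely $Df(r_2 \otimes D_1(r_1)) = r_2 D_2(f^*(r_1))$, is exactly what is needed to cancel the discrepancy: if $m' = r_1 \cdot m$ for $r_1 \in R_1$, then $\nabla(r_1 m) = D_1(r_1) \otimes m + r_1 \nabla(m)$, and applying $Df$ to the first factor of $\id \otimes$ this yields $D_2(f^*(r_1)) \otimes (1\otimes m)$ plus the $f^*(r_1)$-multiple of the rest, which matches the Leibniz correction coming from the $D_2 \otimes \id$ term on the other side of the balancing relation $r_2 \otimes r_1 m = r_2 f^*(r_1) \otimes m$. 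So the two terms are individually not balanced but their sum is. Next, the Leibniz rule over $R_2$: for $r_2 \in R_2$, $D_2$ is a derivation so $D_2(r_2 \cdot (s_2 \otimes m)) = D_2(r_2 s_2)\otimes(1\otimes m) = D_2(r_2) s_2 \otimes (1\otimes m) + r_2 D_2(s_2) \otimes (1\otimes m)$, while the $Df \circ (\id \otimes \nabla)$ term is visibly $R_2$-linear in the $R_2$-slot, giving $D_2(r_2) \otimes (s_2 \otimes m) + r_2 \cdot f^*\nabla(s_2 \otimes m)$ as required. Compatibility with differentials follows from $d_\Omega \circ D = D \circ d_R$ for both derivations together with $\nabla \circ d = d \circ \nabla$ and the fact that $Df$ is a chain map (a morphism of coefficient rings with derivations intertwines the differentials, since $D_i$ are compatible with $d_{R_i}$), checked termwise.

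The main obstacle, and the only place where any care is needed, is the balancing/well-definedness of the second term and the interaction between the two summands described above: neither $D_2 \otimes \id$ nor $Df \circ (\id \otimes \nabla)$ is a well-defined map out of the balanced tensor product $R_2 \otimes_{R_1} M$ on its own, and one has to see that the failures are equal and opposite, which is precisely the content of the compatibility axiom on $Df$. Everything else — the Leibniz rule, chain-map property, and the identification of $\Omega_2 \otimes_{R_2} f^*M$ with $(R_2 \otimes_{R_1}\Omega_1)\otimes_{R_2} f^*M$ modulo $Df$ — is a routine diagram chase using that $\Omega_1, \Omega_2$ are free of finite rank (so that all the completions and products commute with the relevant tensor operations, exactly as noted after Definition \ref{def:conn}).

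\begin{proof}
    We must check that the stated formula defines a map $f^*M \to \Omega_2 \otimes f^*M$, that it commutes with the differential, and that it satisfies the Leibniz rule over $R_2$.

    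First, recall $f^*M = R_2 \otimes_{R_1} M$. The term $Df \circ (\id \otimes \nabla)$ is the composite
    $$R_2 \otimes_{R_1} M \xrightarrow{\id \otimes \nabla} R_2 \otimes_{R_1}(\Omega_1 \otimes_{R_1} M) = (R_2 \otimes_{R_1} \Omega_1) \otimes_{R_2} f^*M \xrightarrow{Df \otimes \id} \Omega_2 \otimes f^*M,$$
    while $D_2 \otimes \id$ sends $r_2 \otimes m \mapsto D_2(r_2) \otimes (1 \otimes m)$. Neither composite is balanced over $R_1$ individually: for $r_1 \in R_1$, the element $r_2 r_1 \otimes m$ (writing $r_1$ for $f^*(r_1)$ where it acts on $R_2$) equals $r_2 \otimes r_1 m$ in $f^*M$, and
    $$(\id \otimes \nabla)(r_2 \otimes r_1 m) = r_2 \otimes \big(D_1(r_1) \otimes m + r_1\nabla(m)\big),$$
    so after applying $Df$, using $Df(r_2 \otimes D_1(r_1)) = r_2 D_2(r_1)$, the second term contributes $r_2 D_2(r_1) \otimes(1 \otimes m) + (Df \circ (\id\otimes\nabla))(r_2 r_1 \otimes m)$. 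Meanwhile $(D_2 \otimes \id)(r_2 r_1 \otimes m) = D_2(r_2 r_1)\otimes(1\otimes m) = D_2(r_2)\otimes(1\otimes m) + r_2 D_2(r_1)\otimes(1\otimes m)$, whereas $(D_2\otimes\id)(r_2 \otimes r_1 m) = D_2(r_2)\otimes(1\otimes r_1 m) = D_2(r_2)\otimes(r_1 \otimes m)$, which differs from $D_2(r_2)\otimes(1\otimes m)$ inside $\Omega_2 \otimes f^*M$ by moving $r_1$ across the tensor. Summing the two terms, the extra $r_2 D_2(r_1)\otimes(1\otimes m)$ from the $Df$-term is cancelled against the extra $r_2 D_2(r_1)\otimes(1\otimes m)$ appearing in the $D_2$-term when one compares $D_2(r_2 r_1)$ with $D_2(r_2)$ applied to $r_2 \otimes r_1 m$; hence the sum $f^*\nabla$ descends to a well-defined map on $R_2 \otimes_{R_1} M$.

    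For the Leibniz rule over $R_2$, take $r \in R_2$ and $x \in f^*M$. The second term $Df\circ(\id\otimes\nabla)$ is visibly $R_2$-linear in the left tensor factor, so it sends $rx \mapsto r \cdot (Df\circ(\id\otimes\nabla))(x)$. For the first term, $D_2$ is a derivation on $R_2$, so writing $x = s \otimes m$ we get $(D_2\otimes\id)(rs \otimes m) = D_2(rs)\otimes(1\otimes m) = D_2(r)s\otimes(1\otimes m) + rD_2(s)\otimes(1\otimes m) = D_2(r)\otimes(s\otimes m) + r\cdot(D_2\otimes\id)(s\otimes m)$. Adding, $f^*\nabla(rx) = D_2(r)\otimes x + r\cdot f^*\nabla(x)$, as required.

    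Finally, $f^*\nabla$ commutes with the differential: the differential on $f^*M$ is $d_{R_2}\otimes\id + \id\otimes d_M$, and on $\Omega_2 \otimes f^*M$ likewise. Since $D_2$ commutes with $d_{R_2}$ (given), the term $D_2 \otimes \id$ is a chain map; since $\nabla$ commutes with $d_M$ (and $D_1$ with $d_{R_1}$) and $Df$ is a morphism of complexes (a morphism of coefficient rings with derivations intertwines the differentials), the term $Df\circ(\id\otimes\nabla)$ is a chain map as well. Hence so is their sum. This completes the verification.
\end{proof}
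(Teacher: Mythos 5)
Your verification is correct and is exactly what the paper's one-line ``routine from the definitions'' is standing in for: you check well-definedness on the balanced tensor product via the compatibility axiom $Df(r_2\otimes D_1(r_1))=r_2D_2(f^*(r_1))$, then the Leibniz rule and compatibility with differentials term by term. The only slip is purely notational: you write $D_2(r_2 r_1)\otimes(1\otimes m)=D_2(r_2)\otimes(1\otimes m)+r_2D_2(r_1)\otimes(1\otimes m)$, dropping the factor $r_1$ from the first summand (it should be $D_2(r_2)r_1\otimes(1\otimes m)$, which is what then gets identified with $D_2(r_2)\otimes(1\otimes r_1 m)$ by moving $r_1$ across); the cancellation you describe and the conclusion are nonetheless correct.
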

\begin{proof}
    This is routine from the definitions.
\end{proof}

\begin{lem}\label{lem:RLamder}
Consider the obvious derivation
$$D^\big_{\Lambda,\ge 0}: \Lambda^\big_{\ge 0} \to \Omega^\big_{\Lambda,\ge 0}:= \Lambda^\big_{\ge 0} \langle d\log T, d\novblam_i \rangle.$$
Suppose that we have
$$\kappa = \sum_{q \in Q} \kappa_q PD(V_q)$$
in $H^2(X,W)$. 
Then there is a morphism of coefficient rings with derivations, $(f^*,Df):(R^\big,D^\big_R) \to (\Lambda^\big_{\ge 0},D^\big_{\Lambda,\ge 0})$, where $f^*$ is the morphism \eqref{eq:RbigLambig}, and we have
\begin{align*}
    Df(d\log \nov_q) &:= \kappa_q d\log T \\
    Df(d\novb_j) &= \sum_i C^i_j d\novblam_i. 
\end{align*}
\end{lem}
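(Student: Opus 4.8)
The plan is to unwind the definition of a morphism of coefficient rings with derivations, which here asks for two things: that $f^*$ be a morphism of coefficient rings — which it is, being \eqref{eq:RbigLambig} — and that $Df : \Lambda^\big_{\ge 0}\otimes_{R^\big}\Omega^\big_R \to \Omega^\big_{\Lambda,\ge 0}$ be a $\Lambda^\big_{\ge 0}$-module map with $Df\bigl(r_2\otimes D^\big_R(r_1)\bigr) = r_2\, D^\big_{\Lambda,\ge 0}(f^*(r_1))$ for all $r_1 \in R^\big$, $r_2\in\Lambda^\big_{\ge 0}$. First I would note that $Df$ is unambiguously defined: the module $\Omega^\big_R = \bigoplus_q \Omega^q_R \oplus \bigoplus_i \Omega^i_R$ of Example \ref{eg:Omega R big} is free of finite rank over $R^\big$ on the symbols $d\log\nov_q$ and $d\novb_i$, so $\Lambda^\big_{\ge 0}\otimes_{R^\big}\Omega^\big_R$ is free over $\Lambda^\big_{\ge 0}$ on the $1\otimes d\log\nov_q$ and $1\otimes d\novb_i$, and the stated assignments $1\otimes d\log\nov_q \mapsto \kappa_q\, d\log T$ and $1\otimes d\novb_j\mapsto\sum_i C^i_j\, d\novblam_i$ therefore extend uniquely $\Lambda^\big_{\ge 0}$-linearly.

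The content is the compatibility identity, which by $\Lambda^\big_{\ge 0}$-linearity of $Df$ reduces to the case $r_2=1$: I must show that the two maps $\Phi_1, \Phi_2 : R^\big\to\Omega^\big_{\Lambda,\ge 0}$ given by $\Phi_1(r):=Df(1\otimes D^\big_R(r))$ and $\Phi_2(r):=D^\big_{\Lambda,\ge 0}(f^*(r))$ agree, where $\Omega^\big_{\Lambda,\ge 0}$ is regarded as an $R^\big$-module via $f^*$. Both maps are continuous and $\Z$-linear, and both obey the $f^*$-twisted Leibniz rule $\Phi(ab) = f^*(a)\,\Phi(b) + f^*(b)\,\Phi(a)$ — for $\Phi_1$ because $D^\big_R$ is a derivation and scalars pass across the $R^\big$-balanced tensor, for $\Phi_2$ because $D^\big_{\Lambda,\ge 0}$ is a derivation and $f^*$ a ring map. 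Hence $\Phi_1-\Phi_2$ is a continuous $f^*$-twisted derivation, so it vanishes identically once it vanishes on a set of topological algebra generators of $R^\big$; I would use the curve-class monomials $\nov^u$ ($u\in\NE$, for which a finite generating set exists by Gordan's lemma) and the divided powers $\novb_j^k/k!$.

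It then remains to run the two checks. On $\nov^u$: the only nonzero component of $D^\big_R(\nov^u)$ is $D^q_R(\nov^u) = (u\cdot V_q)\,\nov^u\, d\log\nov_q$, and $f^*(\nov^u)=T^{\kappa(u)}$, so $\Phi_1(\nov^u)=\sum_q (u\cdot V_q)\,T^{\kappa(u)}\,\kappa_q\, d\log T$, whereas $\Phi_2(\nov^u)=D^\big_{\Lambda,\ge 0}(T^{\kappa(u)})=\kappa(u)\,T^{\kappa(u)}\, d\log T$; these agree precisely because the hypothesis $\kappa=\sum_q\kappa_q\, PD(V_q)$ gives $\sum_q\kappa_q\,(u\cdot V_q)=\kappa(u)$, and this is the one place the hypothesis enters. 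On the divided powers $\novb_j^k/k!$: here $D^\big_R(\novb_j^k/k!)=\bigl(\novb_j^{k-1}/(k-1)!\bigr)\, d\novb_j$, so $\Phi_1(\novb_j^k/k!)=f^*\bigl(\novb_j^{k-1}/(k-1)!\bigr)\cdot\sum_i C^i_j\, d\novblam_i$; on the other side $f^*(\novb_j^k/k!)=\left(\sum_i C^i_j\novblam_i\right)^{k}/k!$, and since $D^\big_{\Lambda,\ge 0}$ is a derivation with $D^\big_{\Lambda,\ge 0}(\novblam_i)=d\novblam_i$, this maps to $\tfrac{1}{(k-1)!}\left(\sum_i C^i_j\novblam_i\right)^{k-1}\sum_\ell C^\ell_j\, d\novblam_\ell$, which matches $\Phi_1(\novb_j^k/k!)$ because $f^*\bigl(\novb_j^{k-1}/(k-1)!\bigr)=\left(\sum_i C^i_j\novblam_i\right)^{k-1}/(k-1)!$. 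I expect no conceptual difficulty; the main, if modest, obstacle is the bookkeeping with the divided-power structure and the completions — checking that ``vanishing on topological generators suffices'' is legitimate for $f^*$-twisted derivations into the completed module, and keeping straight that $f^*(\novb_j^k/k!)$ genuinely lies in $\Lambda^\big_{\ge 0}$, which rests on the multinomial identity $(\sum_i x_i)^k/k!=\sum_{|a|=k}\prod_i x_i^{a_i}/a_i!$ valid in a divided-power ring (and is precisely what makes \eqref{eq:RbigLambig}, hence the present statement, well posed).
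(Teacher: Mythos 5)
Your proof is correct and follows the same strategy as the paper's, which simply asserts ``it suffices to check'' the compatibility on $\nov^u$ and $\novb_j$ and declares both checks ``routine from the definitions.'' You carry out those routine checks in full, and usefully spell out the abstract justification the paper leaves implicit: that $\Phi_1$ and $\Phi_2$ are both continuous $f^*$-twisted derivations, hence equal once they agree on topological generators. The one cosmetic divergence is that you check on the divided powers $\novb_j^k/k!$ directly, while the paper checks on $\novb_j$; both suffice, since the target is a $\C$-algebra module (torsion-free), so agreement on $\novb_j$ propagates via the Leibniz rule to $\novb_j^k$ and then to $\novb_j^k/k!$.
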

\begin{proof}
    It suffices to check that
    $$Df(D^\big_R(\nov^u)) = D^\big_{\Lambda,\ge 0}(T^{\kappa(u)})$$
    for $u \in \NE$, and
    $$Df(D^\big_R(\novb_j)) = D^\big_{\Lambda,\ge 0}(\sum_i C^i_j \novblam_i).$$
    These are both routine from the definitions.
\end{proof}

\begin{lem}\label{lem:Rlamderconn}
    Let 
    $$(f^*,Df): (R^\big,\Lambda^\big) \to (\Lambda^\big_{\ge 0},\Omega^\big_{\Lambda,\ge 0})$$ 
    be the morphism of coefficient rings with derivations from Lemma \ref{lem:RLamder}. 
    Then 
    $$f^* \nabla^{DG}_R = \nabla^{DG}_\Lambda.$$
\end{lem}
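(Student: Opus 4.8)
The plan is to reduce the claimed identity of maps out of $QC^*(X;\Lambda^\big_{\ge 0})[[u]]$ to a check on a set of module generators. Both $f^*(u\nabla^{DG}_R)$ and $u\nabla^{DG}_\Lambda$ satisfy the Leibniz rule for the \emph{same} derivation $D^\big_{\Lambda,\ge 0}$: for $f^*(u\nabla^{DG}_R)$ this is exactly the ``$D_2 \otimes \id$'' term in Lemma \ref{lem:pullback conn}, with $D_2 = D^\big_{\Lambda,\ge 0}$ as supplied by Lemma \ref{lem:RLamder}. Now $QC^*(X;\Lambda^\big_{\ge 0})[[u]]$ is generated as a $\Lambda^\big_{\ge 0}[[u]]$-module by the pull-backs $1 \otimes m$ of the chosen Morse-cochain basis vectors $m$ of $CM^*(f,g)$, and on these the derivation terms of both connections vanish; so it suffices to prove that the two maps agree on such $m$, equality everywhere then following from the Leibniz rule.

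On such an $m$, Lemma \ref{lem:pullback conn} gives that the left-hand side equals $(Df \otimes \id)\bigl(1 \otimes u\nabla^{DG}_R(m)\bigr)$. Substituting the definition of $u\nabla^{DG}_R$ together with the formulas $Df(d\log \nov_q) = \kappa_q\, d\log T$ and $Df(d\novb_j) = \sum_i C^i_j\, d\novblam_i$ of Lemma \ref{lem:RLamder}, this becomes
$$d\log T \otimes \left(\Bigl(\sum_q \kappa_q\, PD(V_q)\Bigr) \star m\right) \;+\; \sum_i d\novblam_i \otimes \left(\Bigl(\sum_j C^i_j\,\alpha^j\Bigr) \star m\right),$$
while the right-hand side is $d\log T \otimes \bigl([\omega] \star m\bigr) + \sum_i d\novblam_i \otimes \bigl(\gamma^i \star m\bigr)$ by the definition of $u\nabla^{DG}_\Lambda$. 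To identify these I would use: (i) the hypothesis $\kappa = \sum_q \kappa_q\, PD(V_q)$ in $H^2(X,W)$ from Lemma \ref{lem:RLamder}, which implies that $\sum_q \kappa_q\, PD(V_q)$ is a cocycle representing the image $[\omega]$ of $\kappa = [\omega;\theta]$ in $H^2(X)$; and (ii) that $\sum_j C^i_j\, \alpha^j$ is the chosen chain-level representative of $\gamma^i \in H^*(X;\Q)$. Since the chain-level operators $PD(V_q)\star -$ and $\alpha^j \star -$ on $QC^*(X;\Lambda^\big_{\ge 0})$ are base-changed from their $R^\big$-counterparts and are linear in the cohomology insertion at the cochain level, the two displayed sums are precisely the chain-level operators $[\omega]\star -$ and $\gamma^i \star -$ appearing in $u\nabla^{DG}_\Lambda$, which gives the agreement on $m$.

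The one step here that is not pure bookkeeping is the last one: one must know that the chain-level quantum-multiplication operators entering the definition of $u\nabla^{DG}_\Lambda$ are normalised so that, after base change along \eqref{eq:RbigLambig}, ``$[\omega]\star -$'' is \emph{literally} $\sum_q \kappa_q\,(PD(V_q)\star -)$ and ``$\gamma^i \star -$'' is \emph{literally} $\sum_j C^i_j\,(\alpha^j \star -)$. I expect this to hold essentially by construction, since both families of operators are defined from the same moduli problem as on the $R^\big$-side and extended linearly over the chosen cochain representatives of $[\omega]$ and of the $\gamma^i$ (whose construction is given in the body); granting this, the rest of the verification -- in particular matching signs and the placement of $u$ with the fixed conventions for $u\nabla^{DG}_\Lambda$ -- is routine.
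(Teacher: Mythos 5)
Your proposal is correct and spells out exactly what the paper summarizes as ``routine from the definitions'': reduce to a generating set using that both sides obey the Leibniz rule for $D^\big_{\Lambda,\ge 0}$, then compare on Morse-cochain generators using the formulas in Lemma \ref{lem:RLamder} and the relation $\kappa = \sum_q \kappa_q PD(V_q)$. The final-paragraph caveat you flag (that the chain-level operators $[\omega]\star-$ and $\gamma^i\star-$ in $\nabla^{DG}_\Lambda$ are, by convention, the base-changed $\sum_q \kappa_q\,(PD(V_q)\star-)$ and $\sum_j C^i_j\,(\alpha^j\star-)$) is indeed the substance being swept into the phrase ``routine,'' and you are right that this is a normalisation built into the definition of the $\Lambda$-level connection rather than a theorem to be proved; apart from a minor sign mismatch with one of the paper's (themselves mutually inconsistent) displays of $\nabla^{DG}$, which does not affect the conclusion, the argument matches the paper's intent.
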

\begin{proof}
    Again, this is routine from the definitions.
\end{proof}

\subsection{Getzler--Gauss--Manin connection}\label{sec:GGM}

Let $R$ be a coefficient ring with a derivation $D:R \to \Omega$, and $\cC$ an $R$-linear $A_\infty$ category. 
We define the operation
\[ B^{1|1}:CC^*(\cC) \otimes CC_*^{nu}(\cC) \to CC_*^{nu}(\cC)\]
\begin{align*}
 B^{1|1}(\varphi|c_0[c_1|\ldots|c_s]) &:= \sum_{j,k,\ell} e^+[c_{j+1}|\ldots|c_k|\varphi^*(\ldots,c_\ell)|\ldots,c_s,c_0,\ldots,c_j ],
\end{align*}
where $\sigma(B^{1|1}) \sigma(CC_*) = \sigma(CC_*) \sigma(e^+)\sigma(\varphi)$.

Suppose that all morphism spaces of $\cC$ admit $D$-connections $\tilde \nabla$ (this is automatically the case, for example, when they are free $R$-modules, as is the case for the big relative Fukaya category). 
We denote by $\tilde \nabla$ the connections induced on $fCC^*(\cC)$ and $fCC_*^-(\cC)$.  
We define the Getzler--Gauss--Manin connection on the chain level by 
\begin{align*}
u\nabla^{GGM}: fCC_*^{-}(\cC) & \to  \Omega \otimes fCC_*^{-}(\cC),\\
u\nabla^{GGM}(\alpha) &:= u\tilde \nabla(\alpha) - b^{1|1}(\tilde \nabla(\mu^*)|\alpha) - uB^{1|1}(\tilde\nabla(\mu^*)|\alpha). 
\end{align*} 
This descends to a connection
$u\nabla^{GGM}: fHC_*^-(\cC) \to \Omega \otimes fHC_*^-(\cC)$. 
It is shown in \cite{Sheridan_formulae} (in the unfiltered case, and over a field, but the argument applies verbatim in the present setting) that the connection is independent of the choice of connections $\tilde \nabla$, on the level of cohomology.

We expand out the terms of the connection on the summands $fCC_*^\vee$ and $fCC_*^\wedge$ of $fCC_*^{nu}$:
\begin{align*}
    b^{1|1}_{\vee\vee}(\tilde \nabla(\mu^*)|\alpha) &= b^{1|1}(\tilde\nabla(\mu^*)|\alpha)\\
    b^{1|1}_{\diamond\wedge} &= 0\\
    b^{1|1}_{\wedge\vee}(\tilde \nabla(\mu^*)|c_0[\ldots|c_s]) &= -\sum_{k} \tilde\nabla(\mu^*)(c_{k+1},\ldots,c_s)[c_0|\ldots|c_k]\\
    B^{1|1}_{\vee\wedge}(\tilde{\nabla}(\mu^*)|c_0[\ldots|c_s]) &= \sum_{j,k,\ell} c_{j+1}[\ldots|c_k|\tilde\nabla(\mu^*)(\ldots,c_\ell)|\ldots,c_s,c_0,\ldots,c_j ],\\
    B^{1|1}_{\diamond \diamond} &= 0 \qquad\text{for $\diamond\diamond \neq \vee\wedge$.}
\end{align*}

\begin{lem}\label{lem:GGM_C_Cbc}
The natural map $fCC_*^-(\cC^{\prebc}) \to fCC_*^-(\cC)$ from Lemma \ref{lem:HC_C_Cbc} respects connections, up to homotopy.
\end{lem}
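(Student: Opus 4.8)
The plan is to reduce the statement to a comparison of two chain-level connections on $fCC_*^-(\cC^{\prebc})$: the Getzler--Gauss--Manin connection $u\nabla^{GGM}$ built using connections $\tilde\nabla^{\prebc}$ on the morphism spaces of $\cC^{\prebc}$, and the pushforward along $F_*$ of the one built using connections $\tilde\nabla$ on $\cC$. Since the excerpt records that $u\nabla^{GGM}$ is independent of the choice of the $\tilde\nabla$ up to homotopy on cohomology, I am free to choose the connections on $\cC^{\prebc}$ compatibly with those on $\cC$ via the functor $F$; concretely, $F$ acts on objects and morphisms in a controlled way (it is the functor of \cite[Lemma 2.14]{relfukii} passing from pre-bounding cochains to the underlying category), and one can pull back a choice of $\tilde\nabla$ on $\cC$ to a choice on $\cC^{\prebc}$, so that $F$ intertwines them on the nose.

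First I would set up the relevant naturality: recall from Lemma \ref{lem:HC_C_Cbc} that $F$ induces a chain map $F_*: fCC_*^-(\cC^{\prebc}) \to fCC_*^-(\cC)$, and observe that $F_*$ is built from the Taylor coefficients $F^d$ of the $A_\infty$ functor together with the induced map on $e^+$. Then I would write out, on both sides, the three constituent pieces of $u\nabla^{GGM}$ — the term $u\tilde\nabla(\alpha)$, the term $b^{1|1}(\tilde\nabla(\mu^*)|\alpha)$, and the term $uB^{1|1}(\tilde\nabla(\mu^*)|\alpha)$ — expanded on the summands $fCC_*^\vee$ and $fCC_*^\wedge$ as in the display just before the lemma. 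The $A_\infty$-functor relation for $F$ (i.e. $F \circ \mu_{\cC^{\prebc}} = \mu_\cC \circ F^{\otimes}$, suitably interpreted, plus the curvature-term bookkeeping since $\cC^{\prebc}$ may be curved) differentiates, under $\tilde\nabla$, to an identity relating $\tilde\nabla^{\prebc}(\mu^*)$ to $\tilde\nabla(\mu^*)$ together with $\tilde\nabla(F)$. This differentiated functor equation is the engine: substituting it into the $b^{1|1}$ and $B^{1|1}$ terms produces, besides the "expected" matching terms, a collection of error terms all of which involve $\tilde\nabla(F)$.

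Next I would package those error terms as the boundary of an explicit chain homotopy $h$. The natural candidate is the operator built from $\tilde\nabla(F)$ inserted into the cyclic bar complex in the same combinatorial positions that $B^{1|1}$ and $b^{1|1}$ use — i.e. a "$1|1$-type" operation with $\tilde\nabla(F)$ in the distinguished slot and the functor $F$ applied to the remaining inputs, extended $u$-linearly and completed. One then checks $(b+uB)\circ h + h\circ(b+uB) = (\id\otimes F_*)\circ u\nabla^{GGM}_{\cC^{\prebc}} - u\nabla^{GGM}_\cC \circ F_*$ by a direct (if lengthy) bookkeeping of the terms, using the $A_\infty$ relations for $F$ and the compatibility $\tilde\nabla d = d\tilde\nabla$ of the connections with the differentials. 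This is essentially the filtered, $u$-linear, curved refinement of the argument in \cite{Sheridan_formulae} showing $F_*$ respects $u\nabla^{GGM}$ up to homotopy; I would cite that reference for the unfiltered prototype and indicate that the filtration and the $e^+$/$fCC_*^\wedge$ bookkeeping go through formally.

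The main obstacle I anticipate is the $fCC_*^\wedge$ (the $e^+$-headed) summand together with curvature: $\cC^{\prebc}$ is only curved-uncurved modulo the relevant ideal, so $F$ has a nonzero zeroth-order term and the functor relation carries curvature contributions, and these must be tracked through $b_{\wedge\vee}^{1|1}$, $B_{\vee\wedge}^{1|1}$, and the $\tilde\nabla(F)$-insertions to confirm no obstruction survives. The saving grace, as in the earlier lemmas of this section, is that everything is controlled by the $\fm$-adic (and $u$-adic) filtrations, so a spectral-sequence / order-by-order argument reduces the curved case to the uncurved one where the formulae of \cite{Sheridan_formulae} apply verbatim.

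\begin{proof}
We use that $u\nabla^{GGM}$ is independent, up to homotopy on cohomology, of the auxiliary connections $\tilde\nabla$ on morphism spaces. We therefore choose connections on the morphism spaces of $\cC^{\prebc}$ compatibly with a choice on $\cC$: the functor $F:\cC^{\prebc}\to\cC$ of \cite[Lemma 2.14]{relfukii} is the identity on objects and, on morphisms, its linear term is the identity, so a choice of $D$-connection $\tilde\nabla$ on the free modules $hom_\cC(C_0,C_1)$ pulls back to one on $hom_{\cC^{\prebc}}(C_0,C_1)$ for which the components $F^d$ of $F$ and the identification of strict units are covariantly constant in the appropriate sense. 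With this choice, the three constituent operators of $u\nabla^{GGM}$ — namely $\alpha\mapsto u\tilde\nabla(\alpha)$, $\alpha\mapsto b^{1|1}(\tilde\nabla(\mu^*)|\alpha)$, and $\alpha\mapsto uB^{1|1}(\tilde\nabla(\mu^*)|\alpha)$ — may be compared directly across $F_*$.

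Differentiating the $A_\infty$-functor equation for $F$ with $\tilde\nabla$ yields an identity expressing $\tilde\nabla^{\prebc}(\mu^*_{\cC^{\prebc}})$ in terms of $\tilde\nabla(\mu^*_\cC)$ pre- and post-composed with $F^{\otimes\bullet}$, plus terms involving $\tilde\nabla(F)$; curvature contributions, present because $\cC^{\prebc}$ is a priori curved, enter only through terms lying in the relevant ideal. Substituting this identity into $b^{1|1}$ and $B^{1|1}$, and using $F_*\circ b = b\circ F_*$ and $F_*\circ B = B\circ F_*$ from Lemma \ref{lem:HC_C_Cbc} and Section \ref{sec:cyc}, one finds that $(\id\otimes F_*)\circ u\nabla^{GGM}_{\cC^{\prebc}}$ and $u\nabla^{GGM}_\cC\circ F_*$ differ by an expression in which every term contains one insertion of $\tilde\nabla(F)$.

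Define a chain homotopy $h:fCC_*^-(\cC^{\prebc})\to\Omega\otimes fCC_*^-(\cC)$ by the $1|1$-type operation that inserts $\tilde\nabla(\mu^*)$-free copies of $\tilde\nabla(F)$ into the cyclic bar complex in the slots dictated by the definitions of $b^{1|1}$ and $B^{1|1}$, applies $F$ to the remaining inputs, and is extended $R[[u]]$-linearly and by continuity. A direct term-by-term comparison, using the $A_\infty$ relations for $F$, the Leibniz rule for $\tilde\nabla$, and $\tilde\nabla d = d\tilde\nabla$, gives
\[
(b+uB)\circ h + h\circ(b+uB) \;=\; (\id\otimes F_*)\circ u\nabla^{GGM}_{\cC^{\prebc}} - u\nabla^{GGM}_\cC\circ F_*.
\]
The bookkeeping on the summand $fCC_*^\wedge$ (the $e^+$-headed chains), and the matching of curvature terms, is handled order-by-order in the $\fm$-adic and $u$-adic filtrations, reducing to the uncurved, unfiltered statement established in \cite{Sheridan_formulae}, whose proof then applies verbatim. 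Hence $fCC_*^-(\cC^{\prebc})\to fCC_*^-(\cC)$ respects the connections up to homotopy.
\end{proof}
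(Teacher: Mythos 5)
Your proof is correct and takes essentially the same approach as the paper's: the paper simply cites \cite[Theorem 3.32]{Sheridan_formulae} (adapted to the filtered setting) applied to the functor $F$, and your argument is a reconstruction of the proof of that cited theorem — differentiate the $A_\infty$-functor equation, collect the $\tilde\nabla(F)$-terms into a $1|1$-type chain homotopy, and reduce the curved/filtered bookkeeping to the uncurved unfiltered case order-by-order. One small caveat in your write-up: the assertion that the components $F^d$ can be made covariantly constant should be qualified to $d\geq 1$ (here $F^1=\id$, $F^{\geq 2}=0$); the curvature term $F^0$, which is the pre-bounding cochain $b$ itself, is precisely the piece whose derivative $\tilde\nabla(b)$ drives the homotopy, so it cannot in general be taken flat.
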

\begin{proof}
    This follows by applying \cite[Theorem 3.32]{Sheridan_formulae} (adapted to the filtered setting) to the functor $F$ from \cite[Lemma 2.14]{relfukii}.
\end{proof}

\section{Recollection of the formalism for defining chain-level Floer theoretic operations}

For the reader's convenience, we give an informal summary of the formalism developed for defining chain-level Floer-theoretic operations in \cite{relfukii}, to which we refer for all details.

\subsection{Systems of families of domains}

A \emph{mixed curve} consists of a Riemann surface with boundary, and interior and boundary marked points, together with a union of intervals and rays attached at their boundary points to interior marked points of the curve. 
The interior marked points are designated as either `stabilizing' or `bulk'; the bulk marked points get an endpoint of a ray or interval attached to them. 

Given a stable topological type $\top$ of mixed curves, there is a moduli spaces $\cC(\top)$ of stable mixed curves of that topological type, parametrizing the complex structure on the curve as well as the lengths of the intervals (which may lie in $[0,\infty]$); and it has a Deligne--Mumford compactification $\Cbar(\top)$.

A \emph{choice of directions} for a mixed curve consists of a designation of the boundary marked points and nodes, as well as bulk marked points, as `incoming' or `outgoing'. 

A \emph{family of domains} $\Rbar(F)$ can be thought of as a `stratified chain' in one of the Deligne--Mumford moduli spaces of stable mixed curves, with extra decorations. 
In slightly more detail, it consists of a certain kind of stratified space called a `semianalytic pseudomanifold with boundary', $\Rbar(F)$, together with a stratified map $\Rbar(F) \to \Cbar(\top_F)$ for some topological type of mixed curve $\top_F$, a continuous family of choices of directions for the curves parametrized by $\Rbar(F)$, and a subset $P^{sym}(s)$ of the set of stabilizing marked points, for each stratum $s$ of $\Rbar(F)$, with the property that $s \subseteq t \Rightarrow P^{sym}(s) \supseteq P^{sym}(t)$. 
The stabilizing points contained in $P^{sym}(s)$ are called `symmetric', while the other stabilizing points are called `non-symmetric'. 
We require that the subgroup $Sym(s) \subseteq Sym(P^{sym}(s))$, whose action on the topological type $\top_s$ by relabelling marked points preserves the topological type, acts on $s$ by an orientation-preserving diffeomorphism.

There are various natural operations on families of domains: we may take the disjoint union of several families; their boundary attachment by joining an incoming boundary marked point to an outgoing boundary marked point; their interior attachment with length parameter $0$, $\infty$, or $[0,\infty]$, which joins one bulk marked point to another by an interval of length $0$, $\infty$, or by the family of all intervals of lengths in $[0,\infty]$; we may pass to a boundary stratum of $\Rbar(F)$; and we may stabilize the family by adding incoming boundary marked points, stabilizing marked points, or incoming bulk marked points (where the positions of these additional marked points are unconstrained). 

We define a \emph{system of families of domains} to be a set of families of domains which is closed under all of the above operations. We examine what this means in the example of the $A_\infty$ operations which define the big relative Fukaya category, following \cite[Section 5.2]{relfukii}. 
For any $s \in \Z_{\ge 0}$ and $\ell = (\ell_\bulk,\ell_\stab) \in (\Z_{\ge 0})^2$, such that $s+2\ell \ge 3$, we define $\top_{\mu,s,\ell}$ to be the topological type consisting of a disc with $s+1$ cyclically ordered boundary marked points, of which the first is outgoing and the rest are incoming; $\ell_\bulk$ bulk marked points, all of which are incoming; and $\ell_\stab$ stabilizing marked points, all of which are symmetric on all boundary strata. 
We define $\Rbar(\mu,s,\ell) \to \Cbar(\top_{\mu,s,\ell})$ to be the fundamental chain, i.e., the identity map. 

The family $\Rbar(\mu,s,\ell)$ has codimension-$1$ boundary strata consisting of nodal discs; each of these is obtained by boundary attachment from a disjoint union of two families $\Rbar(\mu,s_1,\ell_1)$ and $\Rbar(\mu,s_2,\ell_2)$. 
In fact, the families $\Rbar(\mu,s,\ell)$ may also be considered as stabilizations of the empty family; we refer to \cite[Section 3.1]{relfukii} for explanation on this point.

\subsection{Floer-theoretic operations}

Given a family of domains $\Rbar(F) \to \Cbar(\top_F)$, we define a `Lagrangian labelling' $\bL$ to associate a Lagrangian brane $L_C$ to each boundary component of the curve after all boundary marked points are punctured, which `match' at the boundary nodes. 
For any incoming (respectively outgoing) boundary marked point $p$, we define $L_p^-/L_p^+$ to be the Lagrangian labels in positive/negative (respectively negative/positive) direction from $p$. 
We define
\begin{multline}
B^\ex(F,\bL) := Hom^*\left(\S(F) \otimes \bigotimes_{p \in P^{bulk,in}} \sigma(-2)QC^*(X;\Z) \otimes \bigotimes_{p \in P^{\partial,in}} \fuk^{\ex}(L_p^-,L_p^+) ,\right.\\
\left.\bigotimes_{p \in P^{bulk,out}} QC^*(X;\Z) \otimes  \bigotimes_{p \in P^{\partial,out}} \fuk^{\ex}(L_p^-,L_p^+)\right),
 \end{multline}
 where $\S(F)$ is a certain graded $\Z_2$-torsor associated to the family $\Rbar(F)$, $P^{bulk,in/out}$ is the set of incoming/outgoing bulk marked points, $P^{\partial,in/out}$ the set of incoming/outgoing boundary marked points. 

Associated to a family of domains $\Rbar(F)$ with at most one non-symmetric stabilizing marked point, and satisfying a condition called `$f^{sym}$-stability', together with a choice of Lagrangian labelling $\bL$, a homotopy class $A$ of maps from the domain into $X$, and an element $q \in Q$ in the case that there is a non-symmetric stabilizing marked point in the family, we define an element
$$F^\ex_{\bL,A,(q)} \in B^\ex(F) \otimes \sigma(\mu(A))$$
by counting the $0$-dimensional components of the moduli space of pseudoholomorphic maps from a mixed curves with domain parametrized by an element of $\Rbar(F)$, boundary constrained to lie on the Lagrangians $\bL$, homotopy class $A$, and the non-symmetric stabilizing marked point (if it exists) constrained to lie on $V_q$. 

We then define
$$F^\sm_{\bL,(q)} := \sum_A \nov^A \cdot F^\ex_{\bL,A,(q)} \in R^\sm \otimes B^\ex(F).$$
It is straightforward to verify that these operations behave as expected under disjoint union, boundary attachments, and interior attachments with length parameter $\infty$ (see \cite[Lemmas 4.16, 4.17]{relfukii}).  
We also show that if $\Rbar(F)$ is a $f^{sym}$-stable family of domains, all of whose codimension-$1$ boundary strata are $f^{sym}$-stable, then 
\begin{equation}\label{eq:codim 1}\partial(F^\sm_{\bL,(q)}) = \sum_{F'} (F')^\sm_{\bL,(q)}
\end{equation}
where the sum is over all codimension-$1$ boundary components $\Rbar(F')$ of $\Rbar(F)$ (see \cite[Lemma 4.18]{relfukii}). 

Finally, given a family of domains $\Rbar(F)$, and a subset $P \subset P^{\bulk,in}$ of the incoming bulk marked points, we define 
\begin{multline}
B^\big(F,P) := Hom\left(\S(F,P) \otimes \bigotimes_{p \in P} QC^*(X;R^{big}) \otimes \bigotimes_{p \in P^{\partial,in}} \cF^{big}(L_p^-,L_p^+) ,\right.\\
\left.\bigotimes_{p \in P^{bulk,out}} QC^*(X;R^{big}) \otimes  \bigotimes_{p \in P^{\partial,out}} \cF^{big}(L_p^-,L_p^+)\right)
 \end{multline}
 where $\S(F,P) = \S(F)\sigma(-2|P|)$. 
 
There is a natural map
$$B^\sm(F) \otimes_{R^{small}} R^{big} \to B^\big(F,P),$$
induced by the map
\begin{align*}
    \bigotimes_{p \in P^{bulk,in} \setminus P} \sigma(2)QC^*(X;R^{big})^\vee & \to R^{big} \\
    p_1 \otimes \ldots \otimes p_\ell &\mapsto \frac{1}{\ell!} \novb_1 \ldots \novb_\ell.
\end{align*}
We denote the image of $F^\sm_{\bL,(q)} \otimes 1$ under this map by $F^\big_{P,\bL,(q)}$. 

The operations we define in this paper are all constructed to be equal to $F^\big_{P,\bL,(q)}$ for an appropriate family $\Rbar(F)$; and we prove relations between them using \eqref{eq:codim 1}.

\section{Weak proper Calabi--Yau structure}\label{sec:wpcy}

In this section we construct the weak proper Calabi--Yau structure on $\fuk^\sm(X,D;\Lambda^\sm)^\bc$, proving Theorem \ref{thm:wpcy}. 

We define $\Rbar(wpCY,0,0)$ to be the subspace of the moduli space of discs with two incoming boundary marked points, $p_0^\partial$ and $p_1^\partial$, and one interior marked point $p_1^{int}$, where the disc can be parametrized as the unit disc with $p_0^\partial$ lying at $-1$, $p_1^\partial$ lying at $+1$, and $p_1^{int}$ lying at $0$. Let $\Rbar(wpCY,s,\ell)$ be its stabilization, where $s=(s_1,s_2)$ records the number of boundary marked points with negative imaginary part, and with positive imaginary part respectively, and $\ell=(\ell_\bulk,\ell_\stab)$ records the number of bulk and symmetric stabilizing marked points respectively. 

We define $\Rbar(wpCY^{-1},s,\ell)$ to be the same as $\Rbar(wpCY,s,\ell)$ except that the boundary marked points $p_0^\partial$ and $p_1^\partial$ are outgoing rather than incoming. 

We define $\Rbar(H^1_{wpCY},0,0)$ to be the subspace of the moduli space of discs with one incoming boundary marked point, $p_0^\partial$, one outgoing boundary marked, $p_1^\partial$, and two interior marked points $p_1^{int}$ and $p_2^{int}$, where the disc can be parametrized as the unit disc with $p_0^\partial$ lying at $-1$, $p_1^\partial$ lying at $+1$, $p_1^{int}$ at $-t$, and $p_2^{int}$ at $+t$, for $t \in [0,1]$. 
It has two boundary components: one, at $t=1$, is identified with the boundary gluing of $\Rbar(wpCY,0,0)$ with $\Rbar(wpCY^{-1},0,0)$, and the other, at $t=0$, is the moduli space $\Rbar(wpCY_{\id},0,0)$ of stable discs with one incoming boundary marked point $p_0^\partial$, one outgoing boundary marked point $p_1^\partial$, and a sphere with marked points $p_1^{int}$ and $p_2^{int}$, attached at an interior node. 
We define $\Rbar(H^1_{wpCY},s,\ell)$ to be its stabilization.

We define $\Rbar(H^2_{wpCY},s,\ell)$ to be the same as $\Rbar(H^1_{wpCY},s,\ell)$, except that the two boundary marked points are outgoing rather than incoming.

\begin{thm}\label{thm:wpcy_big}
    There exist:
    \begin{itemize}
        \item a morphism 
                $$wpCY \in hom_{\bimod{\fuk^\big}{\fuk^\big}}(\sigma(n) \fuk^\big_\Delta,(\fuk^\big_\Delta)^\vee)$$
                which is closed, i.e. $\partial(wpCY) = 0$;
        \item a morphism 
                $$wpCY^{-1} \in hom_{\bimod{\fuk^\big}{\fuk^\big}}((\fuk^\big_\Delta)^\vee,\sigma(n) \fuk^\big_\Delta)$$
                which is closed, i.e. $\partial(wpCY^{-1}) = 0$;
        \item an element 
        $$H^1_{wpCY} \in hom^{-1}_{\bimod{\fuk^\big}{\fuk^\big}}(\fuk^\big_\Delta,\fuk^\big_\Delta)$$
        satisfying 
        \begin{equation}\label{eq:H1wpCY}
        wpCY^{-1} \circ wpCY = \id + \partial(H^1_{wpCY});
        \end{equation}
        \item an element 
        $$H^2_{wpCY} \in hom^{-1}_{\bimod{\fuk^\big}{\fuk^\big}}((\fuk^\big_\Delta)^\vee,(\fuk^\big_\Delta)^\vee)$$
        satisfying
        \begin{equation}\label{eq:H2wpCY}
        wpCY \circ wpCY^{-1} = \id + \partial(H^2_{wpCY}).
        \end{equation}
        \end{itemize}
    In particular, the morphism
    $$[wpCY] \in hom_{H(\bimod{\fuk^\big}{\fuk^\big})}(\sigma(n) \fuk^\big_\Delta,(\fuk^\big_\Delta)^\vee)$$ 
    is an isomorphism, and hence defines an $n$-wpCY structure on $\fuk^\big(X,D)$.
\end{thm}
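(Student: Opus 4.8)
The plan is to construct all four morphisms as the Floer-theoretic operations $F^\big_{P,\bL}$ associated, in the sense of Section~3, to the systems of families of domains $\Rbar(wpCY,s,\ell)$, $\Rbar(wpCY^{-1},s,\ell)$, $\Rbar(H^1_{wpCY},s,\ell)$ and $\Rbar(H^2_{wpCY},s,\ell)$ introduced above, and then to read off each asserted identity from the codimension-one boundary formula \eqref{eq:codim 1}. First I would fix a coherent choice of perturbation data making all four systems $f^{sym}$-stable, compatibly with the data already chosen for the $A_\infty$ families $\Rbar(\mu,s,\ell)$; this is possible since each of our families is obtained from the $\mu$-families by the operations (stabilization, boundary attachment, interior attachment, passage to a boundary stratum) under which a system of families of domains is closed by definition. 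Next I would observe that $F^\big$ for $\Rbar(wpCY,s,\ell)$ is a bimodule morphism of the stated type: a disc carrying the two \emph{incoming} boundary marked points $p_0^\partial,p_1^\partial$, with the remaining boundary inputs distributed cyclically along the two arcs between them, no boundary outputs, and the prescribed bulk insertions at the non-distinguished interior marked points, defines (after summing over homotopy classes) an $R^\big$-multilinear functional on the two-pointed bar complex of $\fuk^\big$; by hom--tensor adjunction, exactly as in \eqref{eq:lin_duality} with $\cM=\fuk^\big_\Delta$, such a functional is the same datum as a bimodule morphism $\fuk^\big_\Delta\to(\fuk^\big_\Delta)^\vee$. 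The distinguished interior marked point $p_1^{int}$, pinned at the centre and carrying no constraint, rigidifies the otherwise one-dimensional reparametrization group, while the degree-$n$ shift $\sigma(n)$ on the source records the dimension of the relevant moduli spaces (equivalently, matches the degree of the open--closed map, to which this operation is closely related). The family $\Rbar(wpCY^{-1},s,\ell)$, with $p_0^\partial,p_1^\partial$ \emph{outgoing}, then yields a morphism $(\fuk^\big_\Delta)^\vee\to\sigma(n)\fuk^\big_\Delta$, and the one-parameter families $\Rbar(H^i_{wpCY},s,\ell)$ yield morphisms of degree $-1$ of the indicated types.

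Next I would establish closedness of $wpCY$ and $wpCY^{-1}$. Each of $\Rbar(wpCY,s,\ell)$ and $\Rbar(wpCY^{-1},s,\ell)$ is a fundamental chain, so its only codimension-one boundary strata are nodal discs obtained by boundary attachment, i.e.\ disjoint unions of an $A_\infty$ family $\Rbar(\mu,\cdot,\cdot)$ with a $\Rbar(wpCY,\cdot,\cdot)$ (respectively $\Rbar(wpCY^{-1},\cdot,\cdot)$) family --- there is no interior degeneration, thanks to the Cieliebak--Mohnke stabilizing divisors built into the relative Fukaya formalism. By \eqref{eq:codim 1}, $\partial(wpCY)$ equals the sum of the operations attached to these strata; comparing this sum with the definition of the bimodule differential on $hom_{\bimod{\fuk^\big}{\fuk^\big}}(\sigma(n)\fuk^\big_\Delta,(\fuk^\big_\Delta)^\vee)$ shows it is exactly the differential applied to $wpCY$, hence zero. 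The same argument gives $\partial(wpCY^{-1})=0$.

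Finally I would read off the homotopies. The codimension-one boundary of $\Rbar(H^1_{wpCY},s,\ell)$ consists of: (i) the $t=1$ stratum, the boundary gluing of $\Rbar(wpCY,\cdot,\cdot)$ with $\Rbar(wpCY^{-1},\cdot,\cdot)$, whose operation is the composition $wpCY^{-1}\circ wpCY$; (ii) the $t=0$ stratum $\Rbar(wpCY_{\id},\cdot,\cdot)$, a disc with one incoming and one outgoing boundary marked point carrying a sphere bubble at an interior node, whose operation is the identity morphism of $\fuk^\big_\Delta$ --- the disc part reproduces the diagonal bimodule structure, and the sphere part, carrying only the two unconstrained interior points, contributes solely through its constant (ghost) component with coefficient $1$; and (iii) the boundary-attachment strata with an $A_\infty$ factor, whose operations assemble into $\partial(H^1_{wpCY})$. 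By \eqref{eq:codim 1} these contributions sum to zero, which is exactly \eqref{eq:H1wpCY}; the symmetric analysis of $\Rbar(H^2_{wpCY},s,\ell)$ gives \eqref{eq:H2wpCY}. From \eqref{eq:H1wpCY} and \eqref{eq:H2wpCY}, the classes $[wpCY]$ and $[wpCY^{-1}]$ are mutually inverse in $H(\bimod{\fuk^\big}{\fuk^\big})$, so $[wpCY]$ is an isomorphism and defines an $n$-wpCY structure on $\fuk^\big(X,D)$.

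I expect the main obstacle to be the combinatorial and analytic bookkeeping underlying the last two steps: checking $f^{sym}$-stability of every boundary stratum appearing, so that \eqref{eq:codim 1} applies uniformly; confirming that the stabilizing divisors genuinely kill all interior (sphere-bubble) degenerations other than the single ghost-sphere stratum at $t=0$; and --- most delicately --- verifying that that stratum really evaluates to $\id$ rather than a nontrivial self-equivalence, i.e.\ that the disc with one incoming and one outgoing boundary marked point together with a ghost sphere is the correct chain-level model for the identity bimodule endomorphism of $\fuk^\big_\Delta$.
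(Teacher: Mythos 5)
Your overall strategy is the same as the paper's: define the four morphisms as the Floer-theoretic operations $F^\big$ attached to the families $\Rbar(wpCY,s,\ell)$, $\Rbar(wpCY^{-1},s,\ell)$, $\Rbar(H^1_{wpCY},s,\ell)$, $\Rbar(H^2_{wpCY},s,\ell)$, deduce closedness and the homotopy relations from the codimension-one boundary formula \cite[Lemma 4.18]{relfukii}, and then identify each boundary stratum with a term of the asserted identity. That part is fine.

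The gap is in your treatment of the $t=0$ stratum $\Rbar(wpCY_{\id},s,\ell)$, exactly the point you yourself flag as delicate. An element of $hom_{\bimod{\fuk^\big}{\fuk^\big}}(\fuk^\big_\Delta,\fuk^\big_\Delta)$ has components $\phi^{s_1|1|s_2}$ for all $s_1,s_2\ge 0$; the identity is characterised by $\phi^{0|1|0}=\id$ and $\phi^{s_1|1|s_2}=0$ for $(s_1,s_2)\ne(0,0)$. Saying that ``the disc part reproduces the diagonal bimodule structure, and the sphere part contributes solely through its constant (ghost) component with coefficient $1$'' does not yield this. Your ghost-sphere observation constrains only the sphere component; even when the attached sphere is constant, the disc component of a stable map from a curve in $\Rbar(wpCY_\id,s,\ell)$ can be non-constant, and the resulting higher components $\phi^{s_1|1|s_2}$ would then be nonzero. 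The ``coefficient $1$'' count is also not automatic for the $(0,0)$ component without a further argument.

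The paper closes this gap by a perturbation-data choice: the perturbation data for $(wpCY_\id,s,\ell)$ are taken to be \emph{independent of the position at which the nodal sphere is attached}. Because that attachment point is a free one-parameter family in the domain, any non-constant rigid solution would come in a one-parameter family of solutions, contradicting rigidity; hence only strips that are constant along their length contribute, and these contribute precisely the identity bimodule endomorphism (compare \cite[Theorem 5.8]{relfukii}). This is the ingredient your proposal is missing; with it, your argument matches the paper's.
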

\begin{proof}
    We define a bimodule morphism
    $$wpCY: \sigma(n) \fuk^\big(X,D)_\Delta \to \fuk^\big(X,D)^\vee_\Delta$$
    by setting
    $$wpCY^{s_1|1|s_2} = \sum_{\ell,\bL} (wpCY,(s_1,s_2),\ell)_{\bL}(e). $$
    It is closed, by \cite[Lemma 4.18]{relfukii} applied to the family $(wpCY,s,\ell)$, and hence defines a morphism $[wpCY]$ in $H(\bimod{\fuk^\big(X,D)}{\fuk^\big(X,D)})$.
    
    We similarly define a morphism $[wpCY^{-1}]$ in the opposite direction, and show it is closed, using the family $(wpCY^{-1},s,\ell)$.

    Applying \cite[Lemma 4.18]{relfukii} to the family $(H^1_{wpCY},s,\ell)$ yields the equation
    $$wpCY^{-1} \circ wpCY = wpCY_{\id} + \partial(H^1_{wpCY})$$
    in $hom_{\bimod{\fuk^\big}{\fuk^\big}}(\fuk^\big_\Delta,\fuk^\big_\Delta)$. 
    For the family $(wpCY_{\id},s,\ell)$, we may choose perturbation data to be independent of the position at which the nodal sphere is attached; this ensures that the only contribution comes from strips which are constant along their length, so that $wpCY_{\id}$ is equal to the identity endomorphism (compare \cite[Theorem 5.8]{relfukii}).
    This completes the proof of \eqref{eq:H1wpCY}.
    The proof of \eqref{eq:H2wpCY} is analogous.
\end{proof}

We now define $\Rbar(H^3_{wpCY},0,0)$ to be the subspace of the moduli space of discs with two incoming boundary marked points $p_0^\partial$ and $p_1^\partial$, and one interior marked point $p_1^{int}$, where the disc can be parametrized as the unit disc with $p_0^\partial$ lying at $-1$, $p_1^\partial$ lying at $+1$, and $p_1^{int}$ lying at $-ti$ for $t \in [0,1]$. 
It has two boundary components: one, at $t=0$, is identified with $\Rbar(wpCY,0,0)$; the other, at $t=1$, is identified with the attachment of $\Rbar(\cO\cC,0,0)$ with $\Rbar(\mu,2,0)$. 
We define $\Rbar(H^3_{wpCY},s,\ell)$ to be its stabilization. 

\begin{lem}\label{lem:wpcy_oc}
    Let $\varphi \in (\sigma(n)fCC_*(\fuk^\big))^\vee$ be the element
    $\varphi(\alpha) := \langle \cO\cC^\big(\alpha),e\rangle$. 
    Then there exists $H^3_{wpCY} \in hom_{\bimod{\fuk^\big}{\fuk^\big}}(\sigma(n)\fuk^\big_\Delta, (\fuk^\big)^\vee_\Delta)$ such that
    $$G_{\fuk^\big}(\varphi) = wpCY + \partial(H^3_{wpCY}).$$
\end{lem}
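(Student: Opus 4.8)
The plan is to construct $H^3_{wpCY}$ geometrically, as the operation associated to the family $\Rbar(H^3_{wpCY},s,\ell)$ introduced above, and then read off the claimed identity from the codimension-one boundary relation \eqref{eq:codim 1}, exactly as in the construction of $H^1_{wpCY}$ in Theorem~\ref{thm:wpcy_big}. Concretely, we set
$$(H^3_{wpCY})^{s_1|1|s_2} := \sum_{\ell,\bL}(H^3_{wpCY},(s_1,s_2),\ell)_{\bL}(e),$$
using the family $\Rbar(H^3_{wpCY},s,\ell)$, with the interior marked point $p_1^{int}$ carrying the insertion $e$. Because this family carries the extra interval parameter $t \in [0,1]$, the resulting element $H^3_{wpCY}$ has degree $-1$ in $hom_{\bimod{\fuk^\big}{\fuk^\big}}(\sigma(n)\fuk^\big_\Delta,(\fuk^\big)^\vee_\Delta)$, so that $\partial(H^3_{wpCY})$ has degree $0$, matching $wpCY$ and $G_{\fuk^\big}(\varphi)$.

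Applying \cite[Lemma 4.18]{relfukii} (that is, \eqref{eq:codim 1}) to the family $\Rbar(H^3_{wpCY},s,\ell)$, and keeping track of the Koszul signs via the $\sigma$-torsor conventions of Section~\ref{sec:cyc}, yields an equation of the form
$$\Phi = wpCY + \partial(H^3_{wpCY})$$
in $hom_{\bimod{\fuk^\big}{\fuk^\big}}(\sigma(n)\fuk^\big_\Delta,(\fuk^\big)^\vee_\Delta)$. Here the $t=0$ boundary stratum is, by construction, the family $\Rbar(wpCY,s,\ell)$ and contributes $wpCY$; the $t=1$ boundary stratum is the boundary attachment of $\Rbar(\cO\cC,0,0)$ with $\Rbar(\mu,2,0)$ (and its stabilizations), whose operation we denote $\Phi$; one chooses the perturbation data on $\Rbar(H^3_{wpCY},s,\ell)$ to restrict to the data already fixed on these boundary families; and the remaining codimension-one strata (boundary-disc bubbling not involving $p_1^{int}$) account for the bimodule differential $\partial(H^3_{wpCY})$, while sphere bubbling and degenerations of the interior insertion are controlled by the semipositivity hypothesis and the Morse-flowline formalism of \cite{relfukii}. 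It therefore remains to identify $\Phi$ with $G_{\fuk^\big}(\varphi)$.

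This is a matter of unwinding the definition of $G_\cC$ from Section~\ref{sec:cyc}. By construction, $G_{\fuk^\big}(\varphi)$ is the image of $\varphi$ under the dual of the collapse map $\fuk^\big_\Delta \otimes_{\fuk^\big} \fuk^\big_\Delta \to \fuk^\big_\Delta$ of \cite[Equation (2.7)]{Seidel_nat_transf}, followed by the hom--tensor adjunction/linear-duality isomorphism \eqref{eq:lin_duality} with $\cM = \fuk^\big_\Delta$. Spelled out on inputs, the resulting bimodule morphism first uses the $A_\infty$ products of $\fuk^\big$ to merge the distinguished diagonal element together with a consecutive block of the remaining inputs into a single Hochschild input --- this is the collapse map, computed by the families $\Rbar(\mu,s,\ell)$ --- and then applies $\varphi = \langle \cO\cC^\big(-),e\rangle$ to the result, i.e.\ the chain-level open--closed map, computed by the families $\Rbar(\cO\cC,s,\ell)$, with its interior output paired against $e$. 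But this is precisely what the $t=1$ stratum of $\Rbar(H^3_{wpCY},s,\ell)$ computes: a $\Rbar(\mu,\cdot,\cdot)$-disc whose boundary output is glued to the boundary input of a $\Rbar(\cO\cC,0,0)$-disc whose interior point carries $e$. Hence, once the perturbation data are matched, $\Phi = G_{\fuk^\big}(\varphi)$ term by term, as required.

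The one step demanding genuine care is this last identification: one must verify that the purely algebraic recipe for $G_\cC$ --- hom--tensor adjunction, linear duality of bimodules, and the Seidel collapse map --- reproduces the geometric operation of the $t=1$ stratum together with all Koszul signs. Given the conventions already established in Section~\ref{sec:cyc} and \cite[Section 2]{relfukii}, however, this is a routine (if lengthy) comparison of explicit formulae, of the same nature as the sign verifications already carried out for $wpCY$, $wpCY^{-1}$, and the open--closed map.
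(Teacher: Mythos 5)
Your proof is correct and takes the same route as the paper's: the paper's entire proof is the one-liner ``Follows by applying \cite[Lemma 4.18]{relfukii} to the family $\Rbar(H^3_{wpCY},s,\ell)$,'' and you have simply spelled out the boundary-stratum bookkeeping (the $t=0$ face giving $wpCY$, the $t=1$ face giving $G_{\fuk^\big}(\varphi)$ via the collapse map plus $\cO\cC^\big$ paired with $e$, and disc bubbling giving the $\partial$ terms) that the authors leave implicit. Your identification of where the genuine work lies --- matching the algebraic formula for $G_\cC$ against the geometric $t=1$ operation, including Koszul signs --- is also apt, and is exactly the kind of check the paper relegates to Appendix~\ref{sec:signs} and \cite[Appendix C]{relfukii}.
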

\begin{proof}
    Follows by applying \cite[Theorem 4.18]{relfukii} to the family $\Rbar(H^3_{wpCY},s,\ell)$.
\end{proof}

\begin{proof}[Proof of Theorem \ref{thm:wpcy}]
    Consider the morphisms $wpCY$, $wpCY^{-1}$, $H^1_{wpCY}$ and $H^2_{wpCY}$ from Theorem \ref{thm:wpcy_big}. 
    Base-changing them to $\Lambda^\sm_{\ge 0}$ gives an $n$-wpCY structure on $\fuk^\sm(X,D;\Lambda^\sm_{\ge 0})$. 
    Passing to bounding cochains gives an $n$-wpCY structure on $\fuk^\sm(X,D;\Lambda^\sm)^\bc$, by Lemma \ref{lem:wpcy_bc}. 

    By Lemma \ref{lem:wpcy_oc}, base-changed to $\Lambda^\sm_{\ge 0}$, the $n$-wpCY structure on $\fuk^\sm(X,D;\Lambda^\sm)$ is given by $G_{\fuk^\sm}(\varphi)$; and hence, by Lemma \ref{lem:wpcy_gc}, the $n$-wpCY structure on $\fuk^\sm(X,D;\Lambda^\sm)^\bc$ is given by $G_{\fuk^\sm(X,D;\Lambda^\sm)^\bc}(\phi)$. 
    This completes the proof.
\end{proof}

\section{Cyclic open--closed map}\label{sec:cyc oc}

In this section we construct the cyclic open--closed map, proving Theorem \ref{thm:OC_minus_bc}.

Let $D^2$ be the unit disc in $\C$, and $S^1$ its boundary. 
Define $\Rbar(\cO\cC,s,\ell)$ be the family introduced in \cite[Section 5.5]{relfukii}. 
There are analytic maps 
$$\theta_j: \Rbar(\cO\cC,s,\ell) \to S^1,$$
for $j=0,\ldots,s$, which sends a disc to the position of the $j$th marked boundary point on $S^1$, if the disc is parametrized so that the outgoing interior marked point lies at the origin and $\theta_0$ is constant equal to $1 \in S^1$. Let $r:\Rbar(\cO\cC,s,\ell) \to \Rbar(\cO\cC,s,\ell)$ be the automorphism which relabels the boundary marked points, so $r^*\theta_j = \theta_{j-1}$.

We now introduce a set $K = \mathbb{N}_0 \times \{\vee,\wedge\}$, and we will denote an element $(k,\vee)$ (respectively, $(k,\wedge)$) of this set by $k^\vee$ (respectively, $k^\wedge$).

We define 
$$D_k = \{(z_1,\ldots,z_k) \in \C^k: \sum_{i=1}^k |z_i|^2 \le 1\}.$$
We inductively define a stratification on 
$$\Rbar({\cO\cC},k^\vee,s,\ell) := \Rbar(\cO\cC,s,\ell) \times D_k$$
by identifying 
$$r^j \cdot \Rbar({\cO\cC},(k-1)^\vee,s,\ell) \subset  \Rbar({\cO\cC}^{S^1},(k-1)^\wedge,s,\ell) := \Rbar(\cO\cC,s,\ell) \times \partial D_k$$ 
as the subset where $\arg(z_k) = \theta_j$,\footnote{We define ``$\arg(z) = \theta$'' to mean ``$z=0$, or $z \neq 0$ and $\arg(z) = \theta$''.} and defining $r^j \cdot \Rbar({\cO\cC},(k-1)^\wedge,s,\ell) \subset \Rbar(\cO\cC,s,\ell) \times \partial D_k$ as the subset where $\theta_j \le \arg(z_k) \le \theta_{j+1}$. 
Note that we can identify $r^j \cdot \Rbar({\cO\cC},(k-1)^\vee,s,\ell)$ with $\Rbar({\cO\cC},(k-1)^\vee,s,\ell) = \Rbar(\cO\cC,s,\ell) \times D_{k-1}$ via projection to the first $k-1$ factors of $D_k$.

We need to specify the identifications of boundary strata for these families:
\begin{itemize}
\item We specify that $\Rbar({\cO\cC},0^\vee,s,\ell)$ is identified with $ \Rbar(\cO\cC,s,\ell)$. 
\item We specify that $\Rbar({\cO\cC},k^\vee,s,\ell)$ is a stabilization of $\Rbar({\cO\cC},k^\vee,0,0)$, with all stabilizing marked points symmetric. 
\item We specify that $\Rbar({\cO\cC}^{S^1},k^\wedge,s,\ell)$ is a stabilization of $\Rbar({\cO\cC},k^\wedge,1,0)$. (Recall that this imposes conditions on the boundary identifications of disc and sphere bubbles; this implies corresponding boundary identifications for the sub-family $\Rbar({\cO\cC},k^\wedge,s,\ell)$.) 
\item We identify $\Rbar({\cO\cC},k^\vee,s,\ell)$ and $r \cdot \Rbar({\cO\cC},k^\vee,s,\ell)$ as codimension-$1$ boundary components of $\Rbar({\cO\cC},k^\wedge,s,\ell)$.
\item We identify $r^i \cdot \Rbar({\cO\cC},(k-1)^\wedge,s,\ell)$ as a codimension-$1$ boundary component of $\Rbar({\cO\cC},k^\vee,s,\ell)$, for each $0 \le i \le s$.  
\end{itemize}

\begin{rem}
    To relate these families with the corresponding ones in \cite[Section 5.5]{Ganatra:cyclic}, observe that there are maps
    \begin{align*}
        _k \overline{\check{\cR}}^1_s & \to \Rbar({\cO\cC},k^\vee,s,0,0)\\
        _k \overline{\hat{\cR}}^1_s & \to \Rbar({\cO\cC},k^\wedge,s,0,0)
    \end{align*}
    defined by sending
\begin{align*}
    (p_1,\ldots,p_k) &\mapsto (z_1,\ldots,z_k) \qquad \text{where}\\
    z_i & = \begin{cases} p_1& \text{if $i=1$,} \\
    \left( 1-\frac{|p_{i-1}|^2}{|p_i|^2} \right) p_i& \text{if $i>1$.}
    \end{cases}
\end{align*}
Note that the `degenerate' strata where $|p_{i-1}| = |p_i|$ in \cite[Section 5.5]{Ganatra:cyclic} get sent to the locus where $z_i = 0$ in the new version, which is not a stratum; so these strata have been collapsed and the stratification coarsened to remove them. 
\end{rem}

\begin{lem}\label{lem:oc cyc chain}
    There are maps
    \begin{align*}
        \cO\cC^\vee_k: \sigma(\cO\cC)\sigma(-2k)fCC^{\vee}_*(\fuk^\big(X,D)) & \to QH^*(X;R^\big)\\
        \cO\cC^\wedge_k: \sigma(\cO\cC)\sigma(-2k)fCC^{\wedge}_*(\fuk^\big(X,D)) & \to QH^*(X;R^\big)
    \end{align*}
    satisfying:
    \begin{align}
    \label{eq:OC check rel}    \cO\cC^\vee_k \circ b + \cO\cC^\wedge_{k-1} \circ B & = \partial \circ \cO\cC^\vee_k \\
    \label{eq:OC hat rel}    \cO\cC^\wedge_k \circ b_{\wedge\wedge} + \cO\cC^\vee_k \circ b_{\wedge\vee} & = \partial \circ \cO\cC^\wedge_k,
    \end{align}
    where $\sigma(b) = \sigma(2)\sigma(B) = \sigma(\partial)$, and furthermore,
    \begin{align*}
        \cO\cC^\vee_0|_{fCC_*(\fuk^\big(X,D))}&= \cO\cC^\big.
    \end{align*}
\end{lem}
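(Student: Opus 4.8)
The plan is to realize $\cO\cC^\vee_k$ and $\cO\cC^\wedge_k$ as Floer-theoretic operations of the type recalled above, associated to the families of domains $\Rbar(\cO\cC,k^\vee,s,\ell)$ and $\Rbar(\cO\cC,k^\wedge,s,\ell)$ just constructed, and then to read off the relations \eqref{eq:OC check rel} and \eqref{eq:OC hat rel} from the master equation \eqref{eq:codim 1} by matching the codimension-$1$ boundary strata of these families against the terms of the Hochschild differential $b$, the Connes differential $B$, and the pieces $b_{\wedge\wedge}$, $b_{\wedge\vee}$. This is the cyclic elaboration of the construction of $\cO\cC^\big$ from $\Rbar(\cO\cC,s,\ell)$ in \cite{relfukii}, modelled on \cite[Section 5.5]{Ganatra:cyclic}.

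Concretely, I would first define, by exactly the recipe that produces $\cO\cC^\big$ in \cite{relfukii},
$$\cO\cC^\vee_k := \sum_{s,\ell,\bL}(\cO\cC,k^\vee,s,\ell)^\big_{\bL}, \qquad \cO\cC^\wedge_k := \sum_{s,\ell,\bL}(\cO\cC,k^\wedge,s,\ell)^\big_{\bL},$$
the sums running over the number $s$ of boundary inputs, the bulk/stabilizing configuration $\ell$, and Lagrangian labellings $\bL$; as for $\cO\cC^\big$, each $F^\big_{\bL}$ silently incorporates the sum over homotopy classes and the bulk deformation via the $F\mapsto F^\big$ construction with $P=\emptyset$. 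The $k$-th ball factor $D_k$ (resp. its boundary $\partial D_k$) of the domain family carries the grading shift $\sigma(-2k)$, and the position of the auxiliary $S^1$-marker relative to the boundary marked points — recorded combinatorially by the stratifications fixed before the lemma — is what corresponds on the chain side to the $e^+$ slot of an element of $fCC^\wedge_*$. Since $\Rbar(\cO\cC,0^\vee,s,\ell)$ is identified by fiat with $\Rbar(\cO\cC,s,\ell)$, this immediately gives $\cO\cC^\vee_0=\cO\cC^\big$ on the nose on $fCC_*=fCC^\vee_*$, settling the last assertion.

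For the relations I would verify that $\Rbar(\cO\cC,k^\vee,s,\ell)$ and $\Rbar(\cO\cC,k^\wedge,s,\ell)$, together with all of their codimension-$1$ boundary strata, form an $f^{sym}$-stable system of families of domains — in particular that the stratifications described before the lemma (including the coarsening that deletes the loci $z_i=0$) obey the axioms, and that consistent perturbation data may be chosen by induction on $(k,s)$ and energy — so that \eqref{eq:codim 1}, i.e. \cite[Lemma 4.18]{relfukii}, applies. Granting this, the boundary identifications listed before the lemma say that the codimension-$1$ boundary of $\Rbar(\cO\cC,k^\vee,s,\ell)$ comprises: (a) the $s+1$ strata $r^i\cdot\Rbar(\cO\cC,(k-1)^\wedge,s,\ell)$ for $0\le i\le s$, whose contributions assemble exactly into $\cO\cC^\wedge_{k-1}\circ B$, because the $s+1$ cyclic relabellings $r^i$ match the $s+1$ rotations in $B(c_0[c_1|\ldots|c_s])=\sum_j e^+[c_{j+1}|\ldots|c_s|c_0|\ldots|c_j]$; (b) nodal-disc strata obtained by boundary attachment of a lower $\mu$-disc, contributing $\cO\cC^\vee_k\circ b$; and (c) strata in which a Morse/sphere configuration detaches at the interior output marked point, contributing $\partial\circ\cO\cC^\vee_k$, where $\partial$ is the differential on the chain-level quantum cochain complex $QC^*(X;R^\big)$ (written $QH^*$ in the statement). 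Summing (a)--(c) via \eqref{eq:codim 1} yields \eqref{eq:OC check rel}. In the same way, the codimension-$1$ boundary of $\Rbar(\cO\cC,k^\wedge,s,\ell)$ is the two strata $\Rbar(\cO\cC,k^\vee,s,\ell)$ and $r\cdot\Rbar(\cO\cC,k^\vee,s,\ell)$, contributing $\cO\cC^\vee_k\circ b_{\wedge\vee}$ — since $b_{\wedge\vee}(a_0[a_1|\ldots|a_s])=a_0[a_1|\ldots|a_s]-a_s[a_0|\ldots|a_{s-1}]$ is precisely the difference of the identity and the cyclic rotation by one — together with nodal-disc strata giving $\cO\cC^\wedge_k\circ b_{\wedge\wedge}$ (the $\mu$-bubbles here never wrap past the $e^+$ slot) and output-degeneration strata giving $\partial\circ\cO\cC^\wedge_k$; this is \eqref{eq:OC hat rel}.

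The main obstacle is not this combinatorial bookkeeping, which is forced by the shape of the stratifications, but rather: (i) the geometric input that the prescribed system of domains can actually be realized — that one can choose consistent (virtual) perturbation data cutting out all relevant moduli spaces of the expected dimension while respecting every one of the specified boundary identifications, in particular preserving $f^{sym}$-stability throughout; and (ii) the orientation and Koszul-sign bookkeeping needed to check that the three classes of boundary contributions occur with exactly the signs appearing in $b$, $B$, $b_{\wedge\wedge}$, $b_{\wedge\vee}$ and in $\partial$ (this is where the $\sigma(-2k)$ twists and the graded $\Z_2$-torsors of the formalism must be tracked carefully). Both are largely inherited: \cite{relfukii} built the abstract machinery precisely for arguments of this form, and \cite{Ganatra:cyclic} carried out the identical construction in the monotone/exact case, with moduli spaces ${}_k\overline{\check{\cR}}^1_s$, ${}_k\overline{\hat{\cR}}^1_s$ mapping onto ours as recorded in the remark above; what remains is to confirm that the passage to the big relative Fukaya category — the divided-power coefficient ring $R^\big$, the possible curvature of objects, and the bulk insertions at the $\ell_\bulk$ marked points — does not disrupt these arguments, which it does not, since all of these features are handled uniformly by the $F\mapsto F^\big$ formalism recalled above.
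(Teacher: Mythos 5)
Your proposal matches the paper's proof: the paper likewise defines $\cO\cC^\vee_k$ and $\cO\cC^\wedge_k$ as the operations associated to the families $\Rbar(\cO\cC,k^\vee,s,\ell)$ and $\Rbar(\cO\cC,k^\wedge,s,\ell)$, then invokes \cite[Lemma 4.18]{relfukii} to deduce the relations from the codimension-one boundary stratification, citing \cite[Proposition 5.17]{Ganatra:cyclic} for the correspondence with the terms $b$, $B$, $b_{\wedge\wedge}$, $b_{\wedge\vee}$. Your explicit matching of the $r^i$-strata to the rotations in $B$ and of the two strata $\Rbar(\cO\cC,k^\vee,s,\ell)$, $r\cdot\Rbar(\cO\cC,k^\vee,s,\ell)$ to $b_{\wedge\vee}$, and your handling of $\cO\cC^\vee_0 = \cO\cC^\big$ via the identification $\Rbar(\cO\cC,0^\vee,s,\ell) = \Rbar(\cO\cC,s,\ell)$, are exactly what the paper's terse proof relies on (with the sign bookkeeping deferred, as in the paper, to the appendix).
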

\begin{proof}
    Follows from \cite[Lemma 4.18]{relfukii}, applied to the families $\cO\cC^\vee$ and $\cO\cC^\wedge$; the correspondence between codimension-one boundary components and terms in the equations is analogous to \cite[Proposition 5.17]{Ganatra:cyclic}.
\end{proof}

\begin{cor}
    The map 
    \begin{align*}
        \cO\cC^{-,\big}: \sigma(\cO\cC)fCC_*^-(\fuk^\big(X,D)) & \to QH^*(X;R^\big)[[u]] \\
        \cO\cC^{-,\big}(\alpha) & := \sum_{k=0}^\infty u^k\cdot \left(\cO\cC^\vee_k(\alpha^\vee) + \cO\cC^\wedge_k(\alpha^\wedge)\right),
    \end{align*}
    extended $u$-linearly, where $\alpha = (\alpha^\vee,\alpha^\wedge) \in fCC^{\vee}_* \oplus fCC^{\wedge}_*$, is a chain map.
\end{cor}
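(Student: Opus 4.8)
The plan is to deduce this purely formally from the two chain-level identities \eqref{eq:OC check rel} and \eqref{eq:OC hat rel} of Lemma \ref{lem:oc cyc chain}, with no further geometric input. The assertion is that $\cO\cC^{-,\big}$ intertwines the differential $b+uB$ on $fCC_*^-(\fuk^\big(X,D)) = fCC_*^{nu}(\fuk^\big(X,D))[[u]]$ with the internal differential $\partial$ on $QH^*(X;R^\big)[[u]]$ (extended $u$-linearly). I would first note that the stated formula indeed defines an $R^\big[[u]]$-linear map into the $u$-adically complete module $QH^*(X;R^\big)[[u]]$, since for each fixed $\alpha$ the series $\sum_k u^k(\cdots)$ is a single power series in $u$; and since $b+uB$, $\partial$, and $\cO\cC^{-,\big}$ are all $R^\big[[u]]$-linear and continuous for the $u$-adic filtration, it suffices to verify $\cO\cC^{-,\big}\circ(b+uB) = \partial\circ\cO\cC^{-,\big}$ on classes $\alpha = (\alpha^\vee,\alpha^\wedge) \in fCC_*^{nu}(\fuk^\big(X,D))$ carrying no power of $u$.

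Next I would unwind the Hochschild differential recalled in Section \ref{sec:cyc}. For such $\alpha$, writing elements of $fCC_*^{nu}$ in the decomposition $fCC_*^{nu} = fCC_*^\vee \oplus fCC_*^\wedge$, we have
\[
(b+uB)(\alpha) \;=\; \bigl(b\alpha^\vee + b_{\wedge\vee}\alpha^\wedge,\ b_{\wedge\wedge}\alpha^\wedge\bigr) \;+\; u\bigl(0,\ B\alpha^\vee\bigr),
\]
using that $B$ maps $fCC_*^\vee$ into $fCC_*^\wedge$ and annihilates $fCC_*^\wedge$, while $b$ preserves $fCC_*^\vee$ and restricts on $fCC_*^\wedge$ to $b_{\wedge\wedge}$ (valued in $fCC_*^\wedge$) plus $b_{\wedge\vee}$ (valued in $fCC_*^\vee$). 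Substituting into the defining formula $\cO\cC^{-,\big}(\beta) = \sum_{k\ge0}u^k\bigl(\cO\cC_k^\vee(\beta^\vee) + \cO\cC_k^\wedge(\beta^\wedge)\bigr)$ and reindexing the contribution of the $uB$-term as $\sum_{k\ge1}u^k\,\cO\cC_{k-1}^\wedge(B\alpha^\vee)$, the coefficient of $u^k$ in $\cO\cC^{-,\big}\bigl((b+uB)\alpha\bigr)$ becomes
\[
\bigl(\cO\cC_k^\vee\circ b + \cO\cC_{k-1}^\wedge\circ B\bigr)(\alpha^\vee) \;+\; \bigl(\cO\cC_k^\vee\circ b_{\wedge\vee} + \cO\cC_k^\wedge\circ b_{\wedge\wedge}\bigr)(\alpha^\wedge),
\]
with the convention $\cO\cC_{-1}^\wedge := 0$. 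By \eqref{eq:OC check rel} the first bracket equals $\partial\bigl(\cO\cC_k^\vee(\alpha^\vee)\bigr)$ and by \eqref{eq:OC hat rel} the second equals $\partial\bigl(\cO\cC_k^\wedge(\alpha^\wedge)\bigr)$; summing over $k$ and using $u$-linearity of $\partial$ yields $\partial\bigl(\cO\cC^{-,\big}(\alpha)\bigr)$, which is the claim.

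The only step I expect to require genuine care is the sign bookkeeping. One must check that the Koszul signs implicit in forming $b+uB$ — with $u$ carrying the degree matching the torsors $\sigma(-2k)$ appearing in the sources of the $\cO\cC_k^{\vee}$ and $\cO\cC_k^{\wedge}$, and with the convention $\sigma(b) = \sigma(2)\sigma(B) = \sigma(\partial)$ recorded in Lemma \ref{lem:oc cyc chain} — are precisely those under which \eqref{eq:OC check rel} and \eqref{eq:OC hat rel} were stated, so that the cancellations above are sign-exact rather than merely holding up to sign. This is mechanical, relying only on the $\sigma$-torsor conventions of \cite[Section 2]{relfukii} recalled in Section \ref{sec:cyc}, and involves no geometry.
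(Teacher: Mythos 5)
Your argument is correct, and it is precisely the intended deduction: the paper states this as an immediate corollary of Lemma \ref{lem:oc cyc chain} and gives no separate proof, so the content of the corollary is exactly the bookkeeping you carry out — decomposing $b+uB$ on $fCC_*^\vee \oplus fCC_*^\wedge$, reindexing the $uB$ contribution, and collecting the coefficient of $u^k$ so that \eqref{eq:OC check rel} and \eqref{eq:OC hat rel} apply termwise. Your remark about the sign conventions ($\sigma(b)=\sigma(2)\sigma(B)=\sigma(\partial)$ matching the degree of $u$ against the torsors $\sigma(-2k)$) correctly identifies the one place where care is needed, and it is indeed resolved by the conventions already fixed in the statement of that lemma.
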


\begin{proof}[Proof of Theorem \ref{thm:OC_minus_bc}]
    By tensoring the map $\cO\cC^{-,\big}$ with $\Lambda^\big_{\ge 0}$, along $R^\big \to \Lambda^\big_{\ge 0}$, we obtain a chain map
    $$\sigma(\cO\cC)fCC_*^-(\fuk^\big(X,D;\Lambda^\big_{\ge 0})) \to QH^*(X;\Lambda^\big_{\ge 0})[[u]].$$
    Next, pre-composing with the chain map $F_*$ from Lemma \ref{lem:HC_C_Cbc}, we obtain a chain map
    $$\sigma(\cO\cC)fCC_*^-(\fuk^\big(X,D;\Lambda^\big_{\ge 0})^{\prebc}) \to QH^*(X;\Lambda^\big_{\ge 0})[[u]].$$ 
    Finally, the desired chain map is obtained by restricting to the subcategory of small bounding cochains, and tensoring with $\Lambda^\big$ along $\Lambda^\big_{\ge 0} \to \Lambda^\big$. 
    The fact that it extends $\cO\cC^{-,\big}$ follows immediately from the final point in Lemma \ref{lem:oc cyc chain}.
\end{proof}

\section{Cyclic open--closed map respects pairings}\label{sec:oc pair}

In this section we prove that the cyclic open--closed map respects pairings, proving Theorems \ref{thm:mukai_int} and \ref{thm:oc_hres_bc}.

We recall that the quantum pairing
$$\langle-,-\rangle: QC^*(X;\Z) \otimes QC^*(X;\Z) \to \Z$$
is defined by counting an appropriate moduli space of flowlines, as in \cite[Section 4.1]{relfukii}; we extend it $R^\big$-linearly and $u$-sesquilinearly to the $R^\big[[u]]$-sesquilinear pairing
$$\langle -, - \rangle : QC^*(X;R^\big)[[u]] \otimes_{R^\big[[u]]} QC^*(X;R^\big)[[u]] \to R^\big[[u]]. $$

We define the family $\Rbar(H^{12}_{\langle k \rangle},s,\ell)$, for $k=(k^{\diamond_{in}}_{in},k^{\diamond_{out}}_{out}) \in K \times K$, $s=(s_{in},s_{out})$, and $\ell = (\ell_{\bulk,in},\ell_{\bulk,out},\ell_{\stab,in},\ell_{\stab,out})$ to be obtained by attaching $\Rbar(\cO\cC,\bar{k}_{in},s_{in},(\ell_{\bulk,in},\ell_{\stab,in}))$ to $\Rbar(\cO\cC,\bar{k}_{out},s_{out},(\ell_{\bulk,out},\ell_{\bulk,out}))$ at their outgoing bulk marked point, with length parameter $0$. 

We similarly define the versions with $12$ replaced by $1$, which is the same except the length parameter is $[0,\infty]$.

\begin{lem}\label{lem:cardy triv}
    There are maps
    \begin{align*}
        H^{12}_{\langle k_{in}^{\diamond_{in}},k_{out}^{\diamond_{out}}\rangle}: fCC^{\diamond_{in}}_*(\fuk^\big(X,D))   \otimes fCC^{\diamond_{out}}_*(\fuk^\big(X,D)) &  \to R^{\big}
        \end{align*}
    with 
    $$\sigma(H^{12}_{\langle k_{in}^{\diamond_{in}},k_{out}^{\diamond_{out}}\rangle}) = \sigma_{in}(\cO\cC)\sigma_{out}(\cO\cC) \sigma(-2n-2k_{in}-2k_{out}),$$
    and
    \begin{align*}
        H^{1}_{\langle k_{in}^{\diamond_{in}},k_{out}^{\diamond_{out}}\rangle}: fCC^{\diamond_{in}}_*(\fuk^\big(X,D)) \otimes fCC^{\diamond_{out}}_*(\fuk^\big(X,D)) & \to R^\big
    \end{align*}
    with 
    $$\sigma(H^{1}_{\langle k_{in}^{\diamond_{in}},k_{out}^{\diamond_{out}}\rangle}) = \sigma(\partial)^\vee \sigma(H^{12}_{\langle k_{in}^{\diamond_{in}},k_{out}^{\diamond_{out}}\rangle}),$$
    for all $(k_{in}^{\diamond_{in}},k_{out}^{\diamond_{out}}) \in K \times K$, satisfying
    \begin{multline}
        \langle \cO\cC^\vee_{k_{in}}(\alpha),\cO\cC^\vee_{k_{out}}(\beta) \rangle - H^{12}_{\langle k^\vee_{in},k^\vee_{out}\rangle}(\alpha,\beta) = \partial(H^1_{\langle k^\vee_{in},k^\vee_{out}\rangle}(\alpha,\beta)) \\+ H^1_{\langle k^\vee_{in},k^\vee_{out}\rangle}(b\alpha,\beta) + H^1_{\langle k^\vee_{in},k^\vee_{out}\rangle}(\alpha,b\beta) + H^1_{\langle (k_{in}-1)^\wedge,k^\vee_{out}\rangle}(B\alpha,\beta) + H^1_{\langle k^\vee_{in},(k_{out}-1)^\wedge\rangle}(\alpha,B\beta),
    \end{multline}
    \begin{multline}\label{eq:hat check}
        \langle \cO\cC^\wedge_{k_{in}}(\alpha),\cO\cC^\vee_{k_{out}}(\beta) \rangle - H^{12}_{\langle k^\wedge_{in},k^\vee_{out}\rangle}(\alpha,\beta) = \partial(H^1_{\langle k^\wedge_{in},k^\vee_{out}\rangle}(\alpha,\beta)) \\+ H^1_{\langle k^\wedge_{in},k^\vee_{out}\rangle}(b_{\wedge\wedge}\alpha,\beta) + H^1_{\langle k^\vee_{in},k^\vee_{out}\rangle}(b_{\wedge\vee}\alpha,\beta) + H^1_{\langle k^\wedge_{in},k^\vee_{out}\rangle}(\alpha,b\beta) + H^1_{\langle (k_{in}-1)^\vee,k^\wedge_{out}\rangle}(\alpha,B\beta),
    \end{multline}
    the analogue of $\eqref{eq:hat check}$ with $\wedge$ and $\vee$ swapped, and
    \begin{multline}
        \langle \cO\cC^\wedge_{k_{in}}(\alpha),\cO\cC^\wedge_{k_{out}}(\beta) \rangle - H^{12}_{\langle  k^\wedge_{in},k^\wedge_{out}\rangle}(\alpha,\beta) = \partial(H^1_{\langle k^\wedge_{in},k^\wedge_{out}\rangle}(\alpha,\beta)) \\+ H^1_{\langle k^\wedge_{in},k^\wedge_{out}\rangle}(b_{\wedge\wedge}\alpha,\beta) + H^1_{\langle k^\vee_{in},k^\wedge_{out}\rangle}(b_{\wedge\vee}\alpha,\beta) + H^1_{\langle k^\wedge_{in},k^\wedge_{out}\rangle}(\alpha,b_{\wedge\wedge}\beta) + H^1_{\langle k^\wedge_{in},k^\vee_{out}\rangle}(\alpha,b_{\wedge\vee}\beta),
    \end{multline}
    where $\sigma(\partial) = \sigma(b) = \sigma(2)\sigma(B)$.
\end{lem}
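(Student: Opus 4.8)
The plan is to realise $H^{12}_{\langle k\rangle}$ and $H^1_{\langle k\rangle}$ as Floer-theoretic operations associated, via the formalism recalled above, to the families of domains $\Rbar(H^{12}_{\langle k\rangle},s,\ell)$ and $\Rbar(H^1_{\langle k\rangle},s,\ell)$: one sets $H^{12}_{\langle k\rangle}:=\sum_{\ell,\bL}(H^{12}_{\langle k\rangle},s,\ell)_{\bL}$, and likewise for $H^1_{\langle k\rangle}$, inserting the fundamental chain at the symmetric stabilizing points and the Hochschild chains $\alpha,\beta$ into the two $\cO\cC$-inputs. The stated $\sigma$-torsors are then forced by a dimension count: $\Rbar(H^1_{\langle k\rangle},s,\ell)$ has one more dimension than $\Rbar(H^{12}_{\langle k\rangle},s,\ell)$, accounting for $\sigma(H^1_{\langle k\rangle})=\sigma(\partial)^\vee\sigma(H^{12}_{\langle k\rangle})$, while the factors $\sigma_{in}(\cO\cC)$, $\sigma_{out}(\cO\cC)$, $\sigma(-2k_{in})$, $\sigma(-2k_{out})$ and the $\sigma(-2n)$ from the quantum pairing are inherited from the two $\cO\cC^\diamond$-families exactly as in Lemma \ref{lem:oc cyc chain}. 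For this to be legitimate one must first verify that $\Rbar(H^1_{\langle k\rangle},s,\ell)$ and all of its codimension-one boundary strata are $f^{sym}$-stable, so that \eqref{eq:codim 1} (\cite[Lemma 4.18]{relfukii}) applies.

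The heart of the argument is then to apply \eqref{eq:codim 1} to $\Rbar(H^1_{\langle k\rangle},s,\ell)$ and match its codimension-one boundary strata, with the signs dictated by boundary orientations, against the right-hand sides of the four displayed equations; these strata fall into four groups. First, the end of the gluing interval where the length parameter degenerates to $0$ is, by construction, $\Rbar(H^{12}_{\langle k\rangle},s,\ell)$, contributing $-H^{12}_{\langle k\rangle}(\alpha,\beta)$. Second, the end where the length parameter degenerates to $\infty$ breaks the configuration into a curve contributing to $\cO\cC^{\diamond_{in}}_{k_{in}}$ and a curve contributing to $\cO\cC^{\diamond_{out}}_{k_{out}}$, their outgoing interior marked points joined across a semi-infinite neck; invoking the compatibility of these operations with interior attachment of length $\infty$ (\cite[Lemmas 4.16, 4.17]{relfukii}), together with the fact that the quantum pairing on $QC^*(X;R^\big)$ is itself a signed count of Morse flowlines, identifies this stratum with $\langle\cO\cC^{\diamond_{in}}_{k_{in}}(\alpha),\cO\cC^{\diamond_{out}}_{k_{out}}(\beta)\rangle$. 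Third and fourth, the strata lying over a codimension-one boundary stratum of the incoming (respectively outgoing) copy of the $\cO\cC$-family: these are precisely the boundary strata enumerated in the proof of Lemma \ref{lem:oc cyc chain} (cf.\ \eqref{eq:OC check rel}, \eqref{eq:OC hat rel}), so a disc-breaking stratum of a $\vee$-factor produces $H^1(b\alpha,\beta)$, a disc-breaking stratum of a $\wedge$-factor produces $H^1(b_{\wedge\wedge}\alpha,\beta)$, the strata $r^i\cdot\Rbar(\cO\cC,(k_{in}-1)^\wedge)$ inside a $\vee$-factor produce $H^1_{\langle(k_{in}-1)^\wedge,\dots\rangle}(B\alpha,\beta)$ with the index shift $k^\vee\leadsto(k-1)^\wedge$, and the pair of strata $\Rbar(\cO\cC,k^\vee_{in})$, $r\cdot\Rbar(\cO\cC,k^\vee_{in})$ inside a $\wedge$-factor assemble, with the relative sign matching $b_{\wedge\vee}(a_0[a_1|\ldots|a_s])=a_0[a_1|\ldots|a_s]-a_s[a_0|\ldots|a_{s-1}]$, into $H^1_{\langle k^\vee_{in},\dots\rangle}(b_{\wedge\vee}\alpha,\beta)$, and symmetrically on the outgoing side. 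Reading off these contributions for each of the four choices $(\diamond_{in},\diamond_{out})\in\{\vee,\wedge\}^2$ yields the four equations of the lemma.

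The step I expect to be the main obstacle is the identification of the length-$\infty$ stratum with the quantum pairing of the two open--closed outputs: one must check precisely that, after the neck breaks, the remaining moduli problem is the fibre product of the two $\cO\cC$-moduli spaces over the Morse-theoretic evaluation data defining the quantum pairing, and that the induced orientations — equivalently, the identifications of the graded $\Z_2$-torsors $\sigma(\cO\cC)$, $\sigma(-2k)$ and $\sigma(n)$ — match. This is the analogue in the present formalism of the Cardy-type gluing of \cite[Section 5.5]{Ganatra:cyclic}, and is where essentially all of the sign bookkeeping resides; the other three groups of boundary strata are, after the identifications already made in Lemma \ref{lem:oc cyc chain}, largely a matter of reading off. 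A secondary technical point, as usual with this formalism, is the verification of $f^{sym}$-stability of the families involved and their codimension-one boundary strata, which is needed for \eqref{eq:codim 1}; we note finally that the entire discussion takes place on the chain level over $R^\big$, the passage to $\Lambda^\big$-coefficients and to (small) bounding cochains being carried out afterwards exactly as in the proof of Theorem \ref{thm:OC_minus_bc}.
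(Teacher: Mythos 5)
Your proposal is correct and follows essentially the same approach as the paper: the paper's proof is a one-line invocation of \cite[Lemma 4.18]{relfukii} applied to the families $\Rbar(H^1_{\langle\rangle},s,\ell)$, and your detailed enumeration of the codimension-one boundary strata (the length-$0$ end giving $H^{12}_{\langle\rangle}$, the length-$\infty$ end giving the quantum pairing of the two $\cO\cC$-outputs via the Morse flowline degeneration, and the strata inherited from the $\cO\cC^\diamond$-families giving the $b$, $b_{\wedge\wedge}$, $b_{\wedge\vee}$, and $B$ terms) is an accurate unpacking of that application. You also correctly flag the two points where care is needed — $f^{sym}$-stability of the families and the orientation bookkeeping at the length-$\infty$ stratum — which the paper addresses in \cite[Lemma 4.18]{relfukii} and in its sign appendix respectively.
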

\begin{proof}
    This follows by applying \cite[Lemma 4.18]{relfukii} to the families $H^1_{\langle\rangle}$. 
\end{proof}

\begin{cor}\label{cor:cardy 1}
    There are sesquilinear maps
    \begin{align*}
        H^{12}_{\langle\rangle}: \sigma_{in}(\cO\cC)\sigma_{out}(\cO\cC)fCC_*^-(\fuk^\big) \otimes fCC_*^-(\fuk^\big) & \to R^\big[[u]],\\
        H^1_{\langle\rangle}: \sigma(\partial)^\vee\sigma_{in}(n)\sigma_{out}(n)fCC_*^-(\fuk^\big) \otimes fCC_*^-(\fuk^\big) & \to R^\big[[u]],
    \end{align*}
    satisfying
    \begin{multline}
        \langle \cO\cC^{\big,-}(\alpha),\cO\cC^{\big,-}(\beta)\rangle - H^{12}_{\langle\rangle}(\alpha,\beta) \\= \partial(H^1_{\langle\rangle}(\alpha,\beta)) + H^1_{\langle\rangle}((b+uB)(\alpha),\beta) + H^1_{\langle\rangle}(\alpha,(b+uB)(\beta)).
    \end{multline}
\end{cor}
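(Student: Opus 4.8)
The plan is to assemble $H^{12}_{\langle\rangle}$ and $H^1_{\langle\rangle}$ as generating series in $u$ built from the chain-level maps $H^{12}_{\langle k\rangle}$ and $H^1_{\langle k\rangle}$ supplied by Lemma \ref{lem:cardy triv}, and then to read off the displayed identity by collecting the four families of relations of that lemma according to the total power of $u$. Using the decompositions $fCC_*^{nu}(\fuk^\big) = fCC^\vee_*(\fuk^\big) \oplus fCC^\wedge_*(\fuk^\big)$ and $fCC_*^-(\fuk^\big) = fCC_*^{nu}(\fuk^\big)[[u]]$, I would first set, for $\alpha = (\alpha^\vee,\alpha^\wedge)$ and $\beta = (\beta^\vee,\beta^\wedge)$ in $fCC_*^{nu}(\fuk^\big)$,
\[
H^{12}_{\langle\rangle}(\alpha,\beta) := \sum_{\substack{k_{in},k_{out}\ge 0 \\ \diamond_{in},\diamond_{out}\in\{\vee,\wedge\}}} u^{k_{in}+k_{out}}\, H^{12}_{\langle k_{in}^{\diamond_{in}},k_{out}^{\diamond_{out}}\rangle}(\alpha^{\diamond_{in}},\beta^{\diamond_{out}}),
\]
and the analogous series for $H^1_{\langle\rangle}$, and then extend both $u$-sesquilinearly (with the sign convention of the higher residue pairing, i.e.\ $u\mapsto -u$ in one slot) to all of $fCC_*^-(\fuk^\big) \otimes fCC_*^-(\fuk^\big)$. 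These series make sense: for each fixed $N$ only the finitely many summands with $k_{in}+k_{out}=N$ contribute to the coefficient of $u^N$, and $R^\big$ is complete, so the sums converge $u$-adically. The $\sigma$-torsor decorations asserted in the statement follow from those in Lemma \ref{lem:cardy triv} together with the fact that $u$ carries degree $2$ --- exactly the bookkeeping already used in Section \ref{sec:cyc oc} to package the maps $\cO\cC^\vee_k$, $\cO\cC^\wedge_k$ into $\cO\cC^{-,\big}$ --- so that the factor $u^{k_{in}+k_{out}}$ absorbs the shift $\sigma(-2k_{in}-2k_{out})$.

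It then remains to verify the displayed relation, and here the key observation is that
\[
\langle \cO\cC^{-,\big}(\alpha),\cO\cC^{-,\big}(\beta)\rangle = \sum_{\substack{k_{in},k_{out}\ge 0 \\ \diamond_{in},\diamond_{out}}} \pm\, u^{k_{in}+k_{out}}\, \langle \cO\cC^{\diamond_{in}}_{k_{in}}(\alpha^{\diamond_{in}}),\cO\cC^{\diamond_{out}}_{k_{out}}(\beta^{\diamond_{out}})\rangle
\]
by the definition of $\cO\cC^{-,\big}$ in Section \ref{sec:cyc oc} and the $u$-sesquilinearity of the quantum pairing, the sign being $(-1)^{k_{out}}$ (or $(-1)^{k_{in}}$, depending on which slot is conjugated). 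I would multiply each of the four relations of Lemma \ref{lem:cardy triv} by the matching $\pm u^{k_{in}+k_{out}}$ and sum over all $k_{in},k_{out},\diamond_{in},\diamond_{out}$. On the left, the terms assemble precisely to $\langle \cO\cC^{-,\big}(\alpha),\cO\cC^{-,\big}(\beta)\rangle - H^{12}_{\langle\rangle}(\alpha,\beta)$. On the right, the $\partial(H^1_{\langle k\rangle})$ terms assemble to $\partial(H^1_{\langle\rangle}(\alpha,\beta))$; the terms $H^1_{\langle k\rangle}(b\alpha,\beta)$, $H^1_{\langle k\rangle}(b_{\wedge\wedge}\alpha,\beta)$ and $H^1_{\langle k\rangle}(b_{\wedge\vee}\alpha,\beta)$ assemble to $H^1_{\langle\rangle}(b\alpha,\beta)$, using that the Hochschild differential on $fCC_*^{nu}$ restricts to $b$ on $fCC^\vee_*$ and to $b_{\wedge\wedge}+b_{\wedge\vee}$ on $fCC^\wedge_*$, and symmetrically on the $\beta$-slot; and the terms involving $B$, such as $H^1_{\langle (k_{in}-1)^\wedge,k_{out}^{\diamond_{out}}\rangle}(B\alpha,\beta)$, carry both a $B$ (raising the $u$-degree by one) and an index shift $k_{in}\mapsto k_{in}-1$, so after reindexing $k_{in}-1 \rightsquigarrow k_{in}$ they acquire exactly one extra factor of $u$ and assemble to $H^1_{\langle\rangle}(uB\alpha,\beta)$, and symmetrically to $H^1_{\langle\rangle}(\alpha,uB\beta)$. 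Summing the $b$- and $B$-contributions gives $H^1_{\langle\rangle}((b+uB)(\alpha),\beta) + H^1_{\langle\rangle}(\alpha,(b+uB)(\beta))$, which is the claim.

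The one genuinely delicate point is the consistency of signs and $\sigma$-torsor decorations throughout: one must check that the signs from $u$-sesquilinearity, the signs in the reindexed $B$-terms, and the Koszul signs implicit in Lemma \ref{lem:cardy triv} and in the definition of $\cO\cC^{-,\big}$ all line up, just as in the corresponding check for $\cO\cC^{-,\big}$ itself in Section \ref{sec:cyc oc}. I expect this to be a routine, if slightly tedious, verification on generators, with no conceptual obstacle; convergence and the remaining formal points are immediate once the generating series above are in place.
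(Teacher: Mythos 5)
Your approach is the same as the paper's: package the chain-level maps $H^{1/12}_{\langle k^{\diamond_{in}}_{in},k^{\diamond_{out}}_{out}\rangle}$ from Lemma \ref{lem:cardy triv} into generating series in $u$, then sum the four families of relations. The paper's proof (equation \eqref{eq:put the us}) defines
\[
H^{1/12}_{\langle\rangle}(\alpha,\beta) = \sum_{k_{in},k_{out}} u^{k_{in}}\,(-u)^{k_{out}}\, \sum_{\diamond_{in},\diamond_{out}} H^{1/12}_{\langle k^{\diamond_{in}}_{in},k^{\diamond_{out}}_{out}\rangle}(\alpha^{\diamond_{in}},\beta^{\diamond_{out}})
\]
for $\alpha,\beta\in fCC_*^{nu}$, and then extends $u$-sesquilinearly.

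Your displayed formula for $H^{12}_{\langle\rangle}$ uses $u^{k_{in}+k_{out}}$ rather than $u^{k_{in}}(-u)^{k_{out}}$, and this is a genuine sign error that breaks the argument as written. You correctly note that $\langle \cO\cC^{-,\big}(\alpha),\cO\cC^{-,\big}(\beta)\rangle$ carries a factor $(-1)^{k_{out}}$ from the $u$-sesquilinearity of the quantum pairing, and you correctly state that you would ``multiply each of the four relations of Lemma \ref{lem:cardy triv} by the matching $\pm u^{k_{in}+k_{out}}$.'' But that $\pm$ is $(-1)^{k_{out}}$, so the resulting sum of $H^{12}_{\langle k\rangle}$-terms is $\sum_k (-1)^{k_{out}} u^{k_{in}+k_{out}} H^{12}_{\langle k\rangle}(\alpha,\beta)$, which is the paper's $H^{12}_{\langle\rangle}$, not yours. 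Likewise, in your reindexing of the $B\beta$-terms the sign only works out because $H^1_{\langle\rangle}(\alpha, uB\beta) = -u\,H^1_{\langle\rangle}(\alpha,B\beta)$ by sesquilinearity; the compensating $(-1)$ must come from the $(-1)^{k_{out}}$ in the definition, which you have omitted. Fixing the definition to $u^{k_{in}}(-u)^{k_{out}}$ (equivalently $(-1)^{k_{out}}u^{k_{in}+k_{out}}$) restores consistency, and the rest of your argument — the decomposition of $b$ on $fCC_*^{nu}$ into $b$, $b_{\wedge\wedge}$, $b_{\wedge\vee}$, the $u$-adic convergence, and the reindexing of the $B$-terms — then goes through as you describe and reproduces the paper's proof.
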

\begin{proof}
    We define
    \begin{multline}\label{eq:put the us}
        H^{1/12}_{\langle\rangle}(\alpha,\beta) = \sum_{k_{in},k_{out}} u^{k_{in}}\cdot(-u)^{k_{out}} \cdot (H^{1/12}_{\langle k^\vee_{in},k^\vee_{out}\rangle}(\alpha^\vee,\beta^\vee) \\
        + H^{1/12}_{\langle k^\wedge_{in},k^\vee_{out}\rangle}(\alpha^\wedge,\beta^\vee) + H^{1/12}_{\langle k^\vee_{in},k^\wedge_{out}\rangle}(\alpha^\vee,\beta^\wedge) + H^{1/12}_{\langle k^\wedge_{in},k^\wedge_{out}\rangle}(\alpha^\wedge,\beta^\wedge))
    \end{multline}
    for $\alpha$, $\beta \in fCC_*^{nu}$, and extend $u$-sesquilinearly. 
    The claim then follows from Lemma \ref{lem:cardy triv}. 
\end{proof}

We now consider the family $\Rbar(A,0,0)$ of annuli with no interior marked points, one incoming marked point $p_{0,in}^\partial$ on the inner boundary component, and one incoming marked point $p_{0,out}^\partial$ on the outer boundary component. 
Annuli in the open locus $r \in \cR(A,0,0)$ may be uniquely parametrized as $\{z \in \C: R \le |z| \le 1\}$, so that $p_{0,out}^\partial$ corresponds to the point $1$ and $p_{0,in}^\partial$ corresponds to the point $Re^{i\theta}$, with $R \in (0,1)$ and $e^{i\theta} \in S^1$. 
The Deligne--Mumford compactification adds a point over $R=0$ and a circle over $R=1$, so that $\Rbar(A,0,0)$ is a disc (but we observe that, while the parametrization $Re^{i\theta}$ extends over $R=0$, it does not extend over $R=1$; rather, we take the disc parametrized by $Re^{i\theta}$, perform a real blowup at $1$, then a real blowdown of the proper transform of the boundary of the disc).

We define $\Rbar(A,s,\ell)$ to be its stabilization, where $s=(s_{in},s_{out})$ records the number of incoming boundary marked points on the inner and outer boundary components respectively, in addition to $p_{0,in}^\partial$ and $p_{0,out}^\partial$, and $\ell = (\ell_{\bulk},\ell_{\stab})$ records the number of bulk and symmetric stabilizing marked points. 
There is an analytic map $R:\Rbar(A,s,\ell) \to [0,1]$, which is the pullback of the parameter $R$ defined on $\Rbar(A,0,0)$. 
The strata lying inside $R^{-1}(0)$ consist of `circles of discs' with disc and sphere bubbles attached; while those inside $R^{-1}(1)$ consist of discs connected by a chain of spheres, with disc and sphere bubbles attached. 

We have natural analytic functions $\theta_j^{in}:\Rbar(A,s,\ell) \to S^1$, for $0 \le j \le s_{in}$, which are defined to be $\arg(p_{0,in}^\partial) - \arg(p_{j,in}^\partial)$, the difference in arguments of the respective boundary marked points on the interior boundary. 
We similarly have analytic functions $\theta_j^{out}:\Rbar(A,s,\ell) \to S^1$, for $0 \le j \le s_{out}$, which are defined to be $\arg(p_{j,out}^\partial) - \arg(p_{0,out}^\partial)$. 
We have $\theta = \arg(p_{0,in}^\partial) - \arg(p_{0,out}^\partial)$, for $R \in (0,1)$.

Note that the codimension-$2$ boundary strata lying over $\{R=0\}$ are isomorphic to $\Rbar(H^{12}_{\langle k_{in}^\vee,k_{out}^\vee\rangle},s,\ell)$; and furthermore, the restriction of $\theta_{j,in/out}$ to this stratum is equal to the pullback of $\theta_j$ from $\Rbar(\cO\cC_{in/out},s_{in/out},\ell_{in/out})$.
In order for this property to hold, we are forced to use the opposite sign in the conventions in the definitions of $\theta_j^{in}$ and $\theta_j^{out}$ above. 

For $k_{in},k_{out} \ge 0$, we consider the space 
$$D_{k_{in},k_{out}} := \left\{ (z^{in},z^{out},R) \in \C^{k_{in}} \times \C^{k_{out}} \times [0,1]: \|z^{in}\|^2 \le R, \|z^{out}\|^2 \le R \right\}.$$
We define a family 
$$\Rbar(H^3_{\langle k^\vee_{in},k^\vee_{out}\rangle},s,\ell):= \Rbar(A,s,\ell) \times_{[0,1]_R} D_{k_{in},k_{out}}.$$
We define a sub-family 
$$\Rbar(H^2_{\langle k^\vee_{in},k^\vee_{out}\rangle},s,\ell) \subset \Rbar(H^3_{\langle k^\vee_{in},k^\vee_{out}\rangle},s,\ell)$$
to be the closure of the locus $\{\theta_{in} + \theta_{out} = \theta + \pi\}$ in $R^{-1}([0,1))$, where
$$\theta_{in} = \begin{cases}
                    \theta_0^{in} & \text{ if $k_{in} = 0$}\\
                    \arg(z^{in}_1) & \text{ if $k_{in} > 0$,}
\end{cases}     $$
and $\theta_{out}$ is defined analogously. 

\begin{rem}
    Note that the family $\Rbar(H^2_{\langle 0^\vee,0^\vee\rangle},s,\ell)$ coincides with the family $\Rbar(H^2_{\cC\cY},s,\ell)$ defined in \cite[Section 5.6]{relfukii}, except that one of the boundary marked points there is outgoing.
\end{rem}

Let $r_{in}$ denote the action of the generator of $\Z/(s_{in}+1)$ on $\Rbar(A,s,\ell)$ by rotating the labels of the outer boundary marked points, so that $r_{in}^*\theta^{in}_j = \theta^{in}_{j-1}$; and define $r_{out}$ similarly. 
We now inductively define stratifications of $\Rbar(H^{2/3}_{\langle k^\vee_{in},k^\vee_{out}\rangle},s,\ell)$, and simultaneously introduce new families and identify them with their strata, as follows:
\begin{itemize}
    \item The union of codimension-$1$ strata $\{\|z^{in}\| = R\} \subset \Rbar(H^{2/3}_{\langle k^\vee_{in},k^\vee_{out}\rangle},s,\ell)$ is identified with $\Rbar(H^{2/3,S^1}_{\langle k^\wedge_{in},k^\vee_{out}\rangle},s,\ell)$;
    \item $\Rbar(H^{2/3,S^1}_{\langle k^\wedge_{in},k^\vee_{out}\rangle},s,\ell)$ is the union of strata $\{\theta^{in}_j \le \arg(z^{in}_{k^{in}}) \le \theta^{in}_{j+1}\}$, which is identified with $r_{in}^j \cdot \Rbar(H^{2/3}_{\langle k^\wedge_{in},k^\vee_{out}\rangle},s,\ell)$;
    \item The union of codimension-$1$ strata $\{\|z^{out}\| = R\} \subset \Rbar(H^{2/3}_{\langle k^\vee_{in},k^\vee_{out}\rangle},s,\ell)$ is identified with $\Rbar(H^{2/3,S^1}_{\langle k^\vee_{in},k^\wedge_{out}\rangle},s,\ell)$;
    \item  $\Rbar(H^{2/3,S^1}_{\langle k^\vee_{in},k^\wedge_{out}\rangle},s,\ell)$ is the union of strata $\{\theta^{out}_j \le \arg(z^{out}_{k^{out}}) \le \theta^{out}_{j+1}\}$, which is identified with $r_{out}^j \cdot \Rbar(H^{2/3}_{\langle k^\vee_{in},k^\wedge_{out}\rangle},s,\ell)$;
    \item $\Rbar(H^{2/3}_{\langle k_{in}^\wedge,k_{out}^\wedge \rangle}) = \Rbar(H^{2/3}_{\langle k_{in}^\wedge,k_{out}^\vee \rangle }) \cap \Rbar(H^{2/3}_{\langle k_{in}^\vee,k_{out}^\wedge \rangle})$;
    \item $\Rbar(H^{2/3}_{\langle k_{in}^{\diamond_{in}},k_{out}^{\diamond_{out}} \rangle},s,\ell)$ is a stabilization of $\Rbar(H^{2/3}_{\langle k_{in}^{\diamond_{in}},k_{out}^{\diamond_{out}} \rangle},s,0)$, where $s=(\delta(\diamond_{in}),\delta(\diamond_{out}))$, where $\delta(\vee) = 0$ and $\delta(\wedge) = 1$;
    \item The codimension-$1$ stratum $\{R=1\}$ of $\Rbar(H^2_{\langle 0^\vee,0^\vee \rangle},0,0)$ is identified with the attachment of two copies of $\Rbar(\mu,2,0)$ at two boundary marked points;
    \item The codimension-$1$ stratum $\{R=1\}$ of $\Rbar(H^2_{\langle 0^\wedge,0^\vee\rangle},0,0)$ is identified with the self-attachment of $\Rbar(\mu,2,0)$, and similarly for the version with $\wedge$ and $\vee$ swapped;
    \item The codimension-$1$ stratum $\{R=0\}$ of $\Rbar(H^2_{\langle k_{in}^{\diamond_{in}},k_{out}^{\diamond_{out}}\rangle},0,0)$ is identified with $\Rbar(H^{12}_{\langle k_{in}^{\diamond_{in}},k_{out}^{\diamond_{out}}\rangle},0,0)$;
    \item The codimension-$1$ stratum $\{z^{in}_1 = 0\}$ of $\Rbar(H^2_{\langle k_{in}^{\diamond_{in}},k_{out}^{\diamond_{out}}\rangle},s,\ell)$ is identified with $\Rbar(H^3_{\langle (k_{in}-1)^{\diamond_{in}},k_{out}^{\diamond_{out}}\rangle},s,\ell)$;    
    \item The codimension-$1$ stratum $\{z^{out}_1 = 0\}$ of $\Rbar(H^2_{\langle k^{\diamond_{\in}}_{in},k^{\diamond_{out}}_{out}\rangle},s,\ell)$ is identified with $\Rbar(H^3_{\langle k^{\diamond_{in}}_{in},(k_{out}-1)^{\diamond_{out}}\rangle},s,\ell)$.
\end{itemize}

\begin{lem} \label{lem:cardy nontriv} 
    There are maps
    \begin{align*}
        H^{3}_{\langle k_{in}^{\diamond_{in}},k_{out}^{\diamond_{out}}\rangle}: fCC^{\diamond_{in}}_*(\fuk^\big(X,D)) \otimes fCC^{\diamond_{out}}_*(\fuk^\big(X,D)) & \to R^\big\\
        H^{2}_{\langle k_{in}^{\diamond_{in}},k_{out}^{\diamond_{out}}\rangle}: fCC^{\diamond_{in}}_*(\fuk^\big(X,D)) \otimes fCC^{\diamond_{out}}_*(\fuk^\big(X,D)) & \to R^\big
    \end{align*}
    with 
    \begin{align}
\label{eq:sigma H3}        \sigma(H^3_{\langle k_{in}^{\diamond_{in}},k_{out}^{\diamond_{out}} \rangle}) & = \sigma_{in}(\cO\cC)\sigma_{out}(\cO\cC)\sigma(-2n-2k_{in}-2k_{out}-2)\\
        \label{eq:sigma H2}\sigma(H^2_{\langle k_{in}^{\diamond_{in}},k_{out}^{\diamond_{out}} \rangle}) &= \sigma(\partial)^\vee \sigma(2)\sigma(H^3_{\langle k_{in}^{\diamond_{in}},k_{out}^{\diamond_{out}} \rangle});
    \end{align}
    and these satisfy
    \begin{multline}\label{eq:H12 vee vee}
        H^{12}_{\langle k^\vee_{in},k^\vee_{out}\rangle}(\alpha,\beta) - (-1)^\signn\delta_{k_{in},0}\delta_{k_{out},0} \langle \alpha,\beta \rangle_{Muk,\vee\vee} = \partial(H^2_{\langle k^\vee_{in},k^\vee_{out}\rangle}(\alpha,\beta)) \\+ H^2_{\langle k^\vee_{in},k^\vee_{out}\rangle}(b\alpha,\beta) + H^2_{\langle k^\vee_{in},k^\vee_{out}\rangle}(\alpha,b\beta) + H^2_{\langle (k_{in}-1)^\wedge,k^\vee_{out}\rangle}(B\alpha,\beta) \\+ H^2_{\langle k^\vee_{in},(k_{out}-1)^\wedge\rangle}(\alpha,B\beta) - H^3_{\langle (k_{in}-1)^\vee,k^\vee_{out}\rangle}(\alpha,\beta) - H^3_{\langle k^\vee_{in},(k_{out}-1)^\vee\rangle}(\alpha,\beta)
    \end{multline}
    (where $\delta_{k,0}$ is a Kronecker delta: $1$ if $k=0$, $0$ otherwise; and by definition $H^3_{\langle k_{in},k_{out}\rangle}$ is $0$ unless $k_{in}\ge 0$ and $k_{out} \ge 0$);
    \begin{multline}
        H^{12}_{\langle k^\wedge_{in},k^\vee_{out}\rangle}(\alpha,\beta) - (-1)^\signn\delta_{k_{in},0}\delta_{k_{out},0} \langle \alpha,\beta \rangle_{Muk,\wedge\vee} = \partial(H^2_{\langle k^\wedge_{in},k^\vee_{out}\rangle}(\alpha,\beta)) \\+ H^2_{\langle k^\wedge_{in},k^\vee_{out}\rangle}(b_{\wedge\wedge}\alpha,\beta) + H^2_{\langle k^\vee_{in},k^\vee_{out}\rangle}(b_{\wedge\vee}\alpha,\beta) + H^2_{\langle k^\wedge_{in},k^\vee_{out}\rangle}(\alpha,b\beta)  \\+ H^2_{\langle k^\vee_{in},(k_{out}-1)^\wedge\rangle}(\alpha,B\beta) - H^3_{\langle (k_{in}-1)^\wedge,k_{out}^\vee\rangle}(\alpha,\beta) - H^3_{\langle k_{in}^\wedge,(k_{out}-1)^\vee\rangle}(\alpha,\beta);
    \end{multline}
    the analogous version with $\vee$ and $\wedge$ swapped; and
    \begin{multline}
        H^{12}_{\langle k^\wedge_{in},k^\wedge_{out}\rangle}(\alpha,\beta) = \partial(H^2_{\langle k^\wedge_{in},k^\wedge_{out}\rangle}(\alpha,\beta)) \\+ H^2_{\langle k^\wedge_{in},k^\wedge_{out}\rangle}(b_{\wedge\wedge}\alpha,\beta) + H^2_{\langle k^\vee_{in},k^\wedge_{out}\rangle}(b_{\wedge\vee}\alpha,\beta) + H^2_{\langle k^\wedge_{in},k^\wedge_{out}\rangle}(\alpha,b_{\wedge\wedge}\beta)  \\+ H^2_{\langle k_{in}^\wedge,k_{out}^\vee \rangle}(\alpha,b_{\wedge\vee}(\beta) - H^3_{\langle (k_{in}-1)^\wedge,k^\wedge_{out}\rangle}(\alpha,\beta) - H^3_{\langle k^\wedge_{in},(k_{out}-1)^\wedge\rangle}(\alpha,\beta).
    \end{multline}
\end{lem}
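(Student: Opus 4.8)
The plan is to define $H^2_{\langle k_{in}^{\diamond_{in}},k_{out}^{\diamond_{out}}\rangle}$ and $H^3_{\langle k_{in}^{\diamond_{in}},k_{out}^{\diamond_{out}}\rangle}$ as the Floer-theoretic operations $F^\big_{P,\bL,(q)}$ associated to the families $\Rbar(H^2_{\langle k_{in}^{\diamond_{in}},k_{out}^{\diamond_{out}}\rangle},s,\ell)$ and $\Rbar(H^3_{\langle k_{in}^{\diamond_{in}},k_{out}^{\diamond_{out}}\rangle},s,\ell)$ constructed above, summed over Lagrangian labellings $\bL$, homotopy classes, and stabilizing data, exactly as $wpCY$ was built from $\Rbar(wpCY,s,\ell)$ in the proof of Theorem \ref{thm:wpcy_big} and as $\cO\cC^\vee_k$, $\cO\cC^\wedge_k$ were built from $\Rbar({\cO\cC},k^\vee,s,\ell)$, $\Rbar({\cO\cC},k^\wedge,s,\ell)$ in Lemma \ref{lem:oc cyc chain}. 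As there, the inner (resp.\ outer) incoming boundary marked points which are not $e^+$-insertions carry the entries of $\alpha$ (resp.\ $\beta$), and a $\wedge$ on the $in$ (resp.\ $out$) side records that the first inner (resp.\ outer) entry is an $e^+$, which geometrically is the statement that $\theta_{in}$ (resp.\ $\theta_{out}$) is measured from $\arg(z^{in}_1)$ (resp.\ $\arg(z^{out}_1)$) rather than from a boundary marked point --- this is precisely why the faces $\{\|z^{in}\|=R\}$, $\{\|z^{out}\|=R\}$ of these families are organised by the families $\Rbar(H^{2/3,S^1}_{\langle\cdot\rangle},s,\ell)$. The degree/torsor normalisations \eqref{eq:sigma H3}--\eqref{eq:sigma H2} are then forced by the relative dimensions of these families --- note that $\Rbar(H^2_{\langle\cdot\rangle})$ is the codimension-one sublocus $\{\theta_{in}+\theta_{out}=\theta+\pi\}$ of $\Rbar(H^3_{\langle\cdot\rangle})$, and that $\Rbar(H^{12}_{\langle\cdot\rangle})$ is the $\{R=0\}$ face of $\Rbar(H^2_{\langle\cdot\rangle})$ --- together with the orientation torsors attached to them, exactly as in the bookkeeping of \cite[Lemma 4.18]{relfukii}. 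The four displayed identities are then obtained by applying the codimension-one boundary formula \eqref{eq:codim 1} (\cite[Lemma 4.18]{relfukii}) to the four families $\Rbar(H^2_{\langle k_{in}^{\diamond_{in}},k_{out}^{\diamond_{out}}\rangle},s,\ell)$, and matching each codimension-one boundary component listed in the construction above with exactly one term on the right-hand side.

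Explicitly, the dictionary between the codimension-one strata of $\Rbar(H^2_{\langle k_{in}^{\diamond_{in}},k_{out}^{\diamond_{out}}\rangle},s,\ell)$ and the terms of the corresponding identity is as follows: the intrinsic (perturbation-data) boundary contributes $\partial(H^2_{\langle\cdot\rangle})$; disc- and strip-breaking along the incoming chains contributes $H^2_{\langle\cdot\rangle}$ precomposed with the appropriate Hochschild differential in the first two arguments respectively --- which on the $\vee$ side is $b$ and on the $\wedge$ side splits as $b_{\wedge\wedge}$ plus $b_{\wedge\vee}$, matching the decomposition of the differential on $fCC^{\wedge}_*$ recalled in Section \ref{sec:cyc}; the faces $\{\|z^{in}\|=R\}$ and $\{\|z^{out}\|=R\}$, decomposed through the families $\Rbar(H^{2/3,S^1}_{\langle\cdot\rangle})$ into the rotational translates $r_{in}^j\cdot\Rbar(H^2_{\langle\cdot\rangle})$ resp.\ $r_{out}^j\cdot\Rbar(H^2_{\langle\cdot\rangle})$ and summed over $j$, contribute the Connes-$B$ terms $H^2_{\langle(k_{in}-1)^\wedge,k_{out}^{\diamond_{out}}\rangle}(B\alpha,\beta)$ resp.\ $H^2_{\langle k_{in}^{\diamond_{in}},(k_{out}-1)^\wedge\rangle}(\alpha,B\beta)$ (the shift $k\mapsto k-1$ being the usual $u$-shift, as in \eqref{eq:OC check rel}); the faces $\{z^{in}_1=0\}$ and $\{z^{out}_1=0\}$, which are $\Rbar(H^3_{\langle(k_{in}-1)^{\diamond_{in}},k_{out}^{\diamond_{out}}\rangle})$ resp.\ $\Rbar(H^3_{\langle k_{in}^{\diamond_{in}},(k_{out}-1)^{\diamond_{out}}\rangle})$, contribute the $H^3$-terms; the face $\{R=0\}$, which is $\Rbar(H^{12}_{\langle\cdot\rangle})$, contributes $H^{12}_{\langle\cdot\rangle}(\alpha,\beta)$; and the face $\{R=1\}$ contributes the Mukai-pairing term --- this face is the attachment of two copies of $\Rbar(\mu,2,0)$ in the $\vee\vee$ case, the self-attachment of one copy of $\Rbar(\mu,2,0)$ in the $\wedge\vee$ and $\vee\wedge$ cases, and is empty in the $\wedge\wedge$ case, which accounts both for the shape of the Mukai terms $\langle\alpha,\beta\rangle_{Muk,\vee\vee}$, $\langle\alpha,\beta\rangle_{Muk,\wedge\vee}$, $\langle\alpha,\beta\rangle_{Muk,\vee\wedge}$ and for the absence of such a term in the last identity, and also for the Kronecker factor $\delta_{k_{in},0}\delta_{k_{out},0}$, since $k_{in}>0$ or $k_{out}>0$ forces some $z^{in}_i$ or $z^{out}_i$ to be nonzero, incompatibly with the limit $R\to 1$. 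To evaluate the $\{R=1\}$ contribution one chooses perturbation data near this face pulled back from the one or two $\mu^2$-domains, so that the count literally reproduces the expression \eqref{eqn:mukform} with $\mu_1=\mu_2=\mu_\cC$, up to the sign $(-1)^{\signn}$.

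The main obstacle will be precisely this identification at $\{R=1\}$: one must verify that, with a suitable admissible choice of perturbation data on the annulus family as $R\to1$, the count of rigid pseudoholomorphic maps from the nodal configuration obtained in the limit is exactly the Mukai/higher-residue pairing \eqref{eqn:mukform}, \emph{including} the overall sign $(-1)^{\signn}$ and the restriction to $k_{in}=k_{out}=0$. This is the annulus (``Cardy'') computation of \cite{Ganatra:cyclic} and \cite[Section 5.6]{relfukii}, now to be carried out in the filtered, big-relative setting and for all $(\diamond_{in},\diamond_{out})\in\{\vee,\wedge\}^2$; it carries the full weight of the sign conventions encoded in the orientation torsors, so the sign bookkeeping is where most of the labour lies. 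A secondary, more routine, obstacle is to check that the stratifications of $\Rbar(H^{2/3}_{\langle\cdot\rangle},s,\ell)$ specified above, together with the stated identifications of their $\{R=1\}$, $\{R=0\}$, $\{\|z\|=R\}$ and $\{z_1=0\}$ strata, genuinely assemble into a system of families of domains in the sense of \cite{relfukii}, so that \cite[Lemma 4.18]{relfukii} applies; this amounts to compatibility under stabilization and under passage to boundary strata, which holds by construction but must be spelled out.
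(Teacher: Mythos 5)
Your proposal follows the same route as the paper: the paper's entire proof is ``Follows by applying \cite[Lemma 4.18]{relfukii} to the families $H^2_{\langle \rangle}$. Note that, while we define operations from the families $H^3_{\langle \rangle}$, we do not apply \cite[Lemma 4.18]{relfukii} to them.'' Your expanded dictionary between the codimension-one strata of $\Rbar(H^2_{\langle\cdot\rangle},s,\ell)$ and the terms in each identity --- including that the boundary formula is applied only to the $H^2$ families, with the $H^3$ families merely furnishing the operations appearing in the $\{z^{in}_1=0\}$ and $\{z^{out}_1=0\}$ boundary contributions --- matches the intended argument exactly, and the identifications of the $\{R=0\}$ face with $H^{12}$, the $\{\|z^{in}\|=R\}$ and $\{\|z^{out}\|=R\}$ faces with the Connes-$B$ terms, and the $\{R=1\}$ face with the $\mu^2$-attachments producing the Mukai pairing are all as in the paper's construction. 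You also correctly identify the sign bookkeeping at $\{R=1\}$ as the most delicate point; the paper delegates this to the sign appendix, which carries out precisely the torsor computation you flag.

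One small imprecision: your explanation of the Kronecker factor $\delta_{k_{in},0}\delta_{k_{out},0}$ --- that ``$k_{in}>0$ or $k_{out}>0$ forces some $z^{in}_i$ or $z^{out}_i$ to be nonzero, incompatibly with the limit $R\to1$'' --- is not quite the mechanism. The constraints $\|z^{in}\|^2\le R$, $\|z^{out}\|^2\le R$ do not force the $z$-coordinates to be nonzero as $R\to1$; rather, the point is that the paper defines $\Rbar(H^2_{\langle k^\vee_{in},k^\vee_{out}\rangle},s,\ell)$ as the closure of $\{\theta_{in}+\theta_{out}=\theta+\pi\}$ in $R^{-1}([0,1))$, and only specifies the $\{R=1\}$ attachments (to the $\mu^2$-configurations) for $(k_{in},k_{out})=(0,0)$; for $k>0$ the $\{R=1\}$ contribution does not reduce to a $\mu^2$-trace configuration, because $\theta$ does not extend as a coordinate over $R=1$. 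The conclusion you draw is correct, but the justification should be routed through the family construction rather than a constraint on the $z$-coordinates.
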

\begin{proof}
    Follows by applying \cite[Lemma 4.18]{relfukii} to the families $H^2_{\langle \rangle}$. Note that, while we define operations from the families $H^3_{\langle \rangle}$, we do not apply \cite[Lemma 4.18]{relfukii} to them.
\end{proof}

\begin{cor}\label{cor:cardy 2}
    There is a $u$-sesquilinear map
    \begin{align*}
        H^2_{\langle\rangle}: \sigma(\partial)^\vee\sigma_{in}(\cO\cC)\sigma_{out}(\cO\cC)fCC_*^-(\fuk^\big) \otimes fCC_*^-(\fuk^\big) & \to R^\big[[u]],
    \end{align*}
    satisfying
    \begin{multline}
H^{12}_{\langle\rangle}(\alpha,\beta) - (-1)^\signn\langle \alpha,\beta \rangle_{res} = \partial(H^2_{\langle\rangle}(\alpha,\beta)) \\+ H^2_{\langle\rangle}((b+uB)(\alpha),\beta) + H^2_{\langle\rangle}(\alpha,(b+uB)(\beta)).
    \end{multline}
\end{cor}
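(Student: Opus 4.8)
The proof will parallel that of Corollary \ref{cor:cardy 1}, now also accounting for the auxiliary homotopies $H^3_{\langle\rangle}$ supplied by Lemma \ref{lem:cardy nontriv}. First I would define, for $\alpha,\beta \in fCC_*^{nu}(\fuk^\big)$,
\begin{multline*}
H^2_{\langle\rangle}(\alpha,\beta) := \sum_{k_{in},k_{out}\ge 0} u^{k_{in}}(-u)^{k_{out}}\bigl( H^2_{\langle k_{in}^\vee,k_{out}^\vee\rangle}(\alpha^\vee,\beta^\vee) + H^2_{\langle k_{in}^\wedge,k_{out}^\vee\rangle}(\alpha^\wedge,\beta^\vee)\\
 + H^2_{\langle k_{in}^\vee,k_{out}^\wedge\rangle}(\alpha^\vee,\beta^\wedge) + H^2_{\langle k_{in}^\wedge,k_{out}^\wedge\rangle}(\alpha^\wedge,\beta^\wedge)\bigr),
\end{multline*}
and extend $u$-sesquilinearly to $fCC_*^-(\fuk^\big)\otimes fCC_*^-(\fuk^\big)$. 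Each power of $u$ receives only finitely many contributions, so this is a well-defined map valued in $R^\big[[u]]$, and the degree bookkeeping encoded by the $\sigma$-torsors is the same as for $H^1_{\langle\rangle}$ in Corollary \ref{cor:cardy 1} (via \eqref{eq:sigma H3} and \eqref{eq:sigma H2}), with the factor $u^{k_{in}+k_{out}}$ absorbing the shift $\sigma(-2k_{in}-2k_{out})$.

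Next I would take the four displayed identities of Lemma \ref{lem:cardy nontriv} (the $\vee\vee$, $\wedge\vee$, $\vee\wedge$ and $\wedge\wedge$ cases), multiply the $\langle k_{in}^{\diamond_{in}},k_{out}^{\diamond_{out}}\rangle$ identity by $u^{k_{in}}(-u)^{k_{out}}$, and sum over all $k_{in},k_{out}\ge 0$ and all $\diamond_{in},\diamond_{out}\in\{\vee,\wedge\}$. By the definition \eqref{eq:put the us} of $H^{12}_{\langle\rangle}$, the $H^{12}_{\langle\ldots\rangle}$ terms assemble into $H^{12}_{\langle\rangle}(\alpha,\beta)$; the Mukai terms, carrying $\delta_{k_{in},0}\delta_{k_{out},0}$, survive only at $k_{in}=k_{out}=0$ and, using the decomposition of the chain-level Mukai form into its $\vee\vee$, $\wedge\vee$ and $\vee\wedge$ pieces (there is no $\wedge\wedge$ piece, matching the absence of a Mukai term in the $\wedge\wedge$ identity), assemble into $(-1)^\signn\langle\alpha,\beta\rangle_{Muk}$, hence into $(-1)^\signn\langle\alpha,\beta\rangle_{res}$ after $u$-sesquilinear extension, since $\langle-,-\rangle_{res}$ is by construction that extension. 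On the right-hand side the $\partial(H^2_{\langle\ldots\rangle})$ terms assemble into $\partial(H^2_{\langle\rangle}(\alpha,\beta))$, and the terms with $b$, $b_{\wedge\wedge}$, $b_{\wedge\vee}$ and $B$ acting on the incoming (resp. outgoing) input assemble into $H^2_{\langle\rangle}((b+uB)\alpha,\beta)$ (resp. $H^2_{\langle\rangle}(\alpha,(b+uB)\beta)$); this reassembly is exactly as in the proof of Corollary \ref{cor:cardy 1}, using that the differential on $fCC_*^{nu}$ is $b$ on $fCC_*^\vee$ and $b_{\wedge\wedge}+b_{\wedge\vee}$ on $fCC_*^\wedge$, that $B$ vanishes on $fCC_*^\wedge$ and sends $fCC_*^\vee$ to $fCC_*^\wedge$, and that the index shift $(k-1)^\wedge$ accompanying each $B$-term is compensated by the extra $u$ in $uB$ together with $u$-(sesqui)linearity — in particular the sign in $u$-sesquilinearity on the outgoing side is matched by the $(-u)^{k_{out}}$ weighting.

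The one point requiring genuinely new care — the only feature not already present in Corollary \ref{cor:cardy 1} — is the pair of $H^3_{\langle\ldots\rangle}$ terms $-H^3_{\langle(k_{in}-1)^{\diamond_{in}},k_{out}^{\diamond_{out}}\rangle}-H^3_{\langle k_{in}^{\diamond_{in}},(k_{out}-1)^{\diamond_{out}}\rangle}$ on the right of each identity of Lemma \ref{lem:cardy nontriv}. I would check that these telescope to zero in the weighted sum: for fixed $\diamond_{in},\diamond_{out}$ and fixed $j,k\ge 0$, the map $H^3_{\langle j^{\diamond_{in}},k^{\diamond_{out}}\rangle}$ occurs exactly twice, once from the $\langle(j+1)^{\diamond_{in}},k^{\diamond_{out}}\rangle$ identity with coefficient $-u^{j+1}(-u)^k$ and once from the $\langle j^{\diamond_{in}},(k+1)^{\diamond_{out}}\rangle$ identity with coefficient $-u^j(-u)^{k+1}$, and these sum to $-u^j(-u)^k\bigl(u+(-u)\bigr)=0$. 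Hence the $H^3$-contributions cancel and what remains is precisely the asserted identity; combined with Corollary \ref{cor:cardy 1}, this yields Theorem \ref{thm:oc_hres_bc} upon passing to cohomology (where $(b+uB)\alpha=(b+uB)\beta=0$). I expect no substantive obstacle beyond this bookkeeping: the geometric content sits entirely in Lemma \ref{lem:cardy nontriv}, which produces the $H^2_{\langle\rangle}$ and $H^3_{\langle\rangle}$ operations via \cite[Lemma 4.18]{relfukii} applied to the families $\Rbar(H^2_{\langle\rangle},s,\ell)$.
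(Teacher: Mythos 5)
Your proposal matches the paper's proof: define $H^2_{\langle\rangle}$ by the analogue of \eqref{eq:put the us}, sum the four identities of Lemma \ref{lem:cardy nontriv} with weights $u^{k_{in}}(-u)^{k_{out}}$, and observe that the $H^3_{\langle\,\rangle}$ terms telescope to zero. Your explicit verification of the cancellation --- each $H^3_{\langle j^{\diamond_{in}},k^{\diamond_{out}}\rangle}$ appearing exactly twice with weights $-u^{j+1}(-u)^k$ and $-u^j(-u)^{k+1}$, which sum to zero --- is precisely the content the paper leaves as the remark ``Note that the terms $H^3_{\langle\,\rangle}$ cancel in the sum.''
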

\begin{proof}
    We define $H^2_{\langle\rangle}$ by the analogue of \eqref{eq:put the us}; then the result follows from Lemma \ref{lem:cardy nontriv}. 
    Note that the terms $H^3_{\langle \rangle}$ cancel in the sum.
\end{proof}

\begin{proof}[Proofs of Theorems \ref{thm:mukai_int} and \ref{thm:oc_hres_bc}]
    Combining Corollaries \ref{cor:cardy 1} and \ref{cor:cardy 2}, we obtain a homotopy
    $$H_{\langle \rangle} = H^1_{\langle \rangle} + H^2_{\langle \rangle}$$
    such that
    $$\langle \cO\cC^{-,\big}(\alpha),\cO\cC^{-,\big}(\beta) \rangle - (-1)^\signn\langle \alpha,\beta \rangle_{res} = \partial ( H)(\alpha,\beta).$$
    Tensoring with $\Lambda^\big_{\ge 0}$, then passing to the category of small bounding cochains using Lemma \ref{lem:C_Cbc_pairings}, tensoring with $\Lambda^\big$, then passing to cohomology, we obtain Theorem \ref{thm:oc_hres_bc}. 
    If we quotient by the bulk variables before taking cohomology, then we obtain Theorem \ref{thm:mukai_int}.
\end{proof}

\section{Cyclic open--closed map respects connections}\label{sec:oc conn}

In this section we prove that the cyclic open--closed map respects connections, proving Theorem \ref{thm:oc_conn_bc}.

\subsection{Dubrovin--Givental connection}\label{sec:pdvq}

Recall that we define $QC^*(X;R^\big) := CM^*(f,g) \otimes R^\big$, which comes equipped with the Morse differential $\partial_{QC}$, and the big quantum cup product $\star$.  

We now define chain maps
$$PD(V_q) \star (-): QC^*(X;R) \to QC^*(X;R)$$
for each $q \in Q$, and arrange that the induced maps on cohomology coincide with the quantum cup product with $PD(V_q)$. 
We emphasize that these chain-level maps $PD(V_q) \star$ are new operations, which were not defined in \cite{relfukii}: for example, there is no natural chain-level representative of $PD(V_q)$. 

We define $\Rbar(PD(V_*) \star,0,0)$ to be the moduli space of spheres with one non-symmetric stabilizing marked point $p_*^{int}$, one incoming bulk marked point $p_1^{int}$, and one outgoing bulk marked point $p_0^{int}$. 
We define $\Rbar(PD(V_*)\star,\ell)$ to be its stabilization, adding $\ell_\bulk$ incoming bulk marked points and $\ell_\stab$ symmetric stabilizing marked points. 
We define 
$$PD(V_q) \star (-):= \sum_{\ell} (PD(V_*) \star,\ell)_{q}.$$
It follows from \cite[Lemma 4.18]{relfukii} that it is a chain map.

\begin{defn}
    Let 
    $$D_R^\big: R^\big \to \Omega^\big_R = R^\big \otimes (\Z^Q \oplus CM^{2-*}(f,g))$$ 
    be the derivation introduced in Example \ref{eg:Omega R big}. 
    The \emph{Dubrovin--Givental connection}
    $$u\nabla^{DG}_R: QC^*(X;R^\big)[[u]] \to \Omega_R^\big \otimes QC^*(X;R^\big)[[u]]$$
    is defined by
    $$u\nabla^{DG}_R (\alpha) := uD_R^\big(\alpha) - \sum_{q \in Q} d\log \nov_q \otimes PD(V_q) \star \alpha - \sum_i d\novb_i \otimes \beta_i \star \alpha,$$
    extended $u$-linearly.
\end{defn}

Note that it follows from the fact that $\star$ satisfies the Leibniz rule (by \cite[Lemma 5.3]{relfukii}), and $PD(V_q)\star(-)$ is a chain map, that $\nabla^{DG}_R$ is indeed a connection in the sense of Definition \ref{def:conn}.

\subsection{Cyclic open--closed map respects connections}

We define families $\Rbar(\partial_*\cO\cC,k^\diamond,s,\ell)$ (respectively, $\Rbar(\partial_*\mu,s,\ell)$) to be the same as $\Rbar(\cO\cC,k^\diamond,s,\ell)$ (respectively, $\Rbar(\mu,s,\ell)$), except that one of the stabilizing marked points $p_*^{int}$ is designated as non-symmetric whenever it does not lie on a sphere bubble; and symmetric whenever it lies on a sphere bubble.
We choose perturbation data for this family by pullback under the natural map $\Rbar(\partial_*\cO\cC,k^\diamond,s,\ell) \to \Rbar(\cO\cC,k^\diamond,s,\ell)$ (respectively, $\Rbar(\partial_*\mu,s,\ell) \to \Rbar(\mu,s,\ell)$); we observe that it satisfies all of the required conditions for a choice of perturbation data.

\begin{lem}
    For each $q \in Q$, we have 
    \begin{align*}
        D^q_R(\cO\cC^{-,\big}) &= \sum_{s,\ell,k^\diamond}(\partial_*\cO\cC,k^\diamond,s,\ell)^{\big}_{\emptyset,\bL,q} \cdot u^k \cdot d\log\nov_q \qquad\text{and}\\
        D^q_R(\mu) &= \sum_{s,\ell}(\partial_*\mu,s,\ell)^{\big}_{\emptyset,\bL,q} \cdot d\log\nov_q.
    \end{align*}
\end{lem}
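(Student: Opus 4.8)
The plan is to read both identities off from the description of $\mu$ and $\cO\cC^{-,\big}$ as sums over homotopy classes of the rigid curves they count, recognizing the derivation $D^q_R$ as the geometric operation of marking an intersection point with $V_q$. Concretely, for a fixed Lagrangian labelling and fixed boundary/bulk inputs the relevant component of $\mu$ has the form $\sum_A \nov^A\cdot m_A\cdot F^\ex_{\bL,A}$, where $A$ ranges over homotopy classes of the discs counted, $m_A=\tfrac{1}{\ell!}\novb_{i_1}\cdots\novb_{i_\ell}$ is the bulk monomial recording the Morse constraints, and $F^\ex_{\bL,A}$ is the count; likewise $\cO\cC^\vee_k$ and $\cO\cC^\wedge_k$ are assembled from the families $\Rbar(\cO\cC,k^\diamond,s,\ell)$, and $\cO\cC^{-,\big}=\sum_k u^k(\cO\cC^\vee_k+\cO\cC^\wedge_k)$. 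By Example~\ref{eg:Omega R big}, $D^q_R$ is $\Z$-linear, continuous, kills every divided power $\novb_i^\ell/\ell!$, and satisfies $D^q_R(\nov^A)=(A\cdot[V_q])\,\nov^A\otimes d\log\nov_q$ with $A\cdot[V_q]$ the homological intersection number. First I would apply the Leibniz rule and continuity to get $D^q_R(\mu)=\bigl(\sum_A (A\cdot[V_q])\,\nov^A\cdot m_A\cdot F^\ex_{\bL,A}\bigr)\otimes d\log\nov_q$, and, using $u$-linearity, the same formula for $D^q_R(\cO\cC^{-,\big})$ with an extra $\sum_k u^k$ in front.

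Next I would recognize these right-hand sides geometrically. The perturbation data on $\Rbar(\partial_*\mu,s,\ell)$ (resp.\ $\Rbar(\partial_*\cO\cC,k^\diamond,s,\ell)$) is, by construction, pulled back under the forgetful map to $\Rbar(\mu,s,\ell)$ (resp.\ $\Rbar(\cO\cC,k^\diamond,s,\ell)$); so on the no-bubbling stratum a rigid map contributing to $(\partial_*\mu)^\ex_{\bL,A,(q)}$ is precisely a rigid map $u$ contributing to $F^\ex_{\bL,A}$ together with a choice of one of the points of $u^{-1}(V_q)$. Since $V_q$ is $J_0$-holomorphic, positivity of intersection for pseudoholomorphic curves makes the incidence condition $u(p_*^{int})\in V_q$ automatically transverse (for the classes $A$ that contribute, which are not swept into $V_q$) and shows there are exactly $A\cdot[V_q]$ such points, each with the correct sign; thus the moduli space defining $(\partial_*\mu)^\ex_{\bL,A,(q)}$ is a degree-$(A\cdot[V_q])$ cover of the one defining $F^\ex_{\bL,A}$, so that constant discs contribute nothing on either side, consistently with $D^q_R(\nov^0)=0$. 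Hence $(\partial_*\mu,s,\ell)^\big_{\emptyset,\bL,q}=\sum_A (A\cdot[V_q])\,\nov^A\cdot m_A\cdot F^\ex_{\bL,A}$, and the same for the $\cO\cC$ families; comparing with the previous paragraph and summing over $s,\ell$ (and over $k^\diamond$, with the powers $u^k$ reassembled, in the cyclic case) gives both asserted equalities, the $\mu$-identity being simply the $u$-free special case of the $\cO\cC$-argument.

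The main obstacle I anticipate is this geometric identification: one has to confirm that, for the pulled-back perturbation data, the $\partial_*$-moduli space in a fixed class $A$ really is a covering of the $\mu$- (resp.\ $\cO\cC$-) moduli space of degree $A\cdot[V_q]$ with the correct orientation behaviour, and in particular that the loci where $p_*^{int}$ lies on a sphere bubble (where it is declared symmetric) form boundary strata of positive codimension and so support no rigid configurations. This is exactly the kind of bookkeeping the formalism of \cite{perutz2022constructing,relfukii} was set up to handle, so I would discharge it by invoking the relevant regularity, compatibility, and orientation results there, together with the observation (already recorded where the families $\Rbar(\partial_*\mu)$ and $\Rbar(\partial_*\cO\cC)$ are introduced) that the pulled-back perturbation data does satisfy all the axioms of a choice of perturbation data, rather than reproving any of it here.
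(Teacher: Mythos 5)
Your proof takes essentially the same approach as the paper's: both reduce to the observation that a rigid curve $u$ contributing $\nov^A\cdot\beta$ to $\cO\cC^{-,\big}$ (resp.\ $\mu$) gives rise to exactly $A\cdot V_q$ rigid configurations in the $\partial_*$ family, which matches $D^q_R(\nov^A\cdot\beta)=(A\cdot V_q)\,\nov^A\cdot\beta\,d\log\nov_q$ from Example~\ref{eg:Omega R big}. One small imprecision worth flagging: the map $\Rbar(\partial_*\cO\cC,k^\diamond,s,\ell)\to\Rbar(\cO\cC,k^\diamond,s,\ell)$ is \emph{not} a forgetful map --- the two families have identical underlying marked points, and no new incidence condition is being imposed beyond the relative framework. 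What changes under $\partial_*$ is only the \emph{designation} of one pre-existing stabilizing marked point (which by its tangency vector is already constrained to lie on $V_q$) as the non-symmetric $p_*^{int}$; the factor $A\cdot V_q$ is the number of such choices, and this agrees with the count you obtain via positivity of intersection since the stabilizing marked points constrained to $V_q$ account precisely for $u^{-1}(V_q)$. So the picture in your second paragraph of ``adding a marked point at one of the intersection points and checking transversality of the incidence condition'' should be replaced by ``relabelling one of the existing marked points already carrying that constraint,'' but the resulting count is the same and your conclusion is correct.
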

\begin{proof}
    This follows because each curve $u$ contributing $\nov^u \cdot \beta$ to the LHS, where $u \in \NE$ and $\beta$ is a monomial in the bulk variables,  contributes $(u \cdot V_q) \cdot \nov^u \cdot \beta$ to the RHS. 
    This is because there are $u \cdot V_q$ choices of stabilizing point to designate as $p_*^{in}$, among those whose tangency vector constrains them to lie on $V_q$. 
    It now suffices to observe that $D_q^R(\nov^u \cdot \beta) = (u \cdot V_q) \cdot \nov^u \cdot \beta \cdot d\log \nov_q$, by definition.
\end{proof}

We define families $\Rbar(H^{1}_\nabla,k^\diamond,s,\ell)$, where $k^\diamond \in K$, $s \in \mathbb{N}_0$, and $\ell=(\ell_{\bulk,\cO\cC},\ell_{\stab,\cO\cC},\ell_{\bulk,\nabla},\ell_{\stab,\nabla})$, by attaching the outgoing bulk marked point of $\Rbar(\cO\cC,k^\diamond,s,(\ell_{\bulk,\cO\cC},\ell_{\stab,\cO\cC}))$ to the first incoming bulk marked point of $\Rbar(PD(V_*)\star,(\ell_{\bulk,\nabla},\ell_{\stab,\nabla}))$, with length parameter $[0,\infty]$. 
We denote the boundary stratum where the length parameter is $0$, by $\Rbar(H^{12}_\nabla,k^\diamond,s,\ell)$.

\begin{lem}\label{lem:conn 1}
    For each $q \in Q$, there are maps
    \begin{align*}
        H^1_{\nabla_q}:\sigma(\partial)^\vee \sigma(\cO\cC) fCC_*^-(\fuk^\big(X,D)) & \to QC^*(X;R^\big)[[u]],\\
        H^{12}_{\nabla_q}:\sigma(\cO\cC)fCC_*^-(\fuk^\big(X,D)) &\to QC^*(X;R^\big)[[u]]
    \end{align*}
    satisfying
    \begin{align}\label{eq:H1 nab}
        PD(V_q) \star \cO\cC^{-,\big}(-) - H^{12}_{\nabla_q} &= \partial(H^1_{\nabla_q})
    \end{align}
    where $\sigma(\partial) = \sigma(b) = \sigma(2)\sigma(B)$.
\end{lem}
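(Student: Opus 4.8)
The plan is to apply the codimension-one boundary relation \cite[Lemma 4.18]{relfukii} to the families $\Rbar(H^1_\nabla,k^\diamond,s,\ell)$ just introduced, in exactly the style of the proofs of Lemma \ref{lem:oc cyc chain} and Lemma \ref{lem:cardy triv}. First, for each $k^\diamond \in K$ I would let $H^1_{\nabla_q,k^\diamond}$ (respectively $H^{12}_{\nabla_q,k^\diamond}$) be the Floer-theoretic operation associated by the formalism recalled above to the family $\Rbar(H^1_\nabla,k^\diamond,s,\ell)$ (respectively to its length-zero boundary face $\Rbar(H^{12}_\nabla,k^\diamond,s,\ell)$), with bulk input subset $P=\emptyset$, the non-symmetric stabilizing marked point $p_*^{int}$ constrained to lie on $V_q$, and summed over $s$, $\ell$ and Lagrangian labels $\bL$; here one uses the perturbation data pulled back from $\Rbar(\cO\cC,k^\diamond,s,\ell)$ and $\Rbar(PD(V_*)\star,\ell)$, which, as noted when those families were defined, satisfies all of the required consistency conditions. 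One then sets $H^1_{\nabla_q} := \sum_k u^k\,(H^1_{\nabla_q,k^\vee} + H^1_{\nabla_q,k^\wedge})$ on $fCC^\vee_* \oplus fCC^\wedge_* = fCC_*^{nu}$ and extends $u$-linearly, exactly as $\cO\cC^{-,\big}$ was assembled from $\cO\cC^\vee_k$ and $\cO\cC^\wedge_k$; and likewise for $H^{12}_{\nabla_q}$.

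Next I would enumerate the codimension-one boundary strata of $\Rbar(H^1_\nabla,k^\diamond,s,\ell)$. The interior attachment carries a length parameter in $[0,\infty]$, contributing two faces: the face at length $\infty$ is the attachment of $\Rbar(\cO\cC,k^\diamond,s,\ell')$ to $\Rbar(PD(V_*)\star,\ell'')$, which by the behaviour of the operations under boundary/interior attachments (\cite[Lemmas 4.16, 4.17]{relfukii}) contributes the composite $PD(V_q)\star \cO\cC^\diamond_k(-)$; and the face at length $0$ is by definition $\Rbar(H^{12}_\nabla,k^\diamond,s,\ell)$, contributing $H^{12}_{\nabla_q,k^\diamond}$. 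All remaining codimension-one faces are inherited from the $\cO\cC$-factor: the disc and sphere bubblings at the boundary inputs, and the $r^i\cdot\Rbar(\cO\cC,(k-1)^\wedge)$ and $\Rbar(\cO\cC^{S^1},k^\wedge)$-type faces of $\Rbar(\cO\cC,k^\diamond,s,\ell)$, which are precisely the faces analyzed in Lemma \ref{lem:oc cyc chain}; after summing over $k$ with the appropriate powers of $u$ these assemble into $H^1_{\nabla_q}\circ(b+uB)$ (built out of $b$, $b_{\wedge\wedge}$, $b_{\wedge\vee}$ on the source together with $B$ via the $\vee\!\to\!\wedge$ faces, matching \eqref{eq:OC check rel}--\eqref{eq:OC hat rel}), while the Morse-breaking face at the output contributes $\partial_{QC}\circ H^1_{\nabla_q}$. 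Collecting all terms, \cite[Lemma 4.18]{relfukii} yields the chain-level identity \eqref{eq:H1 nab}, where $\partial$ denotes the differential on $\Hom\bigl(\sigma(\partial)^\vee\sigma(\cO\cC)fCC_*^-(\fuk^\big(X,D)),\,QC^*(X;R^\big)[[u]]\bigr)$ combining $\partial_{QC}$ on the target with $b+uB$ on the source. The identical argument, with $\beta_i\star(-)$ and the family $\Rbar(\partial_*\mu,s,\ell)$ in place of $PD(V_*)\star(-)$ and $\Rbar(\partial_*\cO\cC,\dots)$, produces the analogous maps $H^1_{\nabla_i}$ in the bulk-variable directions.

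I expect the main obstacle to be bookkeeping rather than anything conceptual: one must check that the $u$-powers and Koszul signs line up so that the $\vee/\wedge$-decomposed relations combine into the single $u$-sesquilinear equation \eqref{eq:H1 nab} (precisely as in the passage from Lemma \ref{lem:oc cyc chain} to the corollary defining $\cO\cC^{-,\big}$), and one must verify that the length-$\infty$ face genuinely reproduces $PD(V_q)\star$ composed with $\cO\cC$ and nothing else — this is where it matters that the perturbation data on $\Rbar(H^1_\nabla,k^\diamond,s,\ell)$ restrict compatibly to the product data on the two factors, which is exactly what the pullback construction and the gluing axioms for systems of families of domains guarantee. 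A secondary point worth stating explicitly is that, since we apply \cite[Lemma 4.18]{relfukii} only to the families $\Rbar(H^1_\nabla,\dots)$ and not to $\Rbar(H^{12}_\nabla,\dots)$ or $\Rbar(PD(V_*)\star,\dots)$ directly, we only need the boundary strata of the latter as they appear inside $\Rbar(H^1_\nabla,\dots)$, and those are covered by the enumeration above together with the fact (already used to define $PD(V_q)\star(-)$) that $PD(V_q)\star(-)$ is a chain map.
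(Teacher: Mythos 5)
Your proposal is correct and follows essentially the same route as the paper: one defines $H^{1}_{\nabla_q,k^\diamond}$ and $H^{12}_{\nabla_q,k^\diamond}$ as the Floer-theoretic operations associated to the families $\Rbar(H^1_\nabla,k^\diamond,s,\ell)$ and their length-zero faces, applies \cite[Lemma 4.18]{relfukii} to enumerate the codimension-one boundary strata (length $\infty$ giving $PD(V_q)\star\cO\cC^\diamond_k$, length $0$ giving $H^{12}_{\nabla_q,k^\diamond}$, and the $\cO\cC$-inherited faces giving the $b_{\vee\vee}$, $b_{\wedge\wedge}$, $b_{\wedge\vee}$, $B$ and $\partial_{QC}$ contributions), and then sums over powers of $u$ as in Section~\ref{sec:oc pair}. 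One small imprecision: the perturbation data on an interior-attachment family with length parameter $[0,\infty]$ such as $\Rbar(H^1_\nabla,\ldots)$ is not literally pulled back from the two factors; rather it is a consistent choice whose restriction to the length-$0$ and length-$\infty$ faces agrees with the data on those boundary strata, which is what the system-of-families consistency axioms provide — your parenthetical in the final paragraph in fact states this correctly, so the earlier ``pulled back'' phrasing is just loose wording. Also note that your closing remark about $\beta_i\star(-)$ belongs to Proposition~\ref{prop:h nabla i}, where the modification is that $p_*^{int}$ is re-designated as an incoming bulk marked point labelled by the $i$th critical point rather than as a stabilizing point constrained to $V_q$; it is not a replacement of $\Rbar(\partial_*\cO\cC,\dots)$ by $\Rbar(\partial_*\mu,\dots)$, which pertain to different families in the subsequent lemmas.
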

\begin{proof}
    We define maps
    \begin{align*}
        H^1_{\nabla_q,k^\diamond}:\sigma(\partial)^\vee\sigma(\cO\cC)\sigma(-2k) fCC_*^\diamond(\fuk^\big(X,D)) &\to QC^*(X;R^\big),\\
        H^{12}_{\nabla_q,k^\diamond}:\sigma(\cO\cC)\sigma(-2k) fCC_*^\diamond(\fuk^\big(X,D)) &\to QC^*(X;R^\big),
    \end{align*}
    by
    $$H^{1/12}_{\nabla_q,k^\diamond} := \sum_{s,\ell} (H^{1/12}_{\nabla},k^\diamond,s,\ell)_{\emptyset,\bL,q}.$$
    By applying \cite[Lemma 4.18]{relfukii} to the families $\Rbar(H^1_\nabla,k^\diamond,s,\ell)$, we find that they satisfy
    \begin{align*}
        PD(V_q) \star \cO\cC^\vee_k - H^{12}_{\nabla_q,k^\vee} = H^1_{\nabla_q,k^\vee} \circ b_{\vee\vee} + H^1_{\nabla_q,(k-1)^\wedge} \circ B\\
        PD(V_q) \star \cO\cC^\wedge_k - H^{12}_{\nabla_q,k^\wedge} = H^1_{\nabla_q,k^\wedge} \circ b_{\wedge\wedge} + H^1_{\nabla_q,k^\vee} \circ b_{\wedge\vee}.
    \end{align*}
    The result then follows by summing over powers of $u$ as in Section \ref{sec:oc pair}.
\end{proof}

We define the family $\Rbar(\cO\cC_*,0,0)$ to be the moduli space of discs with one incoming boundary marked point $p_0^\partial$, one outgoing bulk marked point $p_0^{int}$, and one non-symmetric stabilizing marked point $p_*^{int}$. 
By parametrizing the disc as the unit disc in $\C$ with $p_0^{int}$ at the origin and $p_0^\partial$ at $1 \in S^1$, we obtain a map from the boundary of any disc in the family to $S^1$. 
We define an analytic function $t:\Rbar(\cO\cC_*,0,0)$ to $[0,1]$, so that the disc can be parametrized as the unit disc in $\C$ with $p_0^{int}$ at $-t$ and $p_*$ at $+t$. 
For $t \in (0,1]$, we define $\theta_* \in S^1$ to be the point which gets sent to $-i$, under the above parametrization. The function $\theta_*$ does not extend over $t=0$.

We define $\Rbar(\cO\cC_*,s,\ell)$ to be the stabilization of $\Rbar(\cO\cC_*,0,0)$. 
There are analytic functions $\theta_j \in S^1$ defined on this family, which record the position of the $j$th boundary marked point, for $0 \le j \le s$, with $\theta_0 = 1$; and there is also an analytic function $t \in [0,1]$, and an analytic function $\theta_* \in S^1$ defined on $t^{-1}((0,1])$, both of which are pulled back from $\Rbar(\cO\cC_*,0,0)$ under the stabilization map. 

We define the families $\Rbar(H^2_\nabla,k^\vee,s,\ell)$ to be the closure of the locus $\{\theta_0=\theta_*\}$ in $\Rbar(\cO\cC_*,s,\ell) \times D_k$; and $\Rbar(H^2_\nabla,k^\wedge,s,\ell)$ to be the closure of the locus $\{\theta_0 \le \theta_* \le \theta_1\}$. 
We inductively define stratifications of $\Rbar(H^2_\nabla,k^\diamond,s,\ell)$, and simultaneously introduce new families and identify them with these strata, as follows:
\begin{itemize}
    \item the union of codimension-$1$ strata $z \in \partial D_k$ is identified with $\Rbar(H^{23}_\nabla,k^\diamond,s,\ell)$;
    \item the codimension-$1$ stratum $\theta_0= \theta_*$ of $\Rbar(H^2_\nabla,k^\wedge,s,\ell)$ is identified with $\Rbar(H^2_\nabla,k^\vee,s,\ell)$;
    \item the codimension-$1$ stratum $\theta_* = \theta_1$ of $\Rbar(H^2_\nabla,k^\wedge,s,\ell)$ is identified with $r\cdot \Rbar(H^2_\nabla,k^\vee,s,\ell)$
    \item the codimension-$1$ stratum $\{t=0\}$ is identified with $\Rbar(H^{12}_\nabla,k^\diamond,s,\ell)$;
    \item the codimension-$1$ stratum $\{t=1\}$ is identified with:
    \begin{itemize}
        \item if $\diamond = \vee$, a boundary attachment of $\Rbar(\partial_*\mu,s,\ell)$, $\Rbar(\mu,s,\ell)$, and $\Rbar(\cO\cC,k^\vee,s,\ell)$;
        \item if $\diamond = \wedge$, a boundary attachment of $\Rbar(\partial_*\mu,s,\ell)$ with $\Rbar(\cO\cC,k^\vee,s,\ell)$.
    \end{itemize}
\end{itemize}

\begin{lem}\label{lem:conn 2}
    For each $q \in Q$, there are maps
    \begin{align*}
        H^2_{\nabla_q}:\sigma(\partial)^\vee \sigma(\cO\cC) fCC_*^-(\fuk^\big(X,D)) & \to QC^*(X;R^\big)[[u]],\\
        H^{23}_{\nabla_q}:\sigma(\cO\cC)fCC_*^-(\fuk^\big(X,D)) &\to QC^*(X;R^\big)[[u]]
    \end{align*}
    satisfying
    \begin{align}\label{eq:H2 nab}
         H^{12}_{\nabla_q} + H^{23}_{\nabla_q}- \cO\cC^{-,\big}(b^{1|1}(D_q^R(\mu),-)) &= \partial \circ H^2_{\nabla_q} + H^2_{\nabla_q} \circ b.
    \end{align}
\end{lem}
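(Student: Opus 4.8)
The plan is to follow the recipe used throughout Sections~\ref{sec:wpcy}--\ref{sec:oc conn}: realise $H^2_{\nabla_q}$ and $H^{23}_{\nabla_q}$ as generating functions of Floer-theoretic operations attached to the families $\Rbar(H^2_\nabla,k^\diamond,s,\ell)$ and $\Rbar(H^{23}_\nabla,k^\diamond,s,\ell)$, with the non-symmetric stabilizing marked point $p_*^{int}$ constrained to lie on $V_q$, and then read off \eqref{eq:H2 nab} from \cite[Lemma 4.18]{relfukii}. Concretely, for each $k^\diamond\in K$ I would set
$$H^{2}_{\nabla_q,k^\diamond} := \sum_{s,\ell}(H^2_\nabla,k^\diamond,s,\ell)_{\emptyset,\bL,q}, \qquad H^{23}_{\nabla_q,k^\diamond} := \sum_{s,\ell}(H^{23}_\nabla,k^\diamond,s,\ell)_{\emptyset,\bL,q},$$
check (by the $\sigma$-torsor bookkeeping, exactly as for $H^1_{\nabla_q,k^\diamond}$ in Lemma~\ref{lem:conn 1}, with an extra $\sigma(-2k)$ coming from the $D_k$-factor) that these have the degrees implicit in the statement, and then define $H^2_{\nabla_q} := \sum_k u^k\,(H^2_{\nabla_q,k^\vee}\oplus H^2_{\nabla_q,k^\wedge})$ and $H^{23}_{\nabla_q}$ analogously, extended $u$-linearly, following Section~\ref{sec:oc pair}.

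I would then apply \cite[Lemma 4.18]{relfukii} to $\Rbar(H^2_\nabla,k^\diamond,s,\ell)$ and match each codimension-one boundary stratum in the stratification above to a term of \eqref{eq:H2 nab}: the stratum $\{z\in\partial D_k\}$ contributes $H^{23}_{\nabla_q,k^\diamond}$; the stratum $\{t=0\}$ contributes $H^{12}_{\nabla_q,k^\diamond}$ (with the sign appropriate to a zero-length interior attachment); the disc-bubbling strata inherited from $\Rbar(\cO\cC_*,s,\ell)$, together with the Morse differential on the target $QC^*(X;R^\big)$, contribute $\partial\circ H^2_{\nabla_q,k^\diamond} + H^2_{\nabla_q,k^\diamond}\circ b$; and in the $\wedge$-case the strata $\{\theta_0=\theta_*\}$ and $\{\theta_*=\theta_1\}$ contribute $H^2_{\nabla_q,k^\vee}$ and its cyclic rotate $r\cdot H^2_{\nabla_q,k^\vee}$, whose difference supplies the $b_{\wedge\vee}$-part. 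All of the $D_k$- and $\vee/\wedge$-bookkeeping here is the same as in the proof of Lemma~\ref{lem:oc cyc chain}; in particular, after summing over $u^k$ with the signs of Section~\ref{sec:oc pair}, the Connes operator reappears with the usual index shift and these strata assemble into $\partial\circ H^2_{\nabla_q} + H^2_{\nabla_q}\circ b$.

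The genuinely new input is the stratum $\{t=1\}$. For $\diamond=\vee$ it is the boundary attachment of $\Rbar(\partial_*\mu,s,\ell)$, $\Rbar(\mu,s,\ell)$ and $\Rbar(\cO\cC,k^\vee,s,\ell)$, and for $\diamond=\wedge$ it is the attachment of $\Rbar(\partial_*\mu,s,\ell)$ with $\Rbar(\cO\cC,k^\vee,s,\ell)$. Using the identity $D^q_R(\mu) = \sum_{s,\ell}(\partial_*\mu,s,\ell)^{\big}_{\emptyset,\bL,q}\cdot d\log\nov_q$ from the preceding lemma and the behaviour of these operations under boundary attachment (\cite[Lemma 4.17]{relfukii}), I would identify this contribution with $\cO\cC^\vee_k\circ b^{1|1}(D^q_R(\mu),-)$ restricted to the $\diamond$-summand of $fCC_*^{nu}$: the single $\Rbar(\partial_*\mu)$ factor inserts the cochain $D^q_R(\mu)$ once into the cyclic word, and the presence (for $\vee$) or absence (for $\wedge$) of the extra $\Rbar(\mu)$ factor is exactly what distinguishes the summands $b^{1|1}_{\vee\vee}$ and $b^{1|1}_{\wedge\vee}$ in the expansion of $b^{1|1}$ recorded in Section~\ref{sec:GGM}. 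Summing over $u^k$ then produces the term $-\cO\cC^{-,\big}(b^{1|1}(D^q_R(\mu),-))$ of \eqref{eq:H2 nab}.

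The main obstacle I expect is this last identification: verifying that gluing $\Rbar(\partial_*\mu)$ (and, for $\diamond=\vee$, also $\Rbar(\mu)$) onto $\Rbar(\cO\cC,k^\vee)$ reproduces $b^{1|1}(D^q_R(\mu),-)$ with the correct Koszul signs and with the inserted cochain in the correct cyclic position, and that no other configuration contributes at $t=1$ — in particular none in which $p_*^{int}$ collides with another marked point or bubbles onto a sphere carrying further insertions. The latter is precisely what forces the choice of perturbation data to be pulled back along $\Rbar(\partial_*\cO\cC,k^\diamond,s,\ell)\to\Rbar(\cO\cC,k^\diamond,s,\ell)$, so that the count is insensitive to $p_*^{int}$ apart from its incidence with $V_q$; a preliminary step is to record that this pullback indeed satisfies all the consistency conditions required of perturbation data, just as was done for the families $\Rbar(\partial_*\cO\cC)$ and $\Rbar(\partial_*\mu)$.
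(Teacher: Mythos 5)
Your proposal is correct and is essentially the paper's own proof, fleshed out in considerably more detail: the paper simply says ``the proof follows that of Lemma~\ref{lem:conn 1}, by applying \cite[Lemma 4.18]{relfukii} to the families $\Rbar(H^2_\nabla,k^\diamond,s,\ell)$, then summing over powers of $u$,'' and your stratum-by-stratum bookkeeping fills in what that reference means. One small correction to your final paragraph: the family underlying $\Rbar(H^2_\nabla,k^\diamond,s,\ell)$ is $\Rbar(\cO\cC_*,s,\ell)\times D_k$ (where $p_*^{int}$ is a genuine non-symmetric stabilizing marked point of $\Rbar(\cO\cC_*)$), not $\Rbar(\partial_*\cO\cC)$; the pullback of perturbation data you invoke is the one made for $\Rbar(\partial_*\mu)$ just before Lemma~\ref{lem:conn 1}, which is exactly what is needed to identify the $\{t=1\}$ contribution with $\cO\cC^\vee_k\circ b^{1|1}(D^q_R(\mu),-)$ (the family $\Rbar(\partial_*\cO\cC)$ only enters later, in Lemma~\ref{lem:conn 4}).
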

\begin{proof}
    The proof follows that of Lemma \ref{lem:conn 1}, by applying \cite[Lemma 4.18]{relfukii} to the families $\Rbar(H^2_\nabla,k^\diamond,s,\ell)$, then summing over powers of $u$.
\end{proof}

We subdivide the family $\{z \in \partial D_k\} \subset \Rbar(\cO\cC_*,s,\ell) \times D_k$ into a union of the following families:
\begin{align*} 
&\{\theta_j \le \arg(z_k) \le \theta_{j+1} \le \theta_k \le \theta_* \le \theta_{k+1} \le \theta_j\}; \\
&\{\theta_j \le \arg(z_k) \le \theta_* \le \theta_{j+1}\}; \\
&\{\theta_j \le \theta_* \le \arg(z_k)\le\theta_{j+1}\},
\end{align*}
for all $j$ and $k$, where ``$\le$'' refers to the cyclic order on $S^1$. 
When we refer to a family such as ``$\{\arg(z_k)=\theta_*\}$'', we are implicitly referring to the corresponding union of strata (when we define an operation corresponding to a union of strata, we mean the sum of the operations associated to the pieces). 
We define the family $\Rbar(H^3_\nabla,k^\vee,s,\ell)$ to be $\{\theta_0 \le \arg(z_k) \le \theta_*\}$. 
We identify the codimension-$1$ boundary strata as follows:
\begin{itemize}
    \item $\{\theta_0 = \theta_*\}$ is identified with $\Rbar(H^{23}_\nabla,k^\vee,s,\ell)$; 
    \item $\{\theta_0=\arg(z_k)\}$ is identified with $\Rbar(H^{34}_\nabla,k^\vee,s,\ell)$;
    \item $\{\theta_j \le \theta_* = \arg(z_k) \le \theta_{j+1}\}$ is identified with $r^j\cdot \Rbar(H^2_\nabla,(k-1)^\wedge,s,\ell)$;
    \item $\{t=1\}$ is identified with a boundary attachment of $\Rbar(\cO\cC,k^\wedge,s,\ell)$ with $\Rbar(D_q^R(\mu),s,\ell)$.
\end{itemize}

We define the family $\Rbar(H^{34}_\nabla,k^\wedge,s,\ell)$ to be the difference $\{\theta_0 \le \arg(z_k) \le \theta_1\}$. 
We identify the codimension-$1$ boundary strata as follows:
\begin{itemize}
    \item $\{\theta_0 = \arg(z_k)\}$ is identified with $\Rbar(H^{34}_\nabla,k^\vee,s,\ell)$;
    \item $\{\arg(z_k) = \theta_1\}$ is identified with $r \cdot \Rbar(H^{34}_\nabla,k^\vee,s,\ell)$.
\end{itemize}

\begin{lem}\label{lem:conn 3}
    For each $q \in Q$, there are maps
    \begin{align*}
        H^3_{\nabla_q}:\sigma(\partial)^\vee \sigma(\cO\cC) fCC_*^-(\fuk^\big(X,D)) & \to QC^*(X;R^\big)[[u]],\\
        H^{34}_{\nabla_q}:\sigma(\cO\cC)fCC_*^-(\fuk^\big(X,D)) &\to QC^*(X;R^\big)[[u]]
    \end{align*}
    satisfying
    \begin{align}\label{eq:H3 nab}
         -H^{23}_{\nabla_q} + H^{34}_{\nabla_q} - \cO\cC^{-,\big}(B^{1|1}(D_q^R(\mu),-)) &= \partial(H^3_{\nabla_q}) + H^2_{\nabla_q} \circ uB.
    \end{align}
\end{lem}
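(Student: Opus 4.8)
The plan is to follow the template established in the proofs of Lemmas~\ref{lem:conn 1} and~\ref{lem:conn 2}: exhibit every map appearing in \eqref{eq:H3 nab} as a sum of Floer-theoretic operations attached to the families $\Rbar(H^3_\nabla,k^\vee,s,\ell)$ and $\Rbar(H^{34}_\nabla,k^\diamond,s,\ell)$, and then extract \eqref{eq:H3 nab} by applying \cite[Lemma 4.18]{relfukii} to those families and summing over powers of $u$. First I would set, for each $k^\diamond \in K$, the operation-valued maps $H^3_{\nabla_q,k^\vee} := \sum_{s,\ell}(H^3_\nabla,k^\vee,s,\ell)_{\emptyset,\bL,q}$ and $H^{34}_{\nabla_q,k^\diamond} := \sum_{s,\ell}(H^{34}_\nabla,k^\diamond,s,\ell)_{\emptyset,\bL,q}$, with the degrees forced by \eqref{eq:sigma H3} and \eqref{eq:sigma H2}, and then assemble $H^3_{\nabla_q}$ and $H^{34}_{\nabla_q}$ from these by inserting powers of $u$ exactly as in \eqref{eq:put the us} (the same bookkeeping used in Section~\ref{sec:oc pair} and in the proof of Lemma~\ref{lem:conn 1}).

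The core step is to apply \cite[Lemma 4.18]{relfukii} to $\Rbar(H^3_\nabla,k^\vee,s,\ell)$, and to $\Rbar(H^{34}_\nabla,k^\diamond,s,\ell)$ for the $\vee$-versus-$\wedge$ assembly, and to match each codimension-$1$ boundary stratum, as identified in the list preceding the statement, with a term of \eqref{eq:H3 nab}. Concretely I expect: the stratum $\{\theta_0=\theta_*\}$, identified with $\Rbar(H^{23}_\nabla,k^\vee,s,\ell)$, to contribute $-H^{23}_{\nabla_q}$; the stratum $\{\theta_0=\arg(z_k)\}$, identified with $\Rbar(H^{34}_\nabla,k^\vee,s,\ell)$, to contribute $+H^{34}_{\nabla_q}$; the strata $\{\theta_j\le\theta_*=\arg(z_k)\le\theta_{j+1}\}$, identified with the rotated copies $r^j\cdot\Rbar(H^2_\nabla,(k-1)^\wedge,s,\ell)$, to assemble upon summation over $j$ into the Connes operator, with the index shift $k\mapsto k-1$ turning into the extra factor of $u$ after summing powers of $u$, thereby giving $H^2_{\nabla_q}\circ uB$; the stratum $\{t=1\}$, a boundary attachment of $\Rbar(\cO\cC,k^\wedge,s,\ell)$ with $\Rbar(\partial_*\mu,s,\ell)$ (whose operation, weighted by $d\log\nov_q$, realizes $D^q_R(\mu)$ by the lemma established above), to contribute $-\cO\cC^{-,\big}(B^{1|1}(D^q_R(\mu),-))$; and the remaining, degenerate, strata to contribute $\partial(H^3_{\nabla_q})$, where $\partial$ combines the Morse differential $\partial_{QC}$ on $QC^*(X;R^\big)$ with the Hochschild differential $b$ on the cyclic input. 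Summing the resulting $k$-indexed chain identities over powers of $u$, with signs determined by the $\sigma$-conventions, then yields \eqref{eq:H3 nab}.

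The main obstacle I anticipate is combinatorial and sign-theoretic rather than analytic. The key point is to verify that the $\{t=1\}$ stratum produces the cyclic insertion $B^{1|1}(D^q_R(\mu),-)$ and not $b^{1|1}(D^q_R(\mu),-)$ as in \eqref{eq:H2 nab}: this is forced because the disc carrying the bulk output is now an $\cO\cC^\wedge$-disc, which already carries an $e^+$ in its distinguished slot, so that varying the boundary position at which the $\mu$-disc with the $V_q$-constrained marked point is attached sweeps out precisely the cyclic sum defining $B^{1|1}$ in Section~\ref{sec:GGM}. One must also check that summing the $r^j$-rotated copies of $\Rbar(H^2_\nabla,(k-1)^\wedge,s,\ell)$ reconstitutes the Connes operator with the correct overall sign, and that the index shift is charged to exactly one power of $u$. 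Both checks are entirely parallel to the treatment of $\cO\cC^\wedge$ in Lemma~\ref{lem:oc cyc chain} (cf.\ \cite[Proposition 5.17]{Ganatra:cyclic}) and to the $u$-power bookkeeping in Corollaries~\ref{cor:cardy 1} and~\ref{cor:cardy 2}, so no genuinely new difficulty should arise.
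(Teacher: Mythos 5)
Your overall template is right for the $\vee$-sector of \eqref{eq:H3 nab}: define $H^3_{\nabla_q,k^\vee}$ and $H^{34}_{\nabla_q,k^\diamond}$ as the operations attached to the corresponding families, apply \cite[Lemma 4.18]{relfukii} to $\Rbar(H^3_\nabla,k^\vee,s,\ell)$, match the four listed codimension-one boundary strata to the four non-$H^3$ terms, and sum over powers of $u$ as in \eqref{eq:put the us}; this is essentially what the paper does, producing the identity \eqref{eq:H23 H34}.

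Where your proposal has a gap is the $\wedge$-sector. You do not address the fact that \emph{no family $\Rbar(H^3_\nabla,k^\wedge,s,\ell)$ has been defined}; the paper handles this by explicitly declaring $H^3_{\nabla_q,k^\wedge}:=0$ (equivalently, taking the $\wedge$-family to be empty). Once that choice is made, restricting \eqref{eq:H3 nab} to $fCC^\wedge_*$ reduces (since $B$ and $B^{1|1}$ annihilate $fCC^\wedge_*$, and $H^3_{\nabla_q,k^\wedge}=0$) to the purely algebraic identity
\begin{equation*}
H^3_{\nabla_q,k^\vee}\circ b_{\wedge\vee} \;=\; H^{23}_{\nabla_q,k^\wedge}-H^{34}_{\nabla_q,k^\wedge},
\end{equation*}
which is \eqref{eq:H3 wedge vee}. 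You propose to obtain the $\wedge$-sector by applying \cite[Lemma 4.18]{relfukii} to $\Rbar(H^{34}_\nabla,k^\diamond,s,\ell)$, but that lemma produces a \emph{degeneration} relation of the schematic form $\partial\bigl(H^{34}_{\nabla_q,k^\wedge}\bigr)=\sum_{F'}(F')_{\dots}$, which is not what is needed; the required identity relates three operations of the \emph{same} degree attached to three sub-families of $\{z\in\partial D_k\}$. The paper instead derives it from the combinatorial identity of unions of strata
\begin{equation*}
r\cdot\Rbar(H^3_\nabla,k^\vee,s,\ell)-\Rbar(H^3_\nabla,k^\vee,s,\ell)=\Rbar(H^{34}_\nabla,k^\wedge,s,\ell)-\Rbar(H^{23}_\nabla,k^\wedge,s,\ell),
\end{equation*}
and the fact that the operation attached to a union of families is the signed sum of the operations attached to the pieces (with the $r$-action assembling into $b_{\wedge\vee}$). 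This is a genuinely different mechanism from a boundary-of-moduli-space argument, and your proposal would need to replace the vague ``$\vee$-versus-$\wedge$ assembly'' step with either this strata identity or an equally explicit substitute. As written, the $\wedge$-sector of \eqref{eq:H3 nab} is not established.
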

\begin{proof}
    We define 
    \begin{align*}
        H^3_{\nabla_q,k^\vee}:\sigma(\partial)^\vee\sigma(\cO\cC)\sigma(-2k) fCC_*^\vee(\fuk^\big(X,D)) &\to QC^*(X;R^\big),\\
        H^{34}_{\nabla_q,k^\diamond}:\sigma(\cO\cC)\sigma(-2k) fCC_*^\diamond(\fuk^\big(X,D)) &\to QC^*(X;R^\big),
    \end{align*}
    as in the proof of Lemma \ref{lem:conn 1}; and we define $H^3_{\nabla_q,k^\wedge}$ to be $0$ (note that we have not defined families $\Rbar(H^3_\nabla,k^\wedge,s,\ell)$; we may take these to be the empty family). 
    Applying \cite[Lemma 4.18]{relfukii} to the families $\Rbar(H^3_\nabla,k^\vee,s,\ell)$, we obtain
    \begin{align}\label{eq:H23 H34}
        H^{23}_{\nabla_q,k^\vee} - H^{34}_{\nabla_q,k^\vee} + \cO\cC^\wedge_{k-1} \circ B^{1|1}&=  H^3_{\nabla_q,k^\vee} \circ b_{\vee\vee} + H^2_{\nabla_q,(k-1)^\wedge} \circ B.
    \end{align}
    We now observe that
    \begin{align}\label{eq:H3 wedge vee}
        H^3_{\nabla_q,k^\vee} \circ b_{\wedge\vee} &= H^{23}_{\nabla_q,k^\wedge} - H^{34}_{\nabla_q,k^\wedge},
    \end{align}
    which follows from the following identity of unions of strata:
    $$r \cdot \Rbar(H^3_\nabla,k^\vee,s,\ell) - \Rbar(H^3_\nabla,k^\vee,s,\ell) = \Rbar(H^{34}_\nabla,k^\wedge,s,\ell) - \Rbar(H^{23}_\nabla,k^\wedge,s,\ell).$$
    The result now follows by summing over powers of $u$.
\end{proof}

We define the family $\Rbar(H^4_\nabla,k^\diamond,s,\ell)$ to be $\Rbar(\partial_*\cO\cC,k^\diamond,s,\ell) \times [0,1]_\rho$, where the codimension-$1$ stratum $\{\rho=1\}$ is identified with $\Rbar(\partial_*\cO\cC,k^\diamond,s,\ell)$ (note that in particular, the stabilizing marked point $p_*^{int}$ becomes symmetric along sphere bubbles inside this stratum), and the codimension-$1$ stratum $\{\rho=0\}$ is identified with $\Rbar(H^{34}_\nabla,k^\diamond,s,\ell)$. 

\begin{lem}\label{lem:conn 4}
    For each $q \in Q$, there are maps
    \begin{align*}
        H^4_{\nabla_q}:\sigma(\partial)^\vee \sigma(\cO\cC) fCC_*^-(\fuk^\big(X,D)) & \to QC^*(X;R^\big)[[u]]
    \end{align*}
    satisfying
    \begin{align}\label{eq:H4 nab}
         D_q^R(\cO\cC^{-,\big}) - H^{34}_{\nabla_q} &= \partial(H^4_{\nabla_q}).
    \end{align}
\end{lem}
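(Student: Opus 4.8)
The plan is to establish \eqref{eq:H4 nab} in exactly the same way as Lemmas~\ref{lem:conn 1}--\ref{lem:conn 3}: apply \cite[Lemma 4.18]{relfukii} to the families $\Rbar(H^4_\nabla,k^\diamond,s,\ell)$, extract the chain-level identities from their codimension-one boundary strata, and then sum over powers of $u$ exactly as in Section~\ref{sec:oc pair}. First I would define, for each $k^\diamond\in K$,
$$H^4_{\nabla_q,k^\diamond}:=\sum_{s,\ell}(H^4_\nabla,k^\diamond,s,\ell)^\big_{\emptyset,\bL,q}\colon \sigma(\partial)^\vee\sigma(\cO\cC)\sigma(-2k)\,fCC^\diamond_*(\fuk^\big(X,D))\to QC^*(X;R^\big),$$
using a choice of perturbation data on $\Rbar(H^4_\nabla,k^\diamond,s,\ell)=\Rbar(\partial_*\cO\cC,k^\diamond,s,\ell)\times[0,1]_\rho$ that restricts over $\{\rho=1\}$ to the (pulled-back) data on $\Rbar(\partial_*\cO\cC,k^\diamond,s,\ell)$ and over $\{\rho=0\}$ to the data on $\Rbar(H^{34}_\nabla,k^\diamond,s,\ell)$ fixed in the proof of Lemma~\ref{lem:conn 3}; such data exists by the usual inductive extension of perturbation data over a family all of whose codimension-one strata already carry compatible choices.

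The content is then the identification of the codimension-one boundary of $\Rbar(H^4_\nabla,k^\diamond,s,\ell)$. Since it is a product with $[0,1]_\rho$, its boundary consists of the two end faces $\{\rho=1\}$ and $\{\rho=0\}$, together with $\partial\Rbar(\partial_*\cO\cC,k^\diamond,s,\ell)\times[0,1]_\rho$. The face $\{\rho=1\}=\Rbar(\partial_*\cO\cC,k^\diamond,s,\ell)$ contributes, after summing over $s$, $\ell$ and $k$, precisely $D^q_R(\cO\cC^{-,\big})$: this is the lemma immediately preceding the definition of $\Rbar(H^1_\nabla,\ldots)$, and one reuses the observation made there that the strata on which $p_*$ lies on a sphere bubble are symmetric, so that with the pulled-back perturbation data they account exactly for the $D^q_R$-derivative of the relevant structure constants. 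The face $\{\rho=0\}$, identified with $\Rbar(H^{34}_\nabla,k^\diamond,s,\ell)$, contributes $H^{34}_{\nabla_q,k^\diamond}$. All remaining codimension-one strata are those already present in the boundary of $\Rbar(\cO\cC,k^\diamond,s,\ell)$ --- disc breaking, the transitions between the $\vee$ and $\wedge$ sub-families, and the $\arg(z_k)=\theta_j$ strata responsible for the Connes operator --- now merely carrying the inert marked point $p_*$; after summing over $u$ these assemble, exactly as in Lemma~\ref{lem:conn 1}, into $\partial(H^4_{\nabla_q})$, where $\partial$ denotes the differential of the relevant hom-complex (post-composition with the Morse differential on $QC^*(X;R^\big)[[u]]$ together with $\pm$ pre-composition with $b+uB$). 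Concretely, for each $k$ one obtains schematically
\begin{align*}
D^q_R(\cO\cC^\vee_k)-H^{34}_{\nabla_q,k^\vee}&=\partial\bigl(H^4_{\nabla_q,k^\vee}\bigr)+H^4_{\nabla_q,(k-1)^\wedge}\circ B,\\
D^q_R(\cO\cC^\wedge_k)-H^{34}_{\nabla_q,k^\wedge}&=\partial\bigl(H^4_{\nabla_q,k^\wedge}\bigr)+H^4_{\nabla_q,k^\vee}\circ b_{\wedge\vee},
\end{align*}
where $\partial(H^4_{\nabla_q,k^\diamond})$ absorbs post-composition with the Morse differential and pre-composition with $b_{\diamond\diamond}$; setting $H^4_{\nabla_q}:=\sum_k u^k\bigl(H^4_{\nabla_q,k^\vee}+H^4_{\nabla_q,k^\wedge}\bigr)$ and summing then yields \eqref{eq:H4 nab}.

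This is the most routine of the four connection lemmas, precisely because the relevant family is just a straight-line cylinder. The one point deserving care is the identification of the face $\{\rho=0\}$ with $\Rbar(H^{34}_\nabla,k^\diamond,s,\ell)$ --- i.e.\ checking that $\Rbar(\partial_*\cO\cC,k^\diamond,s,\ell)$ and $\Rbar(H^{34}_\nabla,k^\diamond,s,\ell)$ really are the same family of domains, so that forming the cylinder $\Rbar(\partial_*\cO\cC,k^\diamond,s,\ell)\times[0,1]_\rho$ is legitimate, and that the perturbation data chosen there in Lemma~\ref{lem:conn 3} is compatible with the choices at $\{\rho=1\}$ and on the side strata. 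The geometric content of this identification is that the non-symmetric stabilizing point $p_*$ carrying a tangency to $V_q$, which is a genuine moduli parameter of $\Rbar(\cO\cC_*,s,\ell)$, plays exactly the role of the point $p_*$ in $\Rbar(\partial_*\cO\cC,\ldots)$ once the angular constraint $\theta_0=\arg(z_k)$ cutting out $\Rbar(H^{34}_\nabla,\ldots)$ is imposed. One should also confirm that no extra codimension-one strata appear near the ends from $p_*$ colliding with the outgoing bulk marked point $p_0^{int}$ or with the outgoing boundary marked point; these are excluded by the conventions defining $\partial_*$-families and $\Rbar(\cO\cC_*,s,\ell)$. Granting these points, everything else is parallel to Lemmas~\ref{lem:conn 1}--\ref{lem:conn 3}.
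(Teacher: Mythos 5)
Your proof is correct and takes the same route as the paper's terse proof (apply \cite[Lemma 4.18]{relfukii} to the cylinder families $\Rbar(H^4_\nabla,k^\diamond,s,\ell)$ and sum over $u$-powers), spelling out the boundary analysis and the schematic chain-level identities. You might also make explicit, as the paper does in the remark immediately following this lemma, that the designation of $p_*^{int}$ as symmetric along sphere bubbles over $\{\rho=1\}$ but non-symmetric over $\{\rho=0\}$ is both forced (the first by $f^{sym}$-stability of that face, the second by regularity requirements deeper inside $\{\rho=0\}$ at the stratum $\Rbar(H^{12}_\nabla,\ldots)$) and consistent (since the sphere-bubble locus in question has codimension $2$) --- though this is a point about the well-definedness of the family rather than about the boundary computation itself, which you carry out correctly.
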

\begin{proof}
    Follows by applying \cite[Lemma 4.18]{relfukii} to the families $\Rbar(H^4_\nabla,k^\diamond,s,\ell)$, then summing over powers of $u$ as in the previous section. 
\end{proof}

\begin{rem}    
Because any sphere bubble in the moduli spaces defining $\cO\cC^{-,\big}$ is not $f^{sym}$-stable, we are forced to make $p_*^{int}$ become symmetric along the stratum $\{\rho=1\}$, in order for Lemma \ref{lem:conn 4} to hold.   
On the other hand, we \emph{may} make $p_*^{int}$ symmetric along this locus, because the locus inside $\Rbar(\partial_*\cO\cC,k^\diamond,s,\ell)$ where the point $p_*^{int}$ is symmetric and lies on a sphere bubble, has codimension $2$. 
In particular, $\Rbar(\partial_*\cO\cC,k^\diamond,s,\ell)$ is $f^{sym}$-stable, so we may define operations from it, and we may apply \cite[Lemma 4.18]{relfukii} to $\Rbar(H^4_\nabla,k^\diamond,s,\ell)$. 
On the other hand, we can't allow $p_*^{int}$ to be symmetric in all of the moduli spaces we consider: sitting inside the stratum $\{\rho = 0\} = \Rbar(H^{34}_\nabla,k^\diamond,s,\ell)$, we have the stratum $\{\theta_* = \theta_0\}$, which is identified with $\Rbar(H^2_\nabla,(k-1)^\vee,s,\ell)$; and inside that stratum we have the stratum $\{t=1\}$, which is identified with $\Rbar(H^{12}_\nabla,(k-1)^\vee,s,\ell)$. 
We need to define an operation $H^{12}_\nabla$ from this stratum, so it needs to be $f^{sym}$-stable; this forces $p_*^{int}$ to be non-symmetric along this stratum, for if $p_*^{int}$ were symmetric then by convention the almost-complex structure would need to be constant along the sphere bubble, and in that case we can only guarantee regularity for simple spheres.
\end{rem}

\begin{cor}
    \label{cor:oc_conn}
    For each $q \in Q$, there is a map of $R^\big[[u]]$-modules,
    $$H_{\nabla_q}: \sigma(\partial)^\vee\sigma(\cO\cC) fCC_*^-(\fuk^\big(X,D)) \to \Omega^q_R \otimes QC^*(X;R)[[u]],$$
    such that
    $$ \nabla^{DG}_R \circ \cO\cC^{-,\big} - (\id \otimes \cO\cC^{-,\big}) \circ \nabla^{GGM}  = \partial ( H_{\nabla_q}).$$
    In this formula, the filtered connections $\tilde \nabla$ on the morphism spaces of $\fuk^\big(X,D)$, which are used in the chain-level definition of $\nabla^{GGM}$, are induced by the natural choice of basis for each morphism space (i.e., the connections are characterized by the fact that they make morphisms in $\fuk(X \setminus D) \otimes 1 \subset \fuk^\big(X,D)$, $\tilde \nabla$-constant).
\end{cor}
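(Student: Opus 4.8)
The plan is to obtain the chain homotopy of Corollary~\ref{cor:oc_conn} as a telescoping sum of the four homotopies produced in Lemmas~\ref{lem:conn 1}--\ref{lem:conn 4}. The first step is to unwind, at the chain level, the $d\log\nov_q$-component of $u\nabla^{DG}_R\circ\cO\cC^{-,\big} - (\id\otimes\cO\cC^{-,\big})\circ u\nabla^{GGM}$ (the corollary's $\nabla^{DG}_R$ being of course the $u$-connection $u\nabla^{DG}_R$). Applying the Leibniz rule for the derivation $D^\big_R$, one writes $D^q_R(\cO\cC^{-,\big}(\alpha)) = (D^q_R\cO\cC^{-,\big})(\alpha) \pm \cO\cC^{-,\big}(\tilde\nabla^q\alpha)$, where $\tilde\nabla^q$ is the $q$-component of the connection induced on $fCC_*^-$; the second term cancels against the $u\tilde\nabla$-term contributed by $u\nabla^{GGM}$. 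Since the connections $\tilde\nabla$ on morphism spaces are chosen to make the morphisms of $\fuk(X\setminus D)\otimes 1$ constant, the $d\log\nov_q$-component of $\tilde\nabla(\mu^*)$ is exactly the operation ``$D^q_R(\mu)$'' expressed in terms of the $\partial_*\mu$-family in the lemma preceding Lemma~\ref{lem:conn 1}. Hence, after removing the common factor $d\log\nov_q$, the difference equals
\[
uD^q_R(\cO\cC^{-,\big}) - PD(V_q)\star\cO\cC^{-,\big} + \cO\cC^{-,\big}\circ b^{1|1}(D^q_R(\mu),-) + u\,\cO\cC^{-,\big}\circ B^{1|1}(D^q_R(\mu),-).
\]

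These four terms are, up to sign and up to the powers of $u$ that are built in when the $k^\diamond$-level identities are summed (exactly as in the passage from Lemmas~\ref{lem:cardy triv}/\ref{lem:cardy nontriv} to Corollaries~\ref{cor:cardy 1}/\ref{cor:cardy 2}), the four left-hand sides of \eqref{eq:H1 nab}--\eqref{eq:H4 nab}. Adding those four identities, the auxiliary operations $H^{12}_{\nabla_q}$, $H^{23}_{\nabla_q}$, $H^{34}_{\nabla_q}$ each appear in two of them with opposite signs and telescope away, while the remaining terms reorganize into the Hom-complex differential of a suitable signed combination $H_{\nabla_q}$ of $H^1_{\nabla_q},\dots,H^4_{\nabla_q}$ (the asymmetry between the $\circ\, b$ and $\circ\, uB$ terms displayed in \eqref{eq:H2 nab}--\eqref{eq:H3 nab} being an artefact of the $\vee/\wedge$-decomposition, which disappears after summation). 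Tensoring $H_{\nabla_q}$ with $d\log\nov_q$ gives the desired $R^\big[[u]]$-linear map, with the torsor/degree bookkeeping read off from \eqref{eq:sigma H3}--\eqref{eq:sigma H2} and the normalizations in Lemmas~\ref{lem:conn 1}--\ref{lem:conn 4}; passing to cohomology yields the claimed compatibility in the $\nov_q$-direction, and running the analogous argument in the $d\novb_i$-directions, then base-changing to $\fuk^\big(X,D;\Lambda^\big)^\sbc$ via Lemma~\ref{lem:GGM_C_Cbc} (and Lemma~\ref{lem:Rlamderconn} on the quantum side), gives Theorem~\ref{thm:oc_conn_bc}.

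I expect the main obstacle to be bookkeeping rather than anything conceptual: keeping all Koszul signs and all placements of the formal variable $u$ consistent between the four lemmas and the chain-level formulas for $u\nabla^{GGM}$ and $u\nabla^{DG}_R$. In particular one must verify that the $u$-powers carried by $\cO\cC^\vee_k$, $\cO\cC^\wedge_k$, by the Connes differential $B$, and by the explicit $u$ in front of $D^\big_R$ and of $B^{1|1}$ in the two connection formulas all line up so that the four displayed terms coincide sign-for-sign with the sum of the four lemma left-hand sides; and that ``$\partial(-)$'' in \eqref{eq:H1 nab}--\eqref{eq:H4 nab} and in the corollary is read uniformly as the differential on $\Hom_{R^\big[[u]]}(\sigma(\partial)^\vee\sigma(\cO\cC)fCC_*^-,\,\Omega^q_R\otimes QC^*(X;R^\big)[[u]])$, i.e.\ post-composition with $\partial_{QC}$ together with signed pre-composition with $b+uB$. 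No new geometric input is required beyond Lemmas~\ref{lem:conn 1}--\ref{lem:conn 4}; the one delicate geometric point, namely that $p_*^{int}$ must be symmetric along $\{\rho=1\}\subset\Rbar(H^4_\nabla,k^\diamond,s,\ell)$ yet non-symmetric along $\Rbar(H^{12}_\nabla,\ldots)$ (see the remark preceding the corollary), has already been incorporated into the validity of those lemmas.
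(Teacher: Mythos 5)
Your proposal is correct and matches the paper's proof: the paper likewise sets $H_{\nabla_q}=H^1_{\nabla_q}+H^2_{\nabla_q}+H^3_{\nabla_q}+H^4_{\nabla_q}$, lets the auxiliary $H^{12},H^{23},H^{34}$ telescope out of the sum of \eqref{eq:H1 nab}--\eqref{eq:H4 nab}, and absorbs the remaining mismatch via exactly the Leibniz identity you isolate, $D_q^R \circ \cO\cC^{-,\big} - \cO\cC^{-,\big} \circ D_q^R = D_q^R(\cO\cC^{-,\big})$ (your $\tilde\nabla^q$ on Hochschild chains being, with the stated choice of basis connections, the termwise $D^q_R$). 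Your discussion is more explicit than the paper's two-line proof about the chain-level unwinding and the $u$-bookkeeping, but the underlying argument is the same.
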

\begin{proof}
    We define $H_{\nabla_q} := H^1_{\nabla_q} + H^2_{\nabla_q} + H^3_{\nabla_q} + H^4_{\nabla_q}$. 
    Then the result follows by combining Lemmas \ref{lem:conn 1}--\ref{lem:conn 4}, together with the observation that
    $$D_q^R \circ \cO\cC^{-,\big} - \cO\cC^{-,\big} \circ D_q^R = D_q^R(\cO\cC^{-,\big}).$$
\end{proof}

\begin{prop}\label{prop:h nabla i}
    For each basis element $\beta_i$ of $CM^{2-*}(f,g)$, there is a map of $R^\big[[u]]$-modules,
    $$H_{\nabla_i}: \sigma(\partial)^\vee\sigma(\cO\cC) fCC_*^-(\fuk^\big(X,D)) \to \Omega^i_R \otimes QC^*(X;R)[[u]],$$
    such that
    $$ \nabla^{DG}_R \circ \cO\cC^{-,\big} - (\id \otimes \cO\cC^{-,\big}) \circ \nabla^{GGM}  = \partial ( H_{\nabla_i}).$$
\end{prop}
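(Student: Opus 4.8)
The plan is to run the argument of Corollary \ref{cor:oc_conn} essentially verbatim, systematically replacing the chain-level operation $PD(V_q) \star (-)$ and its defining family $\Rbar(PD(V_*)\star,\ell)$ (which is built around the non-symmetric stabilizing marked point $p_*^{int}$) by the operation $\beta_i \star (-):QC^*(X;R^\big) \to QC^*(X;R^\big)$ from Section \ref{sec:pdvq} and by a family of spheres in which one incoming bulk marked point is fed a fixed chain-level representative of $\beta_i$. Two preliminary identifications are needed. First, since $D^i_R(\nov^u) = 0$ and $D^i_R(\novb_j^k/k!) = \delta_{ij}\,\novb_i^{k-1}/(k-1)!\cdot d\novb_i$, differentiating the sum-over-curves definition of $F^\big_{\emptyset,\bL}$ in the $\novb_i$-direction amounts to de-absorbing one incoming bulk marked point from a $\novb$-weight and feeding it the representative of $\beta_i$; I would check that the divided-power normalization ($\tfrac1{\ell!}$ versus $\tfrac1{(\ell-1)!}$) makes each such curve appear with multiplicity one, so that $D^i_R(\cO\cC^{-,\big})$ and $D^i_R(\mu)$ are identified with these de-absorption operations. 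Second, as in Corollary \ref{cor:oc_conn}, the connections $\tilde\nabla$ on morphism spaces of $\fuk^\big(X,D)$ entering the chain-level definition of $\nabla^{GGM}$ are the ones making the morphisms of $\fuk(X\setminus D)\otimes 1$ constant, so that $\tilde\nabla(\mu^*)$ in the $\beta_i$-direction is precisely $D^i_R(\mu)$.

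With these in hand, I would define four families $\Rbar(H^1_{\nabla_i},k^\diamond,s,\ell),\dots,\Rbar(H^4_{\nabla_i},k^\diamond,s,\ell)$ as the exact analogues of $\Rbar(H^1_\nabla,\dots),\dots,\Rbar(H^4_\nabla,\dots)$ from Section \ref{sec:oc conn}: $H^1_{\nabla_i}$ from attaching $\Rbar(\cO\cC,k^\diamond,s,\ell)$ to the sphere family defining $\beta_i\star(-)$ along a bulk marked point with length parameter $[0,\infty]$; $H^2_{\nabla_i}$ from the analogue of $\Rbar(\cO\cC_*,s,\ell)$ with the non-symmetric stabilizing marked point replaced by an incoming bulk marked point fed $\beta_i$, cut out by the loci $\{\theta_0=\theta_*\}$ and $\{\theta_0\le\theta_*\le\theta_1\}$; $H^3_{\nabla_i}$ from the corresponding subdivision of $\{z\in\partial D_k\}$; and $H^4_{\nabla_i}$ from $\Rbar(\cO\cC,k^\diamond,s,\ell)$ stabilized by one extra incoming bulk marked point fed $\beta_i$, crossed with $[0,1]_\rho$, interpolating at $\rho=1$ with the operation computing $D^i_R(\cO\cC^{-,\big})$ and at $\rho=0$ with $\Rbar(H^{34}_{\nabla_i},\dots)$. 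Applying \cite[Lemma 4.18]{relfukii} to each of these families, and then summing over powers of $u$ as in Section \ref{sec:oc pair}, yields maps $H^1_{\nabla_i},\dots,H^4_{\nabla_i}$ satisfying the exact analogues of \eqref{eq:H1 nab}, \eqref{eq:H2 nab}, \eqref{eq:H3 nab}, \eqref{eq:H4 nab}, with $\cO\cC^{-,\big}(b^{1|1}(D^i_R(\mu),-))$ and $\cO\cC^{-,\big}(B^{1|1}(D^i_R(\mu),-))$ in place of the $D^q_R(\mu)$-terms. Setting $H_{\nabla_i}:=H^1_{\nabla_i}+H^2_{\nabla_i}+H^3_{\nabla_i}+H^4_{\nabla_i}$ and using $D^i_R\circ\cO\cC^{-,\big}-\cO\cC^{-,\big}\circ D^i_R=D^i_R(\cO\cC^{-,\big})$, the four relations telescope — the intermediate terms $H^{12}_{\nabla_i}$, $H^{23}_{\nabla_i}$, $H^{34}_{\nabla_i}$ cancelling in pairs — to give exactly $\nabla^{DG}_R\circ\cO\cC^{-,\big}-(\id\otimes\cO\cC^{-,\big})\circ\nabla^{GGM}=\partial(H_{\nabla_i})$ in the $\Omega^i_R$-component, which is the assertion.

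The main obstacle I expect is bookkeeping rather than conceptual: one has to set up the stratifications of the four new families so that every codimension-one boundary stratum is correctly identified — in particular so that the $\{t=1\}$ and $\{\rho=1\}$ strata reproduce precisely the $b^{1|1}$, $B^{1|1}$, and $D^i_R(\cO\cC^{-,\big})$ terms — and one must pin down the divided-power multiplicities mentioned above so that no spurious combinatorial factors survive in the identification of $D^i_R(\cO\cC^{-,\big})$. On the other hand, there is a genuine simplification relative to the $q$-direction: because the $\beta_i$-direction is governed by an honest chain-level input $\beta_i$ rather than by a non-symmetric stabilizing marked point, the delicate interplay between $f^{sym}$-stability and simplicity of sphere bubbles discussed in the Remark following Lemma \ref{lem:conn 4} does not arise, and $\Rbar(H^4_{\nabla_i},k^\diamond,s,\ell)$ together with its boundary behaviour is comparatively straightforward.
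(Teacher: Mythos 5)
Your proposal is correct and takes essentially the same approach as the paper: the paper's proof is a single sentence declaring the argument identical to that of Corollary~\ref{cor:oc_conn}, with the non-symmetric stabilizing marked point $p_*^{int}$ re-designated as an incoming bulk marked point labelled by the $i$th critical point of $f$ throughout. Your unpacking — the divided-power bookkeeping for $D^i_R(\cO\cC^{-,\big})$, the four families $H^1_{\nabla_i},\dots,H^4_{\nabla_i}$ with $p_*^{int}$ fed $\beta_i$, the telescoping sum, and the observation that the $f^{sym}$-stability subtlety of the Remark after Lemma~\ref{lem:conn 4} is absent here — supplies exactly the details the paper leaves implicit in "the proof is identical."
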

\begin{proof}
    The proof is identical to that of Corollary \ref{cor:oc_conn}, except that the point $p_*^{int}$ is designated as an incoming bulk marked point labelled by the $i$th critical point of the Morse function $f$, rather than a stabilizing marked point labelled by $q$, throughout the argument. 
\end{proof}

\begin{thm}
    \label{thm:oc_conn}
    There exists a map of $R^\big[[u]]$-modules,
    $$H_\nabla: \sigma(\partial)^\vee\sigma(\cO\cC) fCC_*^-(\fuk^\big(X,D)) \to \Omega^\big_R \otimes u^{-1} QC^*(X;R)[[u]],$$
    such that
    $$ \nabla^{DG}_R \circ \cO\cC^{-,\big} - (\id \otimes \cO\cC^{-,\big}) \circ \nabla^{GGM} = \partial ( H_\nabla )$$
    (where the connections $\tilde \nabla$ involved in the definition of $\nabla^{GGM}$ are chosen as in Corollary \ref{cor:oc_conn}).
\end{thm}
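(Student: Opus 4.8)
The plan is to glue together the one-direction-at-a-time homotopies already produced in Corollary \ref{cor:oc_conn} and Proposition \ref{prop:h nabla i}. Recall from Example \ref{eg:Omega R big} that $\Omega^\big_R = \bigoplus_{q\in Q}\Omega^q_R \oplus \bigoplus_i \Omega^i_R$ is a \emph{finite} direct sum of rank-one free $R^\big$-modules, and that $D^\big_R$, hence both $u\nabla^{DG}_R$ and $u\nabla^{GGM}$, are built componentwise with respect to this decomposition: the $\Omega^q_R$-component of $u\nabla^{DG}_R$ is $uD^q_R(-) - d\log\nov_q \otimes (PD(V_q)\star -)$, the $\Omega^q_R$-component of $u\nabla^{GGM}$ is $u\tilde\nabla_q(-) - b^{1|1}(D^q_R(\mu),-) - uB^{1|1}(D^q_R(\mu),-)$ (with $\tilde\nabla$ the natural connection fixed in Corollary \ref{cor:oc_conn}), and the $\Omega^i_R$-components are given by the same formulas with $PD(V_q)$ replaced by $\beta_i$ and $D^q_R$ by $D^i_R$.

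First I would set $H_\nabla := \bigoplus_{q\in Q} H_{\nabla_q}\oplus\bigoplus_i H_{\nabla_i}$, where $H_{\nabla_q}$ is the homotopy of Corollary \ref{cor:oc_conn} and $H_{\nabla_i}$ that of Proposition \ref{prop:h nabla i}; this is a map of $R^\big[[u]]$-modules into $\Omega^\big_R \otimes u^{-1}QC^*(X;R)[[u]]$. Since the differential $\partial$ on the relevant Hom-complex commutes with projection onto each summand of $\Omega^\big_R$, and both $u$-connections decompose compatibly along $\Omega^\big_R = \bigoplus_q\Omega^q_R \oplus \bigoplus_i\Omega^i_R$, the asserted identity
$$\nabla^{DG}_R\circ\cO\cC^{-,\big} - (\id\otimes\cO\cC^{-,\big})\circ\nabla^{GGM} = \partial(H_\nabla)$$
is obtained simply by summing the $\Omega^q_R$-components (one for each $q \in Q$, from Corollary \ref{cor:oc_conn}) and the $\Omega^i_R$-components (one for each basis element $\beta_i$, from Proposition \ref{prop:h nabla i}).

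Two points deserve comment. The $u^{-1}$ in the target reflects the fact that $\nabla^{DG}_R$ and $\nabla^{GGM}$ are $u$-connections: only $u\nabla^{DG}_R$ and $u\nabla^{GGM}$ are honest maps, and the displayed identity is to be read as obtained by dividing the honest chain-level identity $u\nabla^{DG}_R\circ\cO\cC^{-,\big} - (\id\otimes\cO\cC^{-,\big})\circ u\nabla^{GGM} = \partial(u\,H_\nabla)$ through by $u$, where $u\,H_\nabla$ takes values in $\Omega^\big_R\otimes QC^*(X;R)[[u]]$. And this honest identity is, in each $\Omega^q_R$-direction, exactly the $u$-weighted sum of the four identities \eqref{eq:H1 nab}, \eqref{eq:H2 nab}, \eqref{eq:H3 nab}, \eqref{eq:H4 nab} of Lemmas \ref{lem:conn 1}--\ref{lem:conn 4} (and, in the $\Omega^i_R$-directions, of their analogues in which the non-symmetric stabilizing marked point is replaced by a bulk marked point labelled by the $i$th critical point of $f$): when the four identities are added, the intermediate correction operations $H^{12}_{\nabla_q}$, $H^{23}_{\nabla_q}$, $H^{34}_{\nabla_q}$ telescope and cancel, leaving $H_{\nabla_q} = H^1_{\nabla_q} + H^2_{\nabla_q} + H^3_{\nabla_q} + H^4_{\nabla_q}$ on the right-hand side and the $\Omega^q_R$-component of the connection difference on the left.

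The step I expect to require the most care — really the only non-formal one — is this last bookkeeping: verifying, with the correct Koszul signs and the correct distribution of powers of $u$ (organized as in Section \ref{sec:oc pair}), that the left-hand sides of \eqref{eq:H1 nab}--\eqref{eq:H4 nab} sum exactly to the $\Omega^q_R$-component of $u\nabla^{DG}_R\circ\cO\cC^{-,\big} - (\id\otimes\cO\cC^{-,\big})\circ u\nabla^{GGM}$. The inputs are the identity $D^q_R\circ\cO\cC^{-,\big} - \cO\cC^{-,\big}\circ D^q_R = D^q_R(\cO\cC^{-,\big})$ recorded in the proof of Corollary \ref{cor:oc_conn}, the identification of the natural connection's effect on the structure maps with $D^q_R(\mu)$, and the expansions of $u\nabla^{GGM}$ on the summands $fCC^\vee_*$ and $fCC^\wedge_*$ from Section \ref{sec:GGM}. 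Everything else is formal, and since the left-hand side of the displayed identity is a chain map and the right-hand side is exact, the conclusion descends to cohomology; base-changing to $\Lambda^\big$ along Lemma \ref{lem:Rlamderconn} and restricting to small bounding cochains then yields Theorem \ref{thm:oc_conn_bc}.
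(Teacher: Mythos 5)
Your proposal is correct and matches the paper's proof exactly: both define $H_\nabla = \bigoplus_{q \in Q} H_{\nabla_q} \oplus \bigoplus_i H_{\nabla_i}$ and conclude from Corollary \ref{cor:oc_conn} and Proposition \ref{prop:h nabla i}. The paper states this in two sentences; your extra exposition about the componentwise decomposition of $\Omega^\big_R$, the telescoping of $H^{12}, H^{23}, H^{34}$, and the role of $u^{-1}$ is accurate but really unpacks the already-proved Corollary \ref{cor:oc_conn} rather than adding anything new to the proof of Theorem \ref{thm:oc_conn} itself.
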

\begin{proof}
    We define $H_\nabla = \bigoplus_{q \in Q} H_{\nabla_q} \oplus \bigoplus_i H_{\nabla_i}$, then the result follows from Corollary \ref{cor:oc_conn} and Proposition \ref{prop:h nabla i}.
\end{proof}

\begin{proof}[Proof of Theorem \ref{thm:oc_conn_bc}]
    Let $(f^*,Df): (R^\big,\Omega^\big_R) \to (\Lambda^\big_{\ge 0},\Omega^\big_{\Lambda,\ge 0})$ be the morphism of coefficient rings with derivations from Lemma \ref{lem:RLamder}; then we may pull back $(R^\big,D^\big_R)$-connections along $(f^*,Df)$ by Lemma \ref{lem:pullback conn}. 
    It follows from Theorem \ref{thm:oc_conn} that
    \begin{multline}
        f^*\nabla^{DG}_R \circ (\cO\cC^{-,\big} \otimes_R \Lambda_{\ge 0}) - (\cO\cC^{-,\big} \otimes_R \Lambda_{\ge 0}) \circ f^*\nabla^{GGM} =\\ \partial (H_\nabla \otimes_R \Lambda_{\ge 0}) .
    \end{multline}
    By Lemma \ref{lem:Rlamderconn}, we have $f^*\nabla^{DG}_R = \nabla^{DG}_\Lambda$; and by the naturality of the Getzler--Gauss--Manin connection, we have that $f^*\nabla^{GGM}$ is the Getzler--Gauss--Manin connection for $\fuk^\big(X,D;\Lambda^\big_{\ge 0})$. 

    Now recall that the next step in defining $\cO\cC^{-,\big}_\Lambda$ is to pre-compose $\cO\cC^{-,\big} \otimes_R \Lambda^\big_{\ge 0}$ with the map 
    $$F_*:fCC_*^-(\fuk^\big(X,D;\Lambda^\big_{\ge 0})^\sbc) \to fCC_*^-(\fuk^\big(X,D;\Lambda^\big_{\ge 0}))$$ from Lemma \ref{lem:HC_C_Cbc}; this map respects connections up to homotopy, by Lemma \ref{lem:GGM_C_Cbc}.
    The final step is to base change along the inclusion $\Lambda^\big_{\ge 0} \to \Lambda^\big$, which clearly respects connections. 
    Putting it all together, we obtain that $\cO\cC^{-,\big}_\Lambda$ respects connections up to homotopy, and in particular, respects connections on the level of homology.
\end{proof}

\appendix

\section{Signs}\label{sec:signs}

We justify selected signs for the relations proved in this paper; the other sign checks are straightforward variations on the ones explained here.

\subsection{Cyclic open--closed map}\label{sec:cyc oc signs}

We use the complex orientation on $D_k$ to get an isomorphism
$$\S(\cO\cC^\vee_k) = \sigma(\cO\cC)\sigma(-2k),$$
and hence define $\cO\cC^\vee_k$. 
As $\cO\cC^\wedge_{k-1}$ forms part of the codimension-$1$ boundary of $\cO\cC^\vee_{k}$, we have the boundary orientation isomorphism
$$\S(\cO\cC^\wedge_{k-1}) = \sigma(\partial)\S(\cO\cC^\vee_k) = \sigma(\cO\cC)\sigma(-2k);$$
by identifying $\sigma(\partial) \sigma(e^+) = \sigma(2)$, we obtain an identification
$$\S(\cO\cC^\wedge_{k-1}) = \sigma(\cO\cC)\sigma(-2(k-1)) \sigma(e^+)^\vee,$$
which allows us to define $\cO\cC^\wedge_{k-1}$.

Together, these identifications suffice to define the signs in the operations $\cO\cC^\vee_k$ and $\cO\cC^\wedge_k$; putting them together with the natural isomorphism $\sigma(B) = \sigma(-2)\sigma(\partial)$ suffices to verify the signs in \eqref{eq:OC check rel}. 
The signs at codimension-1 strata corresponding to disc bubbles are treated by \cite[Lemma C.1]{relfukii}. 
Note that a key point is that, at the boundary component $\cR(\cO\cC,(k-1)^\wedge,s,\ell)$ of $\cR(\cO\cC,k^\vee,s,\ell)$, if we define $z_k = r_k\exp(i\theta_k)$, then we may identify $\sigma(r_k)$ with $\sigma(\partial)$ and $\sigma(\theta_k)$ with $\sigma(e^+)$; thus the identification $\sigma(\partial)\sigma(e^+) = \sigma(2)$ corresponds to the complex orientation of the complex $z_k$-plane. 

To verify the signs in \eqref{eq:OC hat rel}, we need to compare the boundary orientation at the codimension-one boundary components $r \cdot \cO\cC^\vee_k$ and $\cO\cC^\vee_k$ of $\cO\cC^\wedge_k$, with the complex orientation. 
This is equivalent to comparing the boundary orientation of the boundary points $\{\theta_1\}$ and $\{\theta_0\}$ of the interval $\{z \in S^1: \theta_0 \le \arg(z) \le \theta_1\}$, where $S^1$ is equipped with the standard orientation (as this is the one induced on it as the boundary of the unit disc in $\C$). 
The orientations agree at $\{\theta_1\}$, and disagree at $\{\theta_0\}$; this agrees with the signs of the two terms in \eqref{eq:b hat}, which suffices to verify the signs in \eqref{eq:OC hat rel}.

\subsection{Cyclic open--closed map respects pairings}

Recalling that $\cR(A,0,0) \cong \{(R,\theta):R \in (0,1),\theta \in S^1\}$, we obtain an identification $\sigma(\cR(A,0,0)) = \sigma(R)\sigma(\theta)$, where $\sigma(R) = \sigma((0,1))$ and $\sigma(\theta) = \sigma(S^1)$ are given their usual orientations. 

We have an identification $\sigma(D_{k_{in},k_{out}}) = \sigma(2(k_{in}+k_{out})) \sigma(R)$, by equipping $\C^{k_{in}} \times \C^{k_{out}}$ with the complex orientation. 

These give rise to identifications 
\begin{equation}\label{eq:RH3 or}
    \sigma(\cR(H^3_{\langle k_{in}^\vee,k_{out}^\vee\rangle},(1,1),(0,0))) = \sigma(R)\sigma(\theta)\sigma(2k_{in}+2k_{out}),
\end{equation}
and hence
$$\S(H^3_{\langle k_{in}^\vee,k_{out}^\vee}\rangle ) = \sigma(R)^\vee\sigma(\theta)^\vee \sigma(-2n-2k_{in}-2k_{out}) \sigma_{in}(\cO\cC)\sigma_{out}(\cO\cC),$$
where $\sigma_{in/out}(\cO\cC) = \sigma(2n)\sigma(B_{in/out})^\vee$. 

We have
$$\sigma(\cR(H^2))\sigma(N_{H^2/H^3})=\sigma(\cR(H^3)),$$
where $N_{H^3/H^2}$ denotes the normal bundle to $\cR(H^2) = \{\theta_{in}+\theta_{out} = \theta+\pi\}$ inside $\cR(H^3)$. 
We orient $\sigma(N_{H^3/H^2})$ in the direction of increasing $\theta_{in} + \theta_{out} - \theta$. 

This gives rise to an identification 
$$\sigma(\cR(H^2_{\langle k_{in}^\vee,k_{out}^\vee\rangle},(1,1),(0,0))) = \sigma(R)\sigma(2k_{in}+2k_{out}).$$

Identifying $\sigma(R) = \sigma(\partial)$, we obtain the identification
$$\S(H^2_{k_{in}^\vee,k_{out}^\vee}) = \sigma(\partial)^\vee \sigma(-2n-2k_{in}-2k_{out}) \sigma_{in}(\cO\cC)\sigma_{out}(\cO\cC) = \sigma(H^2_{\langle k_{in}^\vee,k_{out}^\vee\rangle})$$
(cf. \eqref{eq:sigma H2}), which allows us to define $H^2_{k_{in}^\vee,k_{out}^\vee}$. 
On the other hand, identifying $\sigma(R)\sigma(\theta) = \sigma(2)$ (i.e., giving the complex orientation to the disc $\Rbar(A,0,0)$), we obtain the identification
$$\S(H^3_{\langle k_{in}^\vee,k_{out}^\vee\rangle}) = \sigma(-2n-2k_{in}-2k_{out}-2)\sigma_{in}(\cO\cC)\sigma_{out}(\cO\cC) = \sigma(H^3_{\langle k_{in}^\vee,k_{out}^\vee\rangle})$$
(cf. \eqref{eq:sigma H3}), which allows us to define $H^3_{\langle k_{in}^\vee,k_{out}^\vee\rangle}$.

\begin{lem}
    The signs of the isomorphisms
    \begin{align*}
    \S(H^2_{k_{in}^\vee,k_{out}^\vee}) & \cong \sigma(\partial)^\vee \S(H^3_{(k_{in}-1)^\vee,k_{out}^\vee}),\\
    \S(H^2_{k_{in}^\vee,k_{out}^\vee}) &\cong \sigma(\partial)^\vee \S(H^3_{k_{in}^\vee,(k_{out}-1)^\vee}),
    \end{align*}
    induced at the codimension-$1$ boundary component $\{z^{in}_{k_{in}}=0\}$ (respectively $\{z^{out}_{k_{out}} = 0\}$), are both $-1$. 
    This justifies the signs in front of the corresponding terms in \eqref{eq:H12 vee vee}.
\end{lem}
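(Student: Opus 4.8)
The plan is to read off both orientations in explicit local coordinates near the stratum in question and to isolate the single place where a minus sign is forced. I work throughout with the $z^{in}$-coordinate occurring in $\theta_{in}$ (this is the coordinate denoted $z^{in}_{k_{in}}$ in the statement; since only it produces a codimension-one locus, there is no harm in renaming it $z^{in}_1$, as in the construction), and I reduce at once to $s=(0,0)$, $\ell=(0,0)$, because the additional boundary and stabilising marked points are adjoined by the same stabilisation operation to both families and enter the two orientation torsors through identical tensor factors. I may assume $k_{in}\ge 1$, since otherwise $H^3_{\langle(k_{in}-1)^\vee,k^\vee_{out}\rangle}=0$ by convention and there is nothing to prove. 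Near a point of $\cR(H^3_{\langle k^\vee_{in},k^\vee_{out}\rangle})$ with $R\in(0,1)$ and $z^{in}_1=0$ I take coordinates $(R,\theta,z^{in}_1,\dots,z^{in}_{k_{in}},z^{out}_1,\dots,z^{out}_{k_{out}})$, oriented by $\sigma(R)\sigma(\theta)$ followed by the complex orientations of the $z$-planes, as in \eqref{eq:RH3 or}. Writing $z^{in}_1=\rho e^{i\phi}$, so that $\theta_{in}=\phi$ for $\rho>0$ and the complex orientation of the $z^{in}_1$-plane is $\sigma(\rho)\sigma(\phi)$, this reads
\[\sigma(\cR(H^3_{\langle k^\vee_{in},k^\vee_{out}\rangle}))=\sigma(R)\,\sigma(\theta)\,\sigma(\rho)\,\sigma(\phi)\,\sigma(z^{in}_2)\cdots\sigma(z^{in}_{k_{in}})\,\sigma(z^{out}_1)\cdots\sigma(z^{out}_{k_{out}})\]
on the locus $\{z^{in}_1\neq 0\}$.

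Next I would use that $\cR(H^2_{\langle k^\vee_{in},k^\vee_{out}\rangle})$ is the hypersurface $\phi+\theta_{out}-\theta=\pi$ inside $\cR(H^3_{\langle k^\vee_{in},k^\vee_{out}\rangle})$, with normal line oriented in the direction of increasing $\theta_{in}+\theta_{out}-\theta$; since $\partial_\phi$ points that way and is transverse to the hypersurface, $\sigma(N_{H^2/H^3})=\sigma(\phi)$. Peeling $\sigma(\phi)$ off to the end of the displayed product (using the convention $\sigma(\cR(H^2))\sigma(N_{H^2/H^3})=\sigma(\cR(H^3))$), it moves only past the factors $\sigma(z^{in}_j)$ and $\sigma(z^{out}_j)$, all of even degree, so no Koszul sign is produced and
\[\sigma(\cR(H^2_{\langle k^\vee_{in},k^\vee_{out}\rangle}))=\sigma(R)\,\sigma(\theta)\,\sigma(\rho)\,\sigma(z^{in}_2)\cdots\sigma(z^{in}_{k_{in}})\,\sigma(z^{out}_1)\cdots\sigma(z^{out}_{k_{out}})\]
on $\{z^{in}_1\neq 0\}$.

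The crucial observation is then that $\{z^{in}_1=0\}=\{\rho=0\}$ is a genuine codimension-one boundary component of $\cR(H^2_{\langle k^\vee_{in},k^\vee_{out}\rangle})$, although it has codimension two in $\cR(H^3_{\langle k^\vee_{in},k^\vee_{out}\rangle})$: by the convention on $\arg$, the equation $\phi+\theta_{out}-\theta=\pi$ is vacuous when $z^{in}_1=0$, so near this locus $\cR(H^2_{\langle k^\vee_{in},k^\vee_{out}\rangle})$ has the form $\{\rho\ge 0\}$ times the coordinates $(R,\theta,z^{in}_2,\dots,z^{in}_{k_{in}},z^{out}_1,\dots,z^{out}_{k_{out}})$, which — after relabelling $z^{in}_j\mapsto z^{in}_{j-1}$ — are exactly those of $\cR(H^3_{\langle(k_{in}-1)^\vee,k^\vee_{out}\rangle})$ with the orientation \eqref{eq:RH3 or} (with $k_{in}$ replaced by $k_{in}-1$). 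The boundary-orientation convention writes $\sigma(\cR(H^2_{\langle k^\vee_{in},k^\vee_{out}\rangle}))$ as $\sigma(\partial)$ applied to the orientation of this boundary piece; but the outward normal of $\{\rho\ge 0\}$ at $\rho=0$ points in the direction of decreasing $\rho$, whereas the factor $\sigma(\rho)$ appearing above is oriented by $+\partial_\rho$, so the comparison produces a sign $-1$. This is precisely the discrepancy between the geometric boundary identification and the abstract identification $\sigma(H^2_{\langle k^\vee_{in},k^\vee_{out}\rangle})\cong\sigma(\partial)^\vee\sigma(H^3_{\langle(k_{in}-1)^\vee,k^\vee_{out}\rangle})$ recorded in \eqref{eq:sigma H2}--\eqref{eq:sigma H3}, so the sign of $\S(H^2_{k^\vee_{in},k^\vee_{out}})\cong\sigma(\partial)^\vee\S(H^3_{(k_{in}-1)^\vee,k^\vee_{out}})$ is $-1$. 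The statement for $\{z^{out}_{k_{out}}=0\}$ follows from the verbatim computation with the ``in'' and ``out'' roles interchanged: there $\sigma(\phi_{out})$ again passes only even-degree torsors, and moving the normal factor $\sigma(\rho_{out})$ to the front passes the pair $\sigma(R)\sigma(\theta)$ with sign $(-1)(-1)=+1$ and the even-degree $z$-torsors with sign $+1$, so the same $-1$ emerges, justifying both signs in \eqref{eq:H12 vee vee}.

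The hard part will be the bookkeeping of the preceding paragraph: recognising the \emph{polar collapse} that turns the codimension-two locus $\{z^{in}_1=0\}$ into a codimension-one boundary of $\cR(H^2_{\langle k^\vee_{in},k^\vee_{out}\rangle})$, and tracking the direction of the outward normal relative to the orientation inherited from the complex structure of the $z$-planes. This is exactly where the $-1$ originates, and it has to be reconciled with the orientation conventions of \cite{relfukii} underlying \cite[Lemma 4.18]{relfukii} and with the normalisations set up earlier in Appendix \ref{sec:signs}; once those are fixed, the remaining Koszul-sign checks are immediate, since every torsor moved past the normal factor has even degree.
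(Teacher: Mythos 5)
Your proposal is correct, and it uses essentially the same strategy as the paper: track the orientation torsors in explicit local (polar) coordinates near the codimension-one stratum, reducing to $s=\ell=0$. The key geometric observation, which you correctly isolate, is the ``polar collapse'': $\{z^{in}_1=0\}$ has codimension two in $\cR(H^3_{\langle k^\vee_{in},k^\vee_{out}\rangle})$, but the constraint cutting out $\cR(H^2_{\langle k^\vee_{in},k^\vee_{out}\rangle})$ becomes vacuous there (by the convention on $\arg$), so the locus is genuinely codimension one in $\cR(H^2)$. That is exactly the geometric content that makes the lemma nontrivial, and the Koszul checks you perform (all of $\sigma(z^{in}_j)$, $\sigma(z^{out}_j)$ being even) are accurate.

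Where you and the paper differ is in the bookkeeping of the final $-1$, and it is worth flagging. The paper sets up a chain
\[
\sigma(2)\sigma(\partial)\cong\sigma(R)\sigma(\theta)\sigma(\partial)\cong\sigma(R)\sigma(\theta)\sigma(r_{in})\cong\sigma(R)\sigma(r_{in})\sigma(\theta_{in})\cong\sigma(\partial)\sigma(2),
\]
explicitly writes $\sigma(\partial)=\sigma(r_{in})$ (i.e.\ the \emph{gluing-parameter} direction $+\partial_\rho$, with no sign), and locates the $-1$ in the Koszul swap of $\sigma(\theta_{in})$ past $\sigma(r_{in})$. You instead peel $\sigma(\phi)=\sigma(N_{H^2/H^3})$ off past the even $z$-torsors (collecting no Koszul sign) and attribute the $-1$ to the outward normal at $\{\rho=0\}$ being $-\partial_\rho$, i.e.\ you implicitly take $\sigma(\partial)$ to be identified with the \emph{outward normal}. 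These two conventions for $\sigma(\partial)$ differ by a sign, and the compensating sign is precisely the Koszul swap that you avoid by peeling $\sigma(\phi)$ off through the even factors rather than past $\sigma(\rho)$. So the answers agree, but the two arguments put the $-1$ in different places; you should cross-check which identification of $\sigma(\partial)$ (gluing parameter vs.\ outward normal) is the one actually built into the orientation conventions underlying \cite[Lemma 4.18]{relfukii}, since the paper's wording indicates the former.

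One more minor point you handled correctly but is worth saying out loud: the lemma statement writes $\{z^{in}_{k_{in}}=0\}$ while the stratification set up in Section~\ref{sec:oc pair} says the relevant stratum is $\{z^{in}_1=0\}$ (the coordinate entering $\theta_{in}=\arg(z^{in}_1)$). Only that coordinate produces a codimension-one locus via the polar collapse, so the discrepancy is a notational slip; your reduction to $z^{in}_1$ is the right reading.
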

\begin{proof}
    In a neighbourhood of such a boundary component, let $z^{in}_{k_{in}} = r^{in} \exp( i\theta^{in})$. 
    Then the identification
    $$\sigma(\cR(H^3)) \sigma(\partial) \cong \sigma(\cR(H^2))$$
    is induced by 
    \begin{align*}
        \sigma(2) \sigma(\partial) &\cong \sigma(R)\sigma(\theta)\sigma(\partial) \\
        & \cong \sigma(R)\sigma(\theta)\sigma(r_{in}) \\
        & \cong \sigma(R)\sigma(r_{in})\sigma(\theta_{in}) \\
        &\cong \sigma(\partial) \sigma(2),
    \end{align*}
    where the first isomorphism is the isomorphism introduced in the definition of $\S(H^3)$; the second arises from the identification $\sigma(\partial) = \sigma(r_{in})$ at this boundary component; the third arises from the identification $\sigma(\theta) = \sigma(N_{H^3/H^2}) = \sigma(\theta_{in})$ introduced in the definition of $\S(H^2)$; and the final one arises from the complex orientation on the $z^{in}_{k_{in}}$-plane, which goes into the isomorphism \eqref{eq:RH3 or}, together with the identification $\sigma(R) = \sigma(\partial)$, which goes into the definition of $\S(H^2)$.
    The overall sign is easily verified to be $-1$, arising from commuting $\sigma(\theta_{in})$ with $\sigma(r_{in})$. 
    The computation of the second sign is identical.
\end{proof}

The sign computations at the other boundary components involved in the proof of Lemmas \ref{lem:cardy triv} and \ref{lem:cardy nontriv} are all variations on this one, or of those from Section \ref{sec:cyc oc signs}, or those explained in \cite[Appendix C]{relfukii}.

\subsection{Cyclic open--closed map respects connections}

We have an isomorphism $\sigma(\cR(\cO\cC_*,0,0)) = \sigma(t)\sigma(\theta_*)$. 
We may identify $\cR(\cO\cC_*,0,0)$ with $\{z \in \C: 0<|z|<1\}$ by recording the position of $p_*^{int}$, if the disc is parametrized as the unit disc with $p_0^{int}$ at $0$ and $p_0^\partial$ at $-i$. 
Note that $\partial/\partial t, \partial/\partial \theta_*$ form a complex-oriented basis; so the complex orientation induces the trivialization $\sigma(t)\sigma(\theta_*) = \sigma(2)$.

We identify $\sigma(t) = \sigma(\partial)$. 
Using the complex orientation of $D_k$, and the natural identification of the orientation line of the normal bundle to $\cR(H^2_\nabla,k^\vee,s,\ell)$ inside $\cR(\cO\cC_*,s,\ell) \times D_k$ with $\sigma(\theta_*)$, we obtain an identification
$$\S(H^2_\nabla,k^\vee) = \sigma(\partial)^\vee \sigma(-2k) \sigma(\cO\cC);$$
on the other hand, identifying $\sigma(\theta_*) = \sigma(e^+)$, we obtain an identification
$$\S(H^2_\nabla,k^\wedge) = \sigma(e^+)^\vee\sigma(\partial)^\vee\sigma(-2k)\sigma(\cO\cC).$$
These allow us to define the maps $H^2_{\nabla_q}$.

We equip $\cR(H^{23}_{\nabla},k^\diamond,s,\ell)$ with the boundary orientation as a codimension-$1$ boundary stratum of $\cR(H^2_\nabla,k^\diamond,s,\ell)$. 
This allows to define the maps $H^{23}_{\nabla_q}$. 
The corresponding signs in Lemma \ref{lem:conn 2} are then $+1$ by construction; the signs of the boundary components $H^{12}$ and those involving $b^{1|1}$ are analogous to the sign verification in \cite[Lemma C.6]{relfukii}. 

We have 
$$\sigma(\cR(H^3_\nabla,k^\vee,0,0)) = \sigma(t)\sigma(\theta_*)\sigma(\arg(z_k)) \sigma(2k-2).$$
We identify $\sigma(t) = \sigma(\partial)$ and $\sigma(\theta_*)\sigma(arg(z_k)) = \sigma(2)$. 
This gives 
$$\S(H^3_\nabla,k^\vee) = \sigma(\partial)^\vee\sigma(-2k)\sigma(\cO\cC),$$
which allows us to define $H^3_{\nabla_q}$. 
One checks that the signs of the boundary components $\cR(H^{23}_\nabla,k^\vee,0,0)$, $\cR(H^{34}_\nabla,k^\vee,0,0)$, and $\cR(H^2_\nabla,(k-1)^\vee,0,0)$ are as given in \eqref{eq:H23 H34}; the sign of the boundary component corresponding to $B^{1|1}$ is computed using \cite[Lemma C.2]{relfukii}. 

We orient $\cR(H^{23}_\nabla,k^\wedge,0,0)$ and $\cR(H^{34}_\nabla,k^\wedge,0,0)$ so that their orientations agree with that of $\cR(H^3_\nabla,k^\vee,0,0)$ on their overlaps; this makes the verification of the signs in \eqref{eq:H3 wedge vee} straightforward. 

\bibliographystyle{alpha}
\bibliography{references.bib}

\end{document}